\crefname{equation}{equation}{equation}
\title[{Evolutionary equations with state-dependent delay}]{Evolutionary equations with state-dependent delay}
\author[B.~Aigner]{Bernhard Aigner\orcidlink{0009-0009-8252-162X}}
\thanks{supported by the state of Saxony via a graduate student stipend}
\address[B.~Aigner]{Institut f\"{u}r Angewandte Analysis\\
  TU Bergakademie Freiberg\\
  Institut für Angewandte Analysis\\
  Pr\"{u}ferstr. 9, 09599 Freiberg\\
  Germany}
\email[B.~Aigner]{bernhard.aigner@doktorand.tu-freiberg.de}
\author[M.~Waurick]{Marcus Waurick\orcidlink{0000-0003-4498-3574}}
\address[M.~Waurick]{Institut f\"{u}r Angewandte Analysis\\
  TU Bergakademie Freiberg\\
  Institut für Angewandte Analysis\\
  Pr\"{u}ferstr. 9, 09599 Freiberg\\
  Germany}
\email[M.~Waurick]{marcus.waurick@math.tu-freiberg.de}
\date{\today}
\DeclarePairedDelimiterX{\norm}[1]{\lVert}{\rVert}{#1}
\DeclarePairedDelimiterX{\abs}[1]{\lvert}{\rvert}{#1}
\DeclarePairedDelimiterX{\dset}[2]{\{}{\}}{#1\,\delimsize\vert\,\mathopen{} #2}
\DeclarePairedDelimiterX{\scprod}[2]{(}{)}{#1\delimsize| #2}
\DeclarePairedDelimiterX{\dualprod}[2]{\langle}{\rangle}{#1,#2}
\newcommand{\e}{\mathrm{e}}
\newcommand{\iu}{\mathrm{i}}
\newcommand{\dd}{\mathrm{d}}
\newcommand{\dx}[1][x]{\,\dd{}#1}
\newcommand{\argdot}{\cdot}
\renewcommand{\Re}{\operatorname{Re}}
\newcommand{\adjun}{^{\ast}}
\newcommand{\R}{\mathbb{R}}
\newcommand{\C}{\mathbb{C}}
\newcommand{\N}{\mathbb{N}}
\newcommand{\supp}{\mathrm{spt}}
\newtheorem*{definition*}{Definition}
\newtheorem*{theorem*}{Theorem}
\newtheorem*{proposition*}{Proposition}
\newtheorem*{lemma*}{Lemma}
\newtheorem*{corollary*}{Corollary}
\newtheorem*{remark*}{Remark}
\newtheorem{definition}{Definition}[section]
\newtheorem{theorem}[definition]{Theorem}
\newtheorem{proposition}[definition]{Proposition}
\newtheorem{lemma}[definition]{Lemma}
\newtheorem{corollary}[definition]{Corollary}
\newtheorem{remark}[definition]{Remark}
\newtheorem{example}[definition]{Example}
\newtheorem{assumptions}[definition]{Assumptions}
\begin{document}

%%%%%%%%%%%%%%%%%%%%%%%%%%%%%%
% Abstract
\begin{abstract}
  We extend a contraction mapping argument for ordinary state-dependent delay differential equations
%   from \cite{Waurick2023}
  to evolutionary partial differential equations in the sense of R.~Picard, that is, to equations of the form $\bigl(\partial_{t} M(\partial_{t}) + A\bigr) u(t) = F\bigl(t,u_{(t)}\bigr)$, where $A$ is an $\mathrm{m}$-accretive (unbounded) linear operator and $M$ is a material law. We establish local well-posedness (in the sense of weak solutions) of generalized initial value problems that stem from a distributional formulation. We require prehistories in $H^{1}$ with bounded derivative, a regularity increasing right-hand side and a consistency condition. We showcase the viability of our results by applying them to classical examples (heat, wave and Maxwell's equations), examples from semigroup theory, port-Hamiltonian systems, as well as equations featuring fractional derivatives and convolutions (in time) with bounded operators.
\end{abstract}

\maketitle

%%%%%%%%%%%%%%%%%%%%%%%%%%%%%%
%          SECTION
\section{Introduction}
\label{sec:Intro}
In this article we study partial differential equations (PDEs) with state-dependent delay, which are PDEs of the form
\begin{equation*}
  (Du)(t)=F\bigl(t,u_{(t)}\bigr)\qquad t>0\text{,}
\end{equation*}
where $D$ is a (partial) differential operator in space-time and the right-hand side $F$ is a function depending on the history
\begin{equation*}
  u_{(t)}\colon\,[-h,0]\rightarrow H\text{,}\qquad s\mapsto u(t+s)
\end{equation*}
 of the state $u$. The parameter $h>0$ describes a suitable backward time-horizon and $H$ at this point denotes any Hilbert space. The dependence on prior histories demands --- in contrast to equations without delay --- a prescribed initial prehistory $\Phi$ instead of traditional initial values:
\begin{equation*}
  u_{(0)} = \Phi \qquad \Longleftrightarrow \qquad u(t)=\Phi(t)\quad\text{for }-h\leq t\leq 0\text{.}
\end{equation*}
This article follows up on recent discoveries concerning the solution theory of initial value problems (IVPs) for ordinary differential equations (ODEs) (i.e., $D=\nicefrac{\operatorname{d}}{\operatorname{dt}}$) with state-dependent delay utilizing a weak solution theory and exponentially weighted Sobolev spaces. We refer to \cite{Waurick2023} for the principal publication and to \cite{Aigner2024} for a summary of the main results and some elementary examples. We will extend the approach presented there to evolutionary equations in the sense of R.~Picard, i.e., PDEs of the form
\begin{equation}
  \label{eq:FDE}
  \bigl(\partial_{t}M(\partial_{t})+A\bigr)u(t)= f(t) + F\bigl(t, (u+Z)_{(t)}\bigr) \qquad\, t>0\text{,}\\
\end{equation}
where $M$ is a holomorphic operator-valued function that is bounded on a right-half space, $M(\partial_{t})$ is a so-called material law operator (cf.~\cref{subsec:MatLaw} or \cite[ch.~5.2]{Waurick2022}) and $A$ is an $\mathrm{m}$-accretive operator on the Hilbert space $H$. In particular, our results hold for skew-selfadjoint $A$. The function $f$ is assumed to be suitably regular and integrable, $Z$ is an extension of some initial prehistory $\Phi \in H^{1}(-h,0;H)$ and $F\colon [0,\infty)\times L_{2}(-h,0;H)\to H$ is a continuous map incorporating the state-dependent delay $u_{(t)}$.
In the theory of evolutionary equations, IVPs of the form
\begin{alignat}{3}
  \label{eq:IVP}
  \begin{aligned}
    \bigl(\partial_{t}M(\partial_{t}) + A\bigr)u(t)&= F\bigl(t,u_{(t)}\bigr) && \qquad\qquad\, t>0\text{,}\\
    u(t)&=\Phi(t) && \quad -h\leq t\leq 0\text{,}
  \end{aligned}
\end{alignat}
are equivalent to a reformulation as evolutionary equations on the entire line without initial value, albeit in a distributional setting, cf.~\cite[thm.~9.4.3]{Waurick2022}. The term $f$ in \cref{eq:FDE} originates from this reformulation and suitably incorporates the initial prehistory $\Phi$. For more complex material laws, such a formulation is obtained using a projection argument. We provide some background in \cref{subsec:DistribSpaces_IVPs}. The appearance of a distributional formulation is also present in other approaches. Indeed, in the context of solution theory utilizing semigroup theory, well-posedness is understood in the sense of existence of mild solutions. It is a well-known fact, that mild solutions of a Cauchy problem are the classical solutions of the extrapolated Cauchy problem (cf.~\cite[sec.~VI.7]{Engel/Nagel2000} or \cite[sec.~2.2]{Schnaubelt2024}).\\
In the context of delay differential equations (DDEs), semigroup theory has been utilized extensively, e.g., cf.~\cite{Batkai2005} for a comprehensive overview and also \cite{Wu1996}. There are results for the non-autonomous setting (cf.~\cite{Hadd2006}) or with asymptotically autonomous delay (cf.~\cite{Schnaubelt2004}). Here, we offer a different perspective, but we compare our results to semigroup approaches in \cref{subsec:SG1stOrder} and \cref{subsec:SG2ndOrder}.\\
In contrast to semigroup theory, the theory of evolutionary equations can make short work of mixed type problems (e.g., mixed eddy current models, cf.~\cref{subsec:Maxwell}) and the material law can incorporate operators that are even nonlocal in time (cf.~\cref{subsec:ViscoElasticity}). The large applicability of evolutionary equations comes at the price of a more involved problem formulation for initial value problems. Indeed, partial differential algebraic equations are included; a subclass for which it is already a challenging task to identify a suitable space of initial values. We choose to forego this problem and focus on the well-posedness aspect. This approach is reminiscent of the foundational article of evolutionary equations \cite{Picard2009} and the works \cite{Trostorff2018,Trostorff2020,Trostorff2017} of S.~Trostorff.\\
We want to transform the problem (\ref{eq:IVP}) into a fixed point problem (FPP) and then utilize a contraction mapping argument. To that end, we employ weighted Lebesgue and Sobolev spaces:
\begin{definition}
  \label{def:LebesgueSobolev}
  For $\rho>0$ and $-\infty\leq a<b\leq +\infty$ let
  \begin{align*}
    L_{2,\rho}(a,b;H)&\coloneq \overline{\mathcal{C}_{c}^{\infty}\bigl(a,b;H\bigr)}^{\norm{\argdot}_{2,\rho}}\text{,}\\
    \norm{u}_{2,\rho}&\coloneq \Bigl(\medint\int_a^b \norm{u(t)}^{2}_{H}\e^{-2\rho t} \dx[t]\Bigr)^{\nicefrac{1}{2}}\text{,}
  \intertext{and, accordingly,}
    H_{\rho}^{1}(a,b;H)&\coloneq \bigl\{u\in L_{2,\rho}(a,b;H)\colon\,u'\in L_{2,\rho}(a,b;H)\bigr\}\text{,}\\
    \norm{u}_{H^{1}_{\rho}}&\coloneq \bigl(\norm{u}^{2}_{2,\rho}+\norm{u'}^{2}_{2,\rho}\bigr)^{\nicefrac{1}{2}}\text{,}
  \end{align*}
\end{definition}
where $\mathcal{C}_{c}^{\infty}\bigl(a,b;H\bigr)$ is the space of all smooth functions with values in $H$ and compact support in $(a,b)$. The derivative in the definition is to be understood in the distributional sense. The use of weighted Sobolev spaces, rather than the classical (unweighted) versions will allow us to force contractivity of a Lipschitz-mapping, reminiscent of a classical proof of the Picard--Lindel\"of theorem for ODEs by Morgenstern, cf.~\cite{Morgenstern1952}. For a general introduction to Sobolev spaces we refer to \cite[ch.~5]{Evans2010}, for exponentially weighted Sobolev spaces to \cite[ch.~3]{Waurick2022}.\\
The article is structured as follows: We begin with a short recap on evolutionary equations and mostly recall established results with a few new considerations. We follow up with some observations on the delay and on the nature of evolutionary equations with state-dependent delay. In \cref{sec:SolThy} we prove our main (well-posedness) results, which we make more explicit for the case of simple material laws in \cref{sec:SimMatLaws}. We conclude with a large variety of examples in \cref{sec:Examples} and a short review of our results paired with a comparison to other results, in particular to results utilizing semigroup theory. The appendix contains definitions of differential operators employed in \cref{sec:Examples} and recapitulates known results for the formulation of IVPs in the framework of evolutionary equations.

%%%%%%%%%%%%%%%%%%%%%%%%%%%%%%
%          SECTION
\section{Background on evolutionary equations}
\label{sec:EvEq}
In the context of evolutionary equations, the left-hand side of \cref{eq:FDE} is split up into two parts, on one hand the ``material dependence'' $\partial_{t}M(\partial_{t})$, which is time-dependent, and on the other hand a purely spatial part $A$, which is assumed to be a densely defined, linear, $\mathrm{m}$-accretive operator on the Hilbert space $H$. To explain the first part, we first need to explain a central concept in evolutionary equations, the time derivative in exponentially weighted spaces.

\subsection{The time derivative}
\label{subsec:TimeDerivative}
Let $\rho > 0$ and $-\infty < a < b \leq +\infty$. Let
\begin{align*}
  I_{\rho}\colon\,L_{2,\rho}(a,b;H)&\to L_{2,\rho}(a,b;H)\text{,}\\
  u&\mapsto \Bigl( t\mapsto \medint\int_a^{t} u(s)\dx[s]\Bigr)\text{.}
\end{align*}
This operator is well-defined on $L_{2,\rho}(a,b;H)$, as the next proposition shows.
\begin{proposition}
  Let $\rho > 0$ and $-\infty < a < b \leq +\infty$. Then $I_{\rho}\in \mathcal{L}\bigl(L_{2,\rho}(a,b;H)\bigr)$ with $\norm{I_{\rho}}\leq \nicefrac{1}{\rho}$.
\end{proposition}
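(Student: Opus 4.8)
The plan is to estimate $\norm{I_\rho u}_{2,\rho}$ directly using the Cauchy--Schwarz inequality on the inner integral, after inserting a clever factor that splits the weight $\e^{-2\rho t}$ into two halves. Concretely, for $u\in\mathcal{C}_c^\infty(a,b;H)$ and $t\in(a,b)$, I would write
\begin{equation*}
  \norm{(I_\rho u)(t)}_H=\Bnorm{\medint\int_a^t u(s)\dx[s]}_H\leq \medint\int_a^t \norm{u(s)}_H\e^{-\rho s}\e^{\rho s}\dx[s]\text{,}
\end{equation*}
and then apply Cauchy--Schwarz with the first factor $\norm{u(s)}_H\e^{-\rho s}$ and the second factor $\e^{\rho s}$. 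The point of the splitting is that $\int_a^t \e^{2\rho s}\dx[s]=\tfrac{1}{2\rho}(\e^{2\rho t}-\e^{2\rho a})\leq \tfrac{1}{2\rho}\e^{2\rho t}$, so that the exponential blow-up is controlled by exactly the weight we need to absorb.

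Carrying this out, Cauchy--Schwarz gives
\begin{equation*}
  \norm{(I_\rho u)(t)}_H^2\leq \Bigl(\medint\int_a^t\norm{u(s)}_H^2\e^{-2\rho s}\dx[s]\Bigr)\Bigl(\medint\int_a^t\e^{2\rho s}\dx[s]\Bigr)\leq \norm{u}_{2,\rho}^2\cdot\frac{1}{2\rho}\e^{2\rho t}\text{.}
\end{equation*}
Multiplying by $\e^{-2\rho t}$ and integrating over $t\in(a,b)$, the $\e^{2\rho t}$ cancels against the weight, leaving
\begin{equation*}
  \norm{I_\rho u}_{2,\rho}^2=\medint\int_a^b\norm{(I_\rho u)(t)}_H^2\e^{-2\rho t}\dx[t]\leq \frac{1}{2\rho}\norm{u}_{2,\rho}^2\medint\int_a^b\e^{-2\rho t}\e^{2\rho t}\dx[t]\text{,}
\end{equation*}
which is not quite right if $b=+\infty$ since the last integral diverges --- so instead one integrates more carefully: $\norm{I_\rho u}_{2,\rho}^2\leq \tfrac{1}{2\rho}\int_a^b\bigl(\int_a^t\norm{u(s)}_H^2\e^{-2\rho s}\dx[s]\bigr)\dx[t]$ is still not finite. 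The correct route is to keep the weight inside: from $\norm{(I_\rho u)(t)}_H^2\e^{-2\rho t}\leq \tfrac{1}{2\rho}\int_a^t\norm{u(s)}_H^2\e^{-2\rho s}\dx[s]$ one should instead apply Cauchy--Schwarz with second factor $\e^{\rho s}$ replaced so that a genuinely integrable bound results; the cleanest fix is to bound $\int_a^t\e^{2\rho s}\dx[s]\leq\tfrac{1}{2\rho}\e^{2\rho t}$ and then note that after multiplying by $\e^{-2\rho t}$ the remaining double integral, upon switching the order of integration via Tonelli, gives $\int_a^b\norm{u(s)}_H^2\e^{-2\rho s}\bigl(\int_s^b \e^{-2\rho t}\e^{2\rho t}\cdot(\text{correction})\bigr)$; to make this honest one uses the sharper pointwise estimate obtained by keeping a factor $\e^{-\rho(t-s)}$, i.e.\ bound $\norm{(I_\rho u)(t)}_H\e^{-\rho t}\leq\int_a^t\norm{u(s)}_H\e^{-\rho s}\e^{-\rho(t-s)}\dx[s]$, which is a convolution of an $L_{2,\rho}$-function with $\e^{-\rho\argdot}\in L_1(0,\infty)$ of norm $\tfrac{1}{\rho}$; Young's convolution inequality then yields $\norm{I_\rho u}_{2,\rho}\leq\tfrac{1}{\rho}\norm{u}_{2,\rho}$ immediately.

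Thus the streamlined argument is: rewrite $\e^{-\rho t}(I_\rho u)(t)=\int_a^t \e^{-\rho(t-s)}\bigl(\e^{-\rho s}u(s)\bigr)\dx[s]$, recognize this as (the restriction of) a convolution $k*v$ with $k(r)=\e^{-\rho r}\mathbf{1}_{[0,\infty)}(r)$ and $v=\e^{-\rho\argdot}u\in L_2(a,b;H)$, apply Young's inequality $\norm{k*v}_2\leq\norm{k}_1\norm{v}_2$ with $\norm{k}_1=\tfrac{1}{\rho}$, and translate back: $\norm{I_\rho u}_{2,\rho}=\norm{\e^{-\rho\argdot}I_\rho u}_2\leq\tfrac{1}{\rho}\norm{v}_2=\tfrac{1}{\rho}\norm{u}_{2,\rho}$. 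Finally, since this bound holds on the dense subspace $\mathcal{C}_c^\infty(a,b;H)$ and $I_\rho$ is clearly linear, it extends by continuity to all of $L_{2,\rho}(a,b;H)$, giving $I_\rho\in\mathcal{L}\bigl(L_{2,\rho}(a,b;H)\bigr)$ with $\norm{I_\rho}\leq\tfrac{1}{\rho}$. The only subtle point --- the ``main obstacle'' --- is handling the case $b=+\infty$ so that all integrals stay finite; framing the estimate as a convolution / Young's inequality, or equivalently keeping the decaying factor $\e^{-\rho(t-s)}$ inside the integral, resolves it cleanly, whereas the naive Cauchy--Schwarz split must be done with care to avoid a divergent outer integral.
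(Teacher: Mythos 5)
Your final, ``streamlined'' argument is correct: writing $\e^{-\rho t}(I_{\rho}u)(t)=\int_{a}^{t}\e^{-\rho(t-s)}\bigl(\e^{-\rho s}u(s)\bigr)\dx[s]$, recognizing this as the restriction to $(a,b)$ of the convolution of $k(r)=\e^{-\rho r}\chi_{[0,\infty)}(r)$ with $v=\e^{-\rho\argdot}u\chi_{(a,b)}\in L_{2}(\mathbb{R};H)$, and applying Young's inequality with $\norm{k}_{L_{1}}=\nicefrac{1}{\rho}$ gives exactly $\norm{I_{\rho}u}_{2,\rho}\leq\nicefrac{1}{\rho}\norm{u}_{2,\rho}$ on the dense subspace $\mathcal{C}_{c}^{\infty}(a,b;H)$, and the extension by continuity is legitimate since $L_{2,\rho}$ is defined as the closure of that subspace. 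The paper itself omits the proof and cites the literature, where the standard argument is a weighted Cauchy--Schwarz (Schur-test) estimate: one splits the inserted weight as $\e^{\rho s}=\e^{\rho s/2}\cdot\e^{\rho s/2}$ rather than putting all of it in one factor, which keeps a decaying factor $\e^{-\rho(t-s)/1}$ available after Tonelli and yields the same constant $\nicefrac{1}{\rho^{2}}$ for the squared norm. Your convolution formulation is an equivalent, arguably cleaner, packaging of the same cancellation. The one criticism is presentational: the first two paragraphs of your write-up document a failed attempt (the all-or-nothing split $\e^{\rho s}$, which, as you correctly observe, leaves a non-integrable constant bound when $b=+\infty$) together with several half-finished repairs; none of that belongs in the final proof. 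Delete everything up to ``Thus the streamlined argument is'' and the remaining paragraph stands on its own as a complete and correct proof.
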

The proof is a simple calculation and can be checked in \cite[prop.~2.2]{Trostorff2011}, \cite[prop.~4.7]{Waurick2023} or \cite[prop.~2.3]{Aigner2024}.
An easy corollary is the following result:
\begin{corollary}
  Let $\rho > 0$ and $-\infty < a < b \leq +\infty$. For $I_{\rho}\colon\,L_{2,\rho}(a,b;H)\rightarrow H^{1}_{\rho}(a,b;H)$ we have $\norm{I_{\rho}}\leq \sqrt{\frac{1}{\rho^{2}}+1}$.
\end{corollary}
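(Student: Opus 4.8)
The plan is to deduce this corollary directly from the preceding proposition, which already provides the norm bound $\norm{I_{\rho}}\leq \nicefrac{1}{\rho}$ for $I_{\rho}$ viewed as an operator on $L_{2,\rho}(a,b;H)$. First I would observe that for $u\in L_{2,\rho}(a,b;H)$ the function $I_{\rho}u$ is (by the very definition of the Bochner integral with variable upper limit, or by approximation from $\mathcal{C}_c^\infty$) weakly differentiable with $(I_{\rho}u)' = u$ in the distributional sense; since both $I_{\rho}u$ and $u$ lie in $L_{2,\rho}(a,b;H)$, this shows $I_{\rho}u\in H^{1}_{\rho}(a,b;H)$, so the map $I_{\rho}\colon L_{2,\rho}(a,b;H)\to H^{1}_{\rho}(a,b;H)$ is well-defined.

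Next I would compute the $H^1_\rho$-norm of the image. By \cref{def:LebesgueSobolev},
\[
  \norm{I_{\rho}u}_{H^{1}_{\rho}}^{2} = \norm{I_{\rho}u}_{2,\rho}^{2} + \norm{(I_{\rho}u)'}_{2,\rho}^{2} = \norm{I_{\rho}u}_{2,\rho}^{2} + \norm{u}_{2,\rho}^{2}.
\]
Applying the proposition to the first summand gives $\norm{I_{\rho}u}_{2,\rho}^{2}\leq \tfrac{1}{\rho^{2}}\norm{u}_{2,\rho}^{2}$, hence
\[
  \norm{I_{\rho}u}_{H^{1}_{\rho}}^{2} \leq \Bigl(\tfrac{1}{\rho^{2}}+1\Bigr)\norm{u}_{2,\rho}^{2},
\]
and taking square roots yields $\norm{I_{\rho}u}_{H^{1}_{\rho}}\leq \sqrt{\tfrac{1}{\rho^{2}}+1}\,\norm{u}_{2,\rho}$, which is exactly the claimed operator-norm estimate.

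There is essentially no obstacle here; the only point requiring a word of care is the identity $(I_{\rho}u)' = u$, i.e. that the primitive defined by the Bochner integral really is a distributional antiderivative of $u$. This is standard (it follows by density of $\mathcal{C}_c^\infty(a,b;H)$ in $L_{2,\rho}(a,b;H)$ together with Fubini's theorem to interchange the time-integral with the test-function pairing), and it is implicitly used already in the cited references for the preceding proposition, so I would simply invoke it. Everything else is the two-line computation above.
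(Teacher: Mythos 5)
Your proposal is correct and matches the paper's proof: both split the $H^1_\rho$-norm into the $L_{2,\rho}$-norms of $I_\rho u$ and $(I_\rho u)'=u$, bound the first summand by $\tfrac{1}{\rho^2}\norm{u}_{2,\rho}^2$ via the preceding proposition, and conclude. Your extra remarks on well-definedness and the identity $(I_\rho u)'=u$ are fine but the paper treats these as immediate.
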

\begin{proof}
  For $v\in L_{2,\rho}(a,b;H)$ we infer $I_{\rho}v \in H^{1}_{\rho}(a,b;H)$ and hence
  \begin{equation*}
    \norm{I_{\rho}v}^{2}_{H^{1}_{\rho}}=\norm{I_{\rho}v}^{2}_{L_{2,\rho}}+\norm{(I_{\rho}v)'}^{2}_{L_{2,\rho}}\leq \bigl(\tfrac{1}{\rho^{2}}+1\bigr)\norm{v}^{2}_{L_{2,\rho}}\text{.} \qedhere
  \end{equation*}
\end{proof}
This allows us to define the time derivative with boundary condition in $L_{2,\rho}$. We could do this in general, but for the purposes of this article it suffices to consider the case $a=0$ and $b=\infty$ in this instance.
\begin{definition}
  \label{def:TimeDerBoundary}
  For $\rho >0$ let
  \begin{equation*}
    \mathring{\partial}_{\rho}\colon \, L_{2,\rho}(0,\infty;H)\supseteq \operatorname{dom}\bigl(\mathring{\partial}_{\rho}\bigr) \to L_{2,\rho}(0,\infty;H)\text{,}
    \qquad\mathring{\partial}_{\rho}\coloneq I_{\rho}^{-1}\text{.}
  \end{equation*}
  Furthermore, let $\partial_{\rho}$ be the standard (distributionally defined) derivative on $L_{2,\rho}$.\footnote{We simply attach the index $\rho$ to make it clear that the derivative is taken in $L_{2,\rho}$. We will make no distinction between $\partial_{\rho}$ defined on $\mathbb{R}$ or $(0,\infty)$.}
\end{definition}
The operator $\mathring{\partial}_{\rho}$ is well-defined since $I_{\rho}$ is injective and a bounded linear operator. In particular, $\mathring{\partial}_{\rho}$ is closed. To show that $\mathring{\partial}_{\rho}$ is indeed an operator with a boundary condition, we need to evaluate $H^{1}_{\rho}(0,\infty;H)$-functions pointwise. As with unweighted Sobolev spaces, we are free to do this appealing to a continuous embedding. For the purpose of stating such an embedding, we introduce the following notation:
\begin{definition}
  \label{def:ContFunctionsWeighted}
  For $h\geq 0$ let
  \begin{align*}
    \mathcal{C}_{\rho}(-h,\infty;H)&\coloneq \bigl\{u\colon [-h,\infty)\to H\colon\,u\,\text{is continuous and}\,\,\norm{u}_{\rho,\infty}<\infty\bigr\}\text{,}\\
    \norm{u}_{\rho,\infty}&\coloneq \sup_{t\geq -h}\e^{-\rho t}\norm{u(t)}_{H}\text{,}\\
    \mathcal{C}_{0,\rho}(-h,\infty;H) &\coloneq \bigl\{u\in \mathcal{C}_{\rho}(-h,\infty;H)\colon\, \lim_{t\to\infty}\e^{-\rho t}u(t)=0\bigr\}\text{.}
  \end{align*}
\end{definition}
Due to the one-dimensional setting (in time), the following classical result holds:
\begin{theorem}[Sobolev-embedding]
  \label{th:Sobolev}
  Let $h\geq 0$. Then the continuous embedding  $H^{1}_{\rho}(-h,\infty;H)\hookrightarrow \mathcal{C}_{0,\rho}(-h,\infty;H)$ holds. Moreover, every $u\in H^{1}_{\rho}$ admits a continuous representative.
\end{theorem}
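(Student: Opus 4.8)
The plan is to remove the exponential weight by an isometric rescaling and then reduce to the classical one-dimensional Sobolev embedding on the whole real line. Consider $\iota_{\rho}\colon u\mapsto\bigl(t\mapsto\e^{-\rho t}u(t)\bigr)$. Since $\norm{\iota_{\rho}u}_{L_{2}(-h,\infty;H)}=\norm{u}_{2,\rho}$ and, by the distributional product rule, $(\iota_{\rho}u)'=\e^{-\rho\argdot}\bigl(u'-\rho u\bigr)$, a direct computation shows that $\iota_{\rho}$ is a boundedly invertible bounded linear bijection from $H^{1}_{\rho}(-h,\infty;H)$ onto the \emph{unweighted} space $H^{1}(-h,\infty;H)$, with the norms of $\iota_{\rho}$ and $\iota_{\rho}^{-1}$ bounded in terms of $\rho$ alone; its inverse is multiplication by $\e^{\rho\argdot}$. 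Hence it suffices to show that every $v\in H^{1}(-h,\infty;H)$ possesses a representative $\tilde v$ that is continuous on $[-h,\infty)$, bounded, and tends to $0$ at $+\infty$, with $\norm{\tilde v}_{\infty}\leq C\norm{v}_{H^{1}}$: transporting back, $\tilde u\coloneq\e^{\rho\argdot}\tilde v$ is then a continuous representative of $u$, $\e^{-\rho t}\tilde u(t)=\tilde v(t)\to 0$ as $t\to\infty$, and $\norm{u}_{\rho,\infty}=\norm{\tilde v}_{\infty}\leq C\norm{v}_{H^{1}}\leq C'\norm{u}_{H^{1}_{\rho}}$.

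For the unweighted statement I would extend across the finite endpoint and then mollify. Reflection at $-h$ provides a bounded extension operator $E\colon H^{1}(-h,\infty;H)\to H^{1}(\R;H)$, so it is enough to treat $V\in H^{1}(\R;H)$ and restrict afterwards. Let $V_{\varepsilon}\coloneq V\ast\varphi_{\varepsilon}$ with a standard mollifier; then $V_{\varepsilon}$ is smooth, lies in $H^{1}(\R;H)$, and $V_{\varepsilon}\to V$ in $H^{1}(\R;H)$ as $\varepsilon\downarrow 0$. The scalar function $t\mapsto\norm{V_{\varepsilon}(t)}_{H}^{2}$ is continuously differentiable with derivative $2\Re\scprod{V_{\varepsilon}(t)}{V_{\varepsilon}'(t)}\in L_{1}(\R)$ (Cauchy--Schwarz), so both one-sided limits of $\norm{V_{\varepsilon}(t)}_{H}^{2}$ as $t\to\pm\infty$ exist, and since $\norm{V_{\varepsilon}(\argdot)}_{H}^{2}\in L_{1}(\R)$ they vanish. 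Hence $\norm{V_{\varepsilon}(t)}_{H}^{2}=\int_{-\infty}^{t}2\Re\scprod{V_{\varepsilon}(s)}{V_{\varepsilon}'(s)}\dx[s]\leq 2\norm{V_{\varepsilon}}_{L_{2}}\norm{V_{\varepsilon}'}_{L_{2}}\leq\norm{V_{\varepsilon}}_{H^{1}}^{2}$ for every $t$, i.e.\ $\norm{V_{\varepsilon}}_{\infty}\leq\norm{V_{\varepsilon}}_{H^{1}}$. Applying the same bound to $V_{\varepsilon}-V_{\varepsilon'}$ shows that $(V_{\varepsilon})_{\varepsilon}$ is Cauchy for the supremum norm; its uniform limit $\tilde V$ is continuous, vanishes at $\pm\infty$, agrees with $V$ almost everywhere, and obeys $\norm{\tilde V}_{\infty}\leq\norm{V}_{H^{1}}$. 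Since $E$ is an extension, the restriction $\tilde V|_{[-h,\infty)}$ is the desired representative of $v$.

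Chaining the two reductions yields the embedding with a finite constant depending only on $\rho$ (coming from $\iota_{\rho}$ and the fixed norm of the reflection $E$). I do not expect any step to be genuinely difficult: the only place where the Hilbert-space structure is essential is the differentiation of $t\mapsto\norm{V_{\varepsilon}(t)}_{H}^{2}$ combined with Cauchy--Schwarz, and the one thing to keep track of is that the various constants remain finite after rescaling and extension. A route that avoids the extension operator is to argue directly on $(-h,\infty)$: put $w(t)\coloneq\int_{-h}^{t}v'(s)\dx[s]$, note that $v-w$ has vanishing distributional derivative and is therefore almost everywhere equal to a constant, which already produces an absolutely continuous representative $\tilde v$ of $v$, and then run the same fundamental-theorem-of-calculus estimate for $\norm{\tilde v(\argdot)}_{H}^{2}$ on $[-h,\infty)$, using $\tilde v\in L_{2}$ to identify its limit at $+\infty$ as $0$.
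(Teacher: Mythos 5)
Your argument is correct. The paper does not prove this statement itself but defers to \cite[prop.~2.29]{Trostorff2011}, so there is no in-text proof to compare against; your two-step reduction --- removing the weight via the isometric rescaling $u\mapsto \e^{-\rho\argdot}u$, which is a boundedly invertible map $H^{1}_{\rho}\to H^{1}$, and then running the classical one-dimensional vector-valued Sobolev embedding via reflection, mollification and the fundamental-theorem-of-calculus estimate for $t\mapsto\norm{V_{\varepsilon}(t)}_{H}^{2}$ --- is exactly the standard route, and it correctly delivers all three required conclusions (continuity up to $-h$, the bound on $\norm{\argdot}_{\rho,\infty}$, and the decay $\e^{-\rho t}u(t)\to 0$). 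The only detail worth making explicit if you write this up is that convolution with a scalar mollifier and reflection across the finite endpoint behave for $H$-valued functions exactly as in the scalar case, which is routine.
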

For a proof of this statement we refer to \cite[prop.~2.29]{Trostorff2011}. \Cref{th:Sobolev} in particular allows us to evaluate $H^{1}$-functions pointwise.
\begin{proposition}
  \label{th:DomDerivativeFor0}
  Let $\rho > 0$ and $u\in \operatorname{dom}(\partial_{\rho})$ on $L_{2,\rho}(0,\infty;H)$. Then $u\in \operatorname{dom}\bigl(\mathring{\partial}_{\rho}\bigr)$ if and only if $u(0)=0$.
\end{proposition}

\begin{proof}
  Let $u \in \operatorname{dom}(\partial_{\rho})$. Due to the fundamental theorem of calculus,
  \begin{equation*}
    u(t) = u(0) + \medint\int_{0}^{t}\partial_{\rho}u (s) \dx[s]\text{,}\qquad t\geq 0\text{.}
  \end{equation*}
  The equivalence is now immediate.
\end{proof}
Consequently, we set for $\rho >0$:
\begin{equation*}
  H_{0,\rho}^{1}\coloneq \, \bigl\{u\in H^{1}_{\rho}\left(0,\infty;H\right)\colon\,u(0)=0\bigr\} = \operatorname{dom}\bigl(\mathring{\partial}_{\rho}\bigr)\text{.}
\end{equation*}

\begin{remark}
  Completely analogously to the half-line case, one defines the operator $I_{\rho}$ on $L_{2,\rho}(\mathbb{R};H)$ or $H^{1}_{\rho}(\mathbb{R};H)$ as antiderivative $I_{\rho} = \smallint_{-\infty}^{\argdot}$, which produces a bounded linear operator adhering to the same estimates as obove and gives rise to the “standard” (i.e., distributionally defined) time derivative $\partial_{\rho}$. On the entire real line, integrability replaces the boundary condition on the left endpoint.
\end{remark}
To conclude this first subsection, we want to investigate the relationship between $\mathring{\partial}_{\rho}$ and $\partial_{\rho}$ a bit more. To that end, we make use of an integration by parts formula:
\begin{proposition}[integration by parts]
  \label{th:IntbyParts}
  On $H^{1}_{\rho}\left(a,b;H\right)$, $-\infty < a \leq b \leq \infty$, the following holds for $u,v\in H^{1}_{\rho}\left(a,b;H\right)$:
  \begin{equation*}
    \dualprod{\partial_{\rho}u}{v}_{L_{2,\rho}} \!= \!-\dualprod{u}{\partial_{\rho}v}_{L_{2,\rho}} \!\!+ \!2\rho \dualprod{u}{v}_{L_{2,\rho}} \!\!+ \!\e^{-2\rho b} \dualprod{u(b)}{v(b)}_{H} \!- \!\e^{-2\rho a} \dualprod{u(a)}{v(a)}_{H}\text{,}
  \end{equation*}
  where the boundary term at $b$ vanishes for $b=\infty$ and the boundary term at $a$ vanishes if at least one of $u,v$ is in $H^{1}_{0,\rho}(a,\infty;H)$.\footnote{For $a\neq 0$ one defines $H_{0,\rho}^{1}(a,\infty;H)\coloneq \{u \in H^{1}_{\rho}(a,\infty;H)\colon u(a) = 0\}$.}
\end{proposition}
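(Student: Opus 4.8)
The plan is to reduce the identity to the fundamental theorem of calculus applied to the scalar function
\[
  g\colon (a,b)\to\R\text{,}\qquad g(t)\coloneq \e^{-2\rho t}\dualprod{u(t)}{v(t)}_{H}\text{,}
\]
and then to rearrange. (In the complex case one reads $\R$ as $\C$; this changes nothing.) Two preliminary observations are used repeatedly: on every bounded subinterval the weight $\e^{-2\rho\argdot}$ is bounded above and below by positive constants, so $H^{1}_{\rho}$ there coincides with the unweighted space $H^{1}$, in which $\mathcal C^{\infty}$-functions are dense; and, by \cref{th:Sobolev}, elements of $H^{1}_{\rho}$ have continuous representatives that may be evaluated pointwise and, on half-lines, decay like $\e^{\rho\argdot}$ at $+\infty$.

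First I would establish the product rule: for $u,v\in H^{1}_{\rho}(a,b;H)$ the function $g$ is locally absolutely continuous with
\[
  g'(t) = -2\rho\,\e^{-2\rho t}\dualprod{u(t)}{v(t)}_{H} + \e^{-2\rho t}\dualprod{\partial_{\rho}u(t)}{v(t)}_{H} + \e^{-2\rho t}\dualprod{u(t)}{\partial_{\rho}v(t)}_{H}
\]
for a.e.\ $t\in(a,b)$. Since this is a local statement, it is enough to check it on bounded subintervals, where one approximates $u$ and $v$ in $H^{1}$ by smooth $H$-valued functions --- for which it is the ordinary product rule --- and passes to the limit, using \cref{th:Sobolev} to control the pointwise values. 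By the Cauchy--Schwarz inequality and the definition of $\norm{\argdot}_{2,\rho}$, each of the three summands is integrable over all of $(a,b)$, so $g'\in L_{1}(a,b)$ and $g$ is absolutely continuous on $(a,b)$ with an integrable derivative.

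Next I would integrate the displayed identity over $(a,b)$. Identifying $\int_{a}^{b}\e^{-2\rho t}\dualprod{\partial_{\rho}u(t)}{v(t)}_{H}\dx[t]=\dualprod{\partial_{\rho}u}{v}_{L_{2,\rho}}$ and similarly for the other two integrals, this gives
\[
  \lim_{t\to b^{-}}g(t) - g(a) = -2\rho\dualprod{u}{v}_{L_{2,\rho}} + \dualprod{\partial_{\rho}u}{v}_{L_{2,\rho}} + \dualprod{u}{\partial_{\rho}v}_{L_{2,\rho}}\text{.}
\]
If $b<\infty$, continuity of the representative yields $\lim_{t\to b^{-}}g(t)=\e^{-2\rho b}\dualprod{u(b)}{v(b)}_{H}$. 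If $b=\infty$, then $\e^{-\rho t}u(t)\to 0$ and $\e^{-\rho t}v(t)\to 0$ as $t\to\infty$ by \cref{th:Sobolev}, hence $g(t)=\dualprod{\e^{-\rho t}u(t)}{\e^{-\rho t}v(t)}_{H}\to 0$; this is precisely the asserted vanishing of the boundary term at $b$. Since $g(a)=\e^{-2\rho a}\dualprod{u(a)}{v(a)}_{H}$ in all cases, solving for $\dualprod{\partial_{\rho}u}{v}_{L_{2,\rho}}$ produces exactly the claimed formula. Finally, if $u$ or $v$ lies in $H^{1}_{0,\rho}(a,\infty;H)$, then $u(a)=0$ or $v(a)=0$, so $g(a)=0$ and the boundary term at $a$ drops out too; the degenerate case $a=b$ is trivial because then $L_{2,\rho}(a,b;H)=\{0\}$.

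The only step that is not entirely routine is the product rule together with the (local) absolute continuity of $g$; once that is in place, the rest is the fundamental theorem of calculus and bookkeeping. It is handled by the density of $\mathcal C^{\infty}$-functions in $H^{1}$ on bounded intervals, exhausting an unbounded interval by bounded ones. Alternatively, one may transport the classical vector-valued integration-by-parts formula on the unweighted $H^{1}(a,b;H)$ through the unitary transformation $L_{2,\rho}\to L_{2}$, $w\mapsto\e^{-\rho\argdot}w$, under which $\partial_{\rho}$ turns into $\partial + \rho$; the weights $\e^{-2\rho b}$ and $\e^{-2\rho a}$ in the boundary terms then appear automatically from $(\e^{-\rho\argdot}u)(b)=\e^{-\rho b}u(b)$ and $(\e^{-\rho\argdot}u)(a)=\e^{-\rho a}u(a)$.
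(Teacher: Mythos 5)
Your argument is correct and follows essentially the same route as the paper's proof: both reduce the identity to the classical (unweighted) integration by parts via density of smooth functions on bounded intervals, and both use the Sobolev embedding \cref{th:Sobolev} to evaluate the boundary terms and to obtain $\e^{-\rho t}u(t)\to 0$ for the case $b=\infty$. Your packaging through the product rule for $g(t)=\e^{-2\rho t}\dualprod{u(t)}{v(t)}_{H}$ and the fundamental theorem of calculus is a minor reorganization of the same computation, and is if anything slightly more careful about the limiting step.
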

\begin{proof}
For $b<\infty$ we calculate for $\varphi,\psi \in \mathcal{C}^{1}(a,b;H)$:
\begin{align*}
  \dualprod{\partial_{\rho}u}{v}_{L_{2,\rho}}  \!&= \medint\int_{a}^{b} \dualprod{u'(s)}{v(s)}_{H} \e^{-2\rho s}\dx[s]\\
                                              &= - \!\medint\int_{a}^{b} \bigl[\dualprod{u(s)}{v'(s)}_{H}\!- \!\dualprod{u(s)}{v(s)}_{H} 2\rho \bigr] \e^{-2\rho s}\dx[s] \\
                                              &\quad \,+ \!\e^{-2\rho b}\dualprod{u(b)}{v(b)}_{H} \!- \!\e^{-2\rho a}\dualprod{u(a)}{v(a)}_{H}\\
                                              &= - \dualprod{u}{\partial_{\rho}v}_{L_{2,\rho}} \!\!+ \!2\rho \dualprod{u}{v}_{L_{2,\rho}} \!\!+ \!\e^{-2\rho b} \dualprod{u(b)}{v(b)}_{H} \!- \!\e^{-2\rho a} \dualprod{u(a)}{v(a)}_{H}\text{.}
\end{align*}
We obtain the proclaimed formula by appealing to density of $\mathcal{C}^{1}(a,b;H)$ in $H^{1}_{\rho}(a,b;H)$ and the (classical) Sobolev embedding theorem.\\
For the case $b = \infty$ we appeal to the embedding \cref{th:Sobolev}, specifically to the fact that for $u\in H^{1}_{\rho}(0,\infty;H)$ one has $\lim_{t\to \infty} \e^{-\rho t}u(t)=0$. That the boundary terms at $a$ vanish for $u$ or $v$ in $H^{1}_{0,\rho}(a,b;H)$ is clear.
\end{proof}
\begin{proposition}
  \label{th:TimeDer_and_Adjoint}
  Let $\rho >0$. Then, on $L_{2,\rho}(0,\infty;H)$ hold
  \begin{equation*}
    \mathring{\partial}_{\rho}\subseteq \partial_{\rho} \qquad\text{and}\qquad
    \partial_{\rho}=-\bigl(\mathring{\partial}_{\rho}\bigr)^{\ast}+2\rho\text{.}
  \end{equation*}
\end{proposition}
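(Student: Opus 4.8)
\emph{Approach and the inclusion $\mathring{\partial}_{\rho}\subseteq\partial_{\rho}$.} The plan is to treat the two assertions in turn, using \cref{th:DomDerivativeFor0} for the inclusion and the integration-by-parts formula \cref{th:IntbyParts} for the adjoint identity. Recall that $\operatorname{dom}(\partial_{\rho}) = H^{1}_{\rho}(0,\infty;H)$ by definition, and that $\mathring{\partial}_{\rho}$ is densely defined because $\operatorname{dom}(\mathring{\partial}_{\rho}) = H^{1}_{0,\rho}$ contains $\mathcal{C}_{c}^{\infty}(0,\infty;H)$, so $(\mathring{\partial}_{\rho})^{\ast}$ is well-defined. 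For the inclusion, take $u\in\operatorname{dom}(\mathring{\partial}_{\rho}) = H^{1}_{0,\rho}\subseteq H^{1}_{\rho}$; then $\partial_{\rho}u$ exists, and the fundamental theorem of calculus (exactly as in the proof of \cref{th:DomDerivativeFor0}) together with $u(0)=0$ gives $u = I_{\rho}\partial_{\rho}u$, whence $\mathring{\partial}_{\rho}u = I_{\rho}^{-1}u = \partial_{\rho}u$. In particular $\mathring{\partial}_{\rho}$ acts as the classical derivative on $\mathcal{C}_{c}^{\infty}(0,\infty;H)$.

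\emph{The inclusion $2\rho-\partial_{\rho}\subseteq(\mathring{\partial}_{\rho})^{\ast}$.} Let $u\in H^{1}_{0,\rho}$ and let $v\in H^{1}_{\rho}(0,\infty;H)$ be arbitrary. Applying \cref{th:IntbyParts} on $(0,\infty)$ --- the boundary term at $\infty$ vanishing by \cref{th:Sobolev}, the one at $0$ vanishing since $u(0)=0$ --- and using $\mathring{\partial}_{\rho}u = \partial_{\rho}u$ gives
\[
  \dualprod{\mathring{\partial}_{\rho}u}{v}_{L_{2,\rho}} = -\dualprod{u}{\partial_{\rho}v}_{L_{2,\rho}} + 2\rho\dualprod{u}{v}_{L_{2,\rho}} = \dualprod{u}{(2\rho-\partial_{\rho})v}_{L_{2,\rho}}\text{.}
\]
Hence $v\in\operatorname{dom}\bigl((\mathring{\partial}_{\rho})^{\ast}\bigr)$ with $(\mathring{\partial}_{\rho})^{\ast}v = (2\rho-\partial_{\rho})v$.

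\emph{The reverse inclusion --- the main obstacle.} What remains, and is the only delicate point, is to show $\operatorname{dom}\bigl((\mathring{\partial}_{\rho})^{\ast}\bigr)\subseteq H^{1}_{\rho}$, i.e.\ that the adjoint's domain is not strictly larger than $H^{1}_{\rho}$. Fix $v\in\operatorname{dom}\bigl((\mathring{\partial}_{\rho})^{\ast}\bigr)$ and set $w\coloneq(\mathring{\partial}_{\rho})^{\ast}v$. For $\varphi\in\mathcal{C}_{c}^{\infty}(0,\infty;H)$ the function $u\coloneq\e^{2\rho\argdot}\varphi$ again lies in $\mathcal{C}_{c}^{\infty}(0,\infty;H)\subseteq\operatorname{dom}(\mathring{\partial}_{\rho})$; inserting it into the defining relation $\dualprod{\mathring{\partial}_{\rho}u}{v}_{L_{2,\rho}} = \dualprod{u}{w}_{L_{2,\rho}}$, expanding $\mathring{\partial}_{\rho}u = 2\rho\e^{2\rho\argdot}\varphi + \e^{2\rho\argdot}\varphi'$ and cancelling the weight $\e^{-2\rho\argdot}$ leaves
\[
  \medint\int_{0}^{\infty}\dualprod{\varphi'(t)}{v(t)}_{H}\dx[t] = \medint\int_{0}^{\infty}\dualprod{\varphi(t)}{w(t)-2\rho v(t)}_{H}\dx[t]\text{.}
\]
Since this holds for all $\varphi\in\mathcal{C}_{c}^{\infty}(0,\infty;H)$, the distributional derivative of $v$ on $(0,\infty)$ equals $2\rho v-w$, which lies in $L_{2,\rho}(0,\infty;H)$; thus $v\in H^{1}_{\rho}$ and $w = 2\rho v-\partial_{\rho}v$. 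Together with the previous step we conclude $(\mathring{\partial}_{\rho})^{\ast} = 2\rho-\partial_{\rho}$ with domain $H^{1}_{\rho} = \operatorname{dom}(\partial_{\rho})$, and rearranging yields $\partial_{\rho} = -(\mathring{\partial}_{\rho})^{\ast}+2\rho$. The device that makes this obstacle routine is the substitution $u = \e^{2\rho\argdot}\varphi$, which strips off the exponential weight and reduces matters to the elementary characterisation of $H^{1}_{\rho}$ via the plain distributional derivative; everything else is bookkeeping with \cref{th:DomDerivativeFor0}, \cref{th:Sobolev} and \cref{th:IntbyParts}.
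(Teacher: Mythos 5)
Your proposal is correct and follows essentially the same route as the paper: integration by parts gives $2\rho-\partial_{\rho}\subseteq(\mathring{\partial}_{\rho})^{\ast}$, and testing against weight-stripped compactly supported smooth functions (you multiply by $\e^{2\rho\argdot}$ where the paper substitutes $\psi=\e^{-2\rho\argdot}\varphi$, which is the same device) identifies the adjoint's domain with $H^{1}_{\rho}$. The only cosmetic difference is in the first inclusion, where you invoke the fundamental theorem of calculus via \cref{th:DomDerivativeFor0} instead of the paper's Fubini computation on $u=I_{\rho}v$; both are valid given the earlier corollary that $I_{\rho}$ maps $L_{2,\rho}$ into $H^{1}_{\rho}$.
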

\begin{proof}
  For the first statement, we take $\varphi \in \mathcal{C}^{\infty}_{c}(0,\infty)$ and $u \in \operatorname{dom}\bigl(\mathring{\partial}_{\rho}\bigr)$. The latter implies by \cref{def:TimeDerBoundary} that $u = I_{\rho}v$ for some $v \in L_{2,\rho}(0,\infty;H)$. We can calculate:
  \begin{align*}
    \medint\int_{0}^{\infty}u(t) \varphi'(t)\dx[x]
    &= \medint \int_{0}^{\infty} \medint\int_{0}^{t} v(s) \dx[s] \varphi'(t) \dx[t]
    = \medint \int_{0}^{\infty} \medint\int_{0}^{t} v(s) \varphi'(t) \dx[s] \dx[t]\\
    &= \medint \int_{0}^{\infty} \medint\int_{s}^{\infty} v(s) \varphi'(t) \dx[t] \dx[s]
    = - \medint \int_{0}^{\infty} v(t)\varphi(t) \dx[t] \text{.}
  \end{align*}
  Hence, $u \in \operatorname{dom}\bigl(\partial_{\rho}\bigr)$.\\
  For the second statement, we first observe that $\mathcal{C}^{\infty}_{c}(0,\infty;H) \subseteq \operatorname{dom}\bigl(\mathring{\partial}_{\rho}\bigr)\subseteq \operatorname{dom}\bigl(\partial_{\rho}\bigr)$, appealing to the first part. Now let $u \in \operatorname{dom}\bigl(\partial_{\rho}\bigr)$. For $v \in \operatorname{dom}\bigl(\mathring{\partial}_{\rho}\bigr)$ we compute using \cref{th:IntbyParts} and $v(0)= 0$:
  \begin{align*}
    -\dualprod{u'}{v} + 2\rho \dualprod{u}{v} &= \medint\int_0^{\infty} \dualprod{u(t)}{v'(t)}\e^{-2\rho t} \dx[t]\\
                                              &= \medint\int_0^{\infty} \dualprod{u(t)}{\partial_{\rho}v(t)}\e^{-2\rho t} \dx[t]
                                              = \dualprod{u}{\mathring{\partial}_{\rho} v}\text{.}
  \end{align*}
  Hence, $u \in \operatorname{dom}\bigl((\mathring{\partial}_{\rho})^{\ast}\bigr)$ and consequently  $\operatorname{dom}\bigl(\partial_{\rho}\bigr)\subseteq \operatorname{dom}\bigl((\mathring{\partial}_{\rho})^{\ast}\bigr)$.\\
  Now let $u\in \operatorname{dom}\bigl((\mathring{\partial}_{\rho})\adjun\bigr)$ and $\varphi \in \mathcal{C}^{\infty}_{c}(0,\infty)$. We note that $\operatorname{lin}\{x\cdot \varphi \colon x\in H,\,\varphi \in \mathcal{C}^{\infty}_{c}(0,\infty)\} \subseteq \operatorname{dom}\bigl(\mathring{\partial}_{\rho}\bigr)$ is a dense subspace. We let $\psi \coloneq \e^{-2\rho \argdot}\varphi \in \mathcal{C}^{\infty}_{c}(0,\infty)$ and compute:
  \begin{align*}
    \medint\int_{0}^{\infty}(\mathring{\partial}_{\rho})\adjun u (t) \overline{\psi (t) x} \dx[t]
    &= \dualprod[\big]{(\mathring{\partial}_{\rho})\adjun u}{\varphi x}_{L_{2,\rho}}
      = \dualprod[\big]{u}{\mathring{\partial}_{\rho}\varphi x}_{L_{2,\rho}}\\
    &= \medint\int_{0}^{\infty} \dualprod{u(t)}{x}_{H} \overline{\varphi'(t)}\e^{-2\rho t} \dx[t]\\
    &= \medint\int_{0}^{\infty} \dualprod{u(t)}{x}_{H} \bigl[\bigl(\overline{\varphi(t)}\e^{-2\rho t}\bigr)' + 2\rho \overline{\varphi(t)}\e^{-2\rho t}\bigr] \dx[t]\text{.}%\\
  \end{align*}
  Since the map
  \begin{equation*}
    \mathcal{C}^{\infty}_{c}(0,\infty) \to \mathcal{C}^{\infty}_{c}(0,\infty)\text{,} \qquad \varphi \mapsto \varphi \e^{-2\rho \argdot}
  \end{equation*}
  is a bijection, and because of the density of $\operatorname{lin}\{x\cdot \varphi \colon x\in H,\,\varphi \in \mathcal{C}^{\infty}_{c}(0,\infty)\}$ in $\operatorname{dom}\bigl(\mathring{\partial}_{\rho}\bigr)$, the following vector-valued integral equality holds for all $\psi \in \mathcal{C}^{\infty}_{\mathrm{c}}(0,\infty)$:
  \begin{equation*}
    \medint\int_{0}^{\infty} u(t)\psi'(t) \dx[t] = \medint\int_{0}^{\infty} \bigl[\bigl(\mathring{\partial}_{\rho}\bigr)^{\ast}u(t) - 2 \rho u(t)\bigl]\psi(t) \dx[t]\text{.}
  \end{equation*}
  Hence, $u \in \operatorname{dom}\bigl(\partial_{\rho}\bigr)$. The claimed formula follows from either calculation.
\end{proof}
We make an additional observation:
\begin{remark}[$\mathring{\partial}_{\rho}$ is $\mathrm{m}$-accretive]
  \label{rmk:evd_is_maximal_monotone}
  We can appeal to \cref{th:TimeDer_and_Adjoint} and infer for $u \in \operatorname{dom}(\mathring{\partial}_{\rho})$:
  \begin{align*}
    2 \Re \dualprod[\big]{\mathring{\partial}_{\rho}u}{u}
    &= \dualprod[\big]{\mathring{\partial}_{\rho} u}{u} + \dualprod[\big]{u}{\mathring{\partial}_{\rho}u}\\
    &= \dualprod[\big]{\mathring{\partial}_{\rho} u}{u} + \dualprod[\big]{(\mathring{\partial}_{\rho})^{\ast}u}{u}\\
    &= \dualprod{u'}{u} - \dualprod{u'}{u} + 2\rho\dualprod{u}{u}\\
    &= 2\rho \norm{u}^{2}\text{,}
  \end{align*}
  i.e., $\mathring{\partial}_{\rho}$ is monotone/accretive. It is maximal, because the inverse $\bigl(\mathring{\partial}_{\rho}\bigr)^{-1}=I_{\rho}$ is a bounded linear operator; in particular, $\mathring{\partial}_{\rho}$ is surjective and therefore there cannot exist a proper monotone extension. We note that, in consequence, $\mathbb{R}_{< 0}\subseteq \uprho (\mathring{\partial}_{\rho})$.
\end{remark}
Having introduced the fundamental concept of the invertible time derivative, we can proceed to define material law operators.

\subsection{Material law operators}
\label{subsec:MatLaw}
\begin{definition}
  \label{def:MatLaw}
  A map $M\colon \C \supseteq \operatorname{dom}(M) \to \mathcal{L}(H)$ is called a {\em material law} if
  \begin{enumerate}[label=\roman*), leftmargin=4ex]
    \item $\operatorname{dom}(M)$ is open and $M$ is holomorphic and
    \item there exists some $\rho \in \R$ such that $\C_{\Re > \rho}\subseteq \operatorname{dom}(M)$ and $\norm{M}_{\infty, \C_{\Re > \rho}}<\infty$.
  \end{enumerate}
  Moreover, we call $s_{\mathrm{b}} (M)\coloneq \inf \{\rho \in \R \colon \C_{\Re > \rho} \subseteq \operatorname{dom}(M) \,\, \wedge \,\, \norm{M}_{\infty, \C_{\Re > \rho}}<\infty\}$ the {\em abscissa of boundedness} of $M$.
\end{definition}
Utilizing a functional calculus, one can give meaning to $M(\partial_{\rho})$ via the so-called {\em Fourier-Laplace transform} $\mathcal{L_{\rho}}$ which is the unitary extension of
\begin{align}
  \label{def:FourierLaplace}
  \begin{aligned}
  \mathcal{C}_{c}^{\infty}(\mathbb{R};H) &\to \mathcal{C}^{\infty}(\mathbb{R};H)\text{,}\\
  \phi &\mapsto \Bigl(t\mapsto \tfrac{1}{\sqrt{2\pi}} \medint\int_{\mathbb{R}} \e^{-(\iu t + \rho)s} \phi(s)\dx[s]\Bigr)
  \end{aligned}
\end{align}
to a map $\mathcal{L_{\rho}}\colon L_{2,\rho}\left(\mathbb{R};H\right)\to L_{2}(\mathbb{R};H)$, cf.~\cite[ch.~5.2]{Waurick2022}.
\begin{definition}
  \label{def:MatLawOp}
  We define the {\em material law operator}
  \begin{equation*}
    M(\partial_{\rho})\coloneq \mathcal{L}^{\ast}_{\rho} M(\iu \mathrm{m} + \rho)\mathcal{L}_{\rho} \in \mathcal{L}\bigl(L_{2,\rho}(\R;H)\bigr)\text{,}
  \end{equation*}
  where $\mathrm{m}$ denotes the multiplication with the argument operator.
\end{definition}
This definition holds for the entire real line (i.e., the case $L_{2,\rho}(\mathbb{R};H)$). It is not immediately obvious what $M\bigl(\mathring{\partial}_{\rho}\bigr)$ is supposed to be in case of the half-line (i.e., on $L_{2,\rho}(0,\infty;H)$). We will forego this problem later on by using a projection argument. Material laws exhibit the following important property:
\begin{definition}
  Let $H_{0},H_{1}$ be Hilbert spaces, $\rho \in \mathbb{R}$ and $F\colon L_{2,\rho}(\mathbb{R};H_{0}) \to L_{2,\rho}(\mathbb{R};H_{1})$. F is called {\em causal} if
  \begin{equation*}
    \forall a \in \mathbb{R}\forall f,g\in L_{2,\rho}(\mathbb{R};H_{0})\colon f|_{(-\infty,a]} = g|_{(-\infty,a]} \implies F(f)|_{(-\infty,a]} = F(g)|_{(-\infty,a]}\text{.}
  \end{equation*}
\end{definition}
This property is of key importance in the proof of the main theorem for evolutionary equations, which is the following result due to R.~Picard.
\begin{theorem}[Picard, {\cite[thm~6.2.1, rem.~6.3.3]{Waurick2022}}]
  \label{th:Picard}
  Let $\rho_{0}\in \mathbb{R}$, let $M\colon \C \supseteq \operatorname{dom}(M)\to \mathcal{L}(H)$ be a material law with $s_{\mathrm{b}}(M)<\rho_{0}$ and let $A\colon H\supseteq \operatorname{dom}(A) \to H$ be $\mathrm{m}$-accretive. Assume that
  \begin{equation*}
    \exists c > 0 \forall\varphi \in H, z \in \C_{\Re > \rho_{0}}\colon \quad\Re \dualprod{\varphi}{zM(z)\varphi}_{H} \geq c \norm{\varphi}_{H}^{2}\text{.}
  \end{equation*}
  Then, for all $\rho > \rho_{0}$, the operator $\partial_{\rho}M(\partial_{\rho}) + A$ is closable and
  \begin{equation*}
    S_{\rho} \coloneq \bigl(\overline{\partial_{\rho}M(\partial_{\rho}) + A}\bigr)^{-1} \in \mathcal{L}\bigl(L_{2,\rho}(\mathbb{R};H)\bigr)\text{.}
  \end{equation*}
  Furthermore,
  \begin{enumerate}[label=\roman*), leftmargin=5ex]
    \item $S_{\rho}$ is causal and satisfies $\norm{S_{\rho}}_{L_{2,\rho}}\leq \nicefrac{1}{c}$,
    \item for all $g \in \operatorname{dom}(\partial_{\rho})$, $S_{\rho}g \in \operatorname{dom}(\partial_{\rho})\cap \operatorname{dom}(A)$,
    \item for $\eta, \nu > \rho_{0}$ and $g \in L_{2,\eta}(\mathbb{R};H)\cap L_{2,\nu}(\mathbb{R};H)$, $S_{\eta}g = S_{\nu}g$.
  \end{enumerate}
\end{theorem}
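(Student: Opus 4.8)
The plan is to realize the solution operator through a pointwise ``resolvent symbol'' and to deduce well-posedness from strict accretivity. Write $B\coloneq\partial_{\rho}M(\partial_{\rho})+A$; since $M(\partial_{\rho})$ commutes with $\partial_{\rho}$ (both are Fourier multipliers), $\operatorname{dom}(\partial_{\rho})\subseteq\operatorname{dom}(\partial_{\rho}M(\partial_{\rho}))$, so $B$ is defined on $\operatorname{dom}(\partial_{\rho})\cap\operatorname{dom}(A)$, which contains $\mathcal{C}_{c}^{\infty}(\R;\operatorname{dom}(A))$ and is therefore dense. Since $\mathcal{L}_{\rho}$ is unitary and conjugates $\partial_{\rho}$ into multiplication by $\iu\mathrm{m}+\rho$, Plancherel's identity gives, for such $u$,
\begin{equation*}
  \Re\dualprod[\big]{\partial_{\rho}M(\partial_{\rho})u}{u}_{L_{2,\rho}}=\medint\int_{\R}\Re\dualprod[\big]{(\iu t+\rho)M(\iu t+\rho)(\mathcal{L}_{\rho}u)(t)}{(\mathcal{L}_{\rho}u)(t)}_{H}\dx[t]\geq c\norm{u}_{2,\rho}^{2}\text{,}
\end{equation*}
where the hypothesis is applied with $z=\iu t+\rho\in\C_{\Re>\rho_{0}}$ (using $\Re\dualprod{Tv}{v}=\Re\dualprod{v}{Tv}$). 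Adding $\Re\dualprod{Au}{u}_{H}\geq0$ yields $\Re\dualprod{Bu}{u}\geq c\norm{u}^{2}$ and hence the a priori bound $\norm{Bu}\geq c\norm{u}$. Closability follows by shifting $B$ onto a test function: for $g\in\mathcal{C}_{c}^{\infty}(\R;\operatorname{dom}(A\adjun))$ the integration-by-parts identity $\partial_{\rho}\adjun=2\rho-\partial_{\rho}$ (cf.~\cref{th:IntbyParts}) gives $\dualprod{Bu}{g}=\dualprod{u}{M\adjun(\partial_{\rho})(2\rho-\partial_{\rho})g+A\adjun g}$, which vanishes in the limit whenever $u\to0$ (the set of such $g$ being dense), so no nonzero limit of $Bu_{n}$ can occur. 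Consequently $\overline{B}$ is injective with $\norm{\overline{B}u}\geq c\norm{u}$, hence has closed range.

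\smallskip

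Next I would show $\operatorname{ran}(B)$ is dense by constructing preimages explicitly. For each $z\in\C_{\Re>\rho_{0}}$ the bounded operator $zM(z)$ is accretive, so $zM(z)+A$ is $\mathrm{m}$-accretive (a bounded accretive perturbation of the $\mathrm{m}$-accretive $A$); being additionally strictly accretive with constant $c$, it is continuously invertible with $N(z)\coloneq(zM(z)+A)^{-1}\in\mathcal{L}(H)$, $\norm{N(z)}\leq\nicefrac{1}{c}$, and $z\mapsto N(z)$ is holomorphic on $\C_{\Re>\rho_{0}}$ (resolvent holomorphy together with a Neumann series in the holomorphic $M$-dependence). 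Thus $N$ is a bounded holomorphic $\mathcal{L}(H)$-valued symbol on a right half-space, and $N(\partial_{\rho})\coloneq\mathcal{L}_{\rho}\adjun N(\iu\mathrm{m}+\rho)\mathcal{L}_{\rho}\in\mathcal{L}\bigl(L_{2,\rho}(\R;H)\bigr)$ with norm $\leq\nicefrac{1}{c}$. For $g\in\operatorname{dom}(\partial_{\rho})$ set $u\coloneq N(\partial_{\rho})g$; the algebraic identities $AN(z)=I-zM(z)N(z)$ and $zM(z)N(z)=I-AN(z)$ (which also record that $N(z)$ maps into $\operatorname{dom}(A)$), together with closedness of $A$ and a truncation on the Fourier side, give that $u\in\operatorname{dom}(A)$ with $Au=\mathcal{L}_{\rho}\adjun\bigl(I-(\iu\mathrm{m}+\rho)M(\iu\mathrm{m}+\rho)N(\iu\mathrm{m}+\rho)\bigr)\mathcal{L}_{\rho}g$, and then that $M(\partial_{\rho})u\in\operatorname{dom}(\partial_{\rho})$ with $\partial_{\rho}M(\partial_{\rho})u=g-Au$; hence $Bu=g$. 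Therefore $\operatorname{ran}(B)\supseteq\operatorname{dom}(\partial_{\rho})$ is dense, so by the previous paragraph $\overline{B}$ is a continuous bijection; since $S_{\rho}\coloneq\overline{B}^{-1}$ and $N(\partial_{\rho})$ are both bounded and coincide on the dense set $\operatorname{dom}(\partial_{\rho})$, we obtain $S_{\rho}=N(\partial_{\rho})$, $\norm{S_{\rho}}_{L_{2,\rho}}\leq\nicefrac{1}{c}$, and assertion (ii).

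\smallskip

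For (iii), since $N$ is bounded and holomorphic on the whole of $\C_{\Re>\rho_{0}}$, a contour-shift (Paley--Wiener) argument shows $\mathcal{L}_{\eta}\adjun N(\iu\mathrm{m}+\eta)\mathcal{L}_{\eta}$ and $\mathcal{L}_{\nu}\adjun N(\iu\mathrm{m}+\nu)\mathcal{L}_{\nu}$ agree on $L_{2,\eta}(\R;H)\cap L_{2,\nu}(\R;H)$. Causality of $S_{\rho}=N(\partial_{\rho})$ then follows from (iii) and the a priori bound by the usual time-horizon argument: if $\supp f\subseteq[a,\infty)$ and $b<a$, then $\e^{-2\rho b}\medint\int_{-\infty}^{b}\norm{(S_{\rho}f)(t)}_{H}^{2}\dx[t]\leq\norm{S_{\rho}f}_{2,\rho}^{2}\leq c^{-2}\norm{f}_{2,\rho}^{2}\leq c^{-2}\e^{-2\rho a}\norm{f}_{2,0}^{2}$, so letting $\rho\to\infty$ forces $(S_{\rho}f)|_{(-\infty,b)}=0$; linearity and density then yield causality in general. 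The step I expect to be the main obstacle is the domain bookkeeping in the second paragraph --- turning the formal Fourier identity $\mathcal{L}_{\rho}u=N(\iu\mathrm{m}+\rho)\mathcal{L}_{\rho}g$ into the rigorous statements $u\in\operatorname{dom}(A)$ and $M(\partial_{\rho})u\in\operatorname{dom}(\partial_{\rho})$ --- together with the perturbation fact that $zM(z)+A$ is $\mathrm{m}$-accretive for an \emph{unbounded} $A$; once these are settled, the accretivity estimates, the Paley--Wiener identity, and the causality argument are routine. (Alternatively one may bypass $N$ by identifying $\overline{B}\adjun=\overline{M\adjun(\partial_{\rho})(2\rho-\partial_{\rho})+A\adjun}$ and invoking that a strictly accretive operator with strictly accretive adjoint is continuously invertible with norm bound $\nicefrac{1}{c}$, but pinning down that adjoint domain requires essentially the same functional-calculus work.)
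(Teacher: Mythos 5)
The paper does not prove this theorem; it is quoted from \cite[thm.~6.2.1, rem.~6.3.3]{Waurick2022}, and your reconstruction follows essentially the standard argument given there: strict accretivity of $\partial_{\rho}M(\partial_{\rho})+A$ via Plancherel, bounded invertibility of the symbol $zM(z)+A$ with uniform bound $\nicefrac{1}{c}$, realization of $S_{\rho}$ as the material law operator with symbol $(zM(z)+A)^{-1}$, and causality and $\rho$-independence via a Paley--Wiener/contour-shift argument. The steps you flag as delicate (the domain bookkeeping for $N(\partial_{\rho})g$ and the m-accretivity of the bounded accretive perturbation $zM(z)+A$) are handled correctly in your sketch, so the proposal is sound.
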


\Cref{th:Picard} leads to a natural definition of weak solutions for the problem
\begin{equation}
  \label{eq:EvolEq}
  \bigl(\partial_{\rho}M(\partial_{\rho}) + A\bigr)u = g
\end{equation}
on $L_{2,\rho}(0,\infty;H)$ via the {\em Picard operator} $S_{\rho} = \bigl(\overline{\partial_{\rho}M(\partial_{\rho}) + A}\bigr)^{-1}$:
\begin{definition}
  \label{def:weakSol}
  For a right-hand side $g \in L_{2,\rho}(0,\infty;H)$, we refer to $u=S_{\rho}g$ as a {\em weak solution}\footnote{We simply extend $g$ trivially by $0$ to obtain a function in $L_{2,\rho}(\mathbb{R},H)$.} to \cref{eq:EvolEq}. For a {\em strong solution} we additionally demand $M(\partial_{\rho})u \in \operatorname{dom}\bigl(\mathring{\partial}_{\rho}\bigr)$.
\end{definition}
In the latter case, appealing to \cref{th:Picard}, we have
\begin{equation*}
  \bigl(\overline{\partial_{\rho}M(\partial_{\rho}) + A}\bigr)^{-1} g
  = \bigl(\partial_{\rho}M(\partial_{\rho}) + A\bigr)^{-1}g\text{.}
\end{equation*}
If not mentioned otherwise, under {\em solution}, we will always understand a weak solution of a problem in this article.
\begin{remark}
  Picard's theorem proves well-posedness of the evolutionary equation
  \begin{equation*}
    \bigl(\overline{\partial_{\rho}M(\partial_{\rho}) + A}\bigr)u = g,
  \end{equation*}
  as well as eventual independence of the weight parameter $\rho$ (i.e., item iii) in \cref{th:Picard}), provides a criterion for a solution being a classical solution (i.e., item ii)) and links material laws to the solution operator by proving that the {\em Picard operator} $S_{\rho}$ is a material law operator (cf.~\cite[rem.~6.3.4]{Waurick2022}).
\end{remark}

\subsection{Distributional spaces and IVPs}
\label{subsec:DistribSpaces_IVPs}
In this article, we want to investigate \cref{eq:FDE}, which can be viewed as a ``generalized initial value problem'', because IVPs are closely related to evolutionary equations in distributional spaces over the full line. This subsection serves as a motivation. We recall the following definition:
\begin{definition}
  Let $H_{0}$ and $H_{1}$ be Hilbert spaces and let $C\colon H_{0}\supseteq \operatorname{dom}(C) \to H_{1}$ be linear, densely defined and closed. We set
  \begin{align*}
    C^{\diamond}&\colon H_{1}\to \operatorname{dom}(C)' \eqcolon H^{-1}(C)\text{,}\\
    (C^{\diamond}\varphi)(x)&\!\coloneq \dualprod{\varphi}{C x}_{H_{1}} \qquad \varphi \in H_{1},x\in \operatorname{dom}(C)\text{.}
  \end{align*}
  We call $C_{-1}\coloneq (C^{\ast})^{\diamond}$ the {\em extrapolated operator} of $C$.
\end{definition}
Note that $C^{\diamond}$ is related to the (Banach space) dual $C'$ of $C$ when considered as a bounded operator from $\operatorname{dom}(C)$ to $H_{1}$ via $C^{\diamond}= C'R_{H_{1}}^{-1}$, where $R_{H_{1}}\colon H_{1}\to H_{1}'$ is the Riesz-isomorphism.\\
The idea behind extrapolated operators is to formalize computations that are not possible in the given (small) space, by introducing a larger space. Indeed, the inclusions $C_{-1}\supseteq C$ and $(C^{\ast})_{-1}\supseteq C^{\ast}$ hold. For basics on extrapolation spaces, we refer to \cite[sec.~9.2]{Waurick2022}.\\
On the entire real line we have $H^{1}_{\rho}(\mathbb{R};H) = \operatorname{dom}(\partial_{\rho})$. Similarly we can now define
\begin{equation*}
  H^{-1}_{\rho}(\mathbb{R};H) \coloneq \operatorname{dom}(\partial_{\rho})'\text{.}
\end{equation*}
The definition for higher exponents is self-explanatory. The next theorem
will provide a crucial transition point:
\begin{theorem}[{\cite[thm.~9.3.2]{Waurick2022}}]
  \label{th:EquivDistrProb}
  Let $u,f \in L_{2,\rho}\left(\mathbb{R};H\right)$. The following are equivalent:
  \begin{enumerate}[label= \roman*), leftmargin=4ex]
    \item $u\in \operatorname{dom}\bigl(\overline{\partial_{\rho}M(\partial_{\rho})+A}\bigr)$ and $\bigl(\overline{\partial_{\rho}M(\partial_{\rho}) + A}\bigr)u=f$.
    \item $u \in \operatorname{dom}\bigl(\overline{\partial_{\rho}M(\partial_{\rho})+A}\bigr)$ and $\bigl(\partial_{\rho}M(\partial_{\rho})\bigr)_{-1}u + A_{-1}u\subseteq f$.
    \item $\partial_{\rho}M(\partial_{\rho})u + Au \subseteq f$, where the left-hand side is considered as an element of $H^{-1}_{\rho}(\mathbb{R};H)\cap L_{2,\rho}\bigl(\mathbb{R};H^{-1}(A)\bigr)$.
  \end{enumerate}
\end{theorem}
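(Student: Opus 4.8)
The plan is to run the cycle $\mathrm{i)}\Rightarrow\mathrm{ii)}\Rightarrow\mathrm{iii)}\Rightarrow\mathrm{i)}$. Two facts about extrapolation spaces (cf.~\cite[sec.~9.2]{Waurick2022}) will be used throughout: for a densely defined, closed operator $C\colon H_{0}\supseteq\operatorname{dom}(C)\to H_{1}$ the extrapolated operator $C_{-1}=(C^{\ast})^{\diamond}\colon H_{0}\to H^{-1}(C^{\ast})$ is bounded, extends $C$, and satisfies $\operatorname{dom}(C)=\{x\in H_{0}\colon C_{-1}x\in H_{1}\}$ with $C_{-1}|_{\operatorname{dom}(C)}=C$ (this ultimately rests on $C=C^{\ast\ast}$); and all operators occurring here commute with convolution in time, because $\partial_{\rho}$ and $M(\partial_{\rho})$ act as Fourier--Laplace multipliers (cf.~\cref{def:MatLawOp} and \cref{def:FourierLaplace}) while $A$ acts only in the spatial variable. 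I also use that convolution with a compactly supported smooth function is bounded on $L_{2,\rho}(\mathbb{R};H)$, maps it into $H^{1}_{\rho}(\mathbb{R};H)=\operatorname{dom}(\partial_{\rho})$, and that a suitably normalised mollifier sequence is an approximate identity there.

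For $\mathrm{i)}\Rightarrow\mathrm{ii)}$: by definition of the closure pick $u_{n}\in\operatorname{dom}\bigl(\partial_{\rho}M(\partial_{\rho})\bigr)\cap\operatorname{dom}(A)$ with $u_{n}\to u$ and $\partial_{\rho}M(\partial_{\rho})u_{n}+Au_{n}\to f$ in $L_{2,\rho}(\mathbb{R};H)$. The extrapolated operators $\bigl(\partial_{\rho}M(\partial_{\rho})\bigr)_{-1}\colon L_{2,\rho}(\mathbb{R};H)\to H^{-1}_{\rho}(\mathbb{R};H)$ and $A_{-1}\colon L_{2,\rho}(\mathbb{R};H)\to L_{2,\rho}(\mathbb{R};H^{-1}(A))$ are continuous and agree with $\partial_{\rho}M(\partial_{\rho})$, resp.\ $A$, on the $u_{n}$; since $H^{-1}_{\rho}(\mathbb{R};H)$, $L_{2,\rho}(\mathbb{R};H^{-1}(A))$ and $L_{2,\rho}(\mathbb{R};H)$ embed continuously and injectively into the common space $\bigl(\operatorname{dom}(\partial_{\rho})\cap\operatorname{dom}(A)\bigr)'$, passing to the limit there yields $\bigl(\partial_{\rho}M(\partial_{\rho})\bigr)_{-1}u+A_{-1}u=f$, which is $\mathrm{ii)}$. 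The step $\mathrm{ii)}\Rightarrow\mathrm{iii)}$ is then bookkeeping: one identifies $\bigl(\partial_{\rho}M(\partial_{\rho})\bigr)_{-1}$ with $M(\partial_{\rho})$ composed with the bounded extension $\partial_{\rho}\colon L_{2,\rho}(\mathbb{R};H)\to H^{-1}_{\rho}(\mathbb{R};H)$, and $A_{-1}$ on $L_{2,\rho}(\mathbb{R};H)$-arguments with the distributional realisation of $A$ in $L_{2,\rho}(\mathbb{R};H^{-1}(A))$; under these identifications $\mathrm{ii)}$ is precisely $\mathrm{iii)}$ together with the extra membership $u\in\operatorname{dom}\bigl(\overline{\partial_{\rho}M(\partial_{\rho})+A}\bigr)$, so in particular it implies $\mathrm{iii)}$.

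The core of the argument is $\mathrm{iii)}\Rightarrow\mathrm{i)}$, obtained by temporal mollification. Let $\phi_{n}\in\mathcal{C}_{c}^{\infty}(\mathbb{R})$ be a nonnegative mollifier sequence with shrinking support, normalised so that $\phi_{n}\ast(\argdot)\to\mathrm{id}$ strongly on $L_{2,\rho}(\mathbb{R};H)$, and set $u_{n}\coloneq\phi_{n}\ast u$. Since convolution commutes with $M(\partial_{\rho})$ and maps into $\operatorname{dom}(\partial_{\rho})$, we get $M(\partial_{\rho})u_{n}=\phi_{n}\ast M(\partial_{\rho})u\in\operatorname{dom}(\partial_{\rho})$, hence $u_{n}\in\operatorname{dom}\bigl(\partial_{\rho}M(\partial_{\rho})\bigr)$ with $\partial_{\rho}M(\partial_{\rho})u_{n}=\phi_{n}\ast\bigl(\partial_{\rho}M(\partial_{\rho})u\bigr)\in L_{2,\rho}(\mathbb{R};H)$. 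Because $A$ commutes with convolution in time, $\mathrm{iii)}$ gives $A_{-1}u_{n}=\phi_{n}\ast(A_{-1}u)=\phi_{n}\ast\bigl(f-\partial_{\rho}M(\partial_{\rho})u\bigr)=\phi_{n}\ast f-\partial_{\rho}M(\partial_{\rho})u_{n}\in L_{2,\rho}(\mathbb{R};H)$; by the characterisation of $\operatorname{dom}(A)$ above this forces $u_{n}\in\operatorname{dom}(A)$, and then $\bigl(\partial_{\rho}M(\partial_{\rho})+A\bigr)u_{n}=\partial_{\rho}M(\partial_{\rho})u_{n}+A_{-1}u_{n}=\phi_{n}\ast f$. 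Letting $n\to\infty$ we have $u_{n}\to u$ and $\bigl(\partial_{\rho}M(\partial_{\rho})+A\bigr)u_{n}\to f$ in $L_{2,\rho}(\mathbb{R};H)$, so $u\in\operatorname{dom}\bigl(\overline{\partial_{\rho}M(\partial_{\rho})+A}\bigr)$ and $\bigl(\overline{\partial_{\rho}M(\partial_{\rho})+A}\bigr)u=f$, i.e.\ $\mathrm{i)}$.

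The step I expect to be the main obstacle is exactly $\mathrm{iii)}\Rightarrow\mathrm{i)}$, and within it the two commutation claims together with the upgrade of $u_{n}$ from $\operatorname{dom}(A_{-1})$ to $\operatorname{dom}(A)$. Commutation of convolution with $\partial_{\rho}$ and $M(\partial_{\rho})$ is seen after applying $\mathcal{L}_{\rho}$, where convolution with $\phi_{n}$ becomes multiplication by the scalar Fourier--Laplace transform of $\phi_{n}$, which commutes with the multiplier $M(\iu\mathrm{m}+\rho)$ and with $\iu\mathrm{m}+\rho$; commutation with $A$ is immediate since $A$ acts pointwise in $t$. The resulting improved regularity $A_{-1}u_{n}\in L_{2,\rho}(\mathbb{R};H)$ is the crucial point: it is what promotes the smoothed function into $\operatorname{dom}(A)$ and thus makes $u_{n}$ a genuine element of $\operatorname{dom}\bigl(\partial_{\rho}M(\partial_{\rho})+A\bigr)$. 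Everything else -- the approximate-identity property, boundedness of convolution on $L_{2,\rho}(\mathbb{R};H)$, and continuity of the extrapolated operators -- is routine.
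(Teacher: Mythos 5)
The paper does not prove this statement at all --- it is quoted verbatim from \cite[thm.~9.3.2]{Waurick2022} --- so there is no internal proof to measure you against; I can only compare with the argument in the cited reference. Your proof is essentially that argument. The cycle i)\,$\Rightarrow$\,ii)\,$\Rightarrow$\,iii)\,$\Rightarrow$\,i) is the standard one, the two load-bearing facts are correctly identified (the characterisation $\operatorname{dom}(C)=\{x\colon C_{-1}x\in H_{1}\}$ with $C_{-1}|_{\operatorname{dom}(C)}=C$, resting on $C=C^{\ast\ast}$; and a time-regularisation that commutes with $\partial_{\rho}$, $M(\partial_{\rho})$ and $A$, maps $L_{2,\rho}(\mathbb{R};H)$ into $\operatorname{dom}(\partial_{\rho})$, and converges strongly to the identity), and the bootstrap $A_{-1}u_{n}\in L_{2,\rho}(\mathbb{R};H)\Rightarrow u_{n}\in\operatorname{dom}(A)$ is exactly the point of the whole theorem. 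The only genuine difference is the choice of regulariser: the reference uses the resolvent $(1+\varepsilon\partial_{\rho})^{-1}$, whereas you mollify with $\phi_{n}\ast{}$. Both work; the resolvent is marginally cleaner because it is itself a material law operator (so commutation with $M(\partial_{\rho})$ and extension to $H^{-1}_{\rho}(\mathbb{R};H)$ come for free, and causality/support on $[0,\infty)$ is preserved, which matters elsewhere in the theory though not for this equivalence), while your mollifier requires the small extra remarks you partly supply: that $\phi_{n}\ast{}$ is bounded on $L_{2,\rho}(\mathbb{R};H)$ because its Fourier--Laplace symbol $\hat{\phi}_{n}(\argdot-\iu\rho)$ is bounded (uniformly in $n$ once the supports shrink), and that this convolution extends consistently to $H^{-1}_{\rho}(\mathbb{R};H)$ and to $L_{2,\rho}(\mathbb{R};H^{-1}(A^{\ast}))$ so that the identity $A_{-1}u=f-\partial_{\rho}M(\partial_{\rho})u$, which a priori only holds in the common superspace $\bigl(\operatorname{dom}(\partial_{\rho})\cap L_{2,\rho}(\mathbb{R};\operatorname{dom}(A))\bigr)'$ into which all three spaces embed injectively, may be convolved term by term. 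With those routine points spelled out, the proof is complete.
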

The gist of this theorem is that the closure bar can be ``removed'' by passing to the distributional formulation. This allows us to tackle the sum of the operators directly and we can start to take a look at IVPs. The problem we want to investigate is the following pair of informal equations
\begin{alignat}{3}
  \tag{\ref{eq:IVP}}
  \begin{aligned}
    \bigl(\partial_{t}M(\partial_{t}) + A\bigr)u(t)&= F(t) &&\qquad t>0\text{,}\\
    u(t)&=\Phi(t) && \qquad t\leq 0\text{.}
  \end{aligned}
\end{alignat}
We point out that the first equation is not well-defined on the half-line, but on the entire real line only. To formulate evolutionary equations with initial values on the half-line, we follow S.~Trostorff, \cite[ch.~3]{Trostorff2018} and use a projection technique.
\begin{definition}
  \label{def:cutoff}
  For $\rho>0$ and $t\in \mathbb{R}$ let
  \begin{align*}
    P_{t}\colon H_{\rho}^{-1}(\mathbb{R};H) \supseteq \operatorname{dom}(P_{t}) &\to H_{\rho}^{-1}(\mathbb{R};H)\text{,}\\
    f &\mapsto \partial_{\rho}\chi_{\mathbb{R}_{\geq t}}\partial_{\rho}^{-1}f
        - \e^{-2\rho t}\bigl(\partial_{\rho}^{-1}f\bigr)(t+)\delta_{t}\text{,}
  \end{align*}
  where $\operatorname{dom}(P_{t})\coloneq \{f\in H_{\rho}^{-1}(\mathbb{R};H)\colon (\partial_{\rho}^{-1}f)(t+)\,\,\text{exists}\}$ and $\delta_{t}$ denotes the {\em Dirac distribution}, cf.~\cref{subsec:Dirac} for a definition.
\end{definition}
\begin{remark}[$P_{t}$ is a generalized cutoff $\chi_{\mathbb{R}_{\geq t}}$]
  \label{th:P_t_is_cutoff}

  Let $f \in L_{2,\rho}\left(\mathbb{R};H\right)$. Then $\partial_{\rho}^{-1}f = I_{\rho}f = \int_{-\infty}^{\argdot}f(s)\dx[s]$ exists and for $\varphi\in \mathcal{C}_{c}^{\infty}(\mathbb{R};H)\subseteq L_{2,\rho}\left(\mathbb{R};H\right)$ we can calculate using \cref{th:TimeDer_and_Adjoint}:
  \begin{align*}
    \dualprod[\big]{\partial_{\rho}\chi_{\mathbb{R}_{\geq t}}\partial_{\rho}^{-1}f}{\varphi}
    &= \dualprod[\big]{\chi_{\mathbb{R}_{\geq t}}I_{\rho}f}{(\partial_{\rho})^{\ast}\varphi}\\
    &= \dualprod[\big]{\chi_{\mathbb{R}_{\geq t}}I_{\rho}f}{-\partial_{\rho}\varphi + 2\rho \varphi}\\
    &= \medint\int_t^{\infty} \dualprod[\big]{(I_{\rho}f)(s)}{-(\varphi(s)\e^{-2\rho s})'}_{H} \dx[s]\\
    &= \medint\int_{t}^{\infty} \dualprod[\big]{f(s)}{\varphi(s)}_{H}\e^{-2\rho s} \dx[s] + \e^{-2\rho t}(I_{\rho}f)(t) \varphi(t)\\
    &= \dualprod[\big]{\chi_{\mathbb{R}_{\geq t}}f}{\varphi} + \dualprod[\big]{\e^{-2\rho t} (I_{\rho}f)(t)\delta_{t}}{\varphi}\text{.}
  \end{align*}
  Subtraction of the rightmost term from the last equation shows $P_{t}f = \chi_{\mathbb{R}_{\geq t}}f$, appealing to density of $\mathcal{C}^{\infty}_{c}(\mathbb{R};H)$ in $L_{2,\rho}(\mathbb{R};H)$.
\end{remark}
Furthermore one can prove:
\begin{proposition}[{\cite[prop.~3.1.16]{Trostorff2018}}]
  Let $f\in H_{\rho}^{-1}(\mathbb{R};H)$. Then $\supp(f)\subseteq \mathbb{R}_{\leq t}$ if and only if $f\in \operatorname{dom}(P_{t})$ with $P_{t}f=0$.
\end{proposition}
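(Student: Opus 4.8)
The plan is to transplant the elementary computation of \cref{th:P_t_is_cutoff} from $L_{2,\rho}$ to $H^{-1}_{\rho}$. First I would record two standing reductions. On $\R$ the map $\partial_{\rho}$ is a topological isomorphism $L_{2,\rho}(\R;H)\to H^{-1}_{\rho}(\R;H)$ (it restricts to the isomorphism $H^{1}_{\rho}\to L_{2,\rho}$ with inverse $I_{\rho}$), so $g\coloneq \partial_{\rho}^{-1}f$ is a genuine element of $L_{2,\rho}(\R;H)$; and $\supp(f)\subseteq\R_{\leq t}$ means, by the definition of the support, that $\dualprod{f}{\varphi}=0$ for every $\varphi\in\mathcal{C}^{\infty}_{c}(\R;H)$ with $\supp\varphi\subseteq(t,\infty)$. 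The one computational lemma I need is: for such $\varphi$ one has $\dualprod{f}{\varphi}=\dualprod{\partial_{\rho}\chi_{\R_{\geq t}}g}{\varphi}$. This follows from $\dualprod{f}{\varphi}=\dualprod{g}{(\partial_{\rho})^{\ast}\varphi}_{L_{2,\rho}}$ together with the observation that $(\partial_{\rho})^{\ast}\varphi=-\partial_{\rho}\varphi+2\rho\varphi$ is again supported in $(t,\infty)$, so the integral only sees $\chi_{\R_{\geq t}}g$ --- exactly the chain of equalities already performed in \cref{th:P_t_is_cutoff} (cf.~also \cref{th:TimeDer_and_Adjoint}).

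Granting this, the implication ``$f\in\operatorname{dom}(P_{t})$ with $P_{t}f=0$'' $\Longrightarrow$ ``$\supp(f)\subseteq\R_{\leq t}$'' is immediate: the hypothesis reads $\partial_{\rho}\chi_{\R_{\geq t}}g=\e^{-2\rho t}(\partial_{\rho}^{-1}f)(t+)\,\delta_{t}$, and testing both sides against a $\varphi$ with $\supp\varphi\subseteq(t,\infty)$ annihilates the right-hand side (because $\varphi(t)=0$), whereas the left-hand side equals $\dualprod{f}{\varphi}$ by the computational lemma; hence $\dualprod{f}{\varphi}=0$ for all such $\varphi$, which is the assertion.

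For the converse, assume $\supp(f)\subseteq\R_{\leq t}$. Then $\partial_{\rho}g=f$ vanishes on the interval $(t,\infty)$, so by the du Bois-Reymond lemma (valid verbatim in the $H$-valued setting since $(t,\infty)$ is connected) $g$ coincides a.e.\ on $(t,\infty)$ with a constant $c\in H$; consequently the right value $(\partial_{\rho}^{-1}f)(t+)$ exists and equals $c$, so $f\in\operatorname{dom}(P_{t})$. Since then $\chi_{\R_{\geq t}}g=\chi_{\R_{\geq t}}c$ as elements of $L_{2,\rho}$, it remains to verify $\partial_{\rho}(\chi_{\R_{\geq t}}c)=\e^{-2\rho t}c\,\delta_{t}$, which is the one-line computation $\dualprod{\chi_{\R_{\geq t}}c}{(\partial_{\rho})^{\ast}\varphi}_{L_{2,\rho}}=\int_{t}^{\infty}\dualprod{c}{-(\varphi(s)\e^{-2\rho s})'}_{H}\dx[s]=\e^{-2\rho t}\dualprod{c}{\varphi(t)}_{H}$ for arbitrary $\varphi\in\mathcal{C}^{\infty}_{c}(\R;H)$ (the boundary term at $\infty$ vanishing by compact support). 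Combining, $P_{t}f=\partial_{\rho}\chi_{\R_{\geq t}}g-\e^{-2\rho t}(\partial_{\rho}^{-1}f)(t+)\,\delta_{t}=0$.

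I expect the only genuinely delicate point to be the handling of the right value $(\partial_{\rho}^{-1}f)(t+)$: one must invoke its precise definition both to conclude that it exists as soon as $g$ is a.e.\ constant to the right of $t$ and to identify it with the constant $c$ appearing in the Dirac term. Everything else --- the bookkeeping of the cutoff $\chi_{\R_{\geq t}}$, the adjoint formula for $\partial_{\rho}$, and the identity $\partial_{\rho}\bigl(\chi_{\R_{\geq t}}c\bigr)=\e^{-2\rho t}c\,\delta_{t}$ for constant $c$ --- is already prepared in the excerpt, so no new machinery is needed.
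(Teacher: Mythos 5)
Your argument is correct. The paper itself does not prove this proposition --- it is quoted from \cite[prop.~3.1.16]{Trostorff2018} without proof --- so there is no in-text argument to compare against; but your reduction to $g=\partial_{\rho}^{-1}f\in L_{2,\rho}(\mathbb{R};H)$, the computational lemma $\dualprod{f}{\varphi}=\dualprod{\partial_{\rho}\chi_{\mathbb{R}_{\geq t}}g}{\varphi}$ for $\supp\varphi\subseteq(t,\infty)$, and the du Bois-Reymond step for the converse together constitute a complete and standard proof, in exactly the style of the computation the paper does carry out in the remark following \cref{def:cutoff}. The only point you should make explicit in a written version is the one you already flagged: the meaning of ``$(\partial_{\rho}^{-1}f)(t+)$ exists'' (right limit of the continuous representative, or of averages), so that its existence and its identification with the a.e.\ constant value $c$ of $g$ on $(t,\infty)$ are immediate; with that pinned down, nothing is missing.
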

Now we can replace the differential equation from problem \eqref{eq:IVP} with the formal projected version on the entire real line:
\begin{equation*}
  P_{0}\bigl(\partial_{\rho}M(\partial_{\rho}) + A\bigr)u = P_{0}F\text{.}
\end{equation*}
Note that $P_{0}F = F$ for $F \in L_{2,\rho}(0,\infty;H)$ by \cref{th:P_t_is_cutoff}. We can now further make an ansatz $u=v+Z$, where $v \in L_{2,\rho}(0,\infty;H)$ and $Z$ is an $H^{1}_{\rho}$-extension of the prehistory $\Phi \in H^{1}(-h,0;H)$ to the right.
$Z$ will take various forms later on, the crucial part however is $\supp(v) \subseteq [0,\infty)$. Plugging this ansatz into the projected equation, one obtains the problem
\begin{equation}
  \label{eq:DistribProblem}
  \bigl(\partial_{\rho}M(\partial_{\rho}) + A\bigr)v = F\bigl(\argdot, (v+Z)_{(\argdot)}\bigr) + f \qquad \text{on}\ H^{-1}_{\rho}(\mathbb{R};H)
\end{equation}
for some $f$ with $\supp (f)\subseteq [0,\infty)$ instead. The additional term $f$ contains information on the initial prehistory and incorporates behaviour of the material law at the origin. The calculation of $f$ is manageable for simple material laws, i.e., material laws of the form $M(z)=M_{0} + M_{1}z^{-1}$, because in this case, the cutoff only depends on the values at the origin. This is not the case for more complex material laws. For details and further reading we refer to \cref{sec:IVPs} and \cite[sec.~3.2]{Trostorff2018}. In any case, \cref{eq:DistribProblem} does not need the initial prehistory as a separate equation any more.\\
This subsection was our justification to pivot to evolutionary equations on the real line (albeit in distributional spaces in general) to prove well-posedness and forego giving meaning to evolutionary problems on the half-line only. The distributional
space $H^{-1}_{\rho}(\mathbb{R};H)$ is still too large to apply our tools though. We will pivot further to a formulation in $L_{2,\rho}(\mathbb{R};H)$ (and in fact $H^{1}_{\rho}$) later, i.e.,
\begin{equation}
  \label{eq:EvolProblem}
  \bigl(\overline{\partial_{\rho}M(\partial_{\rho}) + A}\bigr)v = F\bigl(\argdot, (v+Z)_{(\argdot)}\bigr) + f \qquad \text{on}\ L_{2,\rho}(\mathbb{R};H)\text{.}
\end{equation}
To demand that the operators and functions produce elements in $L_{2,\rho}$ is akin to the functions and operators in \cref{eq:DistribProblem} satisfying a consistency condition, which is an assumption occuring frequently in the study of DDEs. We point out that since $F(t,x)=0$ for $t<0$ and since $\supp (f) \subseteq [0,\infty)$, \cref{eq:EvolProblem} is essentially an equation on the half-line only, appealing to the causality of the Picard operator.

\subsection{Regularity theory in evolutionary equations}
\label{subsec:RegThy}
It will be critically important to have a regularity result for evolutionary equations. In particular, if a right-hand side $g$ in \cref{eq:EvolEq} satisfies $g\in \operatorname{dom}\bigl(\mathring{\partial}_{\rho}\bigr)$, the solution $S_{\rho}g$ should satisfy $S_{\rho}g \in \operatorname{dom}\bigl(\mathring{\partial}_{\rho}\bigr)$, where we regard $\operatorname{dom}\bigl(\mathring{\partial}_{\rho}\bigr) \subseteq L_{2,\rho}(\mathbb{R};H)$, extending functions trivially by $0$.
\begin{theorem}[regularity]
  \label{th:Commutation}
  For an $\mathrm{m}$-accretive operator $A$ and a material law $M$ satisfying the assumptions of \cref{th:Picard}, the operator $S_{\rho} = \bigl(\overline{\partial_{\rho}M(\partial_{\rho}) + A}\bigr)^{-1}$ satisfies
  \begin{equation*}
    S_{\rho}\mathring{\partial}_{\rho} \subseteq \mathring{\partial}_{\rho}S_{\rho} \quad \text{and}\quad
    S_{\rho}I_{\rho} = I_{\rho}S_{\rho}\text{.}
  \end{equation*}
  In particular, $g\in \operatorname{dom}\bigl(\mathring{\partial}_{\rho}\bigr) \implies S_{\rho}g \in \operatorname{dom}\bigl(\mathring{\partial}_{\rho}\bigr)$.
\end{theorem}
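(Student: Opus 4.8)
The plan is to establish the commutation relations via the functional-calculus picture from \cref{th:Picard}, exploiting that $S_{\rho}$ is itself a material law operator. First I would note that it suffices to prove $S_{\rho}I_{\rho} = I_{\rho}S_{\rho}$; the inclusion $S_{\rho}\mathring{\partial}_{\rho}\subseteq\mathring{\partial}_{\rho}S_{\rho}$ then follows by applying $\mathring{\partial}_{\rho}=I_{\rho}^{-1}$ on the left and on the right: if $g\in\operatorname{dom}(\mathring{\partial}_{\rho})$, write $g=I_{\rho}w$ with $w\in L_{2,\rho}$, so $S_{\rho}g = S_{\rho}I_{\rho}w = I_{\rho}S_{\rho}w\in\operatorname{dom}(\mathring{\partial}_{\rho})$ and $\mathring{\partial}_{\rho}S_{\rho}g = S_{\rho}w = S_{\rho}\mathring{\partial}_{\rho}g$. (Here I work on $L_{2,\rho}(\mathbb{R};H)$, where $I_{\rho}=\int_{-\infty}^{\argdot}$ and $\mathring{\partial}_{\rho}=\partial_{\rho}$ is boundedly invertible, extending half-line functions by $0$; the ``$\subseteq$'' is because $S_{\rho}$ need not map all of $L_{2,\rho}$ into $\operatorname{dom}(\mathring{\partial}_{\rho})$.) The ``in particular'' statement is then immediate.

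For $S_{\rho}I_{\rho}=I_{\rho}S_{\rho}$ I would pass through the Fourier--Laplace transform. Since $I_{\rho}=\mathring{\partial}_{\rho}^{-1}$ and $\mathring{\partial}_{\rho}$ corresponds under $\mathcal{L}_{\rho}$ to multiplication by $(\iu\mathrm{m}+\rho)$, we have $\mathcal{L}_{\rho}I_{\rho}\mathcal{L}_{\rho}^{\ast} = (\iu\mathrm{m}+\rho)^{-1}$, a bounded multiplication operator (bounded because $\rho>0$). On the other hand, by \cref{th:Picard} and the remark following it, $S_{\rho}=\bigl(\overline{\partial_{\rho}M(\partial_{\rho})+A}\bigr)^{-1}$ is a material law operator: writing $N(z)\coloneq (zM(z)+A)^{-1}$, which is holomorphic and bounded on $\C_{\Re>\rho_{0}}$ by the positive-definiteness hypothesis and $\mathrm{m}$-accretivity of $A$, Picard's construction gives $S_{\rho}=\mathcal{L}_{\rho}^{\ast}N(\iu\mathrm{m}+\rho)\mathcal{L}_{\rho}$. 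Conjugating both operators by $\mathcal{L}_{\rho}$, the identity $S_{\rho}I_{\rho}=I_{\rho}S_{\rho}$ becomes $N(\iu\mathrm{m}+\rho)(\iu\mathrm{m}+\rho)^{-1} = (\iu\mathrm{m}+\rho)^{-1}N(\iu\mathrm{m}+\rho)$, i.e.\ a pointwise (in the Fourier variable) commutation of the operator $N(z)\in\mathcal{L}(H)$ with the scalar $z^{-1}\in\C$, which is trivially true. Hence the two operators agree.

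Alternatively — and perhaps cleaner to present — one can avoid invoking that $S_{\rho}$ is a material law operator and argue directly: both $I_{\rho}=\mathring{\partial}_{\rho}^{-1}$ and $S_{\rho}$ are bounded operators on $L_{2,\rho}(\mathbb{R};H)$, and two bounded operators commute iff their inverses do (when invertible), so it is enough to show $\mathring{\partial}_{\rho}$ commutes with $S_{\rho}$ on $\operatorname{dom}(\mathring{\partial}_{\rho})$, which in turn reduces to checking that $\mathring{\partial}_{\rho}$ commutes with $\partial_{\rho}M(\partial_{\rho})+A$ on a suitable core — and $\mathring{\partial}_{\rho}$ commutes with $\partial_{\rho}$ (obvious), with $M(\partial_{\rho})$ (since material law operators are functions of $\partial_{\rho}$, explicitly via $\mathcal{L}_{\rho}$, and commute with $\mathring\partial_\rho$), and with $A$ (which acts only in the spatial variable $H$, hence commutes with any operator acting purely in time). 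One must take a little care with domains — the cleanest route is to verify the commutation on the image $S_{\rho}\bigl[\operatorname{dom}(\partial_{\rho})\bigr]$, using item ii) of \cref{th:Picard} which tells us $S_{\rho}g\in\operatorname{dom}(\partial_{\rho})\cap\operatorname{dom}(A)$ there, and then extend by density and boundedness of $S_{\rho}$ and $I_{\rho}$.

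The main obstacle is purely bookkeeping: making the domain manipulations in the direct argument rigorous (the operator $\partial_{\rho}M(\partial_{\rho})+A$ is only closable, not closed, so one cannot naively compose), and justifying that $A$, as an unbounded operator on $H$ lifted to $L_{2,\rho}(\mathbb{R};H)$, genuinely commutes with $I_{\rho}$ on the relevant domains. Both difficulties evaporate in the functional-calculus approach of the second paragraph, which is why I would choose that as the primary line of proof, relegating the direct argument to a remark.
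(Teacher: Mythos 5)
Your functional-calculus proof of $S_{\rho}I_{\rho}=I_{\rho}S_{\rho}$ is fine and is essentially the content of the reference the paper leans on ($S_{\rho}$ is itself a material law operator, so it commutes with the multiplication operator $(\iu\mathrm{m}+\rho)^{-1}$ on the Fourier--Laplace side). The problem is in your first paragraph, where you reduce $S_{\rho}\mathring{\partial}_{\rho}\subseteq\mathring{\partial}_{\rho}S_{\rho}$ to this commutation by declaring that you ``work on $L_{2,\rho}(\mathbb{R};H)$, where $I_{\rho}=\int_{-\infty}^{\argdot}$ and $\mathring{\partial}_{\rho}=\partial_{\rho}$''. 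That identification silently replaces $\mathring{\partial}_{\rho}$ by a different operator. In the theorem, $\mathring{\partial}_{\rho}$ is the operator of \cref{def:TimeDerBoundary}: its domain, viewed inside $L_{2,\rho}(\mathbb{R};H)$ by trivial extension, consists of $H^{1}_{\rho}$-functions with $\supp\subseteq[0,\infty)$ and value $0$ at $0$. So writing $g=I_{\rho}w$ with $w$ supported on $[0,\infty)$ and computing $S_{\rho}g=I_{\rho}S_{\rho}w=\int_{-\infty}^{\argdot}S_{\rho}w$ does \emph{not} yet put $S_{\rho}g$ into $\operatorname{dom}(\mathring{\partial}_{\rho})$: you still need $\supp(S_{\rho}w)\subseteq[0,\infty)$, equivalently $(S_{\rho}g)(0)=0$. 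That is exactly where causality of $S_{\rho}$ (item i) of \cref{th:Picard}) must enter, and it appears nowhere in your argument. What you have actually proved is the real-line statement $S_{\rho}\partial_{\rho}\subseteq\partial_{\rho}S_{\rho}$, which is true but strictly weaker; the ``in particular'' clause --- the part of the theorem that is actually used later to conclude $\Gamma_{\rho,\alpha}(v)\in H^{1}_{0,\rho}(0,\infty;H)$ --- is precisely the support/boundary statement your reduction discards. The paper's proof takes the real-line commutation as given and spends its effort on this point: $g\in\operatorname{dom}(\mathring{\partial}_{\rho})$ gives $S_{\rho}g\in\operatorname{dom}(\partial_{\rho})$, hence continuous, and causality together with $\supp(g)\subseteq[0,\infty)$ forces $\supp(S_{\rho}g)\subseteq[0,\infty)$, so $(S_{\rho}g)(0)=0$ and \cref{th:DomDerivativeFor0} applies. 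Adding that one causality step would close the gap.

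A smaller remark on your third paragraph: the slogan ``two bounded operators commute iff their inverses do'' does not apply as stated, since neither $\mathring{\partial}_{\rho}$ nor $\bigl(\overline{\partial_{\rho}M(\partial_{\rho})+A}\bigr)$ is bounded, and the domain bookkeeping you flag there is not merely cosmetic; but since you present that route only as an alternative sketch, the substantive issue remains the missing causality argument above.
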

\begin{proof}
  For a proof of the inclusion $S_{\rho}\partial_{\rho} \subseteq \partial_{\rho}S_{\rho}$ we refer to \cite[rmk.~6.3.4]{Waurick2022}. The inclusion $S_{\rho}\mathring{\partial}_{\rho} \subseteq \mathring{\partial}_{\rho}S_{\rho}$ is a simple consequence of causality:\\
  Let $g \in \operatorname{dom}\bigl(\mathring{\partial}_{\rho}\bigr)$. Then $g \in \operatorname{dom}(\partial_{\rho})$ and \cref{th:Picard} implies $S_{\rho}g \in \operatorname{dom}(\partial_{\rho})$; in particular $S_{\rho}g$ is continuous appealing to \cref{th:Sobolev}. Moreover, since $\supp(g)\subseteq [0,\infty)$, causality of $S_{\rho}$ yields $\supp(S_{\rho}g)\subseteq [0,\infty)$. Hence, $0 = \bigl(S_{\rho}g\bigr)(0)$ and thus $S_{\rho}g\in \operatorname{dom}\bigl(\mathring{\partial}_{\rho}\bigr)$ by \cref{th:DomDerivativeFor0}.\\
  The equality $S_{\rho}I_{\rho} = I_{\rho}S_{\rho}$ is obtained by applying $I_{\rho}$ on both sides of the inclusion $S_{\rho}\partial_{\rho} \subseteq \partial_{\rho}S_{\rho}$, noting that $\partial_{\rho}I_{\rho}=\mathrm{id}_{L_{2,\rho}(\mathbb{R};H)}$.
\end{proof}

For the heat equation (cf.~\cref{subsec:Heat}) we can say even more:
\begin{definition}
  \label{def:ParabolicPair}
  Let $M\colon \C \supseteq \operatorname{dom}(M) \to H$ be a material law of the form
  \begin{equation*}
    M(z) \coloneq \begin{pmatrix} M_{00}(z) & 0 \\ 0 & 0 \end{pmatrix} + z^{-1}N(z)
  \end{equation*}
  on the decomposed space $H = H_{0}\oplus H_{0}^{\perp}$ for some closed linear subspace $H_{0}\subseteq H$ and let $C\colon H_{0} \supseteq \operatorname{dom}(C) \to H_{0}^{\perp}$ be closed and densely defined. Let $A \coloneq \begin{psmallmatrix} 0 & -C^{\ast} \\ C & 0 \end{psmallmatrix}$. The pair $(M,A)$ is called a {\em parabolic pair} if there exists $\rho > \max\{0,s_{\mathrm{b}}(M)\}$ such that
  \begin{equation*}
    \forall z \in \C_{\Re > \rho}\colon \Re M_{00}(z) \geq c \quad \wedge \quad \C_{\Re >\rho}\ni z \mapsto M_{00}(z)\in \mathcal{L}(H) \,\,\text{is bounded}.
  \end{equation*}
\end{definition}

\begin{theorem}[{\cite[thm.~15.2.3]{Waurick2022}}]
  \label{th:parabolicRegularity}
  Let $(M,A)$ be a parabolic pair such that for some $\rho>\max \{0, s_{\mathrm{b}}(M)\}$ for all $z\in \C_{\Re > \mu}$ ($\mu > \rho$):
  \begin{equation*}
    \Re \bigl(M_{00}(z) + z^{-1}N_{00}(z)\bigr) \geq c\text{,}
  \end{equation*}
  where $N_{00}(z) \coloneq \iota^{\ast}_{H_{0}}N(z)\iota_{H_{0}}$.\footnote{Here, $\iota_{H_{0}}$ is the embedding $H_{0}\hookrightarrow H$.} Let $f \in L_{2,\rho}\left(\mathbb{R};H_{0}\right)$ and $g\in H_{\rho}^{\nicefrac{1}{2}}\left(\mathbb{R};H_{0}^{\perp}\right)$.\\
  Then the solution $(u,v)\coloneq S_{\rho}(f,g)\in L_{2,\rho}\left(\mathbb{R};H\right)$ satisfies:\footnote{Here, $H^{\nicefrac{1}{2}}_{\rho} = \operatorname{dom}\bigl(\partial_{\rho}^{\nicefrac{1}{2}}\bigr)$. For a definition of $\partial_{\rho}^{\nicefrac{1}{2}}$ we refer to \cref{subsec:FracDer} or \cite[sec.~15.2]{Waurick2022}. We will not utilize this space for well-posedness theory.}
  \begin{align*}
    u &\in H^{1}_{\rho}(\mathbb{R}; H_{0})\cap H^{\nicefrac{1}{2}}_{\rho}\bigl(\mathbb{R}; \operatorname{dom}(C)\bigr)\text{,}\\
    v &\in H^{\nicefrac{1}{2}}_{\rho}(\mathbb{R}; H_{0}^{\perp})\cap L_{2,\rho}\bigl(\mathbb{R};\operatorname{dom}(C^{\ast})\bigr)\text{.}
  \end{align*}
\end{theorem}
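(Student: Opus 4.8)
The plan is to pass to the Fourier--Laplace picture and reduce the statement to a family of resolvent estimates in $H$. Since $A$ acts only spatially while $\mathcal{L}_{\rho}$ acts only in time, the two commute, and the defining relation $(u,v)=S_{\rho}(f,g)$ becomes, under $\mathcal{L}_{\rho}$, the pointwise identity
\begin{equation*}
  \bigl(zM(z)+A\bigr)\begin{pmatrix}\hat u(t)\\\hat v(t)\end{pmatrix}=\begin{pmatrix}\hat f(t)\\\hat g(t)\end{pmatrix}\qquad\text{in }H=H_{0}\oplus H_{0}^{\perp}
\end{equation*}
for almost every $t\in\R$, with $z\coloneq\iu t+\rho$ and $\hat{\argdot}\coloneq\mathcal{L}_{\rho}(\argdot)$. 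By unitarity of $\mathcal{L}_{\rho}$, the four assertions amount to weighted square-integrability in $t$ of, respectively, $\abs{z}\hat u$, $\abs{z}^{1/2}C\hat u$, $\abs{z}^{1/2}\hat v$ and $C^{\ast}\hat v$, while the hypotheses on the data give $\hat f\in L_{2}$ and $\abs{z}^{1/2}\hat g\in L_{2}$. Hence it suffices to establish the pointwise-in-$z$ estimate
\begin{equation*}
  \abs{z}\norm{\hat u}+\abs{z}^{1/2}\norm{C\hat u}+\abs{z}^{1/2}\norm{\hat v}+\norm{C^{\ast}\hat v}\lesssim\norm{\hat f}+\abs{z}^{1/2}\norm{\hat g}
\end{equation*}
with constant uniform for $\Re z=\rho$.

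Testing the positivity hypothesis of \cref{th:Picard} (which is exactly what makes $S_{\rho}$ well defined) with the vectors $(u,0)$ and $(0,b)$, and using $M(z)=\begin{psmallmatrix}M_{00}(z)&0\\0&0\end{psmallmatrix}+z^{-1}N(z)$, gives $\Re\dualprod{u}{z\widetilde{M}_{00}(z)u}\geq c\norm{u}^{2}$ and $\Re\dualprod{b}{N_{11}(z)b}\geq c\norm{b}^{2}$ uniformly in $z$, where $\widetilde{M}_{00}(z)\coloneq M_{00}(z)+z^{-1}N_{00}(z)$; in particular $z\widetilde{M}_{00}(z)$ and $N_{11}(z)$ are m-accretive and $N_{11}(z)^{-1}$ exists with $\norm{N_{11}(z)^{-1}}\leq 1/c$. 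Eliminating $\hat v=N_{11}(z)^{-1}\bigl(\hat g-(N_{10}(z)+C)\hat u\bigr)$ from the system reduces it to the equation
\begin{equation*}
  \mathcal{T}(z)\hat u=\hat f-N_{01}(z)N_{11}(z)^{-1}\hat g+C^{\ast}N_{11}(z)^{-1}\hat g
\end{equation*}
for $\hat u$, with $\mathcal{T}(z)\coloneq z\widetilde{M}_{00}(z)-\bigl(N_{01}(z)-C^{\ast}\bigr)N_{11}(z)^{-1}\bigl(N_{10}(z)+C\bigr)$ and the right-hand side understood weakly against test functions in $\operatorname{dom}(C)$ (as $\hat g$ is only $H_{0}^{\perp}$-valued). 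Feeding the ``optimal'' choice $v=-N_{11}(z)^{-1}(N_{10}(z)+C)u$ into Picard's inequality yields $\Re\dualprod{u}{\mathcal{T}(z)u}=\Re\dualprod{(u,v)}{(zM(z)+A)(u,v)}\geq c(\norm{u}^{2}+\norm{v}^{2})$, whence --- $N$ being bounded --- $\mathcal{T}(z)$ is coercive on $\operatorname{dom}(C)$ in the graph norm, uniformly in $z$; its leading part is the elliptic operator $C^{\ast}N_{11}(z)^{-1}C$.

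The crux is then a parabolic resolvent estimate for $\mathcal{T}(z)$. Combining the uniform m-accretivity and sectoriality (sector half-angle forced strictly below $\pi/2$ by the Picard condition) of the ``large'' part $z\widetilde{M}_{00}(z)$ and of the elliptic part $C^{\ast}N_{11}(z)^{-1}C$ with the graph-norm coercivity just established gives, for $\Re z=\rho$ large,
\begin{equation*}
  \norm{\mathcal{T}(z)^{-1}}\lesssim\abs{z}^{-1},\qquad\norm{C\,\mathcal{T}(z)^{-1}}+\norm{\mathcal{T}(z)^{-1}C^{\ast}}\lesssim\abs{z}^{-1/2},\qquad\norm{C\,\mathcal{T}(z)^{-1}C^{\ast}}\lesssim1,
\end{equation*}
the half-power gains being the parabolic smoothing; in the self-adjoint model $\mathcal{T}(z)=z+C^{\ast}C$ these reduce to the elementary spectral bounds $\sup_{\lambda\geq0}\abs{z+\lambda}^{-1}=\abs{z}^{-1}$ and $\sup_{\lambda\geq0}\lambda^{1/2}\abs{z+\lambda}^{-1}\lesssim\abs{z}^{-1/2}$, and in general they follow from the holomorphic functional calculus for sectorial operators. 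Plugging these into the reduced equation, and absorbing the lower-order perturbations by taking $\rho$ large, gives $\norm{\hat u}\lesssim\abs{z}^{-1}\norm{\hat f}+\abs{z}^{-1/2}\norm{\hat g}$ and $\norm{C\hat u}\lesssim\abs{z}^{-1/2}\norm{\hat f}+\norm{\hat g}$; then $\norm{\hat v}\lesssim\norm{\hat g}+\norm{C\hat u}+\norm{\hat u}$, and reading $C^{\ast}\hat v$ off the first block equation, $\norm{C^{\ast}\hat v}\lesssim\abs{z}\norm{\hat u}+\norm{\hat v}+\norm{\hat f}$ --- together exactly the displayed pointwise estimate. (For $\abs{t}$ below a fixed threshold this is immediate, since there $\abs{z}$ is comparable to $1$ and the bare bound $\norm{(\hat u,\hat v)}\leq\tfrac{1}{c}\norm{(\hat f,\hat g)}$ of \cref{th:Picard} suffices.) Squaring, integrating in $t$, and using unitarity of $\mathcal{L}_{\rho}$ then yields the asserted memberships of $u$ and $v$.

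I expect the main obstacle to be this resolvent estimate: squeezing out the sharp powers $\abs{z}^{-1}$ and $\abs{z}^{-1/2}$ when $\widetilde{M}_{00}(z)$ and $N_{11}(z)^{-1}$ are merely sectorial rather than self-adjoint calls for the sectorial functional calculus together with uniform (in $z$) control of the sector angles --- it is here that Picard's positivity condition is used decisively, to keep those angles below $\pi/2$ --- plus the bookkeeping needed to identify the form domain of $\mathcal{T}(z)$ with $\operatorname{dom}(C)$ and to carry the $C^{\ast}\hat g$-term in a weak formulation throughout. The remaining ingredients --- the $\mathcal{L}_{\rho}$-reduction, the weighted-norm equivalences, the elimination of $\hat v$, the absorption of lower-order terms --- are routine.
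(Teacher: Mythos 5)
The paper does not prove this theorem --- it is quoted from \cite{Waurick2022} --- so the benchmark is the proof given there, which indeed starts exactly as you do: unitarity of $\mathcal{L}_{\rho}$ reduces all four memberships to pointwise-in-frequency weighted bounds on $\abs{z}\hat u$, $\abs{z}^{\nicefrac{1}{2}}C\hat u$, $\abs{z}^{\nicefrac{1}{2}}\hat v$ and $C^{\ast}\hat v$ for the $2\times 2$ block system. Your reduction, the elimination of $\hat v$, and the final bookkeeping \emph{assuming} the three displayed bounds on $\mathcal{T}(z)^{-1}$ are all correct. The genuine gap is that those three bounds --- which are the entire content of the theorem --- are asserted rather than proved, and the justification offered does not go through as stated. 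The appeal to ``the holomorphic functional calculus for sectorial operators'' is not an off-the-shelf step here: $z\widetilde{M}_{00}(z)$ is an operator, not a spectral parameter; it does not commute with $C^{\ast}N_{11}(z)^{-1}C$; and, contrary to your parenthetical, its numerical range is \emph{not} contained in a fixed sector of half-angle $<\nicefrac{\pi}{2}$ uniformly in $z$ --- the Picard condition only pins it to the half-plane $\Re\geq c$, while its modulus grows like $\abs{z}$, so the sector half-angle $\arccos\bigl(c/(K\abs{z})\bigr)$ degenerates to $\nicefrac{\pi}{2}$ as $\abs{\Im z}\to\infty$. Uniform accretivity of $\mathcal{T}(z)$ by itself yields only $\norm{\mathcal{T}(z)^{-1}}\leq \nicefrac{1}{c}$; upgrading this to the decisive decay $\abs{z}^{-1}$ and the half-power gains is a non-commutative operator-sum problem and is exactly where the work lies.

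The estimates \emph{are} true and can be closed elementarily by quadratic forms, which is in substance what the cited proof does. Test $\mathcal{T}(z)u=\psi$ twice: once with $u$, which via $\Re N_{11}(z)^{-1}\geq \nicefrac{c}{K^{2}}$ gives the graph coercivity $\norm{u}^{2}+\norm{Cu}^{2}\lesssim \Re\dualprod{u}{\psi}$ that you already derived; and once with $zu$, using $\Re\dualprod{zu}{z\widetilde{M}_{00}(z)u}=\abs{z}^{2}\Re\dualprod{u}{\widetilde{M}_{00}(z)u}\geq c\abs{z}^{2}\norm{u}^{2}$ and controlling the cross term by $\Re\bigl(\bar z\dualprod{Cu}{N_{11}(z)^{-1}Cu}\bigr)\geq -K'\abs{z}\norm{Cu}^{2}$, which uses the \emph{uniform} sector angle of $N_{11}(z)^{-1}$ (this, not sectoriality of $z\widetilde{M}_{00}(z)$, is where the parabolic hypothesis enters). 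A weighted sum of the two identities yields $\norm{u}\lesssim\abs{z}^{-1}\norm{\psi}$, whence $\norm{Cu}\lesssim\abs{z}^{-\nicefrac{1}{2}}\norm{\psi}$ from the first identity, and the bounds on $\mathcal{T}(z)^{-1}C^{\ast}$ and $C\mathcal{T}(z)^{-1}C^{\ast}$ follow by the same argument for the adjoint system. Two smaller points also need patching: eliminating $\hat v$ presupposes $\hat u(t)\in\operatorname{dom}(C)$ pointwise, which is part of the conclusion (run the estimates first on a dense set of data for which regularity of $S_{\rho}(f,g)$ is already known, e.g.\ via \cref{th:Picard}~ii), then close by continuity); and the term $C^{\ast}N_{11}(z)^{-1}\hat g$ must be carried in weak form until the bound on $C\mathcal{T}(z)^{-1}C^{\ast}$ is in hand.
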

We can proceed to investigate the intricacies of evolutionary equations with state-dependent delay.

%%%%%%%%%%%%%%%%%%%%%%%%%%%%%%
%          SECTION
\section{Preliminary Considerations}
\label{sec:Prelim}

\subsection{The delay operator}
\label{subsec:Delay}
We start with some observations regarding the delay operator. From here on, let $h>0$.
\begin{lemma}
  \label{th:EstimateForThetaT}
  Let $T>0$ and $t\in [0,T]$. The map
  \begin{align*}
    \Theta_{t}\colon \, L_{2,\rho}(-h,T;H)&\rightarrow L_{2}(-h,0;H)\text{,}\\
    u&\mapsto u_{(t)}
  \end{align*}
  satisfies $\norm{\Theta_{t}}\leq \e^{\rho t}$.
\end{lemma}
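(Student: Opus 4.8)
The goal is to bound $\norm{u_{(t)}}_{L_2(-h,0;H)}$ in terms of $\norm{u}_{L_{2,\rho}(-h,T;H)}$. First I would write out the definitions explicitly: for $u \in L_{2,\rho}(-h,T;H)$ and $t \in [0,T]$ we have $u_{(t)}(s) = u(t+s)$ for $s \in [-h,0]$, so that
\begin{equation*}
  \norm{u_{(t)}}_{L_2(-h,0;H)}^2 = \medint\int_{-h}^{0} \norm{u(t+s)}_H^2 \dx[s] = \medint\int_{t-h}^{t} \norm{u(r)}_H^2 \dx[r]\text{,}
\end{equation*}
after the substitution $r = t+s$. Note that since $t \in [0,T]$ and $h > 0$, the interval $[t-h,t]$ is contained in $[-h,T]$, so the integral makes sense for $u$ defined on $(-h,T)$.

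The key step is to insert the exponential weight in a way that produces only a bounded factor. On the interval $[t-h,t]$ we have $r \leq t$, hence $-2\rho r \geq -2\rho t$ and therefore $1 \leq \e^{2\rho t}\e^{-2\rho r}$ (using $\rho > 0$). Multiplying the integrand by this and extending the integral from $[t-h,t]$ to all of $(-h,T)$ gives
\begin{equation*}
  \medint\int_{t-h}^{t} \norm{u(r)}_H^2 \dx[r] \leq \e^{2\rho t} \medint\int_{t-h}^{t} \norm{u(r)}_H^2 \e^{-2\rho r} \dx[r] \leq \e^{2\rho t} \medint\int_{-h}^{T} \norm{u(r)}_H^2 \e^{-2\rho r} \dx[r] = \e^{2\rho t}\norm{u}_{2,\rho}^2\text{.}
\end{equation*}
Taking square roots yields $\norm{\Theta_t u}_{L_2(-h,0;H)} \leq \e^{\rho t}\norm{u}_{2,\rho}$, which is the claimed operator norm bound. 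Linearity of $\Theta_t$ is immediate from the definition, so $\Theta_t$ is a bounded linear operator with $\norm{\Theta_t} \leq \e^{\rho t}$.

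There is no real obstacle here; the only point requiring a moment's care is verifying that the shifted interval $[t-h,t]$ stays inside the domain $[-h,T]$ of $u$ (which is where the hypothesis $t \geq 0$ is used for the lower endpoint and $t \leq T$ for the upper), and that the inequality $1 \leq \e^{2\rho t - 2\rho r}$ goes in the correct direction, which hinges on $r \leq t$ on the relevant interval together with $\rho > 0$. One should also remark that it suffices to establish the estimate on the dense subspace $\mathcal{C}_c^\infty(-h,T;H)$ and extend by continuity, or simply note that the computation above is valid verbatim for any $u \in L_{2,\rho}(-h,T;H)$ since all integrals involved are finite.
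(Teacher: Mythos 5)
Your proof is correct and follows essentially the same route as the paper: substitute $r=t+s$, insert the factor $\e^{2\rho t}\e^{-2\rho r}\geq 1$ (valid since $r\leq t$ and $\rho>0$), and enlarge the domain of integration to $(-h,T)$. The additional remarks on the interval inclusion and density are fine but not needed beyond what the computation already shows.
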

\begin{proof}
  For $u\in L_{2,\rho}(0,T;H)$:
  \begin{align*}
    \norm{\Theta_{t}u}_{L_{2}(-h,0;H)}^{2}
    &=\norm{u_{(t)}}_{L_{2}(-h,0;H)}^{2}
    = \medint\int_{-h}^0 \norm{u(t+s)}_{H}^{2}\dx[s]\\
    &=\medint\int_{t-h}^t \norm{u(s)}_{H}^{2}\dx[s]
    \leq \medint\int_{t-h}^t \norm{u(s)}_{H}^{2}\e^{(t-s)2\rho}\dx[s]\\
    &\leq \e^{2\rho t} \medint\int_{-h}^T \norm{u(s)}_{H}^{2}\e^{-2\rho s}\dx[s]
      = \e^{2\rho t}\norm{u}_{L_{2,\rho}(-h,T;H)}^{2}\text{.}\qedhere
  \end{align*}
\end{proof}
Investigating the time-dependence of the delay operator, we obtain the following result, that can already be found in \cite[Thm.~2.2]{Waurick2023} or \cite[Lem.~2.1]{Aigner2024}. % The short proof is repeated for the convenience of the reader.
\begin{lemma}
  \label{th:ThetaIsLipschitz}
  For $0<T\leq \infty$ let
  \begin{align*}
    \Theta\colon\,L_{2,\rho}(-h,T;H)&\rightarrow L_{2,\rho}\bigl(0,T;L_{2}(-h,0;H)\bigr)\text{,}\\
    u&\mapsto (t\mapsto \Theta_{t}u)\text{.}
  \end{align*}
  Then $\norm{\Theta}\leq \frac{1}{\sqrt{2\rho}}$.
\end{lemma}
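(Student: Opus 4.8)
The goal is to bound the norm of $\Theta$ as an operator into $L_{2,\rho}\bigl(0,T;L_{2}(-h,0;H)\bigr)$, so I would start by writing out the target norm explicitly and using Tonelli to exchange the order of integration. For $u\in L_{2,\rho}(-h,T;H)$ one has
\begin{equation*}
  \norm{\Theta u}_{L_{2,\rho}(0,T;L_{2}(-h,0;H))}^{2}
  = \medint\int_{0}^{T}\medint\int_{-h}^{0}\norm{u(t+s)}_{H}^{2}\dx[s]\,\e^{-2\rho t}\dx[t]\text{.}
\end{equation*}
The inner variable substitution $\sigma = t+s$ (for fixed $t$) turns the inner integral into $\int_{t-h}^{t}\norm{u(\sigma)}_{H}^{2}\dx[\sigma]$; alternatively, and more convenient for the Fubini step, I would keep $s$ as the inner variable and swap the two integrals directly, obtaining
\begin{equation*}
  \medint\int_{-h}^{0}\medint\int_{0}^{T}\norm{u(t+s)}_{H}^{2}\e^{-2\rho t}\dx[t]\dx[s]\text{.}
\end{equation*}

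In the inner integral I substitute $\sigma=t+s$, so $\e^{-2\rho t}=\e^{-2\rho\sigma}\e^{2\rho s}$ and the domain becomes $\sigma\in(s,T+s)$. Since $s\in[-h,0]$ we have $\e^{2\rho s}\leq 1$, and enlarging the $\sigma$-domain to all of $(-h,T)$ (legitimate because the integrand is nonnegative and $s\geq -h$) gives the bound $\int_{-h}^{T}\norm{u(\sigma)}_{H}^{2}\e^{-2\rho\sigma}\dx[\sigma]=\norm{u}_{L_{2,\rho}(-h,T;H)}^{2}$ for each fixed $s$. The outer integral over $s\in[-h,0]$ then contributes a factor that I must control — naively this is just $h$, which would give $\norm{\Theta}\leq\sqrt{h}$, not the claimed $\tfrac{1}{\sqrt{2\rho}}$. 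So the crude pointwise bound $\e^{2\rho s}\leq 1$ is too lossy; instead I keep the factor $\e^{2\rho s}$ and integrate it: $\int_{-h}^{0}\e^{2\rho s}\dx[s]=\tfrac{1}{2\rho}(1-\e^{-2\rho h})\leq\tfrac{1}{2\rho}$. Combining,
\begin{equation*}
  \norm{\Theta u}^{2}\leq\Bigl(\medint\int_{-h}^{0}\e^{2\rho s}\dx[s]\Bigr)\norm{u}_{L_{2,\rho}(-h,T;H)}^{2}\leq\tfrac{1}{2\rho}\norm{u}_{L_{2,\rho}(-h,T;H)}^{2}\text{,}
\end{equation*}
which yields $\norm{\Theta}\leq\tfrac{1}{\sqrt{2\rho}}$.

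I should be slightly careful with the measurability/density bookkeeping: strictly the computation above is first carried out for $u\in\mathcal{C}_{c}^{\infty}(-h,T;H)$, where Fubini and the substitution are unproblematic, and then extended to all of $L_{2,\rho}(-h,T;H)$ by density, the estimate showing $\Theta$ extends to a bounded operator; one also checks that the extension agrees with $u\mapsto(t\mapsto u_{(t)})$ a.e., which follows from \Cref{th:EstimateForThetaT} (the pointwise-in-$t$ bound on $\Theta_{t}$ guarantees $t\mapsto\Theta_{t}u$ is well-defined in $L_{2,\rho}$ and depends continuously on $u$). The only genuine subtlety — the ``main obstacle'' — is exactly the observation that one must retain the $\e^{2\rho s}$ weight rather than bounding it by $1$; this is precisely the mechanism by which the exponential weighting converts the delay operator into a \emph{uniformly small} map as $\rho\to\infty$, which is what makes the later contraction argument work. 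Everything else is a routine application of Tonelli's theorem and a change of variables.
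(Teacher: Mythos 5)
Your proof is correct: the Fubini/Tonelli swap followed by the substitution $\sigma=t+s$, crucially retaining the factor $\e^{2\rho s}$ and integrating it to $\tfrac{1}{2\rho}(1-\e^{-2\rho h})\leq\tfrac{1}{2\rho}$, is exactly the standard argument; the paper itself omits the proof and defers to the cited references, whose proof proceeds the same way. Your side remark correctly identifies the one point where a naive estimate ($\e^{2\rho s}\leq 1$) would only give the lossy bound $\sqrt{h}$, and the density bookkeeping, while harmless, is not strictly needed since Tonelli applies directly to the nonnegative measurable integrand.
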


\begin{remark}[$\Theta$ on $H^{1}_{\rho}\left(0,T;H\right)$]
  \label{rmk:ThetaIsLipschitz}
  We point out that the same estimate holds for
  \begin{align*}
    \tilde{\Theta}\colon\,H_{\rho}^{1}(-h,T;H)&\rightarrow H_{\rho}^{1}\bigl(0,T;L_{2}(-h,0;H)\bigr)\text{,}\\
    u&\mapsto (t\mapsto \Theta_{t}u)\text{.}
  \end{align*}
  This holds, since for $u\in H^{1}_{\rho}\left(-h,\infty;H\right)$ one has $\partial_{\rho}u_{(s)}=(\partial_{\rho}u)_{(s)}$, because $\partial_{\rho}u(\argdot + s)= (\partial_{\rho}u)(\argdot + s)$. Therefore, one can simply estimate:
  \begin{align*}
    \norm{\tilde{\Theta} u}_{H^{1}_{\rho}(0,T;L_{2}(-h,0;H))}^{2}
    &= \norm{\Theta u}_{L_{2,\rho}(0,T;L_{2}(-h,0;H))}^{2} + \norm{\partial_{\rho} \Theta u}_{L_{2,\rho}(0,T;L_{2}(-h,0;H))}^{2}\\
    &= \norm{\Theta u}_{L_{2,\rho}(0,T;L_{2}(-h,0;H))}^{2} + \norm{\Theta \partial_{\rho}u}_{L_{2,\rho}(0,T;L_{2}(-h,0;H))}^{2}\\
    &\leq \norm{\Theta}^{2} \norm{u}_{L_{2,\rho}(-h,T;H)}^{2} +  \norm{\Theta}^{2} \norm{\partial_{\rho} u}_{L_{2,\rho}(-h,T;H)}^{2}\\
    &= \norm{\Theta}^{2}\norm{u}_{H^{1}_{\rho}(-h,T;H)}^{2}\text{.}
  \end{align*}
  Similarly, one can show the same estimate for $\tilde{\Theta}$ as a map
  \begin{equation*}
    H_{\rho}^{1}(-h,T;H)\rightarrow L_{2,\rho}\bigl(0,T;H^{1}(-h,0;H)\bigr)\text{,} \quad
    u\mapsto (t\mapsto \Theta_{t}u)\text{.}
  \end{equation*}
\end{remark}
Furthermore, we observe that antiderivative and delay interchange:
\begin{proposition}
  \label{th:delayIntegralCommute}
  For $u \in L_{2,\rho}(-h,\infty;H)$ holds $\Theta_{t}I_{\rho}u = \smallint_{-h}^{\argdot}\Theta_{t}u$.\footnote{Here we understand $I_{\rho} \coloneq \smallint_{-\infty}^{\argdot}$.}
\end{proposition}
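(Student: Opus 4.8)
The plan is to write out both sides as $H$-valued functions on $[-h,0]$ and identify them. Observe first that $I_{\rho}u$ lies in $H^{1}_{\rho}(-h,\infty;H)$ (being the antiderivative of an $L_{2,\rho}$-function, cf.\ the corollary in \cref{subsec:TimeDerivative}), so the shifted function $\Theta_{t}I_{\rho}u=(I_{\rho}u)(t+\argdot)$ lies in $H^{1}(-h,0;H)$; likewise $\smallint_{-h}^{\argdot}\Theta_{t}u$ does, being the antiderivative of the $L_{2}(-h,0;H)$-function $\Theta_{t}u$. By the (classical) Sobolev embedding \cref{th:Sobolev} both sides admit continuous representatives, so it suffices to verify the claimed identity pointwise on $[-h,0]$.

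I would then argue by the fundamental theorem of calculus. For a.e.\ $s\in(-h,0)$,
\begin{equation*}
  \tfrac{\dd}{\dd s}\bigl(\Theta_{t}I_{\rho}u\bigr)(s)=(I_{\rho}u)'(t+s)=u(t+s)=(\Theta_{t}u)(s)=\tfrac{\dd}{\dd s}\bigl(\smallint_{-h}^{s}\Theta_{t}u\bigr)\text{,}
\end{equation*}
using $\partial_{\rho}I_{\rho}=\mathrm{id}$ together with the elementary fact --- already exploited in \cref{rmk:ThetaIsLipschitz} --- that shifting in time commutes with differentiation. Hence the two sides of the asserted identity differ by a (vector-valued) constant; evaluating at the left endpoint $s=-h$, where $\smallint_{-h}^{-h}\Theta_{t}u=0$, this constant equals $(I_{\rho}u)(t-h)=\smallint_{-\infty}^{t-h}u(r)\dx[r]$, which vanishes under the extension-by-zero (resp.\ support) convention on $u$ in force here. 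Therefore both sides coincide.

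Equivalently --- and this makes the role of the convention transparent --- one may compute directly: $\bigl(\Theta_{t}I_{\rho}u\bigr)(s)=(I_{\rho}u)(t+s)=\smallint_{-\infty}^{t+s}u(r)\dx[r]$, while the substitution $r=t+\sigma$ turns the right-hand side into $\smallint_{-h}^{s}u(t+\sigma)\dx[\sigma]=\smallint_{t-h}^{t+s}u(r)\dx[r]$; the two agree precisely when $\smallint_{-\infty}^{t-h}u(r)\dx[r]=0$. The only step that is not purely mechanical is exactly this reconciliation of the lower limit of integration, which is where the support properties of $u$ enter; beyond that I foresee no obstacle.
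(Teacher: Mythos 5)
Your computation follows the same route as the paper's own proof: the paper likewise writes $(I_{\rho}u)_{(t)}(s)=\smallint_{-h}^{t+s}u(r)\dx[r]$, substitutes $r\mapsto t+r$, and then replaces the resulting lower limit $-h-t$ by $-h$. You have correctly isolated the one non-mechanical step, namely the reconciliation of the lower limits, and your identification of the discrepancy as the additive constant $(I_{\rho}u)(t-h)=\smallint_{-\infty}^{t-h}u(r)\dx[r]$ is exactly right.

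The gap is in how you dispose of that constant. Under the extension-by-zero convention $u$ vanishes only on $(-\infty,-h)$, so $(I_{\rho}u)(t-h)=\smallint_{-h}^{t-h}u(r)\dx[r]$, and for $t>0$ the interval $[-h,t-h]$ lies inside the support of $u$: taking $u=\chi_{[0,1]}x$ with $0\neq x\in H$ and $t=h+1$ gives $(I_{\rho}u)(t-h)=x\neq 0$. Hence the constant does not vanish, and the asserted identity holds only in the corrected form $\Theta_{t}I_{\rho}u=(I_{\rho}u)(t-h)+\smallint_{-h}^{\argdot}\Theta_{t}u$; your closing claim that the constant ``vanishes under the \ldots{} support convention'' is the step that fails. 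I should add that the paper's own one-line proof makes exactly the same unjustified replacement of $\smallint_{-h-t}^{s}$ by $\smallint_{-h}^{s}$, so you have in effect surfaced a defect of the proposition itself rather than introduced a new error --- but as a standalone argument your proof does not close the gap, and wherever the proposition is invoked (e.g.\ in the contraction estimate of \cref{th:localExistIns}) the extra constant $(I_{\rho}(v-w))(t-h)$ would have to be estimated separately.
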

\begin{proof}
   For $s\in [-h,0]$, $t+s>0$ and $u\in L_{2,\rho}\left(-h,\infty;H\right)$ we compute:
   \begin{equation*}
     (I_{\rho}u)_{(t)}(s)= \medint\int_{-h}^{t+s} u(r)\dx[r]
        = \medint\int_{-h-t}^s u(t+r)\dx[r]
        = \medint\int_{-h}^s u_{(t)}(r)\dx[r]\text{.}\qedhere
   \end{equation*}
\end{proof}

\subsection{Auxiliary tools}
\label{subsec:AuxiliaryTools}
In order to prepare for \cref{sec:SolThy}, we need some assumptions --- in particular on the right-hand side of \cref{eq:EvolProblem}.

\subsubsection{Lipschitz-regularity}
\label{subsubsec:LipschitzRegularityOfRHS}
For the right-hand side $F$ of \cref{eq:EvolProblem}, we will need to demand some regularity property. Indeed, already for classical ODEs one cannot prove uniqueness of solutions for arbitrary continuous right-hand sides $F$ (i.e., in the case of Peano’s theorem). Motivated from the state-dependent ODE case (studied in \cite{Aigner2024,Waurick2023}), we use the following notion of Lipschitz-continuity:
\begin{definition}
  \label{def:AlmLC}
  A function $G\colon [0,\infty)\times L_{2,\rho}\left(-h,0;H\right) \to H$ is {\em almost uniformly Lipschitz-continuous} if it is continuous and for all $\alpha >0$ there exists some $L_{\alpha}>0$ such that
  \begin{equation*}
    \forall t\in [0,\infty)\,\forall \phi,\psi \in V_{\alpha}\colon \norm{G(t,\phi) - G(t,\psi)}_{H}\leq L_{\alpha}\norm{\phi - \psi}_{H^{1}(-h,0;H)}\text{,}
  \end{equation*}
  where we set $V_{\alpha}\coloneq \{u \in H^{1}(-h,0;H)\colon \norm{u'}_{\infty}\leq \alpha\}\subseteq L_{2}(-h,0;H)$.
\end{definition}
We follow up with an easy observation:
\begin{lemma}
  \label{th:V_alpha_convex}
  For $\alpha>0$, the space $V_{\alpha}\subseteq L_{2}(-h,0;H)$ is closed and convex. In particular, $V_{\alpha}$ is a closed and convex subspace of $H^{1}(-h,0;H)$.
\end{lemma}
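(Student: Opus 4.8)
The claim has two parts: (i) $V_\alpha$ is closed and convex as a subset of $L_2(-h,0;H)$, and (ii) it is closed and convex as a subset of $H^1(-h,0;H)$. I would dispose of convexity first, since it is identical in both topologies: if $u,v\in V_\alpha$ and $\lambda\in[0,1]$, then $\lambda u+(1-\lambda)v\in H^1(-h,0;H)$ with derivative $\lambda u'+(1-\lambda)v'$, and by the triangle inequality $\norm{\lambda u'+(1-\lambda)v'}_\infty\le\lambda\norm{u'}_\infty+(1-\lambda)\norm{v'}_\infty\le\alpha$, so $V_\alpha$ is convex.

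For closedness in $L_2(-h,0;H)$, let $(u_n)$ be a sequence in $V_\alpha$ converging to some $u\in L_2(-h,0;H)$. The key point is that the bound $\norm{u_n'}_\infty\le\alpha$ forces equicontinuity (indeed each $u_n$ is $\alpha$-Lipschitz after passing to its continuous representative, using \cref{th:Sobolev}, or rather its classical analogue on a bounded interval), so the sequence $(u_n')$ is bounded in $L_2(-h,0;H)$; passing to a subsequence it converges weakly to some $w\in L_2(-h,0;H)$, and since the distributional derivative is weakly closed, $w=u'$, so $u\in H^1(-h,0;H)$. It remains to check $\norm{u'}_\infty\le\alpha$. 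This follows because the set $\{v\in L_2(-h,0;H):\norm{v}_\infty\le\alpha\}$ (the ball of $L_\infty(-h,0;H)\subseteq L_2(-h,0;H)$) is convex and closed in $L_2$, hence weakly closed, so the weak limit $u'=w$ inherits the bound. Thus $u\in V_\alpha$ and $V_\alpha$ is closed in $L_2(-h,0;H)$.

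For part (ii), closedness in $H^1(-h,0;H)$ is actually the easier half: the inclusion $H^1(-h,0;H)\hookrightarrow L_2(-h,0;H)$ is continuous, so $H^1$-convergence implies $L_2$-convergence, and we already know $V_\alpha$ is $L_2$-closed; hence $V_\alpha\cap H^1(-h,0;H)=V_\alpha$ is $H^1$-closed. (Alternatively one argues directly: $H^1$-convergence gives $u_n'\to u'$ in $L_2$, a subsequence converges pointwise a.e., and the bound $\norm{u_n'}_\infty\le\alpha$ passes to the limit.) Convexity in $H^1$ is the convexity already established. This gives the "in particular" statement.

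The only genuinely delicate point is verifying that the uniform bound on derivatives survives the limit, i.e.\ that the $L_\infty$-ball is weakly $L_2$-closed; everything else is routine. I would state this as the use of Mazur's lemma / the fact that closed convex sets are weakly closed, and keep the rest brief.
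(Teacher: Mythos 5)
Your proof is correct and uses the same core mechanism as the paper's: extract a weakly convergent subsequence of the bounded derivatives and invoke ``closed convex $\Rightarrow$ weakly closed'' to carry the $L_{\infty}$-bound to the limit. The only organizational difference is where that argument is applied: the paper first proves $H^{1}$-closedness of $V_{\alpha}$ directly (via Fischer--Riesz, a pointwise a.e.\ convergent subsequence of $u_{n}'$) and then applies Mazur's lemma to $V_{\alpha}$ as a convex subset of $H^{1}$, whereas you apply the weak-closedness argument to the $L_{\infty}$-ball inside $L_{2}$ and obtain $H^{1}$-closedness afterwards as a trivial corollary of $L_{2}$-closedness. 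Both routes need the same two ingredients (a pointwise a.e.\ subsequence argument to get strong closedness of some convex set, and Mazur to upgrade it to weak closedness); yours is arguably marginally cleaner because the convex set you work with is the more elementary one. One cosmetic remark: the appeal to equicontinuity of $(u_{n})$ is superfluous --- the $L_{2}$-boundedness of $(u_{n}')$ follows directly from $\norm{u_{n}'}_{\infty}\leq\alpha$ and the finite length of the interval, which is exactly the estimate the paper writes down.
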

\begin{proof}
  The convexity of $V_{\alpha}$ is evident. For closedness let $(u_{n})_{n}\in V_{\alpha}^{\N}$ with $u_{n}\to u$ in $L_{2}(-h,0;H)$ as $n\to \infty$ for some $u\in L_{2}(-h,0;H)$. We observe, that $(u_{n})_{n}$ is bounded in $H^{1}(-h,0;H)$:
  \begin{equation*}
    \norm{u_{n}'}_{2}^{2}= \medint\int_{-h}^{0} \norm{u_{n}'(t)}^{2}\dx[t] \leq \medint\int_{-h}^{0}\alpha^{2} \dx[t] = h\alpha^{2}\text{.}
  \end{equation*}
  Therefore, we obtain a weakly converging subsequence $(u_{n_{k}})_{k}\in H^{1}(-h,0;H)^{\N}$, converging necessarily to $u$, and hence $u\in H^{1}(-h,0;H)$. In addition, since $V_{\alpha}\subseteq H^{1}$ is convex, by Mazur's lemma: $\overline{V_{\alpha}}^{\!\!\phantom{.}_{\sigma (H^{1},(H^{1})')}}= \overline{V_{\alpha}}^{\!\!\phantom{.}_{H^{1}}}$. Since $V_{\alpha}$ is a closed subspace in $H^{1}(-h,0;H)$ by \cite[rem.~3.1]{Waurick2023}, $u\in V_{\alpha}$ follows. For the convenience of the reader, we repeat the short proof of closedness of $V_{\alpha}$ in $H^{1}$ here:\\
  Suppose $(u_{n})_{n} \in V_{\alpha}^{\N}$ with $u_{n}\to u$ in $H^{1}(-h,0;H)$. By the Fischer--Riesz theorem, $u_{n_{k}}'\to u'$ pointwise almost everywhere for a subsequence and thus $\norm{u'}_{\infty} \leq \alpha$.
\end{proof}

\subsubsection{Projection}
\label{subsubsec:Projection}
It will be critical to force the argument of $F$ from \cref{eq:EvolProblem} to be in $V_{\alpha}$ in order to apply almost uniform Lipschitz-continuity of $F$. Expanding upon the proof strategy from \cite[thm.~4.1]{Waurick2023}, we accomplish this by means of a projection:
\begin{definition}
  \label{def:MetricProjection}
  Let the metric projection on $V_{\alpha}$ be denoted by
  \begin{equation*}
    \pi_{\alpha}\colon \begin{Bmatrix}L_{2} \\ H^{1}\end{Bmatrix}
    \to V_{\alpha}\text{,}\qquad v\mapsto
    \begin{cases*}
      v &for $v\in V_{\alpha}$\text{,}\\
      \operatorname{arg min}_{u\in V_{\alpha}}\norm{u-v}_{L_{2}\,\text{or}\,H^{1}} &for $v\notin V_{\alpha}\text{.}$
    \end{cases*}
  \end{equation*}
\end{definition}
Since $V_{\alpha}$ is a closed convex subspace and because $H^{1}$ and $L_{2}$ are Hilbert spaces, the metric projection exists. In either case, the metric projection is Lipschitz-continuous with constant $1$ (cf.~\cite[thm.~V.3.2 \& ex.~V.6.15]{Werner2011}).

\subsubsection{Regularity preservation}
\label{subsubsec:RegularityPreservationOfF}
For technical reasons, we will have to impose the condition that the right-hand side of \cref{eq:EvolProblem} in some sense increases the regularity of the unknown $u$. We will achieve this by supposing that $F$ is of a specific form and contains a nonlinearity that preserves regularity.
\begin{definition}
  \label{def:RegPres}
  We call $F$ {\em regularity preserving} if for $u \in H^{1}_{\rho}$ with $\norm{u'}_{\infty}<\infty$, the function $t\mapsto F\bigl(t, u_{(t)}\bigr)$ defines an element of $H^{1}_{\rho}(0,\infty;H)$.
\end{definition}
\begin{example}
  Examples of such regularity-preserving functions include
  \begin{itemize}[leftmargin=4ex]
    \item autonomous functions $F\bigl(u_{(t)}\bigr) =\sigma\bigl(u(t)\bigr)$ for any (globally) Lipschitz-continuous function $\sigma\colon H\to H$:\\[1cm]
          For $t,s \geq 0$ we can estimate:
          \begin{align*}
            \norm{F(t,u_{(t)}) - F(s,u_{s})} &= \norm[\big]{\sigma\bigl(u(t)) - \sigma(u(s)\bigr)}\\
                                             &\leq L_{\sigma}\norm{u(t)-u(s)}\\
                                             &\leq L_{\sigma}\norm{u'}_{\infty}\abs{t-s}\text{,}
          \end{align*}
          where $L_{\sigma}$ denotes the Lipschitz-constant of $\sigma$. In particular, $t\mapsto F\bigl(t,u(t)\bigr)$ is differentiable almost everywhere and we can bound the derivative by the $L_{2,\rho}(0,\infty;H)$-integrable constant $L_{\sigma}\norm{u'}_{\infty}$.
    \item the ``standard'' example of state-dependent delay, $F\bigl(t,u_{(t)}\bigr) = u\bigl(t-\tau(u(t))\bigr)$ for any (globally) Lipschitz-continuous $\tau\colon H \to [0,h]$:\\
          Because $u' \in L_{\infty}$, $u$ is Lipschitz-continuous and we can calculate for $t,s \geq 0$:
          \begin{align*}
            \norm[\big]{F(t,u_{(t)}) - F(s,u_{(s)})}  &= \norm[\big]{u\bigl(t-\tau(u(t))\bigr) - u\bigl(s-\tau(u(s))\bigr)}\\
                                                &\leq \norm{u'}_{\infty}\abs{t-\tau(u(t)) - s+\tau(u(s))}\\
                                                &\leq \norm{u'}_{\infty}\bigl[\abs{t-s} + L_{\tau}\norm{u(t)-u(s)}\bigr]\\
                                                &\leq \norm{u'}_{\infty}\bigl[1 + L_{\tau}\norm{u'}_{\infty}\bigr]\abs{t-s}\text{,}
          \end{align*}
          where $L_{\tau}$ denotes the Lipschitz-constant of $\tau$. In particular, $t\mapsto F\bigl(t, u_{(t)}\bigr)$ is differentiable almost everywhere and we can bound the derivative by the constant $\norm{u'}_{\infty}\bigl[1 + L_{\tau}\norm{u'}_{\infty}\bigr]$, which is an $L_{2,\rho}(0,\infty;H)$-integrable function.
  \end{itemize}
\end{example}
The next result provides a more abstract class of examples.
\begin{example}
  \label{th:RegularityPreservation}
  Let $F\colon [0,\infty)\times L_{2}(-h,0;H) \to H$ be a function that is
  \begin{itemize}[leftmargin=4ex]
    \item an almost uniformly Lipschitz-continuous map and
    \item satisfies $\forall \alpha>0\exists L>0\forall\varphi\in V_{\alpha}\forall s,t\in [0,\infty)$:
          \begin{equation}
            \norm{F(t,\varphi)-F(s,\varphi)}_{H}\leq L\abs{t-s}\text{.}\tag{$\triangle$}
          \end{equation}
  \end{itemize}
  Then, the function $t \mapsto F\bigl(t, u_{(t)}\bigr)$ is an element of $H^{1}_{\rho}(0,\infty;H)$ for any $u\in H^{1}_{\rho}(-h,\infty;H)$ with bounded derivative.
\end{example}
\begin{proof}
  We let $0\leq s \leq t$ and calculate for $u \in H^{1}_{\rho}(-h,\infty;H)$ with $\norm{u'}_{\infty}\leq \alpha$:
  \begin{align*}
    \norm[\big]{F\bigl(t,u_{(t)}\bigr) - F\bigl(s,u_{(s)}\bigr)}_{H}
    &\leq \norm[\big]{F\bigl(t,u_{(t)}\bigr) - F\bigl(t,u_{(s)}\bigr)} + \norm[\big]{F\bigl(t, u_{(s)}\bigr) - F\bigl(s,u_{(s)}\bigr)}\\
    &\leq L_{\alpha}^{(1)}\norm{u_{(s)}-u_{(t)}}_{H^{1}(-h,0;H)} + L_{\alpha}^{(2)}\abs{t-s}\text{,}
  \end{align*}
  where $L_{\alpha}^{(1)}$ denotes the Lipschitz-constant w.r.t.~to the first variable provided by almost uniform Lipschitz-continuity and $L_{\alpha}^{(2)}$ w.r.t.~the second variable provided by ($\triangle$). We can further estimate the first summand in the following way:
  \begin{align*}
    \norm{u_{(s)}-u_{(t)}}_{L_{2}(-h,0;H)}^{2}
    &= \medint\int_{-h}^{0}\norm{u(s + r) - u(t + r)}^{2}_{H} \dx[r]\\
    &= \medint\int_{s-h}^{s} \norm{u(t-s+x)-u(x)}^{2}_{H} \dx[x]\\
    &= \medint\int_{s-h}^{s} \norm[\Big]{\medint\int_{0}^{t-s} u'(y) \dx[y] }^{2}_{H} \dx[x]\\
    &\leq \medint\int_{s-h}^{s} \abs{t-s}^{2} \alpha^{2} \dx[x]
      = \abs{t-s}^{2}\alpha^{2}h\text{.}
  \end{align*}
  Hence, the Lipschitz-seminorm is bounded (almost everywhere) and the claim follows.
\end{proof}

\subsubsection{Integrability}
\label{subsubsec:IntegrabilityOfF}
Since the theory of evolutionary equations establishes well-posedness of a problem by applying the Picard operator $S_{\rho}$ to the right-hand side of a problem, the associated solution theory requires an input on the entire (half-)line. Hence, we will also have to demand an integrability condition. It turns out that the following condition is sufficient:
\begin{lemma}
  \label{th:integrability}
  Let $v\in H^{1}_{\rho}(-h,\infty;H)$ and $F \colon [0,\infty)\times L_{2}(-h,0;H) \to H$ be almost uniformly Lipschitz-continuous. Suppose $F(\argdot,0)\in L_{2,\rho}(0,\infty;H)$. Then for all $\alpha>0$ holds $F\bigl(\argdot, \pi_{\alpha}(v_{(\argdot)})\bigr)\in L_{2,\rho}(0,\infty;H)$.
\end{lemma}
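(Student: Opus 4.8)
The plan is to bound the weighted norm $\norm{F\bigl(\argdot,\pi_{\alpha}(v_{(\argdot)})\bigr)}_{2,\rho}$ directly, splitting off the value $F(\argdot,0)$ and exploiting that the metric projection $\pi_{\alpha}$ is a $1$-Lipschitz map that fixes the zero function. First I would note that the constant function $0$ belongs to $V_{\alpha}$ (indeed $\norm{0'}_{\infty}=0\leq\alpha$), so $\pi_{\alpha}(0)=0$. Since $\pi_{\alpha}(v_{(t)})\in V_{\alpha}$ for a.e.\ $t\geq 0$, almost uniform Lipschitz-continuity of $F$ (\cref{def:AlmLC}), applied with $\phi=\pi_{\alpha}(v_{(t)})$ and $\psi=0$ and combined with the triangle inequality, yields with the constant $L_{\alpha}$ belonging to $\alpha$
\begin{equation*}
  \norm[\big]{F\bigl(t,\pi_{\alpha}(v_{(t)})\bigr)}_{H}\leq L_{\alpha}\norm[\big]{\pi_{\alpha}(v_{(t)})}_{H^{1}(-h,0;H)}+\norm{F(t,0)}_{H}\qquad\text{for a.e.\ }t\geq 0\text{.}
\end{equation*}

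Second, I would estimate $\norm{\pi_{\alpha}(v_{(t)})}_{H^{1}(-h,0;H)}$ in a way insensitive to the choice (in \cref{def:MetricProjection}) of the underlying norm defining $\pi_{\alpha}$. Because $\pi_{\alpha}(v_{(t)})\in V_{\alpha}$, its derivative contributes at most $\sqrt{h}\,\alpha$ to the $H^{1}$-norm; and the $1$-Lipschitz property together with $\pi_{\alpha}(0)=0$ gives $\norm{\pi_{\alpha}(v_{(t)})}_{L_{2}(-h,0;H)}\leq\norm{v_{(t)}}_{H^{1}(-h,0;H)}$ (using $\norm{\argdot}_{L_{2}}\leq\norm{\argdot}_{H^{1}}$ should $\pi_{\alpha}$ be the $H^{1}$-metric projection). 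Hence $\norm{\pi_{\alpha}(v_{(t)})}_{H^{1}(-h,0;H)}\leq\norm{v_{(t)}}_{H^{1}(-h,0;H)}+\sqrt{h}\,\alpha$. By the variant of \cref{rmk:ThetaIsLipschitz} with codomain $L_{2,\rho}\bigl(0,\infty;H^{1}(-h,0;H)\bigr)$, the map $t\mapsto v_{(t)}$ lies in $L_{2,\rho}\bigl(0,\infty;H^{1}(-h,0;H)\bigr)$ with norm at most $\tfrac{1}{\sqrt{2\rho}}\norm{v}_{H^{1}_{\rho}}$, while the constant $t\mapsto\sqrt{h}\,\alpha$ lies in $L_{2,\rho}(0,\infty;\R)$ since $\e^{-2\rho\argdot}$ is integrable on $(0,\infty)$, and $F(\argdot,0)\in L_{2,\rho}(0,\infty;H)$ by hypothesis.

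Combining these, the triangle inequality in $L_{2,\rho}(0,\infty;H)$ gives $\norm{F\bigl(\argdot,\pi_{\alpha}(v_{(\argdot)})\bigr)}_{2,\rho}\leq L_{\alpha}\bigl(\tfrac{1}{\sqrt{2\rho}}\norm{v}_{H^{1}_{\rho}}+\tfrac{\sqrt{h}\,\alpha}{\sqrt{2\rho}}\bigr)+\norm{F(\argdot,0)}_{2,\rho}<\infty$, which is the assertion once strong measurability of $t\mapsto F\bigl(t,\pi_{\alpha}(v_{(t)})\bigr)$ is recorded: $t\mapsto v_{(t)}$ is strongly measurable as an element of $L_{2,\rho}\bigl(0,\infty;L_{2}(-h,0;H)\bigr)$ by \cref{th:ThetaIsLipschitz}, and post-composing with the continuous maps $\pi_{\alpha}$ and $F$ (the latter jointly continuous on $[0,\infty)\times L_{2}(-h,0;H)$) preserves this. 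The one point that needs care is the interaction with $\pi_{\alpha}$: since $V_{\alpha}$ is convex but not a linear subspace, one cannot appeal to linearity of the projection, so the estimate hinges on $0\in V_{\alpha}$, $\pi_{\alpha}(0)=0$ and the $1$-Lipschitz property noted after \cref{def:MetricProjection}; the remaining bounds are routine consequences of \cref{rmk:ThetaIsLipschitz}.
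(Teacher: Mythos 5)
Your proposal is correct and follows essentially the same route as the paper's proof: split off $F(\argdot,0)$, apply almost uniform Lipschitz-continuity between $\pi_{\alpha}(v_{(t)})$ and $\pi_{\alpha}(0)=0$, use the $1$-Lipschitz property of the metric projection, and conclude with the $H^{1}$-codomain estimate for the delay operator from \cref{rmk:ThetaIsLipschitz}. Your additional care about which norm $\pi_{\alpha}$ is nonexpansive in (bounding the derivative part by $\sqrt{h}\,\alpha$ via the defining constraint of $V_{\alpha}$) and the measurability remark are refinements the paper glosses over, but they do not change the argument.
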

\begin{proof}
  We can simply estimate, using the simple inequality $(a+b)^{2}\leq 2(a^{2}+b^{2})$:
  \begin{align*}
    &\norm[\big]{F\bigl(\argdot,\pi_{\alpha}(v_{(\argdot)})\bigr)}_{L_{2,\rho}(0,\infty;H)}^{2}
      = \medint\int_0^{\infty} \norm[\big]{F\bigl(t,\pi_{\alpha}(v_{(t)})\bigr)}_{H}^{2}\e^{-2\rho t}\dx[t]\\
    &= \medint\int_0^{\infty} \norm[\big]{F(t,0)-F(t,0)+F\bigl(t,\pi_{\alpha}(v_{(t)})\bigr)}_{H}^{2}\e^{-2\rho t}\dx[t]\\
    &\leq \medint\int_0^{\infty} \bigl[2\norm{F(t,0)}_{H}^{2}
      + 2L_{\alpha} \norm{\pi_{\alpha}(0)-\pi_{\alpha}(v_{(t)})}_{H^{1}(-h,0;H)}^{2}\bigr]\e^{-2\rho t}\dx[t]\\
    &\leq \medint\int_0^{\infty} \bigl[2\norm{F(t,0)}_{H}^{2}
      + 2L_{\alpha} \norm{\underbrace{v_{(t)}}_{= \Theta_{t}(v)}}_{H^{1}(-h,0;H)}^{2}\bigr]\e^{-2\rho t}\dx[t]\text{.}
  \end{align*}
  All those integrals are finite, as our estimate for $\Theta$ from \cref{th:ThetaIsLipschitz} shows.
\end{proof}

\subsection{Reformulation of the problem}
\label{subsec:Reformulation}
Let $A$ and $M$ satisfy the assumptions of \cref{th:Picard}. As discussed in \cref{subsec:DistribSpaces_IVPs}, the problem we want to investigate is
\begin{equation}
  \tag{\ref{eq:EvolProblem}}
  \bigl(\overline{\partial_{\rho}M(\partial_{\rho}) + A}\bigr)v = F\bigl(\argdot, (v+Z)_{(\argdot)}\bigr) + f \qquad \text{on}\ L_{2,\rho}(\mathbb{R};H)\text{,}
\end{equation}
for some $f\in L_{2,\rho}(\mathbb{R};H)$ with support on $[0,\infty)$ and a continuation $Z$ of the prehistory $\Phi$. The closure of the operator on the left-hand side allows us to multiply this equation with the Picard operator $S_{\rho}$ (cf.~\cref{th:Picard}). This way, we obtain the (weak) formulation
\begin{equation}
  \label{eq:GenFPP}
  v = S_{\rho}\bigl[f + F\bigl(\argdot, (v+Z)_{(\argdot)}\bigr)\bigr]\qquad \text{in}\ L_{2,\rho} (\mathbb{R};H)\text{.}
\end{equation}
This version of the problem lends itself to a reformulation as a fixed point problem (FPP) on $L_{2,\rho}(0,\infty;H)$ via the mapping
\begin{equation}
  \label{eq:FPPinL2}
  L_{2,\rho}(0,\infty;H) \ni v \mapsto S_{\rho}f + S_{\rho}F\bigl(\argdot,(v+Z)_{(\argdot)}\bigr)\text{,}
\end{equation}
provided that the right-hand side of \cref{eq:FPPinL2} is regular enough to produce a function in $L_{2,\rho}(0,\infty;H)$ again. Note that the solution of the FPP is a weak solution in the sense of \cref{def:weakSol}.

%%%%%%%%%%%%%%%%%%%%%%%%%%%%%%
%          SECTION
\section{Well-posedness}
\label{sec:SolThy}

Throughout this section, let $A\colon H\supseteq \operatorname{dom}(A)\to H$ be an $\mathrm{m}$-accretive linear operator and let $M\colon \C \supseteq \operatorname{dom}(M)\to \C$ be a linear material law, both of which together satisfy the assumptions of \cref{th:Picard} for all $\rho>0$ large enough. Then, utilizing the metric projection $\pi_{\alpha}$ from \cref{def:MetricProjection}, we transition from \eqref{eq:FPPinL2} to
\begin{align}
  \begin{aligned}
    \label{eq:FPPproj}
    \Gamma_{\rho,\alpha}\colon \, L_{2,\rho}(0,\infty;H)&\rightarrow L_{2,\rho}(0,\infty;H)\text{,}\\
    v &\mapsto S_{\rho}f + S_{\rho}F\bigl(\argdot,\pi_{\alpha}((v+Z)_{(\argdot)})\bigr)\text{,}
  \end{aligned}
\end{align}
where we will assume that $F$ satisfies $F(\argdot,0)\in L_{2,\rho}$ and the function on the right is therefore in $L_{2,\rho}(0,\infty;H)$ by \cref{th:integrability}.
The tricky part of our well-posedness theory will be to remove the projection $\pi_{\alpha}$ again (at least locally in time). Suitable regularity assumptions on $f$ will prove necessary for that as well. Before that, we introduce our notion of solution.
\begin{definition}
  A {\em local} solution $v \in L_{2,\rho}(0,\infty;H)$ of \cref{eq:EvolProblem} is a fixed point of $\Gamma_{\rho,\alpha}$ for some $\alpha>0$, that satisfies $\pi_{\alpha}\bigl((v+Z)_{(t)}\bigr)=(v+Z)_{(t)}$ for $t\in [0,T]$ up to some positive $T>0$. A {\em global} solution of \cref{eq:EvolProblem} is a fixed point of \cref{eq:FPPinL2}.
\end{definition}
\begin{remark}[On the notion of solution]
Let $v$ be a local solution of \cref{eq:EvolProblem}, i.e., $v$ solves \cref{eq:GenFPP} up to some positive time $T>0$. Then
\begin{align*}
  \chi_{\mathbb{R}_{\leq T}}v
  &= \chi_{\mathbb{R}_{\leq T}} S_{\rho}\bigl[f + F\bigl(\argdot,\pi_{\alpha}((v+Z)_{(\argdot)})\bigr)\bigr]\\
  &= \chi_{\mathbb{R}_{\leq T}} S_{\rho} \chi_{\mathbb{R}_{\leq T}}\bigl[f + F\bigl(\argdot,\pi_{\alpha}((v+Z)_{(\argdot)})\bigr)\bigr]\\
  &= \chi_{\mathbb{R}_{\leq T}} S_{\rho}\bigl[\chi_{\mathbb{R}_{\leq T}}f + \chi_{\mathbb{R}_{\leq T}}F\bigl(\argdot,\pi_{\alpha}((v+Z)_{(\argdot)})\bigr)\bigr]\\
  &= \chi_{\mathbb{R}_{\leq T}} S_{\rho}\bigl[\chi_{\mathbb{R}_{\leq T}}f + \chi_{\mathbb{R}_{\leq T}}F\bigl(\argdot,(v+Z)_{(\argdot)}\bigr)\bigr]\\
  &= \chi_{\mathbb{R}_{\leq T}} S_{\rho}\bigl[\chi_{\mathbb{R}_{\leq T}}f + \chi_{\mathbb{R}_{\leq T}}F\bigl(\argdot,(\chi_{\mathbb{R}_{\leq T}}v+Z)_{(\argdot)}\bigr)\bigr]\text{,}
\end{align*}
where we used causality of the solution operator $S_{\rho}$, appealing to \cref{th:Picard}. In that sense, a local solution $v$ of \cref{eq:EvolProblem} solves the FPP
\begin{equation}
  \label{eq:localEq}
  L_{2,\rho}(0,T;H) \ni w \mapsto S_{\rho}\bigl[\chi_{\mathbb{R}_{\leq T}}f + \chi_{\mathbb{R}_{\leq T}}F\bigl(\argdot,(w+Z)_{(\argdot)}\bigr)\bigr]\in L_{2,\rho}(0,T;H)
\end{equation}
for some $T>0$. A global solution in particular solves the FPP above for all $T>0$.
\end{remark}
To obtain a local solution of \cref{eq:EvolProblem} and to not simply find a fixed point of \cref{eq:FPPproj} (which is necessarily defined on the entirety of $L_{2,\rho}(0,\infty;H)$), we will in addition have to impose a more specific shape of $F$. In order to do that, we introduce the short notation
\begin{align*}
  \widetilde{F}(v)(t) &\coloneq F\bigl(t, (v + Z)_{(t)}\bigr)\text{,}\\
  \widetilde{F}_{\alpha}(v)(t) &\coloneq F\bigl(t, \pi_{\alpha}\bigl((v + Z)_{(t)}\bigr)\bigr)\text{.}
\end{align*}
Note that the prehistory is integrated into the functions $\widetilde{F}$ and $\widetilde{F}_{\alpha}$. We confine to the following two cases (which are relevant in applications, cf.~\cref{sec:Examples}):

\subsubsection*{The case $\widetilde{F}(v)=I_{\rho}\widetilde{G}(v)$}\phantom{.}\\
We assume $F\bigl(t,(v + Z)_{(t)}\bigr)=I_{\rho}G\bigl(\argdot,(v+Z)_{(\argdot)}\bigr)(t)$, which gives rise to the FPP
\begin{equation}
  \label{eq:FPPIntOut}
  v = S_{\rho}\bigl[f + I_{\rho}G\bigl(\argdot,(v+Z)_{(\argdot)}\bigr)\bigr]
  \qquad\text{in}\,L_{2,\rho}(0,\infty;H)\text{.}
\end{equation}

\subsubsection*{The case $\widetilde{F}(v)=\widetilde{G}(I_{\rho}v)$}\phantom{.}\\
We assume $F\bigl(t,(v + Z)_{(t)}\bigr)=G\bigl(t,(I_{\rho}v + Z)_{(t)}\bigr)$, which gives rise to the FPP
\begin{equation}
  \label{eq:FPPIntIns}
  v = S_{\rho}\bigl[f + G\bigl(\argdot,(I_{\rho}v+Z)_{(\argdot)}\bigr)\bigr]
  \qquad\text{in}\,L_{2,\rho}(0,\infty;H)\text{.}
\end{equation}
In either case, we again point out that the right-hand side has support only on $[0,\infty)$, because $f$ and $G$ have and the solution operator $S_{\rho}$ is causal.

\subsection{Local Existence}
Now we can state local existence and uniqueness results for both of these formulations. We will make use of some recurring assumptions:
\begin{assumptions}
  \label{assumptions}
  \phantom{.}
  \begin{enumerate}[label=(\Alph*), leftmargin=5ex]
    \item \label{ass:A} Let $G\colon [0,\infty)\times L^{2}(-h,0;H)\to H$
          \begin{enumerate}[label=\roman*), leftmargin=5ex]
            \item be almost uniformly Lipschitz-continuous (cf.~\cref{def:AlmLC}),
            \item regularity preserving (cf.~\cref{def:RegPres})
            \item and satisfy $G(\argdot,0)\in L_{2,\rho}(0,\infty;H)$.
          \end{enumerate}
    \item \label{ass:B} Let $\Phi \in H^{1}(-h,0;H)$ with $\norm{\Phi'}_{\infty}<\infty$ and let $Z$ be an $H^{1}_{\rho}(-h,\infty;H)$-extension of $\Phi$ satisfying
          \begin{enumerate}[label=\roman*), leftmargin=5ex]
            \item $Z\vert_{(-h,0]}=\Phi$ and
            \item $Z\vert_{[0,\infty)}\in \mathcal{C}^{1}(0,\infty;H)$ with $\norm{Z'(0)}_{H}\leq \norm{\Phi'}_{\infty}$.
          \end{enumerate}
  \end{enumerate}
\end{assumptions}

\begin{theorem}[local existence and uniqueness for \cref{eq:FPPIntOut}]
  \label{th:localExistOut}
  Let $G$ satisfy assumption \ref{ass:A} and let $\Phi$ and $Z$ satisfy assumption \ref{ass:B}. Let $f \in H_{\tilde{\rho}}^{2}(0,\infty;H)\cap H^{1}_{0,\tilde{\rho}}(0,\infty;H)$ and $G(0,\Phi) = - f'(0)$ for some $\tilde{\rho} >0$. Then \cref{eq:FPPIntOut} has a unique local solution $u\in H^{2}(0,T;H)$.
\end{theorem}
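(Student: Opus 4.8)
The plan is to realise \cref{eq:FPPIntOut} as a Banach fixed point problem on a closed subspace of $H^{1}_{0,\rho}(0,\infty;H)$ and to recover both the $H^{2}$-regularity and the identity $\pi_{\alpha}\bigl((v+Z)_{(t)}\bigr)=(v+Z)_{(t)}$ on a short time interval \emph{a posteriori}. Fix $\alpha>\norm{\Phi'}_{\infty}$; commuting $S_{\rho}$ past $I_{\rho}$ by \cref{th:Commutation}, consider
\begin{equation*}
  \Gamma_{\rho,\alpha}\colon\,v\mapsto S_{\rho}f + I_{\rho}S_{\rho}\widetilde{F}_{\alpha}(v)\qquad\text{where}\quad \widetilde{F}_{\alpha}(v)(t)=G\bigl(t,\pi_{\alpha}((v+Z)_{(t)})\bigr)
\end{equation*}
on $H^{1}_{0,\rho}(0,\infty;H)$. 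This is a self-map: $\widetilde{F}_{\alpha}(v)\in L_{2,\rho}(0,\infty;H)$ by \cref{th:integrability} (applied to $G$ and the extension $v+Z\in H^{1}_{\rho}(-h,\infty;H)$), so $I_{\rho}S_{\rho}\widetilde{F}_{\alpha}(v)\in H^{1}_{0,\rho}$, and $S_{\rho}f\in H^{1}_{0,\rho}=\operatorname{dom}\bigl(\mathring{\partial}_{\rho}\bigr)$ because $f\in H^{1}_{0,\tilde{\rho}}\subseteq\operatorname{dom}\bigl(\mathring{\partial}_{\rho}\bigr)$ for $\rho\geq\tilde{\rho}$ and $S_{\rho}$ preserves this space by \cref{th:Commutation}.

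First I would prove contractivity for $\rho$ large. For $v_{1},v_{2}\in H^{1}_{0,\rho}$ I extend $v_{1}-v_{2}$ by $0$ to $(-h,\infty)$ and combine the bound $\norm{S_{\rho}}\leq\nicefrac{1}{c}$ from \cref{th:Picard}, the corollary $\norm{I_{\rho}}_{L_{2,\rho}\to H^{1}_{\rho}}\leq\sqrt{\nicefrac{1}{\rho^{2}}+1}$, the fact that $\pi_{\alpha}$ is $1$-Lipschitz on $H^{1}(-h,0;H)$, almost uniform Lipschitz-continuity of $G$ on $V_{\alpha}$, and $\norm{\tilde{\Theta}}\leq\nicefrac{1}{\sqrt{2\rho}}$ from \cref{rmk:ThetaIsLipschitz}, to obtain
\begin{equation*}
  \norm{\Gamma_{\rho,\alpha}(v_{1})-\Gamma_{\rho,\alpha}(v_{2})}_{H^{1}_{\rho}}
  \leq \tfrac{L_{\alpha}}{c}\,\sqrt{\tfrac{1}{\rho^{2}}+1}\;\tfrac{1}{\sqrt{2\rho}}\;\norm{v_{1}-v_{2}}_{H^{1}_{\rho}}\text{,}
\end{equation*}
which is a strict contraction once $\rho$ is large enough (depending on $\alpha$, $c$, $L_{\alpha}$). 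Banach's fixed point theorem then yields a unique $v_{\alpha}\in H^{1}_{0,\rho}(0,\infty;H)$ with $v_{\alpha}=\Gamma_{\rho,\alpha}(v_{\alpha})$.

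Next I would bootstrap regularity and deactivate the projection \emph{simultaneously} on a short interval. Differentiating the fixed point identity via $\partial_{\rho}S_{\rho}\subseteq S_{\rho}\partial_{\rho}$ gives $\partial_{\rho}v_{\alpha}=S_{\rho}\bigl[\partial_{\rho}f+\widetilde{F}_{\alpha}(v_{\alpha})\bigr]$, and the jump of $\partial_{\rho}f=f'\chi_{\mathbb{R}_{\geq 0}}$ at the origin is cancelled by $\widetilde{F}_{\alpha}(v_{\alpha})(0+)=G(0,\pi_{\alpha}(\Phi))=G(0,\Phi)=-f'(0)$, using $\Phi\in V_{\alpha}$ and the consistency condition; hence $\partial_{\rho}f+\widetilde{F}_{\alpha}(v_{\alpha})$ has a continuous representative vanishing at $0$. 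Since $v_{\alpha}$ is continuous (\cref{th:Sobolev}), $t\mapsto(v_{\alpha}+Z)_{(t)}$ is continuous into $H^{1}(-h,0;H)$ with value $\Phi$ at $0$; a continuity argument — using $\norm{\Phi'}_{\infty}<\alpha$ strictly, $\norm{Z'(0)}_{H}\leq\norm{\Phi'}_{\infty}$ and (once available) $v_{\alpha}'(0)=0$ — shows $\norm{(v_{\alpha}+Z)_{(t)}'}_{\infty}\leq\alpha$ for $t$ in some $[0,T]$, so $\widetilde{F}_{\alpha}(v_{\alpha})=G(\argdot,(v_{\alpha}+Z)_{(\argdot)})$ there. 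Regularity preservation of $G$ (\cref{def:RegPres}) then upgrades $\partial_{\rho}f+\widetilde{F}_{\alpha}(v_{\alpha})$ to an element of $\operatorname{dom}\bigl(\mathring{\partial}_{\rho}\bigr)$ (locally), whence $\partial_{\rho}v_{\alpha}\in\operatorname{dom}\bigl(\mathring{\partial}_{\rho}\bigr)$ by \cref{th:Commutation}, i.e., $v_{\alpha}\in H^{2}(0,T;H)$; moreover $v_{\alpha}'(0)=0$ because $S_{\rho}$ maps a continuous function supported in $[0,\infty)$ vanishing at $0$ to such a function by causality. Thus $v_{\alpha}$ is a local solution. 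For uniqueness, any local solution solves the unprojected equation \cref{eq:localEq} on its interval of validity, and the same Lipschitz-and-causality estimate restricted to $L_{2,\rho}(0,T';H)$ shows that equation has at most one solution there, forcing coincidence with $v_{\alpha}$.

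The main obstacle is exactly the circular interplay in the previous paragraph: deducing $v_{\alpha}\in H^{2}$ uses regularity preservation of $G$, which presupposes both that the projection is inactive and that $v_{\alpha}+Z$ has a bounded (continuous) derivative near $0$, while showing the projection is inactive near $0$ relies on $v_{\alpha}'(0)=0$, hence on precisely that regularity. Disentangling this — by organising the bootstrap so that the continuity of $t\mapsto(v_{\alpha}+Z)_{(t)}$ into $H^{1}(-h,0;H)$, the hypothesis $f\in H^{2}_{\tilde{\rho}}\cap H^{1}_{0,\tilde{\rho}}$, the bound $\norm{\Phi'}_{\infty}<\infty$ and the consistency condition $G(0,\Phi)=-f'(0)$ reinforce one another in the right order (shrinking $T$ as needed) — is the delicate heart of the proof.
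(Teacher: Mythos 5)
Your fixed-point setup and contraction estimate match the paper's: same map $\Gamma_{\rho,\alpha}$, same ingredients ($\norm{S_{\rho}}\leq\nicefrac{1}{c}$, $1$-Lipschitz continuity of $\pi_{\alpha}$, almost uniform Lipschitz-continuity of $G$, the $\nicefrac{1}{\sqrt{2\rho}}$ bound for $\tilde{\Theta}$). The only cosmetic difference is that the paper works with the norm $\norm{\mathring{\partial}_{\rho}\argdot}_{L_{2,\rho}}$ on $H^{1}_{0,\rho}$ and uses $\mathring{\partial}_{\rho}S_{\rho}I_{\rho}=S_{\rho}$ to get the constant $\nicefrac{L_{\alpha}}{(c\sqrt{2\rho})}$ directly, whereas you carry the factor $\sqrt{\nicefrac{1}{\rho^{2}}+1}$ from the corollary on $I_{\rho}$; both yield a contraction for large $\rho$.

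The genuine gap is the one you flag yourself: your bootstrap is circular, and you leave the circle unresolved. The paper breaks it by applying the regularity-preservation hypothesis to the \emph{projected} nonlinearity $t\mapsto G\bigl(t,\pi_{\alpha}((w+Z)_{(t)})\bigr)$ \emph{before} any attempt to deactivate $\pi_{\alpha}$. The point is that $\pi_{\alpha}$ by construction maps into $V_{\alpha}$, so the argument of $G$ always has derivative bounded by $\alpha$ regardless of what is known about $w$; no prior information on $w'$ is needed for assumption (A)\,ii) to apply. This yields $f'+\widetilde{G}_{\alpha}(w)\in H^{1}_{\rho}(0,\infty;H)$ globally, and the consistency condition $f'(0)=-G(0,\Phi)$ together with continuity of $t\mapsto(w+Z)_{(t)}$ places it in $H^{1}_{0,\rho}$. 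Writing $f=I_{\rho}f'$ (here $f\in H^{2}_{\rho}$ and $f(0)=0$ are used) and invoking \cref{th:Commutation} then gives $\mathring{\partial}_{\rho}w=S_{\rho}\bigl[f'+\widetilde{G}_{\alpha}(w)\bigr]\in H^{1}_{0,\rho}(0,\infty;H)$. Only at this stage — with $w'$ continuous and $w'(0)=0$ established — does one deactivate the projection on some $[0,T]$ via $\norm{Z'(0)}_{H}\leq\norm{\Phi'}_{\infty}<\alpha$ and continuity of $w'+Z'$. So the correct ordering is: regularity first (through the projection), deactivation second; your sketch attempts them simultaneously and therefore does not close. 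The uniqueness argument you sketch is acceptable but should, as in the paper, be phrased in terms of the definition of a local solution as a fixed point of $\Gamma_{\hat{\rho},\alpha}$, with eventual independence of the weight parameter from \cref{th:Picard} reconciling different choices of $\hat{\rho}$.
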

The last condition can be viewed as a consistency condition, which is reminiscent of results for ODEs with state-dependent delay, cf. e.g., \cite{Walther2003}. Consistency conditions of this form will prove to be critical for our techniques throughout the article.
\begin{proof}
  We let $\alpha > \norm{\Phi'}_{\infty}$ and $\rho > \max\bigl\{\tfrac{L_{\alpha}^{2}}{2 c^{2}},\tilde{\rho}\bigr\}$, where $L_{\alpha}$ is the Lipschitz-constant of $G$ associated to the parameter $\alpha$, cf.~\cref{def:AlmLC}, and $c$ is from the bound of $\norm{S_{\tilde{\rho}}}$, cf.~\cref{th:Picard}.\\
  We first show the existence of a fixed point of
  \begin{equation*}
    v = S_{\rho}\bigl[f + I_{\rho}G\bigl(\argdot,\pi_{\alpha}((v+Z)_{(\argdot)})\bigr)\bigr] = S_{\rho}\bigl[f + I_{\rho}\widetilde{G}_{\alpha}(v)\bigr]\eqcolon \Gamma_{\rho,\alpha}(v)\text{,}\quad v\in H^{1}_{0,\rho}(0,\infty;H)\text{,}
  \end{equation*}
  by showing that $\Gamma_{\rho,\alpha}$ satisfies the requirements of the contraction mapping principle:
\begin{itemize}[leftmargin=3ex]
  \item $\Gamma_{\rho,\alpha}$ is a self-mapping:\\
        Since $\pi_{\alpha}$ maps into $V_{\alpha}$ and $G$ satisfies assumption \ref{ass:A} iii), \cref{th:integrability} ensures that $\widetilde{G}_{\alpha}(v)$ defines an element of $L_{2,\rho}(0,\infty;H)$. As $I_{\rho}\colon L_{2,\rho}\to H^{1}_{\rho}$, we have $I_{\rho}\widetilde{G}_{\alpha}(v) \in H^{1}_{\rho}(0,\infty;H)$. By \cref{th:Commutation}, the expression $\Gamma_{\rho,\alpha}(v)$ defines an element of $H^{1}_{\rho}(0,\infty;H)$. Furthermore, because $f\in H^{1}_{0,\rho}(0,\infty;H)$ we have
        \begin{equation*}
          0 = \bigl[f + I_{\rho}\widetilde{G}_{\alpha}(v)\bigr](0)\text{,}
        \end{equation*}
        which assures that $t\mapsto \bigl[f + I_{\rho}\widetilde{G}_{\alpha}(v)\bigr]$ defines an element of $H^{1}_{0,\rho}(0,\infty;H)$ by \cref{th:DomDerivativeFor0} and appealing to causality of the solution operator $S_{\rho}$ we infer $\Gamma_{\rho,\alpha}(v)\in H^{1}_{0,\rho}(0,\infty;H)$.
  \item $\Gamma_{\rho,\alpha}$ is Lipschitz-continuous:\\
        Taking the norm $\norm{\partial_{\rho}\argdot}_{L_{2,\rho}}$ as the norm on $H^{1}_{0,\rho}(0,\infty;H)$, we can calculate for $v,w \in H^{1}_{0,\rho}(0,\infty;H)$:
        \begin{align*}
          &\norm{\Gamma_{\rho,\alpha}v-\Gamma_{\rho,\alpha}w}_{H^{1}_{0,\rho}(0,\infty;H)}^{2}\\
          &\quad=\norm[\big]{\mathring{\partial}_{\rho}S_{\rho}I_{\rho}\widetilde{G}_{\alpha}(v)
            - \mathring{\partial}_{\rho}S_{\rho}I_{\rho}\widetilde{G}_{\alpha}(w)}_{L_{2,\rho}(0,\infty;H)}^{2}\\
          \intertext{Using the fact that antiderivative and $S_{\rho}$ commute (cf.~\cref{th:Commutation}), this expression simplifies to}
          &\quad= \norm[\big]{S_{\rho}\widetilde{G}_{\alpha}(v)
            - S_{\rho}\widetilde{G}_{\alpha}(w)}_{L_{2,\rho}(0,\infty;H)}^{2}\\
          &\quad\leq \norm{S_{\rho}}^{2}\norm[\big]{\widetilde{G}_{\alpha}(v)
            - \widetilde{G}_{\alpha}(w)}_{L_{2,\rho}(0,\infty;H)}^{2}\\
          \intertext{Using almost uniform Lipschitz-continuity of $G$ (note that $\alpha > \norm{\Phi'}_{\infty}$) and the Lipschitz-continuity of the projection $\pi_{\alpha}$, we can estimate further:}
          &\quad\leq \tfrac{1}{c^{2}}\medint\int_0^\infty \norm[\big]{G\bigl(t,\pi_{\alpha}((v+Z)_{(t)})\bigr)
            -G\bigl(t,\pi_{\alpha}((w+Z)_{(t)})\bigr)}_{H}^{2}\e^{-2\rho t}\dx[t]\\
          &\quad\leq \tfrac{L_{\alpha}^{2}}{c^{2}}\medint\int_0^\infty \norm[\big]{\pi_{\alpha}\bigl((v+Z)_{(t)}\bigr)
            -\pi_{\alpha}\bigl((w+Z)_{(t)}\bigr)}_{H^{1}(-h,0;H)}^{2}\e^{-2\rho t}\dx[t]\\
          &\quad\leq \tfrac{L_{\alpha}^{2}}{c^{2}}\medint\int_{0}^{\infty}\norm[\big]{(v+Z)_{(t)}
            -(w+Z)_{(t)}}_{H^{1}(-h,0;H)}^{2}\e^{-2\rho t}\dx[t]\\
          &\quad= \tfrac{L_{\alpha}^{2}}{c^{2}}\medint\int_0^\infty \norm{v_{(t)}
            -w_{(t)}}_{H^{1}(-h,0;H)}^{2}\e^{-2\rho t}\dx[t]\\
          &\quad=\tfrac{L_{\alpha}^{2}}{c^{2}}\norm{\Theta v
            - \Theta w}_{L_{2,\rho}(0,\infty;H^{1}(-h,0;H))}^{2}\\
          \intertext{An application of the Lipschitz-continuity of the delay operator $\Theta$ in the form of \cref{rmk:ThetaIsLipschitz} yields the desired result:}
          &\quad\leq \tfrac{L_{\alpha}^{2}}{c^{2}} \tfrac{1}{2\rho}
            \norm{v-w}_{H_{\rho}^{1}(-h,\infty;H)}^{2}\text{.}
        \end{align*}
  \item $\Gamma_{\rho,\alpha}$ is a contraction:\\
        This follows from the last estimate and the assumption $\rho > \tfrac{L_{\alpha}^{2}}{2 c^{2}}$.
\end{itemize}
The contraction mapping principle hence assures a solution $w$ of
\begin{equation*}
  w = \Gamma_{\rho,\alpha}(w)\text{,} \quad w\in H^{1}_{0,\rho}(0,\infty;H)\text{.}
\end{equation*}
To obtain a local solution of \cref{eq:FPPIntOut}, we need to argue that at least up to some $T>0$, we have $\pi_{\alpha}\bigl((w + Z)_{(\argdot)}\bigr) = (w + Z)_{(\argdot)}$.\\
We first observe that under the assumption $f\in H^{2}_{\rho}\left(0,\infty;H\right)$, we can write $f(t) = f(0) + \int_{0}^{t}f'(s)\dx[s]$ with $f'\in H^{1}_{\rho}\left(0,\infty;H\right)$. Using $f(0)=0$, we obtain $f = I_{\rho}f'$ and therefore by \cref{th:Commutation}:
\begin{equation}
  \label{eq:aux}
  \mathring{\partial}_{\rho}w = S_{\rho}\bigl[f' + G\bigl(\argdot, \pi_{\alpha}((w+Z)_{(\argdot)})\bigr)\bigr]\text{.}
\end{equation}
Under the assumption $f'(0) = -G(0,\Phi)$, the function $t\mapsto f'(t) + G\bigl(t,\pi_{\alpha}((u + Z)_{(t)})\bigr)$ is continuous as composition of continuous functions (in particular because the delay operator $\Theta$ is continuous in time, cf.~\cite[prop.~2.4]{Waurick2023}) and takes the value $0$ at $0$, since
\begin{align*}
  \lim_{t\downarrow 0} \bigl[f'(t) + G\bigl(t,\pi_{\alpha}((u + Z)_{(t)})\bigr)\bigr]
  &= f'(0) + G\bigl(0, \lim_{t\downarrow 0}[\pi_{\alpha}((u + Z)_{(t)})]\bigr)\\[-1.2ex]
  &= f'(0) + G\bigl(0, (u+Z)_{(0)}\bigr)\\
  &= f'(0) + G(0,\Phi)
  = 0\text{.}
\end{align*}
Because $\pi_{\alpha}$ projects onto $H^{1}$ and $G$ is regularity-preserving appealing to assumption \ref{ass:A} ii), $t\mapsto G\bigl(\argdot, \pi_{\alpha}((w + Z)_{(\argdot)})\bigr)$ is an $H^{1}_{\rho}$-function. Hence, the right-hand side of \cref{eq:aux} is in $H^{1}_{0,\rho}(0,\infty;H)$. By virtue of \cref{th:Commutation} we obtain that $w' \in H^{1}_{0,\rho}(0,\infty;H)$.\\
Since $w = 0$ on $(-\infty,0]$ and $\Phi \in V_{\alpha}$, we only need to show that $w' + Z'$ is bounded in $\norm{\argdot}_{\infty}$-norm by $\alpha$ up to some positive time $T>0$. Since $Z'$ is bounded on any interval $[0,c]$, $c>0$, and $\norm{Z'(0)}_{H}< \alpha$ by assumption \ref{ass:B} ii), $\norm{Z'}_{\mathcal{C}(0,\tilde{T};H)}<\alpha$ holds up to some positive $\tilde{T}>0$. Since $w' \in H^{1}_{0,\rho}(0,\infty;H)$, $w'$ is continuous appealing to the embedding  \cref{th:Sobolev} and $w'(0)=0$ by definition of $H^{1}_{0,\rho}$. Hence, up to some positive $0<T\leq \tilde{T}$, $w'+Z'$ remains bounded in $\norm{\argdot}_{\infty}$-norm by $\alpha$. This establishes existence of a local solution.\\
For uniqueness, we let $v \in L_{2,\rho}(0,\infty;H)$ be a local solution of \cref{eq:FPPIntOut}, that is by definition, a fixed point of $\Gamma_{\hat{\rho},\alpha}$ from \eqref{eq:FPPproj} for some $\hat{\rho}>0$, i.e.,
\begin{align*}
  \begin{aligned}
    \Gamma_{\hat{\rho},\alpha}\colon \, L_{2,\hat{\rho}}(0,\infty;H)&\rightarrow L_{2,\hat{\rho}}(0,\infty;H)\text{,}\\
    v &\mapsto S_{\hat{\rho}}f + S_{\hat{\rho}}I_{\hat{\rho}}\widetilde{G}_{\alpha}(v)\text{.}
  \end{aligned}
\end{align*}
As proven at the start of the proof, a fixed point $v$ is an element of $H^{1}_{0,\hat{\rho}}(0,\infty;H)$ by virtue of the shape of the right-hand side. Uniqueness of the fixed point now follows from the contraction mapping principle and eventual independence of the weight parameters $\hat{\rho}$ and $\rho$, appealing to \cref{th:Picard}.
\end{proof}

\begin{remark}[maximal existence interval]
  \label{rmk:maxExistIntOut}
  A closer inspection of the proof reveals that the unique local solution $u = v + Z$ of \cref{eq:FPPIntOut} exists up to $T>0$ at which $\lim_{t\uparrow T}\norm{u}_{\mathcal{C}(0,t;H)}=+\infty$:\\
  Indeed, in the proof of \cref{th:localExistOut}, the solution of the FPP containing $\pi_{\alpha}$
  \begin{equation*}
    v = \Gamma_{\rho, \alpha}(v) = S_{\rho}\bigl[f + I_{\rho}G\bigl(\argdot, \pi_{\alpha}\bigl((v+Z)_{(\argdot)}\bigr)\bigr)\bigr]\text{,} \qquad v\in H^{1}_{0,\rho}(0,\infty;H)
  \end{equation*}
  is a local solution of \cref{eq:FPPIntOut} for as long as the $\norm{\argdot}_{\infty}$-norm of $v' + Z'$ remains bounded by $\alpha$, say up to $T_{0}$. Increasing the parameter $\alpha$ to $\alpha + 1$ yields a solution $w$ of $w = \Gamma_{\rho, \alpha +1}(w)$, such that $w'+Z'$ remains bounded by $\alpha +1$ up to $T_{1}>T_{0}$, by virtue of the continuity of $w'+Z'$ (cf. proof of \cref{th:localExistOut}). We observe that the solutions $v$ (corresponding to $\alpha$) and $w$ (corresponding to $\alpha + 1$) agree up to $T_{0}$, by virtue of unique solvability. The claim follows by iteration and the fact, that $\norm{Z'}_{\mathcal{C}(0,t;H)}<\infty$ for all $t>0$.
\end{remark}

\begin{theorem}[local existence and uniqueness for \cref{eq:FPPIntIns}]
  \label{th:localExistIns}
  Let $G$ satisfy assumption \ref{ass:A} and let $\Phi$ and $Z$ satisfy  assumption \ref{ass:B}. Let $f\in H^{1}_{\tilde{\rho}}(0,\infty;H)$ for some $\tilde{\rho}>0$ and $f(0) = -G\bigl(0, \Phi\bigr)$. Then \cref{eq:FPPIntIns} has a unique local solution $u \in H^{1}(0,T;H)$.
\end{theorem}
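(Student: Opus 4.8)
The plan is to transcribe the proof of \cref{th:localExistOut}, adapting to the fact that the smoothing integration $I_{\rho}$ now sits \emph{inside} the delay argument instead of in front of $G$; this is why $f\in H^{1}$ (rather than $H^{2}$) suffices and why the expected regularity of the solution drops by one derivative. Fix $\alpha>\norm{\Phi'}_{\infty}$, let $L_{\alpha}$ be the associated Lipschitz constant from \cref{def:AlmLC}, and choose $\rho>\tilde{\rho}$ so large that the map
\[
  \Gamma_{\rho,\alpha}\colon L_{2,\rho}(0,\infty;H)\to L_{2,\rho}(0,\infty;H),\qquad
  v\mapsto S_{\rho}\bigl[f+G\bigl(\argdot,\pi_{\alpha}((I_{\rho}v+Z)_{(\argdot)})\bigr)\bigr]
\]
is a contraction. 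That $\Gamma_{\rho,\alpha}$ is well defined follows as before: $\pi_{\alpha}$ takes values in $V_{\alpha}$ and $G(\argdot,0)\in L_{2,\rho}$, so \cref{th:integrability} applied with $I_{\rho}v+Z\in H^{1}_{\rho}(-h,\infty;H)$ in place of $v$ gives $G(\argdot,\pi_{\alpha}((I_{\rho}v+Z)_{(\argdot)}))\in L_{2,\rho}(0,\infty;H)$, while $f\in L_{2,\rho}$ since $\rho\geq\tilde{\rho}$; hence $\Gamma_{\rho,\alpha}(v)\in L_{2,\rho}(0,\infty;H)$ by \cref{th:Picard}, with support in $[0,\infty)$ by causality. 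For the contraction estimate I would chain $\norm{S_{\rho}}\leq\nicefrac{1}{c}$, almost uniform Lipschitz-continuity of $G$, the $1$-Lipschitz continuity of $\pi_{\alpha}$ on $H^{1}$, the identity $(I_{\rho}v+Z)_{(t)}-(I_{\rho}w+Z)_{(t)}=\Theta_{t}I_{\rho}(v-w)$, the estimate $\norm{\tilde{\Theta}}\leq\nicefrac{1}{\sqrt{2\rho}}$ for $\tilde{\Theta}\colon H^{1}_{\rho}(-h,\infty;H)\to L_{2,\rho}(0,\infty;H^{1}(-h,0;H))$ from \cref{rmk:ThetaIsLipschitz}, and the bound $\norm{I_{\rho}}\leq\sqrt{\nicefrac{1}{\rho^{2}}+1}$ for $I_{\rho}\colon L_{2,\rho}\to H^{1}_{\rho}$; this yields $\norm{\Gamma_{\rho,\alpha}v-\Gamma_{\rho,\alpha}w}_{2,\rho}\leq\frac{L_{\alpha}}{c}\cdot\frac{1}{\sqrt{2\rho}}\sqrt{\nicefrac{1}{\rho^{2}}+1}\,\norm{v-w}_{2,\rho}$, a strict contraction for $\rho$ large. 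The contraction mapping principle then produces a unique fixed point $v\in L_{2,\rho}(0,\infty;H)$.

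It remains to discard $\pi_{\alpha}$ locally in time. Since $\pi_{\alpha}$ has range in $V_{\alpha}$ and $G$ is regularity preserving (assumption \ref{ass:A} ii)), the function $t\mapsto G\bigl(t,\pi_{\alpha}((I_{\rho}v+Z)_{(t)})\bigr)$ belongs to $H^{1}_{\rho}(0,\infty;H)$, exactly as in \cref{th:localExistOut}. Because $v$ is supported in $[0,\infty)$ we have $(I_{\rho}v+Z)_{(0)}=\Phi$, hence $\pi_{\alpha}((I_{\rho}v+Z)_{(0)})=\pi_{\alpha}(\Phi)=\Phi$, so the consistency condition $f(0)=-G(0,\Phi)$ says precisely that $g\coloneq f+G(\argdot,\pi_{\alpha}((I_{\rho}v+Z)_{(\argdot)}))$ is an $H^{1}_{\rho}$-function vanishing at the origin, i.e.\ $g\in\operatorname{dom}(\mathring{\partial}_{\rho})=H^{1}_{0,\rho}(0,\infty;H)$; by \cref{th:Commutation} the fixed point satisfies $v=S_{\rho}g\in H^{1}_{0,\rho}(0,\infty;H)$, so $v$ is continuous with $v(0)=0$ by \cref{th:Sobolev} and the definition of $H^{1}_{0,\rho}$. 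Consequently $(I_{\rho}v+Z)'=v+Z'$ equals $\Phi'$ a.e.\ on $(-h,0]$, of sup-norm $<\alpha$, and on $[0,\infty)$ it is continuous with value $Z'(0)$ at the origin, of norm $\leq\norm{\Phi'}_{\infty}<\alpha$ by assumption \ref{ass:B} ii); continuity then gives $\norm{v+Z'}_{\mathcal{C}(0,T;H)}\leq\alpha$ for some $T>0$, so that $\pi_{\alpha}((I_{\rho}v+Z)_{(t)})=(I_{\rho}v+Z)_{(t)}$ for $t\in[0,T]$ and $v$ is a local solution of \cref{eq:FPPIntIns}; the associated $u=v+Z$ lies in $H^{1}(0,T;H)$, being a sum of an $H^{1}_{0,\rho}$- and an $H^{1}_{\rho}$-function. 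Uniqueness follows exactly as in \cref{th:localExistOut}: a local solution is a fixed point of $\Gamma_{\hat{\rho},\alpha}$ for some $\hat{\rho}$, lies in $H^{1}_{0,\hat{\rho}}$ by the same argument, and the contraction mapping principle together with the eventual weight-independence of \cref{th:Picard} iii) forces coincidence with the solution just constructed.

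The step I expect to require the most care — and the essential difference from \cref{th:localExistOut} — is that $\Gamma_{\rho,\alpha}$ is no longer manifestly a self-map of $H^{1}_{0,\rho}$, so the $H^{1}_{0,\rho}$-regularity of the fixed point (which is what turns $v+Z'$ into a continuous function that can be bounded near $0$, hence what permits removal of $\pi_{\alpha}$) has to be recovered a posteriori. Concretely, one must justify that $t\mapsto G\bigl(t,\pi_{\alpha}((I_{\rho}v+Z)_{(t)})\bigr)$ is an $H^{1}_{\rho}$-function even though $I_{\rho}v+Z$ is a priori only continuous — not Lipschitz — in time; the resolution is that the metric projection forces every value $\pi_{\alpha}((I_{\rho}v+Z)_{(t)})$ into $V_{\alpha}$ uniformly in $t$, so the single Lipschitz constant $L_{\alpha}$ governs the second-variable dependence of $G$, while the $H^{1}_{\rho}$-valued continuity of the delay operator (\cref{rmk:ThetaIsLipschitz}) and the $1$-Lipschitz continuity of $\pi_{\alpha}$ make $t\mapsto\pi_{\alpha}((I_{\rho}v+Z)_{(t)})$ regular enough for regularity preservation of $G$ to apply. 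The consistency condition then does nothing more than patch the behaviour at $t=0$, ensuring the datum handed to $S_{\rho}$ lies in $\operatorname{dom}(\mathring{\partial}_{\rho})$ rather than merely in $L_{2,\rho}$, which is precisely what \cref{th:Commutation} needs to upgrade $v$ to an $H^{1}_{0,\rho}$-function.
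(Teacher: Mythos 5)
Your proposal is correct and follows essentially the same route as the paper's proof: a contraction argument for the projected map $\Gamma_{\rho,\alpha}$ on $L_{2,\rho}(0,\infty;H)$, an a posteriori upgrade of the fixed point to $H^{1}_{0,\rho}$ via regularity preservation, the consistency condition and \cref{th:Commutation}, removal of $\pi_{\alpha}$ by continuity of $v+Z'$ near $0$, and uniqueness via the weight-independence in \cref{th:Picard}. The only (harmless) deviations are that you obtain the contraction constant by composing $\norm{\tilde{\Theta}}\leq \nicefrac{1}{\sqrt{2\rho}}$ with $\norm{I_{\rho}}_{L_{2,\rho}\to H^{1}_{\rho}}\leq\sqrt{1+\nicefrac{1}{\rho^{2}}}$, whereas the paper commutes $I_{\rho}$ past the delay (\cref{th:delayIntegralCommute}) and estimates the inner antiderivative directly to get $\sqrt{(h^{2}+1)/(2\rho)}$ --- both tend to $0$ as $\rho\to\infty$ --- and that you identify the solution as $u=v+Z$ where the paper's \cref{rmk:maxExistIntIns} takes $u=I_{\rho}v+Z$, which does not affect the claimed $H^{1}(0,T;H)$-regularity.
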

The proof of this theorem follows in large parts the proof of \cref{th:localExistOut}, but there are some technical differences in how to remove the projection $\pi_{\alpha}$. For the convenience of the reader, we present the entire argument again.
\begin{proof}
  Let $\alpha > \norm{\Phi'}_{\infty}$ and $\rho > \max \bigl\{1, \tilde{\rho}, \tfrac{2 c^{2}}{L_{\alpha}^{2}(h^{2}+1)}\bigr\}$, where $L_{\alpha}$ is the Lipschitz-constant of $G$ associated to the parameter $\alpha$, cf.~\cref{def:AlmLC}, and $c$ is from the bound for $\norm{S_{\tilde{\rho}}}$, cf.~\cref{th:Picard}. We start with a preliminary observation:\\
  Let $-\infty < a < b < \infty$ and $f \in L_{2}(a,b;H)$. Then we can estimate:
  \begin{align}
    \begin{aligned}
      \medint\int_{a}^{b} \norm[\Big]{\medint\int_{a}^{t}f(s)\dx[s]}^{2}\dx[t]
      &\leq \medint\int_{a}^{b} (t-a) \medint\int_{a}^{t}\norm{f(s)}^{2}\dx[t]
      = \medint\int_{a}^{b} \medint\int_{s}^{b}\underbrace{(t-a)}_{\leq b-a} \norm{f(s)}^{2}\dx[t]\dx[s]\\[-1.5ex]
      &\leq (b-a)^{2}\medint\int_{a}^{b}\norm{f(s)}^{2}\dx[s]\text{.}
    \end{aligned} \tag{$\square$}
  \end{align}
  We begin the actual proof by first showing the existence of a fixed point of
  \begin{equation*}
    v = S_{\rho}\bigl[f + G\bigl(\argdot,\pi_{\alpha}\bigl((I_{\rho}v+Z)_{(\argdot)}\bigr)\bigr)\bigr] = S_{\rho}\bigl[f+ \widetilde{G}_{\alpha}(I_{\rho}v)\bigr]\eqcolon \Gamma_{\rho,\alpha}(v)
    \quad v\in L_{2,\rho}(0,\infty;H)\text{,}
  \end{equation*}
  by verifying that $\Gamma_{\rho,\alpha}$ satisfies the requirements of the contraction mapping principle:
\begin{itemize}[leftmargin=3ex]
  \item $\Gamma_{\rho,\alpha}$ is a self-mapping:\\
        This follows from \cref{th:integrability}, the assumptions on $f$ and $G$ and that the Picard operator is a bounded linear operator on $L_{2,\rho}$ (cf.~\cref{th:Picard}).
  \item $\Gamma_{\rho,\alpha}$ is Lipschitz-continuous:\\
        For $v,w \in L_{2,\rho}(0,\infty;H)$ we can compute:
        \begin{align*}
          &\norm{\Gamma_{\rho,\alpha}v-\Gamma_{\rho,\alpha}w}_{L_{2,\rho}(0,\infty;H)}^{2}\\
          &\quad= \norm{S_{\rho}\widetilde{G}_{\alpha}(I_{\rho}v) -S_{\rho}\widetilde{G}_{\alpha}(I_{\rho}w)}_{L_{2,\rho}(0,\infty;H)}^{2}\\
          &\quad\leq \norm{S_{\rho}}^{2} \medint\int_0^\infty \norm{G\bigl(t,\pi_{\alpha}\bigl((I_{\rho}v+Z)_{(t)}\bigr)\bigr) -G\bigl(t,\pi_{\alpha}\bigl((I_{\rho}w+Z)_{(t)}\bigr)\bigr)}_{H}^{2}\e^{-2\rho t}\dx[t]\\
          \intertext{Using almost uniform Lipschitz-continuity of $G$ and Lipschitz-continuity of $\pi_{\alpha}$ we estimate further:}
          &\quad\leq \tfrac{L_{\alpha}^{2}}{c^{2}}\medint\int_0^\infty \norm[\big]{\pi_{\alpha}\bigl((I_{\rho}v+Z)_{(t)}\bigr) -\pi_{\alpha}\bigl((I_{\rho}w+Z)_{(t)}\bigr)}_{H^{1}(-h,0;H)}^{2}\e^{-2\rho t}\dx[t]\\
          &\quad\leq \tfrac{L_{\alpha}^{2}}{c^{2}}\medint\int_{0}^{\infty}\norm[\big]{(I_{\rho}v+Z)_{(t)} -(I_{\rho}w+Z)_{(t)}}_{H^{1}(-h,0;H)}^{2}\e^{-2\rho t}\dx[t]\\
          &\quad=  \tfrac{L_{\alpha}^{2}}{c^{2}}\medint\int_{0}^{\infty}\norm[\big]{\Theta_{t}\bigl(I_{\rho}v+Z\bigr) -\Theta_{t}\bigl(I_{\rho}w+Z\bigr)}_{H^{1}(-h,0;H)}^{2}\e^{-2\rho t}\dx[t]\\
          \intertext{We recall that by \cref{th:delayIntegralCommute}, $I_{\rho}$ and the delay operator interchange, which allows us to equate:}
          &\quad=  \tfrac{L_{\alpha}^{2}}{c^{2}} \medint\int_{0}^{\infty}\norm[\Big]{\smallint_{-h}^{\argdot}\bigl(\Theta_{t}v\bigr) -\smallint_{-h}^{\argdot}\bigl(\Theta_{t}w\bigr)}_{H^{1}(-h,0;H)}^{2}\e^{-2\rho t}\dx[t]\\
          &\quad=  \tfrac{L_{\alpha}^{2}}{c^{2}} \medint\int_{0}^{\infty}\Big[\norm[\big]{v_{(t)} -w_{(t)}}_{L_{2}(-h,0;H)}^{2} +\norm[\Big]{\smallint_{-h}^{\argdot}\bigl(v_{(t)} -w_{(t)}\bigr)}_{L_{2}(-h,0;H)}^{2}\Big]\e^{-2\rho t}\dx[t]\\
          \intertext{We make use of the preliminary observation ($\square$) and obtain:}
          &\quad\leq  \tfrac{L_{\alpha}^{2}}{c^{2}} \medint\int_{0}^{\infty}\Big[\norm[\big]{v_{(t)} -w_{(t)}}_{L_{2}(-h,0;H)}^{2} +h^{2}\norm[\big]{v_{(t)} -w_{(t)}}_{L_{2}(-h,0;H)}^{2}\Big]\e^{-2\rho t}\dx[t]\\
          &\quad= \tfrac{L_{\alpha}^{2}}{c^{2}} (h^{2} + 1) \medint\int_{0}^{\infty} \norm{v_{(t)}-w_{(t)}}_{L_{2}(-h,0;H)}^{2}\e^{-2\rho t}\dx[t]\\
          &\quad= \tfrac{L_{\alpha}^{2}}{c^{2}}(h^{2} + 1) \norm{\Theta v -\Theta w}_{L_{2,\rho}(0,\infty;L_{2}(-h,0;H))}^{2}\\
          \intertext{Now we can appeal to Lipschitz-continuity of $\Theta$, cf.~\cref{th:ThetaIsLipschitz}:}
          &\quad\leq \tfrac{L_{\alpha}^{2}}{c^{2}}(h^{2}+1)\tfrac{1}{2\rho} \norm{v-w}_{L_{2,\rho}(-h,\infty;H)}^{2}\text{.}%\\
        \end{align*}
  \item $\Gamma_{\rho,\alpha}$ is a contraction:\\
        This follows from the last estimate and the assumption $\rho > \max \bigl\{1,\tfrac{2 c^{2}}{L_{\alpha}^{2}(h^{2}+1)}\bigr\}$.
\end{itemize}
The contraction mapping principle therefore provides a solution $w$ of
\begin{equation*}
  w = \Gamma_{\rho,\alpha}(w)\text{,}\qquad w\in L_{2,\rho}(0,\infty;H)\text{.}
\end{equation*}
We again argue, that up to some $T>0$, we have $\pi_{\alpha}\bigl((I_{\rho}w + Z)_{(t)}\bigr) = \bigl(I_{\rho}w + Z\bigr)_{(t)}$.\\
Because of the integral term $I_{\rho}$, it suffices to verify that the term $w + Z'$ is bounded in $\norm{\argdot}_{\infty}$-norm by $\alpha$ up to some positive time $T>0$:
Appealing to the regularity preserving property of $G$ and the fact that $f\in H^{1}_{\rho}(0,\infty;H)$ by assumption, we know $f + G\bigl(\argdot,\pi_{\alpha}\bigl((I_{\rho}w + Z)_{(\argdot)}\bigr)\bigr) \in H^{1}_{\rho}(0,\infty;H)$. We further observe:
\begin{align*}
  \lim_{t\downarrow 0} G\bigl(t,\pi_{\alpha}\bigl((I_{\rho}w+Z)_{(t)}\bigr)\bigr)
  &= G\bigl(0, \pi_{\alpha}\bigl((I_{\rho}w + Z)_{(0)}\bigr)\bigr)\\[-1.5ex]
  &= G\bigl(0, \pi_{\alpha}\bigl((0 + Z)_{(0)} \bigr)\bigr)\\
  &= G(0, \Phi )\text{.}
\end{align*}
The consistency condition therefore assures
\begin{equation*}
  f + G\bigl(\argdot,\pi_{\alpha}\bigl((I_{\rho}w+Z)_{(\argdot)}\bigr)\bigr) \in H^{1}_{0,\rho}(0,\infty;H)\text{.}
\end{equation*}
By virtue of \cref{th:Commutation}, the solution $w$ of $\Gamma_{\rho,\alpha}(w) = w$ is in $H^{1}_{0,\rho}(0,\infty;H)$ and hence $\norm{w(t)}_{H}< \alpha - \norm{\Phi'}_{\infty}$ up to some positive time $\tilde{T}>0$, where we make use of \cref{th:Sobolev}. Since $Z' \in \mathcal{C}(0,\tilde{T};H)$ and $\norm{Z'(0)}_{H}\leq \norm{\Phi'}_{\infty}$ by assumption, continuity provides a positive $0<T\leq \tilde{T}$ such that $\norm{w + Z'}_{\mathcal{C}(-h,T;H)}\leq \alpha$.\\
Uniqueness of the solution follows similarly as in the proof of \cref{th:localExistOut} from the uniqueness of the fixed point of $\Gamma_{\rho,\alpha}$ by appealing to the contraction mapping principle and eventual independence of the weight parameter $\rho$ appealing to \cref{th:Picard}.
\end{proof}

\begin{remark}[maximal existence interval]
  \label{rmk:maxExistIntIns}
  A closer inspection of the proof reveals that the unique local solution $u = I_{\rho}v + Z$ of \cref{th:localExistIns} exists up to $T>0$ at which $\lim_{t\uparrow T}\norm{u}_{\mathcal{C}(0,t;H)}=+\infty$:\\
  Indeed, in the proof of \cref{th:localExistIns}, the solution of the FPP containing $\pi_{\alpha}$
  \begin{equation*}
    v = \Gamma_{\rho, \alpha}(w) = S_{\rho}\bigl[f + G\bigl(\argdot, \pi_{\alpha}\bigl((I_{\rho}v +Z)_{(\argdot)}\bigr)\bigr)\bigr]\text{,} \qquad v \in L_{2,\rho}(0,\infty;H)\text{,}
  \end{equation*}
  is a local solution of \cref{eq:FPPIntIns} for as long as the $\norm{\argdot}_{\infty}$-norm of $v$ remains bounded by $\alpha$. The claim follows as in the case of \cref{eq:FPPIntOut}, cf.~\cref{rmk:maxExistIntOut}, replacing $v'$ and $w'$ with $v$ and $w$ respectively.
\end{remark}
While increased regularity and a consistency condition proved critical for the proofs of \cref{th:localExistOut} and \cref{th:localExistIns}, one can lower these assumptions if the equation itself provides an increase in regularity for its solution. For example, we can make use of parabolic regularity (in the form of \cref{th:parabolicRegularity}), whenever available. We consider the problem
\begin{align}
  \label{eq:ParabolicProblem}
  \begin{aligned}
    \biggl[\partial_{\rho}
    \begin{pmatrix}
      M_{00}(\partial_{\rho}) & 0 \\ 0 & 0
    \end{pmatrix}
    + \begin{pmatrix}
      N_{00}(\partial_{\rho}) & N_{01}(\partial_{\rho}) \\ N_{10}(\partial_{\rho}) & N_{11}(\partial_{\rho})
    \end{pmatrix}
    &+ \begin{pmatrix}
      0 & -C\adjun \\ C & 0
    \end{pmatrix}
      \biggr]\begin{pmatrix} v \\ w \end{pmatrix}\\
    &=
    \begin{pmatrix}
      f + G\bigl(\argdot, (I_{\rho}v + Z)_{(\argdot)}\bigr) \\ 0
    \end{pmatrix}
    \text{,}
  \end{aligned}
\end{align}
where $(M,A)$ is a parabolic pair according to \cref{def:ParabolicPair}. For the right-hand side we assume the form from \cref{eq:FPPIntIns}, and in addition, that $f+G$ only takes values in $H_{0}$.\footnote{We again use the notation of \cref{def:ParabolicPair}.} We will make use of the following result in \cref{subsec:Heat}.
\begin{theorem}
  \label{th:localExistParabolic}
  Let $(M,A)$ be a parabolic pair. Let $G$ satisfy assumptions \ref{ass:A} i) and iii) and let $\Phi$ and $Z$ satisfy assumption \ref{ass:B}. Let further $G$ only take values in $H_{0}$ and $f \in H^{1}_{\tilde{\rho}}(0,\infty;H_{0})$ for some $\tilde{\rho}>0$. Then \cref{eq:ParabolicProblem} has a unique local solution $\begin{psmallmatrix} v \\ w \end{psmallmatrix} \in H^{1}(0,T;H_{0})\times H^{\nicefrac{1}{2}}(0,T;H_{0}^{\perp})$, provided that $f(0) = -G(0,\Phi)$.
\end{theorem}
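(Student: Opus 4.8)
The plan is to follow the proof of \cref{th:localExistIns} almost verbatim for the contraction argument and to replace only its final step — the removal of the metric projection $\pi_{\alpha}$, which there exploited the regularity-preserving assumption \ref{ass:A}~ii) together with \cref{th:Commutation} — by an application of parabolic smoothing, \cref{th:parabolicRegularity}. Since \ref{ass:A}~ii) is not assumed here, this is the only place where the block structure of the parabolic pair is genuinely used.

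First I would fix $\alpha>\norm{\Phi'}_{\infty}$ and a weight $\rho>\tilde\rho$ large enough (chosen as in \cref{th:localExistIns} from the Lipschitz constant $L_{\alpha}$ of $G$ on $V_{\alpha}$ and the constant $c$ of \cref{th:Picard}) and consider the map $\Gamma_{\rho,\alpha}\colon L_{2,\rho}(0,\infty;H_{0})\to L_{2,\rho}(0,\infty;H_{0})$ sending $v$ to the $H_{0}$-component of $S_{\rho}\bigl(f+\widetilde{G}_{\alpha}(I_{\rho}v),\,0\bigr)$. That $\Gamma_{\rho,\alpha}$ is a self-map follows from \cref{th:integrability}, the hypothesis $G(\argdot,0)\in L_{2,\rho}$, boundedness and causality of $S_{\rho}$, and the hypothesis that $f$ and $G$ are $H_{0}$-valued; the contraction estimate is then verbatim the computation in \cref{th:localExistIns} (via $\norm{S_{\rho}}\le\nicefrac{1}{c}$, almost uniform Lipschitz-continuity of $G$, contractivity of $\pi_{\alpha}$, the commutation of $I_{\rho}$ with the delay from \cref{th:delayIntegralCommute}, the auxiliary estimate $(\square)$ of that proof, and \cref{th:ThetaIsLipschitz}), so the contraction mapping principle yields a unique fixed point $v\in L_{2,\rho}(0,\infty;H_{0})$, and I then let $w$ be the $H_{0}^{\perp}$-component of the corresponding Picard solution.

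The decisive step is removing $\pi_{\alpha}$ locally in time. Here I would apply \cref{th:parabolicRegularity} to the right-hand side $\bigl(f+\widetilde{G}_{\alpha}(I_{\rho}v),\,0\bigr)$: its $H_{0}^{\perp}$-slot is $0\in H^{\nicefrac{1}{2}}_{\rho}$ and its $H_{0}$-slot lies in $L_{2,\rho}(\mathbb{R};H_{0})$ by the preceding paragraph, so the theorem gives $v\in H^{1}_{\rho}(\mathbb{R};H_{0})$ and $w\in H^{\nicefrac{1}{2}}_{\rho}(\mathbb{R};H_{0}^{\perp})$. Since $f$ and $G$ are supported on $[0,\infty)$, causality of $S_{\rho}$ forces $\supp v\subseteq[0,\infty)$, hence — by the embedding \cref{th:Sobolev} — the continuous representative of $v$ satisfies $v(0)=0$. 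Then $(I_{\rho}v+Z)'=v+Z'$ equals $\Phi'$ on $(-h,0]$ (bounded by $\norm{\Phi'}_{\infty}<\alpha$) and is continuous on $[0,\infty)$ with $\norm{v(0)+Z'(0)}_{H}\le\norm{\Phi'}_{\infty}<\alpha$ by \ref{ass:B}~ii), the consistency condition $f(0)=-G(0,\Phi)$ entering here exactly as in the proof of \cref{th:localExistIns}; continuity then furnishes $T>0$ with $\norm{v+Z'}_{\mathcal{C}(-h,T;H)}\le\alpha$, so that $\pi_{\alpha}\bigl((I_{\rho}v+Z)_{(t)}\bigr)=(I_{\rho}v+Z)_{(t)}$ for $t\in[0,T]$ and $v$ is a local solution of \cref{eq:ParabolicProblem}. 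Restricting the $\rho$-weighted regularity statements to the bounded interval $(0,T)$ gives $v\in H^{1}(0,T;H_{0})$ and $w\in H^{\nicefrac{1}{2}}(0,T;H_{0}^{\perp})$. Uniqueness follows as in \cref{th:localExistIns}: every local solution is a fixed point of the corresponding $\Gamma_{\hat\rho,\alpha}$, and uniqueness of that fixed point together with eventual independence of the weight (\cref{th:Picard}~iii)) closes the argument.

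I expect the main obstacle to be the careful verification that the problem really meets the hypotheses of \cref{th:parabolicRegularity}: one must confirm the block form of $M$ on $H=H_{0}\oplus H_{0}^{\perp}$, the positivity $\Re\bigl(M_{00}(z)+z^{-1}N_{00}(z)\bigr)\ge c$ on a suitable right half-plane, and that the inhomogeneity carries no $H_{0}^{\perp}$-part (so that no $H^{\nicefrac{1}{2}}_{\rho}$-requirement is triggered) — all of which should follow from the standing hypotheses that $(M,A)$ is a parabolic pair satisfying the conditions of \cref{th:Picard} and that $f$, $G$ are $H_{0}$-valued, but which must be checked rather than taken for granted.
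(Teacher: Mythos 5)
Your proposal is correct and follows essentially the same route as the paper's proof: run the contraction argument of \cref{th:localExistIns} with the projected nonlinearity, then invoke \cref{th:parabolicRegularity} (rather than assumption \ref{ass:A}~ii) and \cref{th:Commutation}) to get $v\in H^{1}_{\rho}$, and combine causality (giving $v(0)=0$) with \ref{ass:B}~ii) and continuity to remove $\pi_{\alpha}$ on $[0,T]$. The only cosmetic difference is that you phrase the fixed point problem in the $H_{0}$-component alone and recover $w$ afterwards, whereas the paper treats the pair $(v,w)$ directly; this changes nothing substantive.
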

\begin{proof}
  We first point out that the left-hand side of \cref{eq:ParabolicProblem} satisfies the assumptions of \cref{th:Picard} and therefore fits the setting of evolutionary equations. We can follow the steps of \cref{th:localExistIns} (using the same notation as there) to obtain a fixed point $(v,w) \in  L_{2,\rho}(0,\infty;H_{0})\times L_{2,\rho}(0,\infty;H_{0}^{\perp})$ of the system (\ref{eq:ParabolicProblem}) for some large enough $\rho>\tilde{\rho}$, but with right-hand side
  \begin{equation*}
    \begin{pmatrix}
      f + G\bigl(\argdot, \pi_{\alpha}\bigl((I_{\rho}v + Z)_{(\argdot)}\bigr)\bigr) \\ 0
    \end{pmatrix}\text{.}
  \end{equation*}
  Again, we simply need to prove that the projection $\pi_{\alpha}$ is locally the identity up to some positive time $T >0$. For this we can make use of parabolic regularity by means of \cref{th:parabolicRegularity}, since the second component of the right-hand side is $0$. Parabolic regularity then assures that $v \in H^{1}_{\rho}(0,\infty;H_{0})$ and $w\in H^{\nicefrac{1}{2}}_{\rho}(0,\infty;H_{0}^{\perp})$. Because of the integral term $I_{\rho}$, as in the proof of \cref{th:localExistIns}, we only need to verify local boundedness of $v+Z'$ in $\norm{\argdot}_{\infty}$-norm by $\alpha$. For this we observe that since the right-hand side vanishes for $t\leq 0$ (appealing to the consistency condition), by virtue of causality of $S_{\rho}$, we infer that $v\vert_{\mathbb{R}_{\leq 0}}=0$. This in turn implies $v\in \operatorname{dom}\bigl(\mathring{\partial}_{\rho}\bigr)$, in particular $v(0)=0$. Together with the embedding \cref{th:Sobolev}, the claim follows for some positive time $T>0$, since $\alpha > \norm{\Phi'}_{\infty}$.
\end{proof}

\subsection{Global existence}
\label{subsec:GlobalExist}
Since our approach follows \cite{Waurick2023} and we extend the techniques for ODEs to the PDE case, it is no surprise that we can also prove the usual maximal existence result one expects from classical ODE theory:
\begin{proposition}[maximal existence interval]
  \label{th:maximalExist}
  Under the assumptions of
  \begin{itemize}[leftmargin=4ex]
    \item \cref{th:localExistOut}, the unique local solution of \cref{eq:FPPIntOut} exists up to $T>0$ at which $\lim_{t\uparrow T}\norm{u'}_{\mathcal{C}(0,t;H)}=+\infty$.
    \item \cref{th:localExistIns}, the unique local solution of \cref{eq:FPPIntIns} exists up to $T>0$ at which $\lim_{t\uparrow T}\norm{u}_{\mathcal{C}(0,t;H)}=+\infty$.
  \end{itemize}
\end{proposition}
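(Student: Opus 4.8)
The statement is the standard "blow-up alternative" for the local solutions produced by \cref{th:localExistOut} and \cref{th:localExistIns}. The plan is to argue by contradiction: suppose the maximal existence interval $[0,T)$ is finite yet $\limsup_{t\uparrow T}\norm{u'}_{\mathcal{C}(0,t;H)}<\infty$ (respectively $\limsup_{t\uparrow T}\norm{u}_{\mathcal{C}(0,t;H)}<\infty$), and show that the solution can be continued past $T$, contradicting maximality. The key point is that \cref{rmk:maxExistIntOut} and \cref{rmk:maxExistIntIns} already reduce the question to this: the local solution extends for as long as $v'+Z'$ (resp.\ $v+Z'$) remains bounded in $\norm{\argdot}_{\infty}$-norm by the parameter $\alpha$, and $\alpha$ may be increased freely because $\norm{Z'}_{\mathcal{C}(0,t;H)}<\infty$ for every $t$. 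So the task is to convert a bound on $\norm{u}_{\mathcal{C}(0,t;H)}$ (resp.\ $\norm{u'}_{\mathcal{C}(0,t;H)}$) into a bound on the quantity controlling $\pi_\alpha$.

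First I would treat \cref{eq:FPPIntOut}. Here $u=v+Z$ and the relevant quantity is $\norm{u'}_\infty=\norm{v'+Z'}_\infty$. Suppose $\norm{u'}_{\mathcal{C}(0,t;H)}\leq K$ for all $t<T$ with $T<\infty$. Pick $\alpha>\max\{K,\norm{\Phi'}_\infty\}$. Then on $[0,T)$ we have $\norm{u'(t)}_H<\alpha$, so by \cref{rmk:maxExistIntOut} the local solution of $\Gamma_{\rho,\alpha}$ (for $\rho$ large depending only on $\alpha$) coincides with the genuine local solution on all of $[0,T)$; but $\Gamma_{\rho,\alpha}$ is a global contraction on $H^1_{0,\rho}(0,\infty;H)$, producing a fixed point $w$ defined on $[0,\infty)$ with $w'$ continuous. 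Since $w'(T)$ is finite and $Z'$ is bounded near $T$, the estimate $\norm{w'+Z'}_\infty\leq\alpha$ persists on some $[0,T+\varepsilon)$, so $w$ is a local solution of \cref{eq:FPPIntOut} past $T$, contradicting that $T$ is the maximal time. The case of \cref{eq:FPPIntIns} is identical in structure, with $u=I_\rho v+Z$, the controlling quantity being $\norm{v+Z'}_\infty$; a bound on $\norm{u}_{\mathcal{C}(0,t;H)}$ is what one feeds into the argument, exactly matching how the projection was removed in the proof of \cref{th:localExistIns} via $\norm{w(t)}_H<\alpha-\norm{\Phi'}_\infty$ and the continuity of $Z'$.

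I expect the main obstacle to be purely bookkeeping rather than conceptual: one must be careful that increasing $\alpha$ forces a larger weight $\rho$ (through $\rho>\tfrac{L_\alpha^2}{2c^2}$ resp.\ $\rho>\tfrac{2c^2}{L_\alpha^2(h^2+1)}$), so the "extended" fixed point lives a priori in a different weighted space; but eventual independence of the weight parameter (\cref{th:Picard} iii)) guarantees that the various local solutions agree on their common interval, so the extension is unambiguous. The other subtlety is that the hypothesised bound is on $u$ or $u'$, not directly on $w$ or $w'$; since $u=v+Z$ with $Z$ fixed and smooth on $[0,\infty)$, a bound on $u$ (resp.\ $u'$) up to $T$ is equivalent to a bound on $v$ (resp.\ $v'$) up to $T$, which is precisely the quantity bounded by $\alpha$ in the removal of $\pi_\alpha$. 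Once these two points are in place the contradiction closes, and the reverse implication (the solution automatically exists as long as the stated norm is finite) is just the contrapositive.
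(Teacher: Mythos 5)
Your proposal is correct and uses exactly the mechanism of the paper's own argument (which simply points to \cref{rmk:maxExistIntOut} and \cref{rmk:maxExistIntIns}): the projection $\pi_{\alpha}$ can be removed for as long as the controlling quantity stays below $\alpha$, the fixed point of the projected problem is globally defined and continuous, and uniqueness together with weight-independence patches the solutions for different $\alpha$ together. The only difference is presentational — the paper iterates $\alpha\mapsto\alpha+1$ to exhaust the interval up to blow-up, whereas you argue by contradiction that a bounded solution on a finite maximal interval could be continued — and you correctly flag the two bookkeeping points (larger $\alpha$ forces larger $\rho$, and the bound on $u$ resp.\ $u'$ transfers to $v$ resp.\ $v'$ via the fixed extension $Z$) that make the argument close.
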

\begin{proof}
  The first claim is \cref{rmk:maxExistIntOut}, the second one \cref{rmk:maxExistIntIns}.
\end{proof}
Providing a criterion for global existence is not easy. As in the ODE case, one can aim to apply Gr\"onwall’s lemma, but it turns out that the notion of almost uniform Lipschitz-continuity only provides an estimate in $H^{1}$-norm, which does not suffice. Hence, we do have to assume a stronger condition:
\begin{definition}
  \label{def:AlmLCinL2}
  Let $F\colon [0,\infty)\times L_{2}(-h,0;H) \to H$ be continuous. $F$ is called {\em almost uniformly Lipschitz-continuous in $L_{2}$} if $\forall \alpha > 0\exists L_{\alpha}>0\forall t\geq 0\forall \varphi,\psi \in V_{\alpha}$:
  \begin{equation*}
    \norm{F(t,\varphi)-F(t,\psi)}_{H}\leq L_{\alpha}\norm{\varphi-\psi}_{L_{2}(-h,0;H)}\text{,}
  \end{equation*}
  where $V_{\alpha}$ is defined as in \cref{def:AlmLC}.
\end{definition}
\begin{remark}
  We point out that many practical examples do not exhibit this property. In particular, not the ``standard'' example
  \begin{equation*}
    F(t,\varphi) = \varphi\bigl(-\tau(\phi(0))\bigr) \quad \leftrightsquigarrow
    \quad F\bigl(t,u_{(t)}\bigr) = u\bigl(t-\tau(u(t))\bigr)
  \end{equation*}
for some Lipschitz-continuous $\tau$. Not even constant delay, i.e., the function $F(t,\varphi)=\varphi(-\tau)$ for a constant $\tau$ does exhibit this property. It is possible to define such delays however, for instance the integral mean
\begin{equation*}
  F(t,\varphi) = \frac{1}{h}\int_{-h}^{0}\varphi(s)\dx[s]
\end{equation*}
qualifies, as a trivial calculation quickly shows:
\begin{align*}
  \norm{F(t,\varphi)-F(t,\phi)}_{H}
  &= \frac{1}{h}\norm[\Big]{\medint\int_{-h}^{0}\varphi(s) - \phi(s)\dx[s]}\\
  &\leq \Bigl(\frac{1}{h}\medint\int_{-h}^{0}\norm{\varphi(s)-\phi(s)}^{2}\dx[s]\Bigr)^{-\nicefrac{1}{2}}
    =h^{-\frac{1}{2}}\norm{\varphi -\phi}_{L_{2}(-h,0;H)}\text{.}
\end{align*}
Examples for such an integral mean type of delay are provided/referenced in \cite{Khasawneh2011}, coming from car following models, wheel shimmy and machining dynamics, although not all examples there meet \cref{def:AlmLCinL2}. An example of a delay matching \cref{def:AlmLCinL2} originating from mathematical biology is discussed in \cite[rmk.~6.3]{Waurick2023}.
\end{remark}
\begin{theorem}[global existence for \cref{eq:FPPIntIns}]
  \label{th:GlobalExistForIntIns}
  The solution of \cref{eq:FPPIntIns} is global if in addition to the assumptions of \cref{th:localExistIns}, $G$ is
  \begin{itemize}[leftmargin=4ex]
    \item (globally) Lipschitz-continuous w.r.t. the first argument and
    \item almost uniformly Lipschitz-continuous in $L_{2}$ and the Lipschitz-constant $L_{\alpha}$ in \cref{def:AlmLCinL2} is uniform with respect to $\alpha$.
  \end{itemize}
\end{theorem}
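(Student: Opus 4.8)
The plan is to rule out finite-time blow-up by an a priori estimate in the weighted norm $\norm{\argdot}_{L_{2,\rho}}$, closed off by Gr\"onwall's lemma; the decisive feature is that the Lipschitz constant $L_{\alpha}\equiv L$ no longer depends on $\alpha$. First I would fix $\rho$ once and for all as large as required in the proof of \cref{th:localExistIns} --- a single choice now, independent of $\alpha$, precisely because $L_{\alpha}\equiv L$ --- and recall, from \cref{th:localExistIns} together with the blow-up criterion for \cref{eq:FPPIntIns} in \cref{th:maximalExist} (that is, \cref{rmk:maxExistIntIns}), that the unique local solution $u=I_{\rho}v+Z$ extends to a maximal interval $[0,T)$ on which, if $T<\infty$, $\lim_{t\uparrow T}\norm{u}_{\mathcal{C}(0,t;H)}=+\infty$. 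It therefore suffices to bound $\norm{u}_{\mathcal{C}(0,t;H)}$ uniformly for $t<T$ whenever $T<\infty$; this forces $T=\infty$.

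For the a priori estimate, fix $t_{0}<T$. On $[0,t_{0}]$ the metric projection acts as the identity (by the maximal-extension construction $u$ is a genuine local solution there), so $v=S_{\rho}\bigl[f+G(\argdot,(I_{\rho}v+Z)_{(\argdot)})\bigr]$ on $[0,t_{0}]$; truncating with $\chi_{\R_{\le t_{0}}}$ and invoking causality of $S_{\rho}$ together with $\norm{S_{\rho}}_{L_{2,\rho}}\le\nicefrac{1}{c}$ from \cref{th:Picard},
\[
  \norm{v}_{L_{2,\rho}(0,t_{0};H)}\le\tfrac{1}{c}\Bigl(\norm{f}_{L_{2,\rho}(0,\infty;H)}+\norm{G\bigl(\argdot,(I_{\rho}v+Z)_{(\argdot)}\bigr)}_{L_{2,\rho}(0,t_{0};H)}\Bigr).
\]
The $G$-term I would bound pointwise in time by $\norm{G(s,0)}_{H}+L\norm{(I_{\rho}v+Z)_{(s)}}_{L_{2}(-h,0;H)}$, using almost uniform Lipschitz continuity in $L_{2}$ (\cref{def:AlmLCinL2}): the argument $(I_{\rho}v+Z)_{(s)}$ lies in $V_{\alpha}$ for $\alpha\coloneq\max\{\norm{\Phi'}_{\infty},\norm{v+Z'}_{\mathcal{C}(0,t_{0};H)}\}$, which is finite since $v\in H^{1}_{0,\rho}$ is continuous and $Z'$ is continuous, and $L$ is the same constant for this (possibly large) $\alpha$. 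Next, \cref{th:ThetaIsLipschitz} gives $\norm{\Theta(I_{\rho}v+Z)}_{L_{2,\rho}(0,t_{0};L_{2}(-h,0;H))}\le\tfrac{1}{\sqrt{2\rho}}\bigl(\norm{I_{\rho}v}_{L_{2,\rho}(0,t_{0};H)}+\norm{Z}_{L_{2,\rho}(-h,\infty;H)}\bigr)$, and a Cauchy--Schwarz estimate yields $\norm{I_{\rho}v}_{L_{2,\rho}(0,t_{0};H)}^{2}\le\tfrac{1}{2\rho}\int_{0}^{t_{0}}y(s)\dx[s]$, where $y(s)\coloneq\norm{v}_{L_{2,\rho}(0,s;H)}^{2}$. (Alternatively one may use $\norm{I_{\rho}v}_{L_{2,\rho}(0,t_{0};H)}\le\tfrac{1}{\rho}\norm{v}_{L_{2,\rho}(0,t_{0};H)}$, which closes the inequality directly once $\rho$ is chosen large enough.)

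Collecting these bounds and using $(a+b)^{2}\le2a^{2}+2b^{2}$, I arrive at $y(t_{0})\le2B^{2}+\kappa\int_{0}^{t_{0}}y(s)\dx[s]$ with constants $B,\kappa>0$ independent of $t_{0}$; here $B<\infty$ because $f\in H^{1}_{\tilde{\rho}}\subseteq L_{2,\rho}$, $G(\argdot,0)\in L_{2,\rho}$ by assumption~\ref{ass:A} iii), and $Z\in H^{1}_{\rho}(-h,\infty;H)$ by assumption~\ref{ass:B}. Gr\"onwall's lemma then gives $y(t_{0})\le2B^{2}\e^{\kappa t_{0}}$, which is bounded on $[0,T)$ when $T<\infty$. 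Since
\[
  \norm{u(t)}_{H}\le\norm{I_{\rho}v(t)}_{H}+\norm{Z(t)}_{H}\le y(t)^{1/2}\Bigl(\tfrac{\e^{2\rho t}-1}{2\rho}\Bigr)^{1/2}+\norm{Z}_{\mathcal{C}(0,t;H)}\text{,}
\]
boundedness of $y$ on $[0,T)$ keeps $\norm{u}_{\mathcal{C}(0,t;H)}$ bounded as $t\uparrow T$, contradicting the blow-up alternative; hence $T=\infty$ and the solution is global.

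The hard part is exactly the one flagged before the theorem: with only the $H^{1}$-form of almost uniform Lipschitz continuity, the $G$-term is controlled by $\norm{(I_{\rho}v+Z)_{(s)}}_{H^{1}(-h,0;H)}$, whose time-integral cannot be fed back into $\norm{v}_{L_{2,\rho}}$ through $\Theta$ and $I_{\rho}$, so the estimate fails to close. Passing to the $L_{2}$-form repairs this, but only if $L_{\alpha}$ does not deteriorate as $\alpha\to\infty$: along the maximal interval the smallest admissible $\alpha$ --- the sup-norm bound on $u'$ --- may grow without control, so a single uniform $L$ is genuinely needed, and this is where the uniformity hypothesis enters, keeping $\kappa$ finite. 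The global Lipschitz continuity of $G$ in the first argument controls the time-dependence of the inhomogeneity uniformly on the half-line and, together with \cref{def:AlmLCinL2}, also secures the regularity-preservation property (cf.~\cref{th:RegularityPreservation}) on which the underlying local theory relies.
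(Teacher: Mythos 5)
There is a genuine gap: you bound the wrong quantity. For \cref{eq:FPPIntIns} the obstruction to continuing the solution is not the size of $u=I_{\rho}v+Z$ but the size of $v=u'-Z'$ in sup-norm: the metric projection $\pi_{\alpha}$ acts as the identity on $(I_{\rho}v+Z)_{(t)}$ precisely when $\norm{v+Z'}_{\infty}\leq\alpha$ on the relevant history window (this is exactly how the projection is removed in the proof of \cref{th:localExistIns}, and \cref{rmk:maxExistIntIns} states explicitly that the fixed point of $\Gamma_{\rho,\alpha}$ is a local solution ``for as long as the $\norm{\argdot}_{\infty}$-norm of $v$ remains bounded by $\alpha$''). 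Your a priori estimate controls $y(t)=\norm{v}_{L_{2,\rho}(0,t;H)}^{2}$ and, via Cauchy--Schwarz, the pointwise values of the antiderivative $I_{\rho}v$; this says nothing about $\norm{v}_{\mathcal{C}(0,t;H)}$, which can blow up while $\norm{I_{\rho}v+Z}_{\mathcal{C}(0,t;H)}$ stays bounded. So the contradiction with the continuation criterion is not reached, and the argument does not close. (You read the blow-up alternative off the literal wording of \cref{rmk:maxExistIntIns}, but the mechanism given there makes clear that the relevant quantity is $\norm{v}_{\mathcal{C}(0,t;H)}$, and the paper's own global-existence proof is organised around exactly that norm.)

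To obtain the needed pointwise bound on $v$ one has to exploit the extra regularity: the paper sets $B(t)=\widetilde{G}_{\alpha}(I_{\rho}v)(t)$, uses the global Lipschitz continuity in the first argument \emph{together with} the $\alpha$-uniform $L_{2}$-Lipschitz property to derive the pointwise derivative bound $\norm{B'(s)}_{H}\leq L_{1}+L_{2}\norm{(v+Z')_{(s)}}_{L_{2}(-h,0;H)}$, then differentiates the fixed-point equation (the consistency condition and regularity preservation give $\mathring{\partial}_{\rho}v=S_{\rho}[f'+B']$ via \cref{th:Commutation}), and applies the fundamental theorem of calculus, causality and Cauchy--Schwarz to get $\e^{-2\rho t}\norm{v(t)}_{H}^{2}\leq C_{1}+C_{2}\int_{0}^{t}\norm{v}_{\mathcal{C}(0,s;H)}^{2}\e^{-2\rho s}\dx[s]$; Gr\"onwall applied to $\gamma(t)=\e^{-2\rho t}\norm{v}_{\mathcal{C}(0,t;H)}^{2}$ then yields the exponential sup-norm bound, after which one still has to check that the resulting $v\in L_{2,\rho_{1}}$ is a fixed point of \cref{eq:FPPinL2} without the projection. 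In your write-up the hypothesis of global Lipschitz continuity in $t$ is never genuinely used (relegating it to ``regularity preservation'' misses its role), which is a further sign that the essential step --- the pointwise control of $B'$ and hence of $v$ --- is missing.
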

\begin{proof}
  We begin with an initial observation: For $t,s\in \mathbb{R}_{\geq 0}$ we can estimate
  \begin{align*}
    &\norm{\widetilde{G}_{\alpha}(I_{\rho}v)(t) - \widetilde{G}_{\alpha}(I_{\rho}v)(s)}_{H}\\
    &\quad\leq \norm[\big]{G\bigl(t,\pi_{\alpha}\bigl((I_{\rho}v+Z)_{(t)}\bigr)\bigr) -G\bigl(s,\pi_{\alpha}\bigl((I_{\rho}v+Z)_{(t)}\bigr)\bigr)}_{H}\\
    &\qquad +\norm[\big]{G\bigl(s,\pi_{\alpha}\bigl((I_{\rho}v+Z)_{(t)}\bigr)\bigr) -G\bigl(s,\pi_{\alpha}\bigl((I_{\rho}v+Z)_{(s)}\bigr)\bigr)}_{H}\\
    &\quad\leq L_{1}\abs{t-s} + L_{2}\norm[\big]{\pi_{\alpha}\bigl((I_{\rho}v+Z)_{(t)}\bigr) -\pi_{\alpha}\bigl((I_{\rho}v+Z)_{(s)}\bigr)}_{L_{2}(-h,0;H)}\text{,}\\
    \intertext{where $L_{1}$ and $L_{2}$ denote the Lipschitz-constants w.r.t. the first and second argument respectively. Using Lipschitz-continuity of $\pi_{\alpha}$, we estimate further:}
    &\quad\leq L_{1}\abs{t-s} + L_{2}\norm[\big]{(I_{\rho}v+Z)_{(t)} -(I_{\rho}v+Z)_{(s)}}_{L_{2}(-h,0;H)}\text{.}
  \end{align*}
  We can therefore infer for $B(t)\coloneq \widetilde{G}_{\alpha}(I_{\rho}v)(t)$:
  \begin{align}
    \begin{aligned}
    \frac{\norm{B(t)-B(s)}_{H}}{\abs{t-s}}
    &\leq L_{1} + L_{2}\frac{\norm[\big]{(I_{\rho}v+Z)_{(t)} - (I_{\rho}v+Z)_{(s)}}_{L_{2}(-h,0;H)}}{\abs{t-s}}\\
    &\underset{t\to s}{\longrightarrow} L_{1} + L_{2}\norm{(v+Z')_{(s)}}_{L_{2}(-h,0;H)}\text{.}
    \end{aligned}\tag{$\circ$}
  \end{align}
  This equation holds for almost all $s\geq 0$. Hence, $B$ is differentiable (almost everywhere) on the entire right half-line.\\
  We start with the actual proof: According to \cref{th:localExistIns}, the local solution $v$ to \cref{eq:FPPIntIns} is in $H^{1}_{0,\rho}(0,T;H)$ up to some positive time $T$. The local solution $v$ of \cref{eq:FPPIntIns} satisfies
  \begin{equation*}
    v = S_{\rho}\bigl[f + \widetilde{G}_{\alpha}(I_{\rho}v)\bigr] = S_{\rho}[f + B]\text{.}
  \end{equation*}
  Since $f$ and $B$ are $H^{1}_{\rho}$-functions (the latter because $G$ is regularity preserving, cf.~\cref{def:RegPres}) and the consistency condition $-f(0) = G(0,\Phi)=B(0)$ holds, we can interchange $\partial_{\rho}$ and $S_{\rho}$ according to \cref{th:Commutation} and obtain for $v'$:
  \begin{equation*}
    \partial_{\rho}v = \mathring{\partial}_{\rho}v = \mathring{\partial}_{\rho}S_{\rho}[f + B] = S_{\rho}[f' + B']\text{.}
  \end{equation*}
  We can apply the fundamental theorem of calculus and write for $0\leq t\leq T$:
  \begin{align*}
    v(t) &= \underbrace{v(0)}_{=0} + \medint\int_{0}^{t}v'(s)\dx[s] = \medint\int_{0}^{\infty}\chi_{[0,t]}S_{\rho}[f' + B'](s)\dx[s]\text{.}\\[-1ex]
    \intertext{We appeal to causality to further obtain}
    v(t) &= \medint\int_{0}^{\infty}\chi_{[0,t]}S_{\rho}\chi_{[0,t]}[f' + B'](s) \dx[s]\text{.}
  \end{align*}
  This allows us to estimate
  \begin{align*}
    &\norm{v(t)}_{H} \leq \norm[\Big]{\medint\int_{0}^{\infty}\chi_{[0,t]}S_{\rho}\chi_{[0,t]}[f' + B'](s)\dx[s]}_{H}\\
    &\quad\leq \medint\int_{0}^{\infty}\chi_{[0,t]} \norm[\big]{S_{\rho}\chi_{[0,t]}[f' + B'](s)}_{H} \dx[s]\\
    &\quad\leq \medint\int_{0}^{\infty}\e^{\rho s}\chi_{[0,t]}\e^{-\rho s} \norm[\big]{S_{\rho}\chi_{[0,t]}[f' + B'](s)}_{H} \dx[s]\\
    &\quad\leq \Bigl[\medint\int_{0}^{t}\e^{2\rho s}\dx[s]\Bigr]^{\nicefrac{1}{2}}
      \Bigl[\medint\int_{0}^{\infty} \norm[\big]{S_{\rho}\chi_{[0,t]}[f' + B'](s)}_{H}^{2}\e^{-2\rho s} \dx[s]\Bigr]^{\nicefrac{1}{2}}\\
                    &\quad\leq \tfrac{1}{\sqrt{2\rho}}\e^{\rho t}\norm{S_{\rho}\chi_{[0,t]}[f' + B']}_{L_{2,\rho}(0,\infty;H)}\\
                    &\quad\leq \tfrac{1}{\sqrt{2\rho}}\e^{\rho t} \norm{S_{\rho}}\Bigl[\medint\int_{0}^{\infty}\norm[\big]{\chi_{[0,t]}[f' + B'](s)}_{H}^{2}\e^{-2\rho s} \dx[s]\Bigr]^{\nicefrac{1}{2}}\\
                    &\quad\leq \tfrac{1}{\sqrt{2\rho}}\e^{\rho t} \norm{S_{\rho}}\Bigl[\norm{f'}_{L_{2,\rho}(0,\infty;H)} + \Bigl(\medint\int_{0}^{\infty} \norm[\big]{\chi_{[0,t]}B'(s)}_{H}^{2}\e^{-2\rho s} \dx[s]\Bigr)^{\nicefrac{1}{2}}\Bigr]\\
    \intertext{Now we can use the estimate for $B'$ from ($\circ$) to further estimate:}
                    &\quad\leq \tfrac{1}{\sqrt{2\rho}}\e^{\rho t} \norm{S_{\rho}}\Bigl[\norm{f'}_{L_{2,\rho}} + \Bigl(\medint\int_{0}^{t}\bigl(L_{1} +L_{2}\norm{(v+Z')_{(s)}}_{L_{2}(-h,0;H)}\bigr)^{2}\e^{-2\rho s} \dx[s]\Bigr)^{\nicefrac{1}{2}}\Bigr]\\
                    &\quad\leq \tfrac{1}{\sqrt{2\rho}}\e^{\rho t}\norm{S_{\rho}} \Bigl[\norm{f'}_{L_{2,\rho}}\!+ \!\sqrt{\tfrac{2}{2\rho}}L_{1} \!+ \!\Bigl(2L_{2}^{2}\medint\int_{0}^{t}2\bigl(\norm{v_{(s)}}^{2}_{L_{2}} \!+ \!\norm{Z'_{(s)}}_{L_{2}}^{2}\bigr)\e^{-2\rho s} \!\dx[s]\Bigr)^{\nicefrac{1}{2}}\Bigr]\\
    &\quad\leq \tfrac{1}{\sqrt{2\rho}}\e^{\rho t}\norm{S_{\rho}} \Bigl[\norm{f'}_{L_{2,\rho}} \!+ \!\tfrac{L_{1}}{\sqrt{\rho}} \!+ \!\Bigl(4L_{2}^{2}\medint\int_{0}^{t} h^{2}\norm{v}_{\mathcal{C}(0,s;H)}^{2}\e^{-2\rho s} \!\dx[s] \\
    &\qquad\qquad\qquad\qquad\qquad\qquad\qquad\quad+ 4L_{2}^{2}\norm{\Theta Z'}_{L_{2,\rho}(0,t;L_{2}(-h,0;H))}^{2}\Bigr)^{\nicefrac{1}{2}}\Bigr]\\
                    &\!\leq \!\tfrac{\e^{\rho t}}{\sqrt{2\rho}}\norm{S_{\rho}}\! \Bigl[\!\norm{f'}_{L_{2,\rho}}\!\!\!+ \!\tfrac{L_{1}}{\sqrt{\rho}} \!+ \!\sqrt{\!\tfrac{2}{\rho}}L_{2} \norm{Z'}_{L_{2,\rho}(-h,\infty;H)} \!+ \!2L_{2}h \!\Bigl(\medint\int_{0}^{t}\norm{v}_{\mathcal{C}(0,s;H)}^{2}\e^{-2\rho s} \!\dx[s]\!\Bigr)^{\!\frac{1}{2}}\Bigr]\text{.}
  \end{align*}
  In other words, the estimate shows (collecting the respective terms into constants $C_{1}$ and $C_{2}$):
  \begin{equation*}
    \e^{-\rho t}\norm{v(t)}_{H}\leq \Bigl[C_{1} + C_{2}\medint\int_{0}^{t}\bigl(\norm{v}_{\mathcal{C}(0,s;H)}\e^{-\rho s}\bigr)^{2}\dx[s]\Bigr]^{\frac{1}{2}}\text{.}
  \end{equation*}
  Consequently (altering the constants),
  \begin{equation*}
    \e^{-2\rho t}\norm{v(t)}_{H}^{2}\leq C_{1} + C_{2}\medint\int_{0}^{t}\norm{v}_{\mathcal{C}(0,s;H)}^{2}\e^{-2\rho s}\dx[s]\text{.}
  \end{equation*}
  Since the mapping $s\mapsto \norm{v(s)}_{H}$ is continuous, there exists $t^{\ast}\in [0,t]$ such that $\norm{v(t^{\ast})}_{H}=\norm{v}_{\mathcal{C}(0,t;H)}$. Hence,
  \begin{align*}
    \e^{-2\rho t}\norm{v}^{2}_{\mathcal{C}(0,t;H)}
    &\leq \e^{-2\rho t^{\ast}}\norm{v(t^{\ast})}^{2}\\
    &\leq C_{1} + C_{2}\medint\int_{0}^{t^{\ast}}\norm{v}^{2}_{\mathcal{C}(0,s;H)}\e^{-2\rho s}\dx[s]\\
    &\leq C_{1} + C_{2}\medint\int_{0}^{t}\norm{v}^{2}_{\mathcal{C}(0,s;H)}\e^{-2\rho s}\dx[s]\text{.}
  \end{align*}
  Thus, Gr\"onwall's lemma applied to $\gamma(t)\coloneq \e^{-2\rho t}\norm{v}_{\mathcal{C}(0,t;H)}^{2}$ implies $\gamma(t)\leq C_{1} \e^{C_{2}t}$. Consequently for $\rho_{1} > \tfrac{C_{2}}{2} + \rho$,
  \begin{align*}
    \norm{v}^{2}_{L_{2,\rho_{1}}(0,\infty;H)} &= \medint\int_{0}^{\infty}\norm{v(s)}_{H}^{2}\e^{-2\rho_{1} s}\dx[s]
                                              \leq \medint\int_{0}^{\infty}\e^{2\rho s}\gamma(s)\e^{-2\rho_{1} s}\dx[s]\\
    &\leq \medint\int_{0}^{\infty}C_{1}\e^{(C_{2} + 2(\rho - \rho_{1}))s}\dx[s] < \infty\text{.}
  \end{align*}
  Hence, the local solution $v$ of \cref{eq:FPPIntIns} defined on successive time-intervals exists as an $L_{2,\rho_{1}}(0,\infty;H)$-function. It remains to verify that $v$ is a fixed point of \eqref{eq:FPPinL2}. For that, it suffices to prove that $t\mapsto G\bigl(t,(I_{\rho_{1}}v + Z)_{(t)}\bigr) \in L_{2,\rho_{1}}(0,\infty;H)$.
\begin{align*}
  &\norm[\big]{G\bigl(\argdot,(v+Z)_{(\argdot)}\bigr)}_{L_{2,\rho_{1}}(0,T;H)}
  = \norm[\big]{G\bigl(\argdot,\pi_{\alpha}\bigl((v+Z)_{(\argdot)}\bigr)\bigr)}_{L_{2,\rho_{1}}(0,T;H)}\\
  &\quad\leq \norm{G(\argdot,0)}_{L_{2,\rho_{1}}(0,T;H)} + \norm[\big]{G(\argdot,0) - G\bigl(\argdot,\pi_{\alpha}\bigl((I_{\rho}v+Z)_{(\argdot)}\bigr)\bigr)}_{L_{2,\rho_{1}}(0,T;H)}\\
  \intertext{Use of almost uniform Lipschitz-continuity in $L_{2}$ with uniform constant $L$ gives:}
  &\quad\leq \norm{G(\argdot,0)}_{L_{2,\rho_{1}}(0,T;H)} + L\Bigl[\medint\int_{0}^{T}\norm[\big]{\pi_{\alpha}\bigl((I_{\rho_{1}}v+Z)_{(\argdot)}\bigr)}_{L_{2}(-h,0;H)}^{2}\e^{-2\rho_{1}s}\dx[s]\Bigr]^{\nicefrac{1}{2}}\\
  &\quad\leq \norm{G(\argdot,0)}_{L_{2,\rho_{1}}(0,T;H)} + L\Bigl[\medint\int_{0}^{T}\norm{(I_{\rho_{1}}v+Z)_{(s)}}_{L_{2}(-h,0;H)}^{2}\e^{-2\rho_{1}s}\dx[s]\Bigr]^{\nicefrac{1}{2}}\\
  &\quad\leq \norm{G(\argdot,0)}_{L_{2,\rho_{1}}(0,T;H)} + L\norm{\Theta(I_{\rho_{1}}v + Z)}_{L_{2,\rho_{1}}(0,T;L_{2}(-h,0;H))}\\
  &\quad\leq \norm{G(\argdot,0)}_{L_{2,\rho_{1}}(0,T;H)} + \tfrac{L}{\sqrt{2\rho_{1}}}\bigl(\tfrac{1}{\rho_{1}}\norm{v}_{L_{2,\rho_{1}}(-h,T;H)} + \norm{Z}_{L_{2,\rho_{1}}(-h,T;H)}\bigr)\text{.}
\end{align*}
  The claim follows letting $T\to \infty$. Hence, $v$ is a global solution of \cref{eq:FPPIntIns}.
\end{proof}

\begin{remark}
  The difficult part of the proof of \cref{th:GlobalExistForIntIns} is establishing an exponential bound for $\norm{v}_{\mathcal{C}(0,t;H)}$ in order to prove that $v$ globally solves \cref{eq:FPPIntIns}. Once $v \in L_{2,\rho}(0,\infty;H)$ for some large enough $\rho >0$ is established, verifying that $v$ is a fixed point of \cref{eq:FPPinL2} is easy.\\
  Conversely, an investigation of the proof of \cref{th:GlobalExistForIntIns} reveals that a function $v \in L_{2,\rho}(0,\infty;H)$ that solves \cref{eq:localEq} for all $T>0$ is a global solution, i.e., $v$ is a fixed point of \cref{eq:FPPinL2} --- provided that the requirements of \cref{th:localExistIns} are met and that the Lipschitz-constant $L_{\alpha}$ is global (independent of $\alpha$). The stronger notion of almost uniform Lipschitz-continuity in $L_{2}$ and Lipschitz-continuity w.r.t. the first argument of $G$ are not required for this.
\end{remark}

\begin{theorem}[Global existence for \cref{eq:FPPIntOut}]
  \label{th:GlobalExistForIntOut}
  The solution of \cref{eq:FPPIntOut} is global if in addition to the assumptions of \cref{th:localExistOut}, $G$ is
  \begin{itemize}[leftmargin=4ex]
    \item (globally) Lipschitz-continuous w.r.t. the first argument and
    \item almost uniformly Lipschitz-continuous in $L_{2}$ and the Lipschitz-constant $L_{\alpha}$ in \cref{def:AlmLCinL2} is uniform with respect to $\alpha$.
  \end{itemize}
\end{theorem}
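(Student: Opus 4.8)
The plan is to deduce the statement from \cref{th:GlobalExistForIntIns} by passing to the derivative of the solution. Let $v$ be the unique local solution of \cref{eq:FPPIntOut} furnished by \cref{th:localExistOut}; as seen in that proof, $v \in H^{1}_{0,\rho}(0,\infty;H)$ with $v' \in H^{1}_{0,\rho}(0,\infty;H)$ and, since $f \in H^{1}_{0,\tilde\rho}$, we have $f(0)=0$, hence $f = I_{\rho}f'$ with $f' \in H^{1}_{\tilde\rho}$ (here we use $f \in H^{2}_{\tilde\rho}$). Because $v$ is a fixed point of $\Gamma_{\rho,\alpha}$ and $G(\argdot,\pi_{\alpha}((v+Z)_{(\argdot)})) \in L_{2,\rho}(0,\infty;H)$ by \cref{th:integrability}, we may rewrite \cref{eq:FPPIntOut} as $v = S_{\rho}I_{\rho}[f' + G(\argdot,\pi_{\alpha}((v+Z)_{(\argdot)}))]$ and apply $\mathring{\partial}_{\rho}$, using $S_{\rho}I_{\rho} = I_{\rho}S_{\rho}$ and $\mathring{\partial}_{\rho}I_{\rho} = \mathrm{id}$ (cf.~\cref{th:Commutation}). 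Since $v(0)=0$, i.e., $v = I_{\rho}v'$, this shows that $p \coloneq v'$ satisfies
\[
  p = S_{\rho}\bigl[f' + G\bigl(\argdot, \pi_{\alpha}\bigl((I_{\rho} p + Z)_{(\argdot)}\bigr)\bigr)\bigr]
\]
with $\pi_{\alpha}((I_{\rho}p+Z)_{(t)}) = (I_{\rho}p+Z)_{(t)}$ for $t\le T$; that is, $p$ is a local solution of a fixed point problem of the form \cref{eq:FPPIntIns} with data $f'$ in place of $f$.

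Next I would verify that $(f',G,\Phi,Z)$ meets the hypotheses of \cref{th:GlobalExistForIntIns}: $G$ satisfies assumption \ref{ass:A} together with the two additional properties (global Lipschitz-continuity in the first argument and almost uniform Lipschitz-continuity in $L_{2}$ with a constant uniform in $\alpha$) by assumption, $\Phi$ and $Z$ satisfy assumption \ref{ass:B}, $f' \in H^{1}_{\tilde\rho}(0,\infty;H)$, and the consistency condition $f'(0) = -G(0,\Phi)$ is exactly the one imposed in \cref{th:localExistOut}. Hence \cref{th:GlobalExistForIntIns} applies to $p$ and yields, via the Gr\"onwall estimate in its proof, an exponential bound $\norm{p}_{\mathcal{C}(0,t;H)} \le C_{1}\e^{C_{2}t}$ on every finite interval, as well as $p \in L_{2,\rho_{1}}(0,\infty;H)$ for $\rho_{1}$ large and $G(\argdot,(I_{\rho}p+Z)_{(\argdot)}) \in L_{2,\rho_{1}}(0,\infty;H)$.

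Finally I would invoke the maximal-existence criterion \cref{th:maximalExist} for \cref{eq:FPPIntOut}: the local solution $u = v + Z$ can only cease to exist at a time $T$ with $\lim_{t\uparrow T}\norm{u'}_{\mathcal{C}(0,t;H)} = +\infty$. But $u' = p + Z'$ with $Z'$ continuous on $[0,\infty)$, so $\norm{u'}_{\mathcal{C}(0,t;H)} \le \norm{p}_{\mathcal{C}(0,t;H)} + \norm{Z'}_{\mathcal{C}(0,t;H)} < \infty$ for every $t$; thus no finite-time blow-up occurs and $v$ exists globally. Undoing the differentiation ($v = I_{\rho}p$, $f = I_{\rho}f'$, $S_{\rho}I_{\rho} = I_{\rho}S_{\rho}$) and using the integrability just noted, $v = S_{\rho}[f + I_{\rho}G(\argdot,(v+Z)_{(\argdot)})]$ holds in $L_{2,\rho_{1}}(0,\infty;H)$, i.e., $v$ is a fixed point of \cref{eq:FPPinL2} and hence a global solution of \cref{eq:FPPIntOut}. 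The only genuine obstacle is the bookkeeping in the first step --- justifying the differentiation of the fixed point equation and matching the consistency and regularity conditions of \cref{th:localExistOut} with those required by \cref{th:localExistIns} --- after which the statement is essentially a corollary of the already established \cref{th:GlobalExistForIntIns}. Alternatively, one may rerun the Gr\"onwall argument of the proof of \cref{th:GlobalExistForIntIns} verbatim on $v' = S_{\rho}[f' + G(\argdot,(v+Z)_{(\argdot)})]$, which is literally the same computation.
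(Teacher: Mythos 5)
Your proposal is correct and matches the paper's argument: the paper's proof is precisely the one-line remark that one runs the proof of \cref{th:GlobalExistForIntIns} on $v'$ instead of $v$ (no extra derivatives of $G$ being needed thanks to the leading $I_{\rho}$), which is exactly your reduction $p=v'=S_{\rho}\bigl[f'+G(\argdot,(I_{\rho}p+Z)_{(\argdot)})\bigr]$. Your version merely formalizes the "completely analogous" step as a literal invocation of \cref{th:GlobalExistForIntIns} together with the uniqueness and maximal-existence statements, which is a clean and valid way to package the same computation.
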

\begin{proof}
  The proof is completely analoguous to the proof of \cref{th:GlobalExistForIntIns}: Instead of $v$ as in the proof of \cref{th:GlobalExistForIntIns} one estimates $v'$ instead (note that no further derivatives of $G$ are required because of the leading integral term $I_{\rho}$).
\end{proof}

\subsection{Continuous dependence on data}
\label{subsec:ContDepOnData}
The remaining part in the discusstion of well-posedness is continuous dependence on data. We will focus on \cref{eq:FPPIntIns} and start with the dependence on the right-hand sides.

\begin{theorem}[continuous dependence on the right-hand sides]
  Under the assumptions of \cref{th:localExistIns} let $u,v$ be local solutions of \cref{eq:FPPIntIns} corresponding to right-hand sides $f_{1} + G_{1}$ and $f_{2} + G_{2}$ respectively up to some $T>0$ such that $u_{(t)},v_{(t)}$ remain in $V_{\alpha}$. Then there exist $\rho>0$ and $C>0$ such that:
  \begin{equation*}
    \norm{u-v}_{L_{2,\rho}(0,T;H)}^{2}\leq C\Bigl[\norm{f_{1}-f_{2}}_{L_{2,\rho}(0,T;H)}^{2} \!+\!\sup_{t\in [0,T]} \norm[\big]{\widetilde{G_{1}}(u)(t) - \widetilde{G_{2}}(v)(t)}_{H}^{2}\Bigr]\text{.}
  \end{equation*}
\end{theorem}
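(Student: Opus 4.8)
The plan is to reduce the claim to the boundedness and causality of the Picard operator $S_\rho$ (\cref{th:Picard}) together with the elementary observation that on a bounded interval the weighted $L_2$-norm is dominated by the supremum norm; in contrast to the global existence proofs, no Gr\"onwall argument is needed. First I would fix $\rho>0$ large enough that both local solutions are defined and $\norm{S_\rho}_{L_{2,\rho}}\leq\nicefrac1c$ --- by eventual independence of the weight (\cref{th:Picard} iii)) a common $\rho$ may be chosen. By definition of a local solution together with the hypothesis that $u_{(t)},v_{(t)}\in V_\alpha$ for $t\in[0,T]$, the metric projection $\pi_\alpha$ occurring in $\Gamma_{\rho,\alpha}$ acts as the identity on $[0,T]$, so on that interval $u$ and $v$ satisfy fixed point identities of the form
\begin{equation*}
  u=S_\rho\bigl[f_1+\widetilde{G_1}(u)\bigr]\text{,}\qquad v=S_\rho\bigl[f_2+\widetilde{G_2}(v)\bigr]\text{,}
\end{equation*}
where $\widetilde{G_i}(\,\cdot\,)$ denotes the nonlinearity evaluated along the respective solution (with an extra leading $I_\rho$ present if $u,v$ denote the $H^1$-solutions $I_\rho(\,\cdot\,)+Z$ rather than the underlying $L_{2,\rho}$-fixed points; this factor is harmless).

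Then I would subtract the two identities, abbreviate $B_i\coloneq\widetilde{G_i}(\,\cdot\,)$, and insert cut-offs using causality of $S_\rho$ and $\supp f_i\subseteq[0,\infty)$, exactly as in the remark ``On the notion of solution'':
\begin{equation*}
  \chi_{\R_{\leq T}}(u-v)=\chi_{\R_{\leq T}}\,S_\rho\,\chi_{\R_{\leq T}}\bigl[(f_1-f_2)+(B_1-B_2)\bigr]\text{.}
\end{equation*}
Taking $\norm{\argdot}_{L_{2,\rho}(\R;H)}$ on both sides, using $\supp(u-v)\subseteq[0,\infty)$ (both solutions share the prehistory), the bound $\norm{S_\rho}\leq\nicefrac1c$, and the elementary inequality $(a+b)^2\leq2(a^2+b^2)$, gives
\begin{equation*}
  \norm{u-v}_{L_{2,\rho}(0,T;H)}^2\leq\tfrac2{c^2}\Bigl(\norm{f_1-f_2}_{L_{2,\rho}(0,T;H)}^2+\norm{B_1-B_2}_{L_{2,\rho}(0,T;H)}^2\Bigr)\text{.}
\end{equation*}
Since $B_1-B_2=\widetilde{G_1}(u)-\widetilde{G_2}(v)$ is continuous on $[0,T]$ (continuity of the $G_i$ and of the delay operator in time, cf.~\cite[prop.~2.4]{Waurick2023}), one finally estimates
\begin{align*}
  \norm{B_1-B_2}_{L_{2,\rho}(0,T;H)}^2
  &\leq\Bigl(\medint\int_0^T\e^{-2\rho t}\dx[t]\Bigr)\sup_{t\in[0,T]}\norm[\big]{\widetilde{G_1}(u)(t)-\widetilde{G_2}(v)(t)}_H^2\\
  &\leq\tfrac1{2\rho}\sup_{t\in[0,T]}\norm[\big]{\widetilde{G_1}(u)(t)-\widetilde{G_2}(v)(t)}_H^2\text{,}
\end{align*}
so that the asserted inequality follows with $C\coloneq\tfrac2{c^2}\max\{1,\nicefrac1{2\rho}\}$ (multiplied by $\norm{I_\rho}^2\leq\nicefrac1{\rho^2}$ in the $H^1$-solution reading).

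The computation itself is routine; the only point that genuinely uses the hypothesis ``$u_{(t)},v_{(t)}$ remain in $V_\alpha$'' --- and hence the step I expect to require the most care --- is the passage to the interval $(0,T)$: one must justify that, after cutting off with $\chi_{\R_{\leq T}}$, the $\pi_\alpha$-projected nonlinearities in $\Gamma_{\rho,\alpha}$ may be replaced by $B_1$ and $B_2$, so that the displayed identity really involves only $f_i$ and $\widetilde{G_i}$ evaluated along the solutions. This is handled precisely as in the remark following \cref{th:localExistIns}. Everything else --- the operator bound, the $(a+b)^2$-split, and the $L_2$-versus-$\sup$ estimate on the finite interval --- is standard.
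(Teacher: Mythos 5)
Your argument is correct for the inequality as literally stated, and it is in fact more direct than the paper's, but the two proofs establish subtly different things. You subtract the two fixed-point identities, apply causality and $\norm{S_\rho}\leq \nicefrac{1}{c}$, split with $(a+b)^2\leq 2(a^2+b^2)$, and bound $\norm{\widetilde{G_1}(u)-\widetilde{G_2}(v)}^2_{L_{2,\rho}(0,T;H)}$ by $\tfrac{1}{2\rho}$ times the supremum; this uses only continuity in $t$ and never invokes the Lipschitz hypotheses (the $V_\alpha$ condition enters only to remove $\pi_\alpha$). The paper instead inserts $\widetilde{G_1}(v)$ as a pivot, writes $\widetilde{G_1}(u)-\widetilde{G_2}(v)=[\widetilde{G_1}(u)-\widetilde{G_1}(v)]+[\widetilde{G_1}(v)-\widetilde{G_2}(v)]$, controls the first bracket via almost uniform Lipschitz continuity of $G_1$, the $1$-Lipschitz projection, \cref{rmk:ThetaIsLipschitz} and the bound on $I_\rho$ to get a term $L_{G_1}^2\tfrac{1}{2\rho}\bigl(1+\tfrac{1}{\rho^2}\bigr)\norm{u-v}^2_{L_{2,\rho}(0,T;H)}$, and absorbs this into the left-hand side for $\rho$ large. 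The payoff of that extra work is that the paper's final supremum is of $\widetilde{G_1}(v)-\widetilde{G_2}(v)$, i.e., the two nonlinearities evaluated at the \emph{same} argument, which is the form a continuous-dependence-on-data result ought to have; your supremum of $\widetilde{G_1}(u)-\widetilde{G_2}(v)$ mixes the two unknown solutions and is therefore nearly tautological, even though it matches the displayed statement verbatim (the statement likely carries a typo there). If you want the stronger, data-only conclusion, you must add the pivot-and-absorb step; for the statement as printed, your shorter argument suffices.
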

\begin{proof}
  We estimate, utilizing the fixed point equation \cref{eq:localEq}
  \begin{align*}
    &\norm{u-v}_{L_{2,\rho}(0,T;H)}^{2}
      =\norm[\big]{S_{\rho}\bigl[f_{1} - f_{2} + \widetilde{G_{1}}(u) - \widetilde{G_{2}}(v)\bigr]}_{L_{2,\rho}(0,T;H)}^{2}\\
    &\quad\leq \norm{S_{\rho}}^{2}\norm[\big]{f_{1} - f_{2} + \widetilde{G_{1}}(u) -\widetilde{G_{2}}(v)}_{L_{2,\rho}(0,T;H)}^{2}\\
    &\quad\leq 2\norm{S_{\rho}}^{2}\Bigl[\norm{f_{1}-f_{2}}_{L_{2,\rho}(0,T;H)}^{2}\\[-1ex]
    &\qquad\qquad\qquad+ \medint\int_0^T \norm[\big]{G_{1}\bigl(t,(I_{\rho}u+Z)_{(t)}\bigr)
      -G_{2}\bigl(t,(I_{\rho}v+Z)_{(t)}\bigr)}_{H}^{2}\e^{-2\rho t} \dx[t]\Bigr]\\
    &\quad\leq 2\norm{S_{\rho}}^{2} \Bigl[\norm{f_{1}-f_{2}}_{L_{2,\rho}}^{2}
      + 2\medint\int_0^T\bigl[\norm[\big]{G_{1}\bigl(t,(I_{\rho}u+Z)_{(t)}\bigr)                                   -G_{1}\bigl(t,(I_{\rho}v+Z)_{(t)}\bigr)}_{H}^{2}\\[-1ex]
    &\qquad\qquad\qquad+ \norm[\big]{G_{1}\bigl(t,(I_{\rho}v+Z)_{(t)}\bigr)
      -G_{2}\bigl(t,(I_{\rho}v+Z)_{(t)}\bigr)}^{2}\bigr] \e^{-2\rho t} \dx[t]\Bigr]\text{.}
  \end{align*}
  The first integral term we can estimate using almost uniform Lipschitz-continuity of $G_{1}$ as follows:
  \begin{align*}
    &\medint\int_0^T \norm[\big]{G_{1}\bigl(t,(I_{\rho}u+Z)_{(t)}\bigr)
      -G_{1}\bigl(t,(I_{\rho}v+Z)_{(t)}\bigr)}_{H}^{2} \e^{-2\rho t}\dx[t]\\[-1ex]
    &\quad \leq L_{G_{1}}^{2} \medint\int_{0}^{T} \norm[\big]{(I_{\rho}u+Z)_{(t)}
      - (I_{\rho}v+Z)_{(t)}}_{H^{1}(-h,0;H)}^{2}\e^{-2\rho t} \dx[t]\\
    &\quad= L_{G_{1}}^{2} \medint\int_{0}^{T} \norm[\big]{(I_{\rho}(u-v))_{(t)}}_{H^{1}(-h,0;H)}^{2}\e^{-2\rho t} \dx[t]\\
    &\quad= L_{G_{1}}^{2} \norm{\Theta(I_{\rho}(u-v))}_{L_{2,\rho}(0,T;H^{1}(-h,0;H))}^{2}\\
    \intertext{We can estimate this term appealing to \cref{rmk:ThetaIsLipschitz}:}
    &\quad\leq L_{G_{1}}^{2}\tfrac{1}{2\rho} \norm{I_{\rho}(u-v)}_{H^{1}_{\rho}(-h,T;H)}^{2}\\
    &\quad\leq L_{G_{1}}^{2}\tfrac{1}{2\rho}\bigl(1+\tfrac{1}{\rho^{2}}\bigr) \norm{u-v}_{L_{2,\rho}(0,T;H)}^{2}\text{.}
  \end{align*}
  For the second integral term we simply observe that since $I_{\rho}\varphi$ is continuous for $\varphi \in L_{2}$, the function $t\mapsto (I_{\rho}\varphi)_{(t)}$ is continuous. Hence, the supremum of $t\mapsto G_{1/2}\bigl(t,(I_{\rho}\varphi)_{(t)}\bigr)$ exists on $[0,T]$ and the second term can be estimated with
  \begin{equation*}
    \tfrac{1}{2\rho}\sup_{t\in [0,T]} \norm[\big]{G_{1}\bigl(t,(I_{\rho}v+Z)_{(t)}\bigr)
      -G_{2}\bigl(t,(I_{\rho}v+Z)_{(t)}\bigr)}_{H}\text{.}
  \end{equation*}
  Making $\rho$ large enough and rearranging gives the inequality
  \begin{align*}
    &\norm{u-v}_{L_{2,\rho}(0,T;H)}^{2} \\
    &\quad\leq \!C\Bigl[\norm{f_{1}\!-\!f_{2}}_{L_{2,\rho}(0,T;H)}^{2} \!+\!\sup_{t\in [0,T]} \norm[\big]{G_{1}\bigl(t,(I_{\rho}v \!+ \!Z)_{(t)}\bigr)\!- \!G_{2}\bigl(t,(I_{\rho}v\!+\!Z)_{(t)}\bigr)}_{H}^{2}\Bigr]\text{.}
      \qedhere
  \end{align*}
\end{proof}

The discussion about the specific continuous dependence on initial datum begs the consideration of the special case of simple material laws, because as outlined in \cref{subsec:DistribSpaces_IVPs}, the function $f$ in \cref{eq:GenFPP} arising from incorporating the prehistory into the equation can be messy. For simple material laws it is easier to recover information on the prehistory. We postpone the corresponding result (\cref{th:ContDepOnIV}) to \cref{sec:SimMatLaws}.

%%%%%%%%%%%%%%%%%%%%%%%%%%%%%%
%          SECTION
\section{Simple material laws}
\label{sec:SimMatLaws}
In this section, we take a closer look at the case of simple material laws, i.e., material laws of the form
\begin{equation*}
  M(z) = M_{0} + z^{-1}M_{1}\text{,}
\end{equation*}
where $M_{0},M_{1}\in \mathcal{L}(H)$. From \cite[prop.~3.2.10]{Trostorff2018} (cf.~\cref{th:IV_for_simple_MatLaws}) it follows that for a prehistory $\Phi \in \chi_{\mathbb{R}_{\leq 0}}H^{1}_{\rho}(\mathbb{R};H)$, the space of admissible prehistories reduces to the space of admissible initial values. In particular, the projection $P_{0}$ (cf.~\cref{def:cutoff}) behaves nicely for such operators and we can explicitly spell out what $f$ in \cref{eq:EvolProblem} is and make some conditions from \cref{sec:SolThy} explicit.\\
To do this, we specifically choose the extension $Z$ of the prehistory $\Phi \in H^{1}(-h,0;H)$ with $\Phi'\in L_{\infty}(-h,0;H)$. To that end, we will make repeated use of the assumption $\Phi(-h)=0$ (and $\Phi^{(k)}(-h)=0$ for $k\geq 0$ if required). This is only a formal restriction though:
\begin{remark}
  \label{rmk:PrehistoryVanishes}
  If $\Phi(-h)\neq 0$, we can simply consider $\tilde{\Phi}\coloneq \Phi - \Phi(-h)$ and solve the equation with a right-hand side $\hat{F}(t,\varphi)\coloneq F(t, \varphi + \Phi(-h))$. Assuming that $F$ satisfies assumption \ref{ass:A}, so does $\hat{F}$.\\
  Alternatively, one can also connect the point $(-h, \Phi(-h))$ to $(-h-1,0)$ via an affine segment, preserving regularity of $\Phi$ and ``force'' this condition for $-h-1$ instead of $-h$. The transition from $-h$ to $-h-1$ is irrelevant, as long as the function $F$ only depends on the histories in $[-h,0]$.\\
  The same argument applies to higher derivatives (if available) by using a suitably regular connection of the aforementioned points.
\end{remark}
As in \cref{sec:SolThy}, in this section we assume that $A\colon H\supseteq \operatorname{dom}(A)\to H$ is an $\mathrm{m}$-accretive operator and that $A$ and $M$ satisfy the assumptions of \cref{th:Picard}. Rather than trying to compute the projection $P_{0}$ of the differential operator $\partial_{\rho}M_{0} + M_{1} + A$, we will derive the problem formulations directly. To that end, we will make repeated use of the following notation for a given $\Psi \in \mathcal{C}(-h,0;H)$:
\begin{equation*}
  \hat{\Psi}\colon\,[-h,\infty)\rightarrow H\text{,}\qquad
  t\mapsto\begin{cases*}\Psi(t) &for $-h\leq t\leq 0$\text{,} \\\Psi(0) &for \qquad\,\,\,\,$t>0$\text{.}\end{cases*}
\end{equation*}
We repeat that for $\Phi \in H^{1}(-h,0;H)$ with $\Phi' \in L_{\infty}(-h,0;H)$, the continuation $\hat{\Phi}$ is an $H_{\rho}^{1}(-h,\infty;H)$-function by virtue of the integration by parts formula and continuity in $0$. By induction, $I_{\rho}^{k}\widehat{\Phi^{(k)}}\in H^{k}_{\rho}$ for $\Phi\in \mathcal{C}^{k}(-h,0;H)$,  where we understand $I_{\rho}= \smallint_{-h}^{\argdot}$.\\
We aim to solve the ``shifted'' problem with trivial prehistory, making the ansatz $u = v + Z$ for some $v$ with $\supp(v)\subseteq [0,\infty)$:
\begin{alignat}{3}
  \begin{aligned}
    \label{eq:FDESimpleMatLaws}
    \bigl(\partial_{\rho}M_{0}+M_{1}+A\bigr)\bigl(v+Z\bigr)   &=F\bigl(\argdot,(v+Z)_{(\argdot)}\bigr)&&\quad\text{for}\qquad\quad\, t>0\text{,}\\
    v(t)&= 0 &&\quad\text{for} -h\leq t\leq 0\text{.}
  \end{aligned}
\end{alignat}
As in \cref{sec:SolThy}, we distinguish between two different cases depending on the shape of the right-hand side of \cref{eq:EvolProblem}. We will choose $Z$ appropriately.

\subsection{The case $\widetilde{F}(v) = I_{\rho}\widetilde{G}(v)$}
We make the ansatz $u = v + I_{\rho}I_{\rho}\widehat{\Phi''}$ for some $v \in H^{1}_{0,\rho}(0,\infty;H)$, assuming that $\Phi'' \in \mathcal{C}^{2}(-h,0;H)$. Applied to \cref{eq:FDESimpleMatLaws}, this ansatz produces
\begin{equation*}
  \bigl(\mathring{\partial}_{\rho}M_{0}\!+\!M_{1}\!+\!A\bigr)v = \!-\!M_{0}I_{\rho}\Phi''(0) \!- \!(M_{1}\!+\!A)I_{\rho}\bigl(\Phi'(0) \!+ \!I_{\rho}\Phi''(0)\bigr)\!+ \!F\bigl(\argdot,(v \!+\!I_{\rho}I_{\rho}\widehat{\Phi''})_{(\argdot)}\bigr)\text{,}
\end{equation*}
because $v\in \operatorname{dom}\bigl(\mathring{\partial}_{\rho}\bigr)$. Assuming that $F$ can be written as in \cref{eq:FPPIntOut}, we obtain
\begin{align}
  \begin{aligned}
  \label{eq:SimplMatLawIntOut}
    \bigl(\mathring{\partial}_{\rho}M_{0}+M_{1}+A\bigr)v = &-M_{0}I_{\rho}\Phi''(0) - (M_{1}+A)I_{\rho}\bigl(\Phi'(0) + I_{\rho}\Phi''(0)\bigr) \\
    &+ I_{\rho}G\bigl(\argdot,(v +I_{\rho}I_{\rho}\widehat{\Phi''})_{(\argdot)}\bigr)\text{.}
  \end{aligned}
\end{align}
An application of the Picard operator $S_{\rho}$ now produces an equation in the style of \cref{eq:FPPIntOut} and we can prove a corresponding well-posedness theorem:
\begin{theorem}[local existence and uniqueness]
  Under assumption \ref{ass:A} on $G$, \cref{eq:SimplMatLawIntOut} has a unique local solution up to some $T>0$ provided that
  \begin{itemize}[leftmargin=4ex]
    \item $\Phi \in \mathcal{C}^{2}(-h,0;H)$ with $\Phi(0),\Phi'(0),\Phi''(0)\in \operatorname{dom}(A)$ and
    \item the consistency condition $M_{0}\Phi''(0) + (M_{1}+A)\Phi'(0) = G(0,\Phi)$ holds.
  \end{itemize}
\end{theorem}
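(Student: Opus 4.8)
The strategy is to reduce equation~\eqref{eq:SimplMatLawIntOut} to the abstract setting of \cref{th:localExistOut}. After applying $S_{\rho}$ to both sides of \eqref{eq:SimplMatLawIntOut}, we arrive at a fixed-point equation of the form
\begin{equation*}
  v = S_{\rho}\bigl[\tilde f + I_{\rho}G\bigl(\argdot,(v+Z)_{(\argdot)}\bigr)\bigr]\text{,}
\end{equation*}
with $Z\coloneq I_{\rho}I_{\rho}\widehat{\Phi''}$ and $\tilde f \coloneq -M_{0}I_{\rho}\Phi''(0) - (M_{1}+A)I_{\rho}\bigl(\Phi'(0)+I_{\rho}\Phi''(0)\bigr)$. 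First I would check that this is precisely an instance of \eqref{eq:FPPIntOut}: $G$ satisfies assumption~\ref{ass:A} by hypothesis, so the only thing to verify is that the data pieces meet the requirements of \cref{th:localExistOut}, namely that $Z$ satisfies assumption~\ref{ass:B} and that $\tilde f \in H^{2}_{\tilde\rho}(0,\infty;H)\cap H^{1}_{0,\tilde\rho}(0,\infty;H)$ for suitable $\tilde\rho>0$ together with the consistency condition $G(0,\Phi) = -\tilde f'(0)$.

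\emph{Verifying assumption~\ref{ass:B} for $Z$.} Since $\Phi\in\mathcal C^{2}(-h,0;H)$ with (by the standing convention of \cref{rmk:PrehistoryVanishes}) $\Phi(-h)=\Phi'(-h)=\Phi''(-h)=0$, the remark preceding \eqref{eq:FDESimpleMatLaws} gives $I_{\rho}^{2}\widehat{\Phi''}\in H^{2}_{\rho}(-h,\infty;H)\subseteq H^{1}_{\rho}(-h,\infty;H)$. On $(-h,0]$ one has $\bigl(I_{\rho}^{2}\widehat{\Phi''}\bigr)(t) = \int_{-h}^{t}\int_{-h}^{s}\Phi''(r)\,\dd r\,\dd s = \Phi(t)$ by two applications of the fundamental theorem of calculus using the vanishing endpoint data, so $Z|_{(-h,0]}=\Phi$. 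On $[0,\infty)$, since $\widehat{\Phi''}$ is continuous, $Z|_{[0,\infty)}$ is $\mathcal C^{2}$ and hence $\mathcal C^{1}$, with $Z'(0) = \bigl(I_{\rho}\widehat{\Phi''}\bigr)(0) = \int_{-h}^{0}\Phi''(r)\,\dd r = \Phi'(0)$, so $\norm{Z'(0)}_{H}=\norm{\Phi'(0)}_{H}\leq\norm{\Phi'}_{\infty}$ (assumption~\ref{ass:B}~ii) holds). Finally $\Phi'\in L_{\infty}$ gives $\norm{\Phi'}_{\infty}<\infty$, completing~\ref{ass:B}.

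\emph{Regularity of $\tilde f$ and the consistency condition.} The term $I_{\rho}\Phi''(0)$ is the antiderivative of the constant $\Phi''(0)$, hence lies in $H^{1}_{0,\rho}$ with derivative the constant $\Phi''(0)$; applying $I_{\rho}$ once more keeps it in $H^{2}_{\rho}$. Since $\Phi'(0),\Phi''(0)\in\operatorname{dom}(A)$, the operator $(M_{1}+A)$ may be applied to the constant vectors $\Phi'(0)+I_{\rho}\Phi''(0)$ before integrating; thus each summand of $\tilde f$ is of the form $I_{\rho}$ applied to an $H^{1}_{\rho}$-function, so $\tilde f\in H^{2}_{\tilde\rho}(0,\infty;H)\cap H^{1}_{0,\tilde\rho}(0,\infty;H)$ for any $\tilde\rho>0$. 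A direct differentiation gives
\begin{equation*}
  \tilde f'(0) = -M_{0}\Phi''(0) - (M_{1}+A)\bigl(\Phi'(0) + I_{\rho}\Phi''(0)\bigr)(0)
  = -M_{0}\Phi''(0) - (M_{1}+A)\Phi'(0)\text{,}
\end{equation*}
since $\bigl(I_{\rho}\Phi''(0)\bigr)(0)=0$. The assumed consistency condition $M_{0}\Phi''(0)+(M_{1}+A)\Phi'(0)=G(0,\Phi)$ is therefore exactly $-\tilde f'(0)=G(0,\Phi)$, as required. All hypotheses of \cref{th:localExistOut} are met, which yields a unique local solution $v\in H^{2}(0,T;H)$ up to some $T>0$; the solution of \eqref{eq:SimplMatLawIntOut} is then $u=v+Z$.

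\emph{Main obstacle.} The bookkeeping around $\operatorname{dom}(A)$ is the only genuinely delicate point: one must ensure that the formal manipulations producing $\tilde f$ — in particular pulling $(M_{1}+A)$ through the antiderivative and through the evaluation at $0$ — are legitimate, which is why $\Phi(0),\Phi'(0),\Phi''(0)\in\operatorname{dom}(A)$ is imposed. Everything else is a routine translation of the data into the hypotheses of \cref{th:localExistOut}.
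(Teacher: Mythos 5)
Your proposal is correct and follows essentially the same route as the paper: both reduce \cref{eq:SimplMatLawIntOut} to \cref{th:localExistOut} with $Z=I_{\rho}I_{\rho}\widehat{\Phi''}$, observe that the additional inhomogeneity lies in $H^{1}_{0,\rho}\cap H^{2}_{\rho}$, and translate the consistency condition via $(I_{\rho}I_{\rho}\widehat{\Phi''})_{(0)}=\Phi$. Your write-up merely spells out the verification of assumption \ref{ass:B} and the computation of $\tilde f'(0)$ in more detail than the paper does.
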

\begin{proof}
  We simply apply \cref{th:localExistOut}, validating the assumptions there:
  \begin{itemize}[leftmargin=5ex]
    \item $I_{\rho}M_{0}\Phi''(0) - I_{\rho}(M_{1}+A)\bigl(\Phi'(0) + I_{\rho}\Phi''(0)\bigr) \in H^{1}_{0,\rho}\cap H^{2}_{\rho}$ holds trivially and
          \item the consistency condition reads:
  \begin{equation*}
    M_{0}\Phi''(0) + (M_{1}+A)\Phi'(0) = G\bigl(0, (I_{\rho}I_{\rho}\widehat{\Phi''})_{(0)}\bigr) = G(0, \Phi)\text{.}\qedhere
  \end{equation*}
  \end{itemize}
\end{proof}

\subsection{The case $\widetilde{F}(v) = \widetilde{G}(I_{\rho}v)$}\phantom{.}\\
Here we make the ansatz $u = v + I_{\rho}\widehat{\Phi'}$ with some $v \in H^{1}_{0,\rho}(0,\infty;H)$, assuming that $\Phi \in \mathcal{C}^{1}(-h,0;H)$. Reformation of the differential equation \eqref{eq:FDESimpleMatLaws} (for $t>0$) produces
\begin{align*}
  \bigl(\partial_{\rho}M_{0}\!+\!M_{1}\!+\!A\bigr)v &=\!-\bigl(\partial_{\rho}M_{0}\!+\!M_{1}\!+\!A\bigr)\bigl(I_{\rho}\Phi'(0) \!+ \!\Phi(0)\bigr)\!+\!F\bigl(\argdot,(v\!+\!I_{\rho}\widehat{\Phi'})_{(\argdot)}\bigr)\\
  &= \!-M_{0}\Phi'(0) \!- \!(M_{1}\!+\!A)\bigl(\Phi(0) \!+ \!I_{\rho}\Phi'(0)\bigr)\!+ \!F\bigl(\argdot,(v\!+\!I_{\rho}\widehat{\Phi'})_{(\argdot)}\bigr)\text{.}
\end{align*}
Since $v\in H^{1}_{0,\rho}(0,\infty;H)$, $v \in \operatorname{dom}\bigl(\mathring{\partial}_{\rho}\bigr)$ follows. We obtain:
\begin{equation*}
  \bigl(\mathring{\partial}_{\rho}M_{0}\!+\!M_{1}\!+\!A\bigr)v  \!=\!-\!M_{0}\Phi'(0) \!- \!(M_{1}\!+\!A)\bigl(\Phi(0) \!+ \!I_{\rho}\Phi'(0)\bigr)\!+ \!F\bigl(\argdot,(v\!+\!I_{\rho}\widehat{\Phi'})_{(\argdot)}\bigr)\text{.}
\end{equation*}
Assuming that $F$ has the same form as in \cref{eq:FPPIntIns}, the equation becomes
\begin{equation}
  \label{eq:SimplMatLawIntIns}
  \bigl(\mathring{\partial}_{\rho}M_{0}\!+\!M_{1}\!+\!A\bigr)v  \!=\!-\!M_{0}\Phi'(0) \!- \!(M_{1}\!+\!A)\bigl(\Phi(0) \!+ \!I_{\rho}\Phi'(0)\bigr)\!+ \!G\bigl(\argdot,(I_{\rho}(v\!+\widehat{\Phi'})_{(\argdot)}\bigr)\text{.}
\end{equation}
Again, we obtain a corresponding well-posedness theorem:
\begin{theorem}[local existence and uniqueness]
  \label{th:localExistInsSimple}
  Under assumption \ref{ass:A} on $G$, \cref{eq:SimplMatLawIntIns} has a unique local solution up to some $T>0$ provided that
  \begin{itemize}[leftmargin=4ex]
    \item $\Phi \in \mathcal{C}^{1}(-h,0;H)$ with $\Phi(0),\Phi'(0) \in \operatorname{dom}(A)$ and
    \item the consistency condition $M_{0}\Phi'(0) + (M_{1}+A)\Phi(0) = G(0,\Phi)$ holds.
  \end{itemize}
\end{theorem}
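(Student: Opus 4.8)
The plan is to deduce \cref{th:localExistInsSimple} from the already-proven \cref{th:localExistIns}, in exact parallel to how the preceding local existence theorem for the case $\widetilde{F}(v)=I_\rho\widetilde{G}(v)$ was deduced from \cref{th:localExistOut}. For the simple material law $M(z)=M_0+z^{-1}M_1$ one has $\mathring{\partial}_\rho M_0+M_1+A\subseteq\overline{\partial_\rho M(\partial_\rho)+A}$, so applying the Picard operator $S_\rho$ to \cref{eq:SimplMatLawIntIns} puts it exactly into the fixed point form \cref{eq:FPPIntIns} with
\[
  Z\coloneq I_\rho\widehat{\Phi'}\qquad\text{and}\qquad f\coloneq -M_0\Phi'(0)-(M_1+A)\bigl(\Phi(0)+I_\rho\Phi'(0)\bigr),
\]
and it then suffices to check the hypotheses of \cref{th:localExistIns} for this particular $f$ and $Z$.

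First I would verify assumption~\ref{ass:B}. Invoking \cref{rmk:PrehistoryVanishes} I may assume $\Phi(-h)=0$; then for $t\in(-h,0]$ one computes $Z(t)=\int_{-h}^t\Phi'(s)\dx[s]=\Phi(t)$, while for $t\ge 0$ one has $Z(t)=\Phi(0)+t\Phi'(0)$, which is affine, hence lies in $\mathcal{C}^1(0,\infty;H)$, and satisfies $\norm{Z'(0)}_H=\norm{\Phi'(0)}_H\le\norm{\Phi'}_\infty<\infty$ because $\Phi'\in\mathcal{C}(-h,0;H)$. Since $\widehat{\Phi'}$ is continuous and eventually constant, it lies in $L_{2,\rho}(-h,\infty;H)$, so $Z=I_\rho\widehat{\Phi'}\in H^1_\rho(-h,\infty;H)$ by the corollary to the boundedness estimate for $I_\rho$; thus \ref{ass:B} holds, and \ref{ass:A} on $G$ is a hypothesis. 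Next I would check that $f\in H^1_{\tilde{\rho}}(0,\infty;H)$ for any $\tilde{\rho}>0$: here the domain hypothesis $\Phi(0),\Phi'(0)\in\operatorname{dom}(A)$ is precisely what makes $M_0\Phi'(0)$ and $(M_1+A)\Phi(0)$ well-defined vectors in $H$, which then define constant $H^1_\rho$-functions, while $(M_1+A)I_\rho\Phi'(0)$ is the affine $H^1_\rho$-function $t\mapsto t(M_1+A)\Phi'(0)$ (up to sign).

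Finally, the consistency condition required by \cref{th:localExistIns} is $f(0)=-G(0,\Phi)$; since $\bigl(I_\rho\Phi'(0)\bigr)(0)=0$ we obtain $f(0)=-M_0\Phi'(0)-(M_1+A)\Phi(0)$, which equals $-G(0,\Phi)$ exactly by the assumed consistency condition $M_0\Phi'(0)+(M_1+A)\Phi(0)=G(0,\Phi)$. With every hypothesis of \cref{th:localExistIns} verified, it yields a unique local solution $v$, and unwinding the ansatz gives the unique local solution $u=I_\rho v+Z\in H^1(0,T;H)$ up to some $T>0$. I do not expect a genuine obstacle here: the mathematical substance is entirely contained in \cref{th:localExistIns}, and the only things to be careful about are bookkeeping — that the $S_\rho$-inversion legitimately transforms \cref{eq:SimplMatLawIntIns} into \cref{eq:FPPIntIns}, that $\Phi(0),\Phi'(0)\in\operatorname{dom}(A)$ is exactly what turns $f$ into an honest $H^1_\rho$-valued right-hand side, and that the normalization $\Phi(-h)=0$ via \cref{rmk:PrehistoryVanishes} is harmless; the mildest delicacy is confirming $Z|_{(-h,0]}=\Phi$ on the nose after that normalization.
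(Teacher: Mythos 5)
Your proposal is correct and follows exactly the route of the paper: the paper's own proof is the one-line ``apply \cref{th:localExistIns} with $Z=I_{\rho}\widehat{\Phi'}$ and translate the consistency condition via $(I_{\rho}\widehat{\Phi'})_{(0)}=\Phi$'', and your additional verifications of assumption \ref{ass:B} for this $Z$ and of $f\in H^{1}_{\tilde{\rho}}(0,\infty;H)$ are precisely the bookkeeping the paper leaves implicit.
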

\begin{proof}
  We simply apply \cref{th:localExistIns} and translate the consistency condition there:
  \begin{equation*}
    f(0) = M_{0}\Phi'(0) + (M_{1}+A)\Phi(0) = G\bigl(0, (I_{\rho}\widehat{\Phi'})_{(0)}\bigr) = G(0, \Phi)\text{.}\qedhere
  \end{equation*}
\end{proof}

For this formulation and the special case of simple material laws we also prove continuous dependence on the initial data:
\begin{theorem}[continuous dependence on initial data]
  \label{th:ContDepOnIV}
  Let $G$ satisfy assumption \ref{ass:A} and let $\Phi,\Psi\in H^{2}(-h,0;H)$ be two prehistories satisfying the assumptions of \cref{th:localExistInsSimple}. Let $\alpha > \operatorname{max}\{\norm{\Phi'}_{\infty},$ $\norm{\Psi'}_{\infty}\}$. Then there exists $C>0$ such that for any solutions $u,v$ to \cref{eq:SimplMatLawIntIns} with initial prehistories $\Phi,\Psi$ respectively, that exist up to $T>0$ such that both $(I_{\rho}(u + \widehat{\Phi'}))_{(t)}$ and $(I_{\rho}(u + \widehat{\Psi'}))_{(t)}$ remain in $V_{\alpha}$ for all $t\leq T$:
  \begin{equation*}
    \norm{u-v}_{L_{2}(-h,T;H)}\leq C \bigl[\norm{\Phi-\Psi}_{H^{2}(-h,0;H)} + \norm{\Phi(0)-\Psi(0)}_{\operatorname{dom}(A)}\bigr]\text{.}
  \end{equation*}
\end{theorem}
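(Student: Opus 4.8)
The plan is to reduce everything to an estimate in $L_{2,\rho}$ for a fixed large $\rho$ and then transfer it to the unweighted norm on the finite interval $(-h,T)$, which is legitimate since on a bounded interval the weighted and unweighted $L_2$-norms are equivalent (with constants depending on $\rho$ and $T$ only). First I would write down the fixed point equations satisfied by $u$ and $v$ in the form \cref{eq:localEq}, i.e.\ for $w\coloneq u-v$,
\begin{equation*}
  \chi_{\mathbb{R}_{\leq T}}w
  = \chi_{\mathbb{R}_{\leq T}} S_{\rho}\bigl[\chi_{\mathbb{R}_{\leq T}}(f_{\Phi}-f_{\Psi})
    + \chi_{\mathbb{R}_{\leq T}}\bigl(G(\argdot,(I_{\rho}u+\widehat{\Phi'})_{(\argdot)}) - G(\argdot,(I_{\rho}v+\widehat{\Psi'})_{(\argdot)})\bigr)\bigr]\text{,}
\end{equation*}
where $f_{\Phi}\coloneq -M_{0}\Phi'(0) - (M_{1}+A)\bigl(\Phi(0)+I_{\rho}\Phi'(0)\bigr)$ and likewise for $\Psi$; note both solutions live on $(-h,\infty)$ with $u=I_{\rho}(\text{f.p.})+I_{\rho}\widehat{\Phi'}$, so on $(-h,0]$ we have $u-v = I_{\rho}\widehat{\Phi'} - I_{\rho}\widehat{\Psi'}$, contributing a term controlled directly by $\norm{\Phi'-\Psi'}_{\infty}\leq \norm{\Phi-\Psi}_{H^2}$ (using the embedding $H^{1}(-h,0;H)\hookrightarrow \mathcal{C}(-h,0;H)$ applied to $\Phi'-\Psi'$). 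On $[0,T]$ I would estimate $\norm{w}_{L_{2,\rho}(0,T;H)}$ by $\norm{S_{\rho}}$ times the $L_{2,\rho}$-norm of the right-hand side, splitting the latter into the $f$-difference and the $G$-difference.

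For the $f$-difference, a direct computation gives $f_{\Phi}-f_{\Psi} = -M_{0}(\Phi'(0)-\Psi'(0)) - (M_1+A)\bigl(\Phi(0)-\Psi(0) + I_{\rho}(\Phi'(0)-\Psi'(0))\bigr)$; its $L_{2,\rho}(0,\infty;H)$-norm is bounded by a constant (involving $\norm{M_0}$, $\norm{M_1}$, $\rho$) times $\norm{\Phi'(0)-\Psi'(0)}_H + \norm{\Phi(0)-\Psi(0)}_{\operatorname{dom}(A)}$, and $\norm{\Phi'(0)-\Psi'(0)}_H$ is controlled by $\norm{\Phi-\Psi}_{H^2(-h,0;H)}$ via the Sobolev embedding applied to $\Phi'-\Psi'$. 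For the $G$-difference I would add and subtract $G(\argdot,(I_{\rho}v+\widehat{\Phi'})_{(\argdot)})$, so that the first part is handled by almost uniform Lipschitz-continuity of $G$ on $V_\alpha$ (both arguments land in $V_\alpha$ by hypothesis, and $\pi_\alpha$ acts as the identity there) and yields, after invoking \cref{rmk:ThetaIsLipschitz} for $\tilde\Theta$ and the bound on $I_\rho$ as a map into $H^1_\rho$, a term $\leq \frac{L_\alpha}{\sqrt{2\rho}}\sqrt{1+\tfrac1{\rho^2}}\,\norm{u-v}_{L_{2,\rho}(-h,T;H)}$; the second part is $\leq L_\alpha\norm{(I_\rho(v)+\widehat{\Phi'})_{(\argdot)} - (I_\rho(v)+\widehat{\Psi'})_{(\argdot)}}$, i.e.\ $L_\alpha\norm{\tilde\Theta(\widehat{\Phi'}-\widehat{\Psi'})}$, again controlled via \cref{rmk:ThetaIsLipschitz} by $\norm{\widehat{\Phi'}-\widehat{\Psi'}}_{H^1_\rho(-h,\infty;H)}$, which (since $\widehat{\Phi'}$ is constant for $t>0$) is a constant multiple of $\norm{\Phi'-\Psi'}_{H^1(-h,0;H)}\leq\norm{\Phi-\Psi}_{H^2(-h,0;H)}$. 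Collecting the contributions from $(-h,0]$ and $[0,T]$ and splitting $\norm{u-v}_{L_{2,\rho}(-h,T;H)}^2 = \norm{u-v}_{L_{2,\rho}(-h,0;H)}^2 + \norm{u-v}_{L_{2,\rho}(0,T;H)}^2$, I would choose $\rho$ large enough that the coefficient $\norm{S_\rho}\tfrac{L_\alpha}{\sqrt{2\rho}}\sqrt{1+\tfrac1{\rho^2}}$ of $\norm{u-v}_{L_{2,\rho}(0,T;H)}$ is $<\tfrac12$, absorb that term to the left, and arrive at
\begin{equation*}
  \norm{u-v}_{L_{2,\rho}(0,T;H)}\leq C_\rho\bigl[\norm{\Phi-\Psi}_{H^2(-h,0;H)} + \norm{\Phi(0)-\Psi(0)}_{\operatorname{dom}(A)}\bigr]\text{.}
\end{equation*}
Finally, adding back the $(-h,0]$ contribution and passing from $L_{2,\rho}(-h,T;H)$ to $L_{2}(-h,T;H)$ at the cost of a factor $\e^{\rho T}$ gives the claimed inequality.

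The main obstacle I anticipate is the bookkeeping around the extension term: unlike in \cref{th:localExistIns}, here the two solutions are built on \emph{different} extensions $I_\rho\widehat{\Phi'}$ and $I_\rho\widehat{\Psi'}$, so the difference $u-v$ does not vanish on $(-h,0]$ and the delay operator $\tilde\Theta$ sees this discrepancy; one must carefully track that $\norm{\widehat{\Phi'}-\widehat{\Psi'}}_{H^1_\rho(-h,\infty;H)}$ really is dominated by $\norm{\Phi-\Psi}_{H^2(-h,0;H)}$ (using that the extension is constant to the right and the integration-by-parts/continuity argument that makes $\widehat{\Psi'}\in H^1_\rho$). A secondary subtlety is making sure the absorption argument is valid on the finite interval, i.e.\ that causality of $S_\rho$ (\cref{th:Picard}) legitimately lets us work with $\chi_{\mathbb{R}_{\leq T}}$-truncated right-hand sides as in the remark following the definition of local solution; everything else is a routine chain of the estimates already assembled in \cref{subsec:AuxiliaryTools} and \cref{subsec:Delay}.
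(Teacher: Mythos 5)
Your plan follows essentially the same route as the paper's proof: write the two solutions as $w_{u}+I_{\rho}\widehat{\Phi'}$ and $w_{v}+I_{\rho}\widehat{\Psi'}$, estimate the difference of the fixed points through the equation, bound the difference of the inhomogeneities directly (with $\norm{A(\Phi(0)-\Psi(0))}_{H}$ absorbed into $\norm{\Phi(0)-\Psi(0)}_{\operatorname{dom}(A)}$ and $\norm{\Phi'(0)-\Psi'(0)}_{H}$ controlled via the Sobolev embedding), use the $\tfrac{1}{\sqrt{\rho}}$-scaling of the delay operator to absorb the $\norm{w_{u}-w_{v}}$ term for $\rho$ large, and finish with the equivalence of the weighted and unweighted norms on the bounded interval. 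The one step that needs repair is your add-and-subtract of the intermediate term $G\bigl(\argdot,(I_{\rho}(w_{v}+\widehat{\Phi'}))_{(\argdot)}\bigr)$: almost uniform Lipschitz-continuity is only available for pairs of arguments both lying in $V_{\alpha}$, and the hypothesis of the theorem only places $(I_{\rho}(w_{u}+\widehat{\Phi'}))_{(t)}$ and $(I_{\rho}(w_{v}+\widehat{\Psi'}))_{(t)}$ there; the mixed point $(I_{\rho}(w_{v}+\widehat{\Phi'}))_{(t)}$ is not covered, so your assertion that both arguments of each half of the splitting land in $V_{\alpha}$ by hypothesis is unjustified. This is fixable --- the mixed point lies in $V_{\alpha'}$ with $\alpha'=\alpha+\norm{\Phi'-\Psi'}_{\infty}\leq 3\alpha$, so one may invoke the constant $L_{3\alpha}$ throughout --- but the paper sidesteps the issue entirely by applying the Lipschitz bound once to the two admissible arguments and only afterwards splitting the resulting $H^{1}(-h,0;H)$-norm of $\bigl(I_{\rho}(w_{u}-w_{v}+\widehat{\Phi'}-\widehat{\Psi'})\bigr)_{(t)}$ by the triangle inequality; I recommend you do the same.
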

\begin{proof}
  We recall that the solutions are given by
  \begin{equation*}
    u = w_{u} + I_{\rho}\widehat{\Phi'} \quad \text{and} \quad v = w_{v} + I_{\rho}\widehat{\Psi'}\text{,}
  \end{equation*}
  where $w_{u}$ and $w_{v}$ are local solutions to the FPP \eqref{eq:SimplMatLawIntIns}, which $w_{u}$ and $w_{v}$ solve up to the common time $T$. We start by estimating the $L_{2,\rho}(0,T;H)$-difference ($\rho$ large enough) of $w_{u}$ and $w_{v}$ by using the formula provided by \cref{eq:SimplMatLawIntIns}. We estimate the nonlinearities and the additional terms separately.
  \begin{enumerate}[leftmargin=3ex, label=\roman*)]
  \item We start with the nonlinearity:
  \begin{align*}
    &\norm[\big]{S_{\rho}\bigl[G\bigl(\argdot,(I_{\rho}(w_{u}+\widehat{\Phi'}))_{(\argdot)}\bigr)
      -G\bigl(\argdot,(I_{\rho}(w_{v}+\widehat{\Psi'}))_{(\argdot)}\bigr)\bigr]}_{L_{2,\rho}(0,T;H)}^{2}\\
    &\leq \norm{S_{\rho}}^{2}\norm[\big]{G\bigl(\argdot,(I_{\rho}(w_{u}+\widehat{\Phi'}))_{(\argdot)}\bigr)
      -G\bigl(\argdot,(I_{\rho}(w_{v}+\widehat{\Psi'}))_{(\argdot)}\bigr)}_{L_{2,\rho}(0,T;H)}^{2}\\
    &= \norm{S_{\rho}}^{2} \medint\int_0^T \norm[\big]{G\bigl(t,(I_{\rho}(w_{u}+\widehat{\Phi'}))_{(t)}\bigr)
      -G\bigl(t,(I_{\rho}(w_{v}+\widehat{\Psi'}))_{(t)}\bigr)}_{H}^{2}\e^{-2\rho t} \dx[t]\\
    \intertext{Using almost uniform Lipschitz continuity of $G$ we obtain:}
    &\leq \norm{S_{\rho}}^{2} L^{2}\medint\int_0^T \norm[\big]{\bigl(I_{\rho}(w_{u}+\widehat{\Phi'})\bigr)_{(t)}
      -\bigl(I_{\rho}(w_{v}+\widehat{\Psi'})\bigr)_{(t)}}_{H^{1}(-h,0;H)}^{2}\e^{-2\rho t} \dx[t]\text{.}
  \end{align*}
  We obtain two integrals because of the $H^{1}$-norm, that we have to estimate:
  \begin{enumerate}[leftmargin=3.5ex, label=\alph*)]
  \item We start with the derivative:
  \begin{align*}
    \;\;\;\;\;&\medint\int_0^T\norm[\big]{\bigl(w_{u}+\widehat{\Phi'}\bigr)_{(t)}
      -\bigl(w_{v}+\widehat{\Psi'}\bigr)_{(t)}}_{L_{2}(-h,0;H)}^{2}\e^{-2\rho t}\dx[t]\\
    &= \!\medint\int_0^T \!\medint\int_{-h}^0
      \norm[\big]{w_{u}(t \!+ \!s) \!- \!w_{v}(t \!+ \!s) \!+ \!\widehat{\Phi'}(t \!+ \!s) \!- \!\widehat{\Psi'}(t+s)}_{H}^{2}
      \dx[s] \e^{-2\rho t} \dx[t]\\
    \intertext{Using $(a+b)^{2}\leq 2(a^{2}+b^{2})$ we continue:}
    &\leq 2\medint\int_{-h}^T \medint\int_{s}^{s+h}
      \bigl(\norm{w_{u}(s)-w_{v}(s)}_{H}^{2} + \norm{\widehat{\Phi'}(s)-\widehat{\Psi'}(s)}_{H}^{2}\bigr)\e^{-2\rho t} \dx[t] \dx[s]\\
    &\leq \tfrac{1}{\rho} \medint\int_{-h}^T
      \bigl(\norm{w_{u}(s)-w_{v}(s)}_{H}^{2} +\norm{\widehat{\Phi'}(s)-\widehat{\Psi'}(s)}_{H}^{2}\bigr)\e^{-2\rho s} \dx[s]\\
    &= \tfrac{1}{\rho} \biggl[\norm{w_{u}\!-\!w_{v}}_{L_{2,\rho}(-h,T;H)}^{2}
      \!+ \!\norm{\Phi'\!-\!\Psi'}_{L_{2,\rho}(-h,0;H)}^{2} \!+ \!\medint\int_0^T \!\norm{\Phi'(0) \!-\!\Psi'(0)}^{2}\e^{-2\rho t}\!\dx[t]\biggr]\\
    &\leq \tfrac{1}{\rho}
      \bigl[\norm{w_{u} \!- \!w_{v}}_{L_{2,\rho}(-h,T;H)}^{2}
      \!+ \!\norm{\Phi' \!- \!\Psi'}_{L_{2,\rho}(-h,0;H)}^{2} \!+ \!\tfrac{1}{2\rho}\norm{\Phi' \!- \!\Psi'}_{\infty}^{2}\bigr]\\
    \intertext{The embedding \cref{th:Sobolev} provides a constant $\tilde{C}(\rho) >0$ such that}
    &\leq \tfrac{1}{\rho}
      \bigl[\tilde{C}(\rho)^{2}\norm{\Phi-\Psi}_{H^{2}_{\rho}(-h,0;H)}^{2}+\norm{w_{u}-w_{v}}_{L_{2,\rho}(0,T;H)}^{2}\bigr]\text{.}
  \end{align*}
  \item For the term with an additional integral term we can make a similar estimate:
  \begin{align*}
    \;\;\;\;\;&\medint\int_0^T\norm[\big]{\bigl(I_{\rho}(w_{u}+\widehat{\Phi'})\bigr)_{(t)}
      -\bigl(I_{\rho}(w_{v}+\widehat{\Psi'})\bigr)_{(t)}}_{L_{2}(-h,0;H)}^{2}\e^{-2\rho t}\dx[t]\\
    &\leq \tfrac{1}{\rho} \medint\int_{-h}^T
      \bigl(\norm{I_{\rho}(w_{u}-w_{v})(s)}_{H}^{2} +\norm{I_{\rho}\bigl(\widehat{\Phi'}-\widehat{\Psi'}\bigr)(s)}_{H}^{2}\bigr)\e^{-2\rho s} \dx[s]\\
    \intertext{Using that $I_{\rho}$ is a bounded linear operator, we continue as before:}
    &\leq \tfrac{1}{\rho^{3}} \biggl[\norm{w_{u}\!-\!w_{v}}_{L_{2,\rho}(-h,T;H)}^{2}
      \!+ \!\norm{\Phi'\!-\!\Psi'}_{L_{2,\rho}(-h,0;H)}^{2} \!+ \!\medint\int_0^T \!\norm{\Phi'(0) \!-\!\Psi'(0)}^{2}\e^{-2\rho t}\!\dx[t]\biggr]\\
    &\leq \tfrac{1}{\rho^{3}}
      \bigl[\norm{w_{u} \!- \!w_{v}}_{L_{2,\rho}(-h,T;H)}^{2}
      \!+ \!\norm{\Phi' \!- \!\Psi'}_{L_{2,\rho}(-h,0;H)}^{2} \!+ \!\tfrac{1}{2\rho}\norm{\Phi' \!- \!\Psi'}_{\infty}^{2}\bigr]\\
    \intertext{The embedding \cref{th:Sobolev} provides a constant $\tilde{C}(\rho)\!>\!0$ such that}
    &\leq \tfrac{1}{\rho^{3}}
      \bigl[\tilde{C}(\rho)^{2}\norm{\Phi-\Psi}_{H^{2}_{\rho}(-h,0;H)}^{2}+\norm{w_{u}-w_{v}}_{L_{2,\rho}(0,T;H)}^{2}\bigr]\text{.}
  \end{align*}
  \end{enumerate}
  Alltogether this implies (assuming $\rho >1$ if necessary),
  \begin{equation*}
    \norm{u-v}_{L_{2,\rho}(0,T;H)} \leq \norm{S_{\rho}} L \sqrt{\tfrac{2}{\rho}} \bigl[\norm{w_{u}-w_{v}}_{L_{2,\rho}(0,T;H)}+\tilde{C}(\rho)\norm{\Phi-\Psi}_{H^{2}_{\rho}(-h,0;H)}\bigr]\text{.}
  \end{equation*}
  The estimate $\norm{S_{\rho}}\leq \frac{1}{c}$ from \cref{th:Picard} yields for some finite $C>0$,
  \begin{equation*}
    \norm{u-v}_{L_{2,\rho}(0,T;H)} \leq \tfrac{C}{\sqrt{\rho}} \bigl[\norm{u-v}_{L_{2,\rho}(0,T;H)}+ \tilde{C}(\rho)\norm{\Phi-\Psi}_{H^{2}_{\rho}(-h,0;H)}\bigr]\text{.}
  \end{equation*}
  This takes care of the nonlinearity.
  \item For the additional terms in \cref{eq:SimplMatLawIntIns} we have to estimate:
  \begin{align*}
    &\norm[\big]{S_{\rho}\bigl[M_{0}\Phi'(0) \!+ \!(M_{1} \!+ \!A)\bigl(\Phi(0) \!+ \!I_{\rho}\Phi'(0)\bigr) \!- \!M_{0}\Psi'(0) \!- \!(M_{1}\! + \!A)\bigl(\Psi(0) \!+ \!I_{\rho}\Psi'(0)\bigr)\bigr]\!}_{\!L_{2,\rho}}^{2}\\
    &\leq \norm{S_{\rho}}^{2}\!\!\medint\int_{0}^{T} \!\norm{M_{0}(\Phi'(0) \!- \!\Psi'(0)) \!+ \!(M_{1}\!+\!A)(\Phi(0)\!-\!\Psi(0)) \!+ \!t(\Phi'(0)\!-\!\Psi'(0))}_{H}^{2}\e^{-2\rho t}\dx[t]\\
    &\leq \tfrac{4}{c^{2}}\bigl[ \norm{M_{0}}^{2}\norm{\Phi'-\Psi'}_{\infty}^{2}\tfrac{1}{2\rho} + \norm{M_{1}}^{2}\norm{\Phi-\Psi}_{\infty}^{2}\tfrac{1}{2\rho} \\
    &\qquad\quad+ \norm{A(\Phi(0)-\Psi(0))}_{H}^{2}\tfrac{1}{2\rho} + \tfrac{1}{4\rho^{3}}\norm{\Phi'-\Psi'}_{\infty}^{2}\bigr]\text{.}
  \end{align*}
  The term $\norm{A(\Phi(0)-\Psi(0))}_{H}$ can be estimated with $\norm{\Phi(0)-\Psi(0)}_{\operatorname{dom}(A)}$ and thus we have proven a sufficient estimate for $\norm{w_{u}-w_{v}}_{L_{2,\rho}(0,T;H)}$.
  \end{enumerate}
  To estimate the difference of $u - v = w_{u} - w_{v} + I_{\rho}\widehat{\Phi'} - I_{\rho}\widehat{\Psi'}$, it remains to estimate the difference $\norm{I_{\rho}\widehat{\Phi'} - I_{\rho}\widehat{\Psi'}}_{L_{2,\rho}(-h,T;H)}^{2}$, but this can be done as above. Using the previous estimates and the fact that $\norm{\Phi-\Psi}_{H^{2}_{\rho}(-h,0;H)}\leq \e^{\rho h}\norm{\Phi -\Psi}_{H^{2}(-h,0;H)}$, we have for $\rho \gg 0$ a constant $\hat{C}(\rho)>0$ such that:
  \begin{equation*}
    \norm{u-v}_{L_{2,\rho}(-h,T;H)} \leq \e^{\rho h}\hat{C}(\rho)\norm{\Phi-\Psi}_{H^{2}(-h,0;H)} + \norm{\Phi(0)-\Psi(0)}_{\operatorname{dom}(A)}\text{.}
  \end{equation*}
  The equivalence of the norms $\norm{\argdot}_{L_{2}(0,T;H)}$ and $\norm{\argdot}_{L_{2,\rho}(0,T;H)}$ produces the desired constant.
\end{proof}

%%%%%%%%%%%%%%%%%%%%%%%%%%%%%%
%          SECTION
\section{Examples}
\label{sec:Examples}
In this section, we visit examples highlighting the applicability of our results. In the process thereof, we will make use of several (weak) differential operators and their symmetrized versions. We provide their definitions in \cref{sec:DiffOp} and take the notation introduced there for granted subsequently. Furthermore, whenever convenient, we will assume that the prehistory $\Phi$ (and if necessary, its derivatives) vanishes at $-h$, appealing to \cref{rmk:PrehistoryVanishes}. We begin with three classical PDEs in which $A$ is a skew-selfadjoint operator.

\subsection{Heat Equation}
\label{subsec:Heat}
Let $\Omega\subseteq \mathbb{R}^{d}$ be open and let $a\colon \, \Omega\rightarrow \mathbb{C}^{d\times d}$ be measurable, bounded and $\operatorname{Re}a\geq c>0$. The heat equation with state-dependent delay is given as
\begin{align}
  \label{eq:Heat}
  \begin{aligned}
    \bigl(\partial_{\rho}-\operatorname{div}_{0}a\operatorname{grad}\bigr)u&=F\bigl(\argdot,u_{(\argdot)}\bigr)\text{,}\\
    u_{0}&=\Phi\text{.}
  \end{aligned}
\end{align}
Here, we simply took the evolutionary equation formulation of the heat equation from \cite[thm.~6.2.4]{Waurick2022} and added a state-dependent right-hand side. Structurally, such a system has been studied e.g. in \cite{Khusainov2009} (although for constant delay), \cite{Camacho2018} (in the context of control theory), \cite{Braik2019} (for classical solutions) and \cite{Schnaubelt2004} (for a semigroup approach).\\
Employing the ansatz $u = v + I_{\rho}\widehat{\Phi'}$ and immediately incorporating the prehistory into the equation in the style of \cref{eq:EvolProblem} in form of a function $f$ on the right-hand side, we can write the heat equation as a system in the following form:
\begin{equation}
  \label{eq:HeatEq}
  \biggr[\partial_{\rho} \begin{pmatrix}1 & 0 \\ 0 & 0\end{pmatrix} + \begin{pmatrix}0 & 0 \\ 0 & a^{-1}\end{pmatrix}
  + \begin{pmatrix}0 & \operatorname{div}_{0} \\ \operatorname{grad} & 0\end{pmatrix} \biggl] \begin{pmatrix}v \\ w\end{pmatrix} = \begin{pmatrix}f + F\bigl(\argdot, (v+I_{\rho}\widehat{\Phi'})_{(\argdot)}\bigr) \\ 0\end{pmatrix}\text{.}
\end{equation}
This system fits the setting of evolutionary equations (cf.~\cite[ch.~6.2]{Waurick2022}); in fact, we have a simple material law, that in addition satisfies the assumptions of \cref{th:parabolicRegularity}. We can prove well-posedness of the heat equation in both cases $\widetilde{F}(v) = I_{\rho}\widetilde{G}(v)$ and $\widetilde{F}(v) = \widetilde{G}(I_{\rho}v)$, but we will focus on the latter case.

\subsubsection{Supposing $\widetilde{F}(v) = \widetilde{G}(I_{\rho}v)$}
If we simply suppose that the right-hand side is of the desired form, we can make use of the parabolic structure, cf.~\cref{def:ParabolicPair}:
\begin{theorem}
  \Cref{eq:HeatEq} is (locally) solvable under assumption \ref{ass:A} i) and iii) on $G$ and assumption \ref{ass:B} on $\Phi$, provided that $f\in H^{1}_{\rho}(0,\infty;H)$ and $f(0) = -G(0,\Phi)$.
\end{theorem}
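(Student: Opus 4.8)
The plan is to verify that the heat equation in the form \cref{eq:HeatEq} is a special instance of the abstract setting of \cref{th:localExistParabolic} and then simply invoke that theorem. First I would identify the ingredients: the Hilbert space is $H = L_{2}(\Omega)\oplus L_{2}(\Omega)^{d}$ with $H_{0} = L_{2}(\Omega)$, the operator $C = \operatorname{grad}$ with adjoint $C\adjun = -\operatorname{div}_{0}$ (so that $A = \begin{psmallmatrix} 0 & \operatorname{div}_{0} \\ \operatorname{grad} & 0 \end{psmallmatrix} = \begin{psmallmatrix} 0 & -C\adjun \\ C & 0 \end{psmallmatrix}$), and the material law
\begin{equation*}
  M(z) = \begin{pmatrix} 1 & 0 \\ 0 & 0 \end{pmatrix} + z^{-1}N(z)\text{,}\qquad N(z) = \begin{pmatrix} 0 & 0 \\ 0 & z\, a^{-1} \end{pmatrix}\text{.}
\end{equation*}
Here one has to be slightly careful with bookkeeping: writing $\begin{psmallmatrix} 0 & 0 \\ 0 & a^{-1}\end{psmallmatrix}$ as $z^{-1}N(z)$ forces $N(z) = z\begin{psmallmatrix} 0 & 0 \\ 0 & a^{-1}\end{psmallmatrix}$, which is holomorphic and bounded on any right half-plane $\C_{\Re > \rho}$ only up to the linear growth in $z$ — but since it is paired against $z^{-1}$ the product is the bounded constant operator $\begin{psmallmatrix} 0 & 0 \\ 0 & a^{-1}\end{psmallmatrix}$, so the material law $M$ itself is bounded and holomorphic on every right half-plane and $s_{\mathrm b}(M) = -\infty$. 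In particular $M_{00}(z) \equiv 1$, which trivially satisfies $\Re M_{00}(z) = 1 \geq c$ and boundedness, so $(M,A)$ is a parabolic pair in the sense of \cref{def:ParabolicPair}.

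Next I would check the positive-definiteness hypothesis of \cref{th:parabolicRegularity}: with $N_{00}(z) = \iota\adjun_{H_{0}} N(z)\iota_{H_{0}} = 0$ (since $N$ acts as $0$ on the $H_{0}$-component), the condition $\Re\bigl(M_{00}(z) + z^{-1}N_{00}(z)\bigr) = \Re(1) = 1 \geq c$ is immediate. One also has to confirm that the full left-hand side operator $\partial_{\rho}M(\partial_{\rho}) + A$ satisfies the hypotheses of \cref{th:Picard} — $A$ is skew-selfadjoint, hence $\mathrm m$-accretive, and the positive-definiteness estimate $\Re\dualprod{\varphi}{zM(z)\varphi}_{H} = \Re\dualprod{\varphi}{(z\,P_{0} + a^{-1}P_{1})\varphi}_{H} \geq \Re(z)\norm{P_0\varphi}^2 + c'\norm{P_1\varphi}^2 \geq c''\norm{\varphi}^2_H$ holds for $\Re z > \rho_0$ with $\rho_0>0$, using $\Re a^{-1}\geq c'>0$ (which follows from $\Re a \geq c>0$ and boundedness of $a$). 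This is exactly the standard computation for parabolic evolutionary equations, see \cite[thm.~6.2.4]{Waurick2022}.

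With these verifications in place, the remaining hypotheses of \cref{th:localExistParabolic} are precisely the hypotheses of the present theorem: $G$ satisfies assumptions \ref{ass:A} i) and iii), $\Phi$ and $Z = I_{\rho}\widehat{\Phi'}$ satisfy assumption \ref{ass:B} (recall from \cref{sec:SimMatLaws} that $I_{\rho}\widehat{\Phi'}\in H^1_\rho$ and $\bigl(I_{\rho}\widehat{\Phi'}\bigr)' = \widehat{\Phi'}$ restricts to $\Phi$ on $(-h,0]$, is $\mathcal{C}^1$ — in fact constant — on $[0,\infty)$, and satisfies $\norm{(I_{\rho}\widehat{\Phi'})'(0)}_H = \norm{\Phi'(0)}_H\leq\norm{\Phi'}_\infty$), $G$ takes values in $H_{0}$ by construction of \cref{eq:HeatEq} (the second component of the right-hand side is $0$), $f\in H^1_\rho(0,\infty;H_0)$, and the consistency condition $f(0) = -G(0,\Phi)$ is assumed. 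Hence \cref{th:localExistParabolic} applies verbatim and yields a unique local solution $\begin{psmallmatrix} v \\ w\end{psmallmatrix}\in H^1(0,T;H_0)\times H^{\nicefrac 12}(0,T;H_0^\perp)$ of \cref{eq:HeatEq} with $\widetilde F(v) = \widetilde G(I_\rho v)$. The only mild obstacle is the bookkeeping around writing the constant lower-right block $a^{-1}$ in the $z^{-1}N(z)$ form demanded by \cref{def:ParabolicPair} and checking it does not spoil boundedness of $M$; once that is done, the proof is a direct citation of \cref{th:localExistParabolic}.
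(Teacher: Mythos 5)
Your approach is exactly the paper's: the proof there consists of one sentence observing that the left-hand side of \cref{eq:HeatEq} is a parabolic pair meeting the hypotheses of \cref{th:parabolicRegularity} and then citing \cref{th:localExistParabolic}. Your verification of the hypotheses (the identification $C=\operatorname{grad}$, $C^{\ast}=-\operatorname{div}_{0}$, the accretivity estimate using $\Re a^{-1}\geq c'>0$, the check of assumption \ref{ass:B} for $Z=I_{\rho}\widehat{\Phi'}$, and the fact that $G$ takes values in $H_{0}$) is correct and fills in details the paper leaves implicit.

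The one point you flag as the ``only mild obstacle'' is, however, resolved incorrectly. In the decomposition $M(z)=\begin{psmallmatrix}M_{00}(z)&0\\0&0\end{psmallmatrix}+z^{-1}N(z)$ of \cref{def:ParabolicPair}, the operator entering the equation is $\partial_{\rho}M(\partial_{\rho})=\partial_{\rho}\begin{psmallmatrix}M_{00}(\partial_{\rho})&0\\0&0\end{psmallmatrix}+N(\partial_{\rho})$ --- compare the displayed form of \cref{eq:ParabolicProblem}: the factor $z^{-1}$ in $z^{-1}N(z)$ already cancels against the leading $\partial_{\rho}$. Hence the zeroth-order block $\begin{psmallmatrix}0&0\\0&a^{-1}\end{psmallmatrix}$ of \cref{eq:HeatEq} \emph{is} $N(\partial_{\rho})$, so $N(z)\equiv\begin{psmallmatrix}0&0\\0&a^{-1}\end{psmallmatrix}$ is a constant, trivially bounded and holomorphic, and there is no boundedness issue to worry about. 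Your choice $N(z)=z\begin{psmallmatrix}0&0\\0&a^{-1}\end{psmallmatrix}$ would instead yield $\partial_{\rho}M(\partial_{\rho})=\partial_{\rho}\begin{psmallmatrix}1&0\\0&a^{-1}\end{psmallmatrix}$, i.e., a wave-type rather than heat-type operator, which is not the system under consideration. The slip happens to be harmless for the remainder of your argument, since $M_{00}\equiv 1$ and $N_{00}\equiv 0$ under either reading, but the correct identification is the simpler one.
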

\begin{proof}
  We observe that the left-hand side of \cref{eq:HeatEq} adheres to the conditions of \cref{th:parabolicRegularity} and we can therefore simply apply \cref{th:localExistParabolic}.
\end{proof}
We remark that it is critical in our approach to delay equations that the equation is structurally regularity increasing. We can demand this by virtue of a right-hand side with built-in increase of regularity or an equation that structurally produces more regular solutions, e.g., by virtue of parabolic regularity.

\subsubsection{Generating the structure}
Alternatively, we can also generate the desired formulation from \cref{eq:Heat}, showcasing that the assumptions on the form of the right-hand side are not as big of a restriction as they might seem at first glance. To this end, we make the ansatz $u = I_{\rho}(v + \widehat{\Phi'})$ for a $v \in L_{2,\rho}(0,\infty;H)$ and obtain from \cref{eq:Heat}:
\begin{equation*}
  \bigl(1 - \operatorname{div}_{0}a \operatorname{grad}I_{\rho}\bigr)v + \Phi'(0) - \operatorname{div}_{0}a \operatorname{grad}\bigl(\Phi(0) + I_{\rho}\Phi'(0)\bigr)
  = F\bigl(\argdot, (I_{\rho}(v+\widehat{\Phi'}))_{(\argdot)}\bigr)\text{.}
\end{equation*}
We substitute $w= I_{\rho}a \operatorname{grad}v \in H^{1}_{0,\rho}(0,\infty;H)$ and obtain:
\begin{equation*}
  v - \operatorname{div}_{0} w = -\Phi'(0) + \operatorname{div}_{0}a \operatorname{grad}\bigl(\Phi(0) + I_{\rho}\Phi'(0)\bigr) + F\bigl(\argdot, (I_{\rho}(v+\widehat{\Phi'}))_{(\argdot)}\bigr)\text{,}
\end{equation*}
which can be written as the following system:
\begin{align}
  \label{eq:HeatEq2}
  \begin{aligned}
  &\biggl[\mathring{\partial}_{\rho}
  \begin{pmatrix}
    0 & 0 \\ 0 & a^{-1}
  \end{pmatrix}
  +
  \begin{pmatrix}
    1 & 0 \\ 0 & 0
  \end{pmatrix}
  -
  \begin{pmatrix}
    0 & \operatorname{div}_{0} \\ \operatorname{grad} & 0
  \end{pmatrix}
  \biggr]\begin{pmatrix} v \\ w \end{pmatrix}\\
  &\qquad= \begin{pmatrix}
    - \Phi'(0) + \operatorname{div}_{0}a \operatorname{grad}\bigl(\Phi(0) + I_{\rho}\Phi'(0)\bigr) + F\bigl(\argdot, (I_{\rho}(v+\widehat{\Phi'}))_{(\argdot)}\bigr)\\ 0
  \end{pmatrix}\text{.}
  \end{aligned}
\end{align}
We therefore generated a formulation of the type (\ref{eq:FPPIntIns}), assuming we have a prehistory $\Phi \in H^{2}(-h,0;H)$ (in particular, the first derivative is bounded in $\norm{\argdot}_{\infty}$-norm). This formulation is reminiscent of \cite[cor.~4.2]{Waurick2017}, where stochastic problems were discussed.
We arrive at the following well-posedness theorem:
\begin{theorem}
  \Cref{eq:HeatEq2} is locally uniquely solvable in $H^{1}_{0,\rho}$ under assumption \ref{ass:A} on $F$ and assumption \ref{ass:B} on $\Phi$, provided that
  \begin{itemize}[leftmargin=4ex]
    \item $a$ is Hermitian (i.e., $a = a^{\ast}$),
    \item $\Phi \in H^{2}(0,\infty;H)$ with $\Phi(0),\Phi'(0) \in \operatorname{dom}\bigl(\operatorname{div}_{0} a \operatorname{grad}\bigr)$ and
    \item the consistency condition $\bigl(\operatorname{div}_{0} a \operatorname{grad}\bigr)\Phi(0) + F(0,\Phi) = \Phi'(0)$ holds.
  \end{itemize}
\end{theorem}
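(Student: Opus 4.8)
The plan is to read \eqref{eq:HeatEq2} as a concrete instance of the abstract fixed point problem \eqref{eq:FPPIntIns} on the product space $L_{2}(\Omega)\oplus L_{2}(\Omega)^{d}$ and to apply \cref{th:localExistIns}. For this I would take the simple material law $M(z)\coloneq\begin{psmallmatrix}0&0\\0&a^{-1}\end{psmallmatrix}+z^{-1}\begin{psmallmatrix}1&0\\0&0\end{psmallmatrix}$, the spatial operator $A\coloneq-\begin{psmallmatrix}0&\operatorname{div}_{0}\\\operatorname{grad}&0\end{psmallmatrix}$, the nonlinearity $G\bigl(t,(\xi_{1},\xi_{2})\bigr)\coloneq\bigl(F(t,\xi_{1}),0\bigr)$, the forcing $f\coloneq\bigl(-\Phi'(0)+\operatorname{div}_{0}a\operatorname{grad}(\Phi(0)+I_{\rho}\Phi'(0)),\,0\bigr)$ and the extension $Z\coloneq\bigl(I_{\rho}\widehat{\Phi'},\,0\bigr)$. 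Then the evolutionary unknown $(v,w)$ carries the trivial prehistory, $I_{\rho}(v,w)+Z$ has prehistory $(\Phi,0)$ (using $\Phi(-h)=0$, so that $I_{\rho}\widehat{\Phi'}$ restricts to $\Phi$ on $(-h,0]$), and its first component is the temperature $u=I_{\rho}(v+\widehat{\Phi'})$; applying $S_{\rho}$ turns \eqref{eq:HeatEq2} into $(v,w)=S_{\rho}\bigl[f+G\bigl(\argdot,(I_{\rho}(v,w)+Z)_{(\argdot)}\bigr)\bigr]$, i.e.\ precisely \eqref{eq:FPPIntIns}.

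The remaining work is the (routine) verification of the hypotheses of \cref{th:localExistIns}. The left-hand operator $\mathring{\partial}_{\rho}M(\partial_{\rho})+A$ has to fit the framework of \cref{th:Picard}: $A$ is skew-selfadjoint (since $\operatorname{grad}$ and $\operatorname{div}_{0}$ are mutually adjoint up to sign), hence $\mathrm{m}$-accretive; $M$ is bounded on every right half-plane, so $s_{\mathrm{b}}(M)\leq0$; and the positivity $\Re\dualprod{\varphi}{zM(z)\varphi}\geq c\norm{\varphi}^{2}$ on some $\C_{\Re > \rho_{0}}$ must hold. This is precisely where the hypothesis $a=a^{\ast}$ enters: then $a^{-1}$ is self-adjoint with $\dualprod{\varphi_{2}}{a^{-1}\varphi_{2}}\geq\norm{a}^{-1}\norm{\varphi_{2}}^{2}$, and since $zM(z)=\begin{psmallmatrix}1&0\\0&za^{-1}\end{psmallmatrix}$ one obtains $\Re\dualprod{\varphi}{zM(z)\varphi}=\norm{\varphi_{1}}^{2}+\Re(z)\dualprod{\varphi_{2}}{a^{-1}\varphi_{2}}\geq\min\{1,\rho_{0}\norm{a}^{-1}\}\norm{\varphi}^{2}$; for a non-Hermitian $a$ the now complex number $\dualprod{\varphi_{2}}{a^{-1}\varphi_{2}}$ would wreck this bound as $\Im z\to\pm\infty$. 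Moreover, $G$ inherits assumption \ref{ass:A} verbatim from $F$ (it merely disregards the second coordinate), and $(\Phi,0)$ together with $Z$ satisfies assumption \ref{ass:B}: $Z$ restricts to $(\Phi,0)$ on $(-h,0]$, $Z'$ equals the constant $(\Phi'(0),0)$ on $[0,\infty)$, and $\Phi\in H^{2}$ forces $\norm{\Phi'}_{\infty}<\infty$.

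Finally one has to check $f\in H^{1}_{\tilde{\rho}}$ and the consistency condition, and here the two domain hypotheses on $\Phi$ are used. Writing $f=\bigl(-\Phi'(0)+\operatorname{div}_{0}a\operatorname{grad}\Phi(0)+I_{\rho}(\operatorname{div}_{0}a\operatorname{grad}\Phi'(0)),\,0\bigr)$ (as $I_{\rho}$ commutes with the spatial operators), membership in $H^{1}_{\tilde{\rho}}$ requires exactly that $\operatorname{div}_{0}a\operatorname{grad}\Phi(0)$ and $\operatorname{div}_{0}a\operatorname{grad}\Phi'(0)$ lie in $L_{2}(\Omega)$, i.e.\ $\Phi(0),\Phi'(0)\in\operatorname{dom}(\operatorname{div}_{0}a\operatorname{grad})$; granted this, the constant part of $f$ lies in $H^{1}_{\tilde{\rho}}$ and the $I_{\rho}$-part does by the mapping property of $I_{\rho}$. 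Evaluating at $t=0$ yields $f(0)=\bigl(-\Phi'(0)+\operatorname{div}_{0}a\operatorname{grad}\Phi(0),\,0\bigr)$ and $G(0,\Phi)=(F(0,\Phi),0)$, so the condition $f(0)=-G(0,\Phi)$ of \cref{th:localExistIns} is precisely $\operatorname{div}_{0}a\operatorname{grad}\Phi(0)+F(0,\Phi)=\Phi'(0)$. \Cref{th:localExistIns} then delivers a unique local solution $(v,w)\in H^{1}_{0,\rho}$, which is the assertion. I expect the main difficulty to be conceptual rather than computational: keeping the several substitutions consistent (original temperature $u$, evolutionary unknown $(v,w)$, extension $Z$) and, above all, recognising that the Hermitianity of $a$ — which plays no role for the temperature equation itself — is exactly what is needed to place \eqref{eq:HeatEq2} within the scope of \cref{th:Picard}.
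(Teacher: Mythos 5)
Your proposal is correct and follows the same route as the paper's proof, which simply notes that the left-hand side of \eqref{eq:HeatEq2} fits the evolutionary-equation framework because $a=a^{\ast}$ and then invokes \cref{th:localExistIns} with the translated consistency condition $-\Phi'(0)+\operatorname{div}_{0}a\operatorname{grad}\Phi(0)+F(0,\Phi)=0$. You merely spell out the details the paper leaves implicit (the identification of $M$, $A$, $G$, $f$, $Z$, the positivity estimate for $zM(z)$, and the role of Hermitianity), all of which check out.
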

\begin{proof}
  We note that the left-hand side of \cref{eq:HeatEq2} fits the setting of evolutionary equations, because $a = a^{\ast}$ and we can simply apply \cref{th:localExistIns}, translating the consistency condition there:
  \begin{equation*}
    -\Phi'(0) + \bigl(\operatorname{div}_{0} a \operatorname{grad}\bigr)\Phi(0)
    + F(0,\Phi) = 0\text{.}\qedhere
  \end{equation*}
\end{proof}

\subsection{Wave Equation}
\label{subsec:Wave}
Let $\Omega\subseteq \mathbb{R}^{d}$ be open and let $T\colon\,\Omega\rightarrow \mathbb{R}^{d\times d}$ be bounded, measurable and Hermitian, satisfying $T\geq c>0$. The wave equation is given as
\begin{align}
  \begin{aligned}
    \label{eq:WaveNaive}
    \bigl(\partial_{\rho}^{2}-\operatorname{div} T \operatorname{grad}_{0}\bigr)u&=F\bigl(\argdot,u_{(\argdot)}\bigr)\text{,}\\
    u_{(0)}&=\Phi\text{.}
  \end{aligned}
\end{align}
Again, we use the formulation from \cite[thm.~6.2.6]{Waurick2022} for the wave equation in evolutionary equations and supplant the conventional righ-hand side with one that includes state-dependent delay as before. This system fits the formulation of many articles employing the theory of semigroups (for a very general resource we reference \cite{Batkai2005} or refer to \cref{subsec:SG2ndOrder} for an example), but there are other sources as well, e.g. \cite{Jornet2021} (where an actual solution is calculated) or \cite{Xiong2023} (in the context of attractors). It should be pointed out that the delayed wave equation has also been studied extensively by C.~Pignotti and S.~Nicaise, although in the context of delayed feedback on the boundary of a domain (e.g. \cite{Pignotti2006}) or with the delay in the derivative of the state variable (e.g. \cite{Pignotti2015}).\\
Making the usual ansatz $u = v + I_{\rho}\widehat{\Phi'}$ with $v \in H^{1}_{0,\rho}(0,\infty;H)$, we can model the equation as the following system, where we include the prehistory as a function $f$ into the equation as in \cref{eq:EvolProblem}:
\begin{equation}
  \label{eq:WaveEq}
  \biggl[\partial_{\rho}
  \begin{pmatrix}
    1 & 0 \\ 0 & T^{-1}
  \end{pmatrix}
  - \begin{pmatrix}
    0 & \operatorname{div} \\ \operatorname{grad}_{0} & 0
  \end{pmatrix}\biggr]
  \begin{pmatrix}v \\ w\end{pmatrix}
  = \begin{pmatrix}
    f + F\bigl(\argdot, (v + I_{\rho}\widehat{\Phi'})_{(\argdot)}\bigr) \\ 0
  \end{pmatrix}\text{.}
\end{equation}
Again, this equation fits the setting of evolutionary equations (cf. \cite[ch.~6.2]{Waurick2022}) and the material law is a simple material law. For both studied cases $\widetilde{F}(v) = I_{\rho}\widetilde{G}(v)$ and $\widetilde{F}(v) = \widetilde{G}(I_{\rho}v)$ we obtain a well-posedness theorem:
\begin{theorem}
  Let $f = I_{\rho}g$ for some $g\in H^{1}_{\rho}(0,\infty;H)$ and $F\bigl(\argdot, (v+I_{\rho}\widehat{\Phi'})_{(\argdot)}\bigr) = I_{\rho}G\bigl(\argdot, (v+I_{\rho}\widehat{\Phi'})_{(\argdot)}\bigr)$ and let $G$ satisfy assumption \ref{ass:A} and $\Phi$ assumption \ref{ass:B}. Then \cref{eq:WaveEq} has a unique local solution provided that $g(0) = -G(0,\Phi)$.
\end{theorem}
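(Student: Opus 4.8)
The plan is to recognise \cref{eq:WaveEq} as an instance of \cref{th:localExistOut}, i.e.\ of the case $\widetilde F(v)=I_{\rho}\widetilde G(v)$, and to verify its hypotheses. The first step is to check that the left-hand side of \cref{eq:WaveEq} fits the framework of evolutionary equations, i.e.\ satisfies the assumptions of \cref{th:Picard}. The material law here is the constant map $M\equiv\begin{psmallmatrix}1&0\\0&T^{-1}\end{psmallmatrix}$, which is trivially holomorphic and bounded on all of $\C$, so $s_{\mathrm b}(M)=-\infty$; since $T$ is Hermitian with $T\geq c>0$, the operator $T^{-1}$ is bounded below, whence
\[
  \Re\dualprod{\varphi}{zM(z)\varphi}_{H}=\Re(z)\bigl(\norm{\varphi_{1}}^{2}+\dualprod{\varphi_{2}}{T^{-1}\varphi_{2}}\bigr)\geq \Re(z)\min\{1,\norm{T}^{-1}\}\norm{\varphi}^{2}
\]
for $\varphi=(\varphi_{1},\varphi_{2})$, which on any half-plane $\C_{\Re>\rho_{0}}$ with $\rho_{0}>0$ is bounded below by a positive multiple of $\norm{\varphi}^{2}$. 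Moreover $A=\begin{psmallmatrix}0&-\operatorname{div}\\-\operatorname{grad}_{0}&0\end{psmallmatrix}$ is skew-selfadjoint (as $\operatorname{div}=-(\operatorname{grad}_{0})\adjun$), hence $\mathrm m$-accretive. This is precisely the wave system of \cite[thm.~6.2.6]{Waurick2022}, so \cref{th:Picard} applies for every $\rho>0$.

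Next I would match the data to the hypotheses of \cref{th:localExistOut}, working on the product space $H=L_{2}(\Omega)\oplus L_{2}(\Omega)^{d}$ and reading the scalar $G$ as the map $(t,\phi)\mapsto(G(t,\phi_{1}),0)$ into $H$, which inherits assumption \ref{ass:A} from the scalar one, and similarly $g$ as $(g,0)$. The extension $Z=I_{\rho}\widehat{\Phi'}$ occurring in \cref{eq:WaveEq} satisfies assumption \ref{ass:B}: using $\Phi(-h)=0$ (admissible by \cref{rmk:PrehistoryVanishes}) one checks $Z\vert_{(-h,0]}=\Phi$, while on $[0,\infty)$ one has $Z(t)=\Phi(0)+t\,\Phi'(0)$, so $Z\vert_{[0,\infty)}$ is affine, in particular $\mathcal{C}^{1}$, with $\norm{Z'(0)}_{H}=\norm{\Phi'(0)}_{H}\leq\norm{\Phi'}_{\infty}$. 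Finally, since $g\in H^{1}_{\rho}(0,\infty;H)$ (extended trivially by $0$), the function $f=I_{\rho}g$ lies in $H^{2}_{\rho}(0,\infty;H)\cap H^{1}_{0,\rho}(0,\infty;H)$, because $I_{\rho}$ maps $L_{2,\rho}$ into $H^{1}_{\rho}$ and $H^{1}_{\rho}$ into $H^{2}_{\rho}$ and $(I_{\rho}g)(0)=0$; moreover $f'(0)=g(0)$. Thus the $\tilde\rho$ of \cref{th:localExistOut} may be taken equal to $\rho$.

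It then only remains to observe that the consistency condition of \cref{th:localExistOut}, namely $G(0,\Phi)=-f'(0)$, reduces under $f=I_{\rho}g$ to exactly the standing assumption $g(0)=-G(0,\Phi)$. Hence all hypotheses of \cref{th:localExistOut} are met, and it yields a unique local solution of \cref{eq:WaveEq}. The steps above are routine verifications; the only point requiring a little care is the $\mathrm m$-accretivity of $A$ together with the uniform positive-definiteness of $z\mapsto zM(z)$ needed to invoke \cref{th:Picard}, but this is standard for the wave system and recorded in \cite[thm.~6.2.6]{Waurick2022}, so no genuine obstacle arises.
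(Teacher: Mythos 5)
Your proposal is correct and follows exactly the route the paper takes (the paper's proof is simply ``Immediate from \cref{th:localExistOut}''); you have merely written out the routine verifications --- skew-selfadjointness of $A$, positivity of $z\mapsto zM(z)$, that $Z=I_{\rho}\widehat{\Phi'}$ satisfies assumption \ref{ass:B}, that $f=I_{\rho}g\in H^{2}_{\rho}\cap H^{1}_{0,\rho}$ with $f'(0)=g(0)$ --- which the paper leaves implicit.
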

\begin{proof}
Immediate from \cref{th:localExistOut}.
\end{proof}
\begin{theorem}
  Let $f \in H^{1}_{0,\rho}(0,\infty;H)$ and $F\bigl(\argdot, (v+I_{\rho}\widehat{\Phi'})_{(\argdot)}\bigr) = G\bigl(\argdot, (I_{\rho}v+I_{\rho}\widehat{\Phi'})_{(\argdot)}\bigr)$ and let $G$ satisfy assumption \ref{ass:A} and $\Phi$ assumption \ref{ass:B}. Then \cref{eq:WaveEq} has a unique local solution provided that $f(0) = -G(0,\Phi)$.
\end{theorem}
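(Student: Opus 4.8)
The plan is to read off \cref{eq:WaveEq} as a special case of the abstract fixed point problem \cref{eq:FPPIntIns} and then invoke \cref{th:localExistIns}. First I would fix the extension of the prehistory to be $Z \coloneq I_{\rho}\widehat{\Phi'}$ (with $I_{\rho} = \smallint_{-h}^{\argdot}$, as in the simple-material-law setting), so that the right-hand side of \cref{eq:WaveEq} reads $\begin{psmallmatrix} f + G\bigl(\argdot, (I_{\rho}v + Z)_{(\argdot)}\bigr) \\ 0 \end{psmallmatrix}$. The left-hand operator of \cref{eq:WaveEq} is built from the constant --- hence simple --- material law $M(z) = \begin{psmallmatrix} 1 & 0 \\ 0 & T^{-1} \end{psmallmatrix}$ and the skew-selfadjoint operator $-\begin{psmallmatrix} 0 & \operatorname{div} \\ \operatorname{grad}_{0} & 0 \end{psmallmatrix}$; as recorded in \cite[thm.~6.2.6]{Waurick2022}, this pair satisfies the hypotheses of \cref{th:Picard}, so the Picard operator $S_{\rho}$ is available. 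Applying $S_{\rho}$ and reading off the first component turns \cref{eq:WaveEq} into $v = S_{\rho}\bigl[f + G\bigl(\argdot, (I_{\rho}v + Z)_{(\argdot)}\bigr)\bigr]$, which is exactly \cref{eq:FPPIntIns}.

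Next I would check the remaining hypotheses of \cref{th:localExistIns}. Assumption \ref{ass:A} on $G$ is part of the statement. For assumption \ref{ass:B}: normalizing $\Phi(-h) = 0$ (legitimate by \cref{rmk:PrehistoryVanishes}), one gets $Z\vert_{(-h,0]} = \smallint_{-h}^{\argdot}\Phi' = \Phi$, while for $t \geq 0$ the continuation is the affine map $Z(t) = \Phi(0) + t\,\Phi'(0)$, which lies in $\mathcal{C}^{1}(0,\infty;H)$ with $\norm{Z'(0)}_{H} = \norm{\Phi'(0)}_{H} \leq \norm{\Phi'}_{\infty}$; moreover $Z \in H^{1}_{\rho}(-h,\infty;H)$ by the continuation estimate already noted before \cref{eq:FDESimpleMatLaws}. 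Finally $f \in H^{1}_{0,\rho}(0,\infty;H) \subseteq H^{1}_{\rho}(0,\infty;H)$ and the consistency condition $f(0) = -G(0,\Phi)$ is assumed. Thus \cref{th:localExistIns} applies verbatim and yields a unique local solution $u = I_{\rho}v + Z \in H^{1}(0,T;H)$ up to some $T > 0$.

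I do not expect a real obstacle: the content is in matching notation and in the two routine verifications above (that the wave operator is covered by \cref{th:Picard}, which is \cite[thm.~6.2.6]{Waurick2022}, and that $Z = I_{\rho}\widehat{\Phi'}$ meets assumption \ref{ass:B}). The one point worth stressing is that, in contrast to the heat equation, \cref{eq:WaveEq} carries no parabolic smoothing, so the metric projection $\pi_{\alpha}$ can only be removed locally in time through the regularity-preservation part of assumption \ref{ass:A} together with the consistency condition $f(0) = -G(0,\Phi)$ --- there is no shortcut via \cref{th:localExistParabolic}. With those ingredients in place the argument is, just as for the companion theorem covering $\widetilde{F}(v) = I_{\rho}\widetilde{G}(v)$, immediate from \cref{th:localExistIns}.
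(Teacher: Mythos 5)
Your proposal is correct and follows exactly the paper's route: the paper's entire proof is ``Immediate from \cref{th:localExistIns}'', and you have simply spelled out the routine verifications (that the wave operator satisfies the hypotheses of \cref{th:Picard}, that $Z = I_{\rho}\widehat{\Phi'}$ meets assumption \ref{ass:B} after the normalization $\Phi(-h)=0$, and that the consistency condition translates correctly since $(I_{\rho}v+Z)_{(0)}=\Phi$) which the paper leaves implicit.
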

\begin{proof}
Immediate from \cref{th:localExistIns}.
\end{proof}
Because we are working with a simple material law, we can also be more explicit and use the following approach, generating the case $\widetilde{F}(v) = I_{\rho}\widetilde{G}(v)$:\\
We make the ansatz $u = v + I_{\rho}I_{\rho}\widehat{\Phi''}$ with $v\in H^{2}_{0,\rho}(0,\infty;H)$ and obtain from \cref{eq:WaveEq}
\begin{equation*}
  \bigl(\mathring{\partial}_{\rho}^{2}- \operatorname{div}T\operatorname{grad}_{0}\bigr)v
  = - \Phi''(0) + \operatorname{div}T\operatorname{grad}_{0}I_{\rho}I_{\rho}\widehat{\Phi''} + F\bigl(\argdot,(v+I_{\rho}I_{\rho}\widehat{\Phi''})_{(\argdot)}\bigr)\text{.}
\end{equation*}
Integration over the entire equation gives
\begin{equation*}
  \bigl(\mathring{\partial}_{\rho}- \operatorname{div}T\operatorname{grad}_{0}I_{\rho}\bigr)v
  = - I_{\rho}\Phi''(0) + \operatorname{div}T\operatorname{grad}_{0}I_{\rho}I_{\rho}I_{\rho}\widehat{\Phi''} + I_{\rho}F\bigl(\argdot,(v+I_{\rho}I_{\rho}\widehat{\Phi''})_{(\argdot)}\bigr)\text{.}
\end{equation*}
Substituting $w=T\operatorname{grad}_{0}I_{\rho}v$, we obtain the following system:
\begin{align*}
  \mathring{\partial}_{\rho}v-\operatorname{div}w&= I_{\rho}\bigl[-\Phi''(0) + \operatorname{div}T\operatorname{grad}_{0}I_{\rho}I_{\rho}\widehat{\Phi''}\bigr] + I_{\rho}F\bigl(\argdot,(v+I_{\rho}I_{\rho}\widehat{\Phi''})_{(\argdot)}\bigr)\text{,}\\
  \mathring{\partial}_{\rho}T^{-1}w&=\operatorname{grad}_{0}v\text{.}
\end{align*}
Written in matrix form, the system reads:
\begin{align}
  \label{eq:WaveEq2}
  \begin{aligned}
  &\biggl[\mathring{\partial}_{\rho} \begin{pmatrix}
    1 & 0 \\ 0 & T^{-1}
  \end{pmatrix}
    - \begin{pmatrix}
      0 & \operatorname{div} \\ \operatorname{grad}_{0} & 0
    \end{pmatrix}\biggr]
    \begin{pmatrix} v \\ w \end{pmatrix}\\
  &\qquad\qquad= \begin{pmatrix}
    I_{\rho}\bigl[-\Phi''(0) + \operatorname{div}T\operatorname{grad}_{0}I_{\rho}I_{\rho}\widehat{\Phi''}\bigr]
    + I_{\rho}F\bigl(\argdot,(v+I_{\rho}I_{\rho}\widehat{\Phi''})_{(\argdot)}\bigr) \\ 0
  \end{pmatrix}\text{.}
  \end{aligned}
\end{align}
The left-hand side of this equation is unchanged from before and satisfies the requirements of Picard's \cref{th:Picard}. We thus produced an equation fitting the formulation \eqref{eq:FPPIntOut}. We also point out that this version of the delayed wave equation is reminiscent to the formulation in \cite[thm.~5.16]{Waurick2018}, where stochastic problems were discussed.
\begin{theorem}
  \Cref{eq:WaveEq2}  admits a unique local solution under assumption \ref{ass:A} on $F$ and assumption \ref{ass:B} on $\Phi$, provided that
  \begin{itemize}[leftmargin=4ex]
    \item $\Phi \in H^{2}(-h,0;H)$ with $\Phi(0),\Phi'(0) \in \operatorname{dom}\bigl(\operatorname{div}T\operatorname{grad}_{0}\bigr)$ and
    \item the consisteny condition $\Phi''(0) - \operatorname{div}T\operatorname{grad}_{0}\Phi(0) = F(0,\Phi)$ holds.
  \end{itemize}
\end{theorem}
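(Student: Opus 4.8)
The plan is to derive the statement as a direct application of \cref{th:localExistOut} to the reformulated first-order system \eqref{eq:WaveEq2}, which was set up precisely so as to have the shape \eqref{eq:FPPIntOut}, i.e.\ the case $\widetilde F(v) = I_\rho\widetilde G(v)$. First one checks that the operator on the left of \eqref{eq:WaveEq2} fits Picard's \cref{th:Picard}: the material law $M(z) = \begin{psmallmatrix} 1 & 0 \\ 0 & T^{-1}\end{psmallmatrix}$ is constant, hence holomorphic and bounded on all of $\C$ (so $s_{\mathrm b}(M) = -\infty$), and since $T = T^{\ast} \geq c > 0$ is bounded, $T^{-1}$ is bounded and uniformly positive; consequently $\Re\scprod{\varphi}{zM(z)\varphi}_H = (\Re z)\scprod{\varphi}{M(z)\varphi}_H \geq (\Re z)\,c''\,\norm{\varphi}_H^{2}$ for a constant $c''>0$, which is bounded below by a positive multiple of $\norm{\varphi}_H^{2}$ on $\C_{\Re>\rho_0}$. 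Moreover $-\begin{psmallmatrix} 0 & \operatorname{div} \\ \operatorname{grad}_0 & 0\end{psmallmatrix} = \begin{psmallmatrix} 0 & (\operatorname{grad}_0)^{\ast} \\ -\operatorname{grad}_0 & 0\end{psmallmatrix}$ is skew-selfadjoint, hence $\mathrm{m}$-accretive (cf.\ \cref{sec:DiffOp} and \cite[ch.~6.2]{Waurick2022}), so the left-hand side meets the hypotheses needed for \cref{th:localExistOut}.

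Next one reads off the remaining data. The role of $G$ in \cref{th:localExistOut} is played by the state-dependent nonlinearity $F$ (extended by $0$ for $t<0$), which satisfies assumption \ref{ass:A} by hypothesis. Invoking \cref{rmk:PrehistoryVanishes} we may assume $\Phi(-h)=\Phi'(-h)=0$; then the extension $Z$ built from $I_\rho I_\rho\widehat{\Phi''}$ restricts to $\Phi$ on $(-h,0]$, is polynomial (in particular $\mathcal C^{1}$) on $[0,\infty)$ with $Z'(0)=\Phi'(0)$, so that $\norm{Z'(0)}_H\le\norm{\Phi'}_\infty$, and $I_\rho^{k}\widehat{\Phi^{(k)}}\in H^{k}_\rho$ for $k=1,2$ by the integration-by-parts/continuity argument recalled before \eqref{eq:FDESimpleMatLaws}; thus assumption \ref{ass:B} holds. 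The prehistory-dependent forcing, namely $f\coloneq I_\rho\bigl[-\Phi''(0)+\operatorname{div}T\operatorname{grad}_0 I_\rho I_\rho\widehat{\Phi''}\bigr]$ (with second component $0$), is on $(0,\infty)$ the antiderivative of a fixed-vector polynomial in $t$ whose coefficients are built from $\Phi(0),\Phi'(0),\Phi''(0)$ and their images under $\operatorname{div}T\operatorname{grad}_0$; the domain hypotheses $\Phi(0),\Phi'(0)\in\operatorname{dom}\bigl(\operatorname{div}T\operatorname{grad}_0\bigr)$ (and, for the $H^{2}_\rho$-regularity of $f$, also $\Phi''(0)$) guarantee these coefficients are genuine elements of $H$, so that $f\in H^{2}_\rho(0,\infty;H)\cap H^{1}_{0,\rho}(0,\infty;H)$ trivially.

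It then remains to match the consistency conditions. Since $(I_\rho I_\rho\widehat{\Phi''})(0^{+})=\Phi(0)$, one computes $f'(0)=-\Phi''(0)+\operatorname{div}T\operatorname{grad}_0\Phi(0)$, so the requirement $G(0,\Phi)=-f'(0)$ of \cref{th:localExistOut} becomes precisely $\Phi''(0)-\operatorname{div}T\operatorname{grad}_0\Phi(0)=F(0,\Phi)$. \Cref{th:localExistOut} then delivers the unique local solution $u\in H^{2}(0,T;H)$, uniqueness coming from the contraction mapping principle together with eventual independence of the weight $\rho$ via \cref{th:Picard}. I expect the only real work to be the bookkeeping in this identification step: verifying that the nested antiderivatives and the unbounded operator $\operatorname{div}T\operatorname{grad}_0$ in the extension term compose legitimately (so that $f$ indeed has the claimed regularity and support in $[0,\infty)$), and that the one-sided limit $\lim_{t\downarrow 0}G\bigl(t,(v+Z)_{(t)}\bigr)=F(0,\Phi)$ used inside the proof of \cref{th:localExistOut} to remove the metric projection $\pi_\alpha$ goes through — both being routine given the polynomial structure of the extension and the continuity in time of the delay operator.
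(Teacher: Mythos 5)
Your proposal is correct and takes essentially the same route as the paper: the paper's proof simply notes that the left-hand side of \eqref{eq:WaveEq2} satisfies the hypotheses of \cref{th:Picard} and then invokes \cref{th:localExistOut}, translating the consistency condition via $F(0,(I_{\rho}I_{\rho}\widehat{\Phi''})_{(0)}) = \Phi''(0) - \operatorname{div}T\operatorname{grad}_{0}(I_{\rho}I_{\rho}\widehat{\Phi''})(0) = \Phi''(0) - \operatorname{div}T\operatorname{grad}_{0}\Phi(0)$, exactly as you do. Your additional bookkeeping (positivity of the constant material law, skew-selfadjointness of the spatial operator, verification of assumptions \ref{ass:A}/\ref{ass:B} for $Z = I_{\rho}I_{\rho}\widehat{\Phi''}$ after normalizing $\Phi(-h)=\Phi'(-h)=0$, and the regularity of $f$) is precisely what the paper leaves implicit.
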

\begin{proof}
  The left-hand side of \cref{eq:WaveEq2} satisfies the assumptions of \cref{th:Picard} and we can simply appeal to \cref{th:localExistOut}, translating the consistency condition there:
  \begin{align*}
    F(0,\Phi) &= F(0, \bigr(I_{\rho}I_{\rho}\widehat{\Phi''}\bigr)_{(0)}) = \Phi''(0) - \operatorname{div}T \operatorname{grad}_{0}(I_{\rho}I_{\rho}\widehat{\Phi''}\bigr)(0)\\
    &= \Phi''(0) - \operatorname{div}T\operatorname{grad}_{0}\Phi(0)\text{.}\qedhere
  \end{align*}
\end{proof}
It is also possible to generate the case $\widetilde{F}(v) = \widetilde{G}(I_{\rho}v)$, cf.~\cref{subsec:SG2ndOrder}.

\subsection{Maxwell's equations}
\label{subsec:Maxwell}
Let $\Omega\subseteq \mathbb{R}^{3}$ be open and let $j_{c}\colon\,\mathbb{R}\times \Omega \rightarrow \mathbb{R}^{3}$. Let further $\epsilon\colon \,\Omega\rightarrow \mathbb{R}^{3\times 3}$ and $\mu\colon\,\Omega\rightarrow \mathbb{R}^{3\times 3}$ be bounded, measurable and symmetric. Also let $\sigma\colon\,\Omega\rightarrow \mathbb{R}^{3\times 3}$ be bounded and measurable. Then the delayed Maxwell's equations are given by
\begin{alignat*}{4}
  &\partial_{\rho}\epsilon E+\sigma E &-\operatorname{curl}H&=F_{1}\bigl(\argdot,(E+\Phi)_{(\argdot)},(H+\Psi)_{(\argdot)}\bigr)\text{,}\\
  &\partial_{\rho}\mu H &+ \operatorname{curl}_{0}E&=F_{2}\bigl(\argdot,(E+\Phi)_{(\argdot)},(H+\Psi)_{(\argdot)}\bigr)\text{,}\\
  &&E_{(0)}&=\Phi\text{,}\\
  &&H_{(0)}&=\Psi\text{,}
\end{alignat*}
where again, we took the evolutionary equation formulation of Maxwell's equation (cf.~\cite[thm.~6.2.8]{Waurick2022}) and supplanted the right-hand side with functions incorporating state-dependent delay. We note that Maxwell's equations are usually completed with conditions on the divergence of $H$ and $E$. It is an easy calculation to show, that the system
\begin{equation*}
  \biggl[\partial_{\rho}
  \begin{pmatrix}
    \epsilon & 0 \\ 0 & \mu
  \end{pmatrix}
  + \begin{pmatrix}
    \sigma & 0 \\ 0 & 0
  \end{pmatrix}
  + \begin{pmatrix}
    0 & - \operatorname{curl} \\ \operatorname{curl}_{0} & 0
  \end{pmatrix}\biggr]
  \begin{pmatrix}E \\ H\end{pmatrix}
  = \begin{pmatrix}j_{0} \\ 0\end{pmatrix}\text{,}
\end{equation*}
where $j_{0}$ is a given electrical current density, is equivalent to the ``standard'' Maxwell's equations due to the prescribed initial conditions (in the case without delay). The calculations can be found in \cite[rem.~6.2.9]{Waurick2022}. In the same source, it is also verified, that the left-hand side of the above system fits the theory of evolutionary equations and again we even have a simple material law.\\
This time, we take the more abstract point of view and investigate the problem in the formulation (\ref{eq:EvolProblem}), incorporating the prehistories into the equation and demanding that the functions $F_{1},F_{2}$ are of the same form as in (\ref{eq:FPPIntIns}), that is we study
\begin{align}
  \begin{aligned}
  \label{eq:Maxwell}
    \biggl[\partial_{\rho}\begin{pmatrix}
      \epsilon & 0 \\ 0 & \mu
    \end{pmatrix}
    &+ \begin{pmatrix}
        \sigma & 0 \\ 0 & 0
      \end{pmatrix}
    + \begin{pmatrix}
        0 & - \operatorname{curl} \\ \operatorname{curl}_{0} & 0
      \end{pmatrix}\biggr]
      \begin{pmatrix}E \\ H\end{pmatrix}\\
    &\quad= \begin{pmatrix}
      f_{1} + G_{1}\bigl(\argdot, (I_{\rho}(E + \widehat{\Phi'}))_{(\argdot)}, (I_{\rho}(H + \widehat{\Psi'}))_{(\argdot)}\bigr)\\
      f_{2} + G_{2}\bigl(\argdot, (I_{\rho}(E + \widehat{\Phi'}))_{(\argdot)}, (I_{\rho}(H + \widehat{\Psi'}))_{(\argdot)}\bigr)
    \end{pmatrix}\text{.}
  \end{aligned}
\end{align}
This formulation is also reminiscent of \cite[def.~4.13]{Waurick2017}, where stochastic problems were discussed. In that article, the authors arrived at a system with a right-hand side in the style of (\ref{eq:FPPIntIns}). By appealing to \cref{th:localExistIns} we obtain:
\begin{theorem}
  \Cref{eq:Maxwell} is locally uniquely solvable under assumption \ref{ass:A} on $G_{1},G_{2}$ and assumption \ref{ass:B} on $\Phi,\Psi$, provided that $f_{1},f_{2}\in H^{1}_{\rho}(0,\infty;H)$ satisfy $f_{1}(0) = -G_{1}(0,\Phi,\Psi)$ and $f_{2}(0) = -G_{2}(0,\Phi,\Psi)$ respectively.
\end{theorem}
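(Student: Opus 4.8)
The plan is to read off the statement as a direct instance of \cref{th:localExistIns}, once \cref{eq:Maxwell} has been identified with the abstract fixed point problem \cref{eq:FPPIntIns}. First I fix the ambient Hilbert space $H = L_{2}(\Omega)^{3}\oplus L_{2}(\Omega)^{3}$, the (simple) material law $M(z) = \operatorname{diag}(\epsilon,\mu) + z^{-1}\operatorname{diag}(\sigma,0)$, and the spatial operator $A = \bigl(\begin{smallmatrix}0 & -\operatorname{curl}\\ \operatorname{curl}_{0} & 0\end{smallmatrix}\bigr)$. Since $\operatorname{curl}_{0}$ and $\operatorname{curl}$ are mutually adjoint, $A$ is skew-selfadjoint, hence $\mathrm{m}$-accretive, and the positivity estimate demanded of $(M,A)$ in \cref{th:Picard} holds under the standing assumptions on $\epsilon,\mu,\sigma$ --- this is exactly \cite[thm.~6.2.8, rem.~6.2.9]{Waurick2022}, recalled above. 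Thus the left-hand side of \cref{eq:Maxwell} sits inside the evolutionary-equations framework and the Picard operator $S_{\rho}$ is available.

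Next I bundle the data. Set $G\bigl(t,(\varphi,\psi)\bigr)\coloneq \bigl(G_{1}(t,\varphi,\psi),G_{2}(t,\varphi,\psi)\bigr)$; then $G$ inherits assumption \ref{ass:A} from $G_{1}$ and $G_{2}$ componentwise, i.e.\ almost uniform Lipschitz-continuity, regularity preservation, and $G(\argdot,0)\in L_{2,\rho}$. Take $(\Phi,\Psi)\in H^{1}(-h,0;H)$ with bounded derivative as the prehistory and, as in the heat and wave examples, the extension $Z\coloneq \bigl(I_{\rho}\widehat{\Phi'},I_{\rho}\widehat{\Psi'}\bigr)$ with $I_{\rho} = \smallint_{-h}^{\argdot}$. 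Using the normalisation $\Phi(-h)=\Psi(-h)=0$ (admissible by \cref{rmk:PrehistoryVanishes}) one checks $Z\vert_{(-h,0]} = (\Phi,\Psi)$, whereas on $[0,\infty)$ one has $Z(t) = \bigl(\Phi(0)+t\Phi'(0),\,\Psi(0)+t\Psi'(0)\bigr)\in \mathcal{C}^{1}$ with $\norm{Z'(0)}_{H} = \norm{(\Phi'(0),\Psi'(0))}_{H}\leq \norm{(\Phi',\Psi')}_{\infty}$; hence $Z$ meets assumption \ref{ass:B}. With the unknown $v\coloneq (E,H)$ and $f\coloneq (f_{1},f_{2})\in H^{1}_{\rho}(0,\infty;H)$, the system \cref{eq:Maxwell} is verbatim the fixed point problem \cref{eq:FPPIntIns} for this $f$, $G$ and $Z$.

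It remains to match the consistency condition. Because $v$ is supported in $[0,\infty)$, we have $(I_{\rho}v + Z)_{(0)} = Z\vert_{[-h,0]} = (\Phi,\Psi)$, so the hypothesis $f(0) = -G\bigl(0,(I_{\rho}v+Z)_{(0)}\bigr)$ of \cref{th:localExistIns} reduces to $f_{1}(0) = -G_{1}(0,\Phi,\Psi)$ and $f_{2}(0) = -G_{2}(0,\Phi,\Psi)$, precisely the assumption made. \Cref{th:localExistIns} then yields the unique local solution, with $u = I_{\rho}v + Z \in H^{1}(0,T;H)$ on some interval $[0,T]$. The only genuine work is bookkeeping: transcribing the scalar hypotheses into the $2\times 2$ block setting and confirming that this explicit $Z$ satisfies assumption \ref{ass:B}. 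The analytic substance --- the contraction estimate, the local removal of the metric projection $\pi_{\alpha}$, and the attendant regularity --- is entirely carried by \cref{th:localExistIns}, and the compatibility of the block left-hand side with Picard's theorem is already on record, so I foresee no real obstacle beyond these routine verifications.
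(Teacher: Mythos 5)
Your proposal is correct and matches the paper's argument, which likewise treats this theorem as an immediate application of \cref{th:localExistIns} after observing that the block left-hand side satisfies Picard's theorem and that the consistency condition $f_{i}(0)=-G_{i}(0,\Phi,\Psi)$ is the translation of $f(0)=-G(0,(I_{\rho}v+Z)_{(0)})$. Your write-up merely makes explicit the bookkeeping (choice of $H$, $M$, $A$, the bundled $G$, and the extension $Z=(I_{\rho}\widehat{\Phi'},I_{\rho}\widehat{\Psi'})$) that the paper leaves implicit.
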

\begin{remark}[Mixed-type problems]
  One strength of the theory of evolutionary equations is the ability to treat mixed-type problems: For instance, on a split domain one may have hyperbolic behaviour on one part, on the other part parabolic behaviour, with both parts linked together by interface conditions. An example is the following eddy current system
  \begin{equation*}
    \biggl[\partial_{t}\begin{pmatrix}
    \epsilon & 0\\0 & \mu
    \end{pmatrix}
    + \begin{pmatrix}
    \sigma & 0\\0 & 0
    \end{pmatrix}
    + \begin{pmatrix}
    0 & -\operatorname{curl}\\\operatorname{curl}_{0} & 0
    \end{pmatrix}\biggr]
    \begin{pmatrix}E\\H\end{pmatrix}
    = \begin{pmatrix}-J\\0\end{pmatrix}
  \end{equation*}
  on the space $(0,1)^{3} \eqcolon \Omega = C\dot{\cup}S$, where $\mu \equiv 1$, $\epsilon=1 - \chi_{C}$, $\sigma=10^{3}\chi_{C}$, $J(x,t)=\sin(\pi t)\chi_{S}(x)$. This setting still fits the framework of evolutionary equations. For a precise treatment of mixed-type eddy current problems in the setting of evolutionary problems (without delay), we refer to \cite{Waurick2021}.\\
  From our perspective of (generalized) initial value problems, such an increase in difficulty provides no problems for the solution theory (as long as we remain in the setting of evolutionary equations). Hence, we obtain well-posedness of the system above for the case of a state-dependent delay on the right-hand side in the vein of \cref{th:localExistOut} or \cref{th:localExistIns}.
\end{remark}
The first three subsections were devoted to some classical PDEs to show the range of examples for our theory. The next two examples are dedicated to examples from semigroup theory. We shall demonstrate exactly how our theory can handle some typical problems.

\subsection{Reaction-Diffusion equations}
\label{subsec:SG1stOrder}
We study the following problem from \cite{Chueshov2015p2}:
\begin{align}
  \label{eq:ReactDiffNaive}
  \begin{aligned}
  u'(t)+Au(t)+F\bigl(u_{(t)}\bigr)+G(u(t))&=h\text{,}\\
  u_{(0)}&=\Phi\text{,}
  \end{aligned}
\end{align}
where $A$ is a strictly positive-definite linear operator and is densely defined in a separable Hilbert space $H$; $F$ is of the form
\begin{equation*}
  F\bigl(u_{(t)}\bigr)=F_{0}\bigl(u(t-\eta (u_{(t)}))\bigr)
\end{equation*}
for some globally Lipschitz-continuous $F_{0}\colon\,H_{\alpha}\to H_{\alpha}$ for $\alpha = 0$ and $\alpha = -\frac{1}{2}$, where $H_{\alpha}$ denotes the completion of $\operatorname{dom}(A^{\alpha})$ w.r.t.\ $\norm{A^{\alpha}\argdot}$ in $H$ and a globally Lipschitz $\eta\colon \mathcal{C}(-r,0;H)\to [0,r]$; $h\in H$ is a constant (in time) and $G\colon H_{\frac{1}{2}}\to H$ is locally Lipschitz, satisfying some additional assumptions, we refer to \cite[ass.~2.1]{Chueshov2015p2} for details.\\
The main example/application in \cite{Chueshov2015p2} is the problem
\begin{equation*}
  \partial_{t}u(t,x) - \Delta u(t,x) + b\bigl(B[u\bigl(t-\eta (u_{(t)}),\argdot\bigr)](x)\bigr) + g\bigl(u(t,x)\bigr) = h(x)\text{,}
\end{equation*}
for a bounded linear $B\in \mathcal{L}\bigl(L^{2}(\Omega)\bigr)$ on a bounded domain $\Omega$; $b\colon \mathbb{R}\to \mathbb{R}$ and $\eta \colon \mathcal{C}\bigl(-h,0;L^{2}(\Omega)\bigr) \to [0,h]$ are Lipschitz-continuous; $g\in \mathcal{C}^{1}$ represents a nonlinear non-delayed reaction term and $h$ is a source term, for instance $h(x) = c_{1}x\e^{-c_{2}x}$ with $c_{1},c_{2}>0$ (motivated from population dynamics). For further motivation and specific examples we refer to the references in \cite{Chueshov2015p2}.\\
For our purposes, we start from the basic IVP (\ref{eq:ReactDiffNaive}) and  make the ansatz for a formulation in the spirit of \cref{eq:FPPIntIns}: $u=I_{\rho}(v+\widehat{\Phi'})$ with $v \in L_{2,\rho}(0,\infty;H)$. For $\phi \in \mathcal{C}^{1}(-h,0;H)$ we have $u'(t) = v(t) + \Phi'(0)$ for $t>0$ and hence the differential equation reads:
\begin{equation*}
  v + \Phi'(0) + AI_{\rho}(v + \widehat{\Phi'}) + F\bigl(I_{\rho}(v+\widehat{\Phi'})_{(\argdot)}\bigr)+G\bigl(I_{\rho}(v+\widehat{\Phi'})\bigr)=h\text{.}
\end{equation*}
Using the substitution $q\coloneq I_{\rho}A^{\nicefrac{1}{2}}v$, we obtain:
\begin{align}
  \begin{aligned}
  \label{eq:ReactionDiffusion}
  &\biggl[\mathring{\partial}_{\rho}\begin{pmatrix}
    1 & 0 \\ 0 & 0
  \end{pmatrix}
  + \begin{pmatrix}
    0 & 0 \\ 0 & 1
  \end{pmatrix}
  + \begin{pmatrix}
    0 & -A^{\nicefrac{1}{2}} \\ A^{\nicefrac{1}{2}} & 0
  \end{pmatrix}\biggr]
  \begin{pmatrix}q \\ v\end{pmatrix}\\
      &\quad= \begin{pmatrix}
        0 \\ -\Phi'(0) - AI_{\rho}\Phi(0) - F\bigl((I_{\rho}(v+\widehat{\Phi'}))_{(\argdot)}\bigr) -G\bigl(I_{\rho}(v+\widehat{\Phi'})\bigr) + h
      \end{pmatrix}\text{.}
  \end{aligned}
\end{align}
The left-hand side satisfies the assumptions of Picard's \cref{th:Picard}. We can now appeal to our well-posedness theory from \cref{sec:SolThy}:
\begin{theorem}
  Under assumption \ref{ass:A} on $F,G$ and assumption \ref{ass:B} on $\Phi\in \mathcal{C}^{1}(-h,0;H)$, \cref{eq:ReactionDiffusion} admits a unique local solution, provided that
  \begin{itemize}[leftmargin=4ex]
    \item $\Phi(0)\in \operatorname{dom}(A)$,
    \item $h\in H^{1}_{\rho}(0,\infty;H)$ and\footnote{In \cref{eq:ReactDiffNaive} (corresponding to the formulation from \cite{Chueshov2015p2}) it is assumed that $h$ is constant (in time), but we do not require this assumption for well-posedness. Of course, due to the exponential weight, constants are integrable functions in $L_{2,\rho}(0,\infty;H)$.}
    \item the consistency condition $\Phi'(0) + A\Phi(0) + F(\Phi) + G(\Phi(0)) = h(0)$ holds.
  \end{itemize}
\end{theorem}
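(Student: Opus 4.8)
\emph{Proof plan.} The approach is to cast \cref{eq:ReactionDiffusion} as a concrete instance of the abstract fixed point problem \cref{eq:FPPIntIns} over the Hilbert space $H\oplus H$ with unknown $\begin{psmallmatrix}q\\v\end{psmallmatrix}$, and then to invoke \cref{th:localExistIns}. Accordingly, the first step is to check that the block operator on the left of \cref{eq:ReactionDiffusion} meets the hypotheses of \cref{th:Picard} for all large $\rho$: the material law $M(z)=\begin{psmallmatrix}1&0\\0&0\end{psmallmatrix}+z^{-1}\begin{psmallmatrix}0&0\\0&1\end{psmallmatrix}$ is simple with $s_{\mathrm b}(M)=0$; for $\Re z>\rho_0>0$ one computes $\Re\dualprod{\varphi}{zM(z)\varphi}_{H\oplus H}=\Re(z)\norm{\varphi_1}^2+\norm{\varphi_2}^2\geq\min\{1,\rho_0\}\norm{\varphi}^2$; and since $A$ is selfadjoint and strictly positive definite, $A^{\nicefrac12}$ is selfadjoint and densely defined, so $\begin{psmallmatrix}0&-A^{\nicefrac12}\\A^{\nicefrac12}&0\end{psmallmatrix}$ is skew-selfadjoint, in particular $\mathrm m$-accretive.

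The second step identifies the data of \cref{eq:FPPIntIns}. The extension of the prehistory is $Z\coloneq I_\rho\widehat{\Phi'}$; using $\Phi(-h)=0$ (cf.~\cref{rmk:PrehistoryVanishes}) one has $Z|_{(-h,0]}=\Phi$ and $Z|_{[0,\infty)}\colon t\mapsto\Phi(0)+t\Phi'(0)$, which is $\mathcal C^1$ with $Z'(0)=\Phi'(0)$ and $\norm{Z'(0)}\leq\norm{\Phi'}_\infty$, so \ref{ass:B} holds. The forcing term is $f=\begin{psmallmatrix}0\\h-\Phi'(0)-AI_\rho\widehat{\Phi'}\end{psmallmatrix}$, whose restriction to $[0,\infty)$ is the only relevant part by causality of $S_\rho$; it lies in $H^1_{\tilde\rho}(0,\infty;H\oplus H)$ thanks to $h\in H^1_{\tilde\rho}$ by hypothesis, the constancy of $\Phi'(0)$, and the assumed domain and regularity conditions on $\Phi$. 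The nonlinearity taking the role of $G$ in \cref{th:localExistIns} is
\[
  \mathcal G(t,(\psi_1,\psi_2))\coloneq\begin{pmatrix}0\\-F_0\bigl(\psi_2(-\eta(\psi_2))\bigr)-G\bigl(\psi_2(0)\bigr)\end{pmatrix},
\]
depending only on the second history component, and $(Z,\mathcal G)$ are matched so that the argument $\bigl(I_\rho\begin{psmallmatrix}q\\v\end{psmallmatrix}+Z\bigr)_{(t)}$ of $\mathcal G$ reproduces the delayed state $u_{(t)}$ of the original equation, with $u=I_\rho(v+\widehat{\Phi'})$.

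The third and main step is to verify \ref{ass:A} for $\mathcal G$. \emph{Almost uniform Lipschitz continuity:} elements of $V_\alpha$ are $\alpha$-Lipschitz, so the state-dependent delay term $F_0(\psi_2(-\eta(\psi_2)))$ is handled exactly by the ``standard example'' estimate carried out after \cref{def:RegPres}, using global Lipschitz continuity of $F_0$ and of $\eta$ and the Sobolev embedding $H^1(-h,0;H)\hookrightarrow\mathcal C(-h,0;H)$ to pass from point evaluation to the $H^1$-history norm; the reaction term $G(\psi_2(0))$ is controlled through point evaluation and Lipschitz continuity of $G$, restricting to histories with values in a fixed bounded set, which is admissible because the local solution remains bounded on a fixed time window (cf.~\cref{rmk:maxExistIntIns}). \emph{Regularity preservation:} one argues as in \cref{th:RegularityPreservation}, namely for $u\in H^1_\rho$ with bounded derivative both $t\mapsto F_0(u(t-\eta(u_{(t)})))$ and $t\mapsto G(u(t))$ are Lipschitz in $t$ with an $L_{2,\rho}$-integrable Lipschitz constant (here the at-most-linear growth of $u$ and the structural/growth assumptions on $G$ from \cite{Chueshov2015p2} enter). \emph{Zero section:} $\mathcal G(\argdot,0)\in L_{2,\rho}$ since $F_0(0),G(0)$ are constant, hence square-integrable against the exponential weight.

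With these facts established, \cref{th:localExistIns} provides a unique local solution $\begin{psmallmatrix}q\\v\end{psmallmatrix}\in L_{2,\rho}(0,\infty;H\oplus H)$ of the projected fixed point problem, and undoing the substitution $q=I_\rho A^{\nicefrac12}v$ yields the local solution $u=I_\rho(v+\widehat{\Phi'})\in H^1(0,T;H)$ of \cref{eq:ReactionDiffusion}. It remains to translate the consistency requirement $f(0)=-\mathcal G(0,\Phi)$ of \cref{th:localExistIns}: since $(I_\rho\widehat{\Phi'})(0)=\Phi(0)$ one has $f(0)=\begin{psmallmatrix}0\\h(0)-\Phi'(0)-A\Phi(0)\end{psmallmatrix}$ while $\mathcal G(0,\Phi)=\begin{psmallmatrix}0\\-F(\Phi)-G(\Phi(0))\end{psmallmatrix}$, so this becomes $\Phi'(0)+A\Phi(0)+F(\Phi)+G(\Phi(0))=h(0)$, exactly the stated condition. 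The principal obstacle throughout is the third step: reconciling the merely \emph{local} Lipschitz continuity of the reaction term $G$ (with natural domain $H_{\nicefrac12}$ rather than $H$) with the almost-uniform-Lipschitz and regularity-preserving framework, which forces the argument onto a fixed bounded time window and relies on the maximal-existence mechanism; once the standard-example estimates for the state-dependent delay are imported, the remaining bookkeeping is routine.
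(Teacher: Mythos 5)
Your proposal is correct and follows essentially the same route as the paper: identify the left-hand side of \cref{eq:ReactionDiffusion} as an evolutionary equation satisfying the hypotheses of \cref{th:Picard}, recognise the right-hand side as an instance of \cref{eq:FPPIntIns} with $Z=I_{\rho}\widehat{\Phi'}$, apply \cref{th:localExistIns}, and translate its consistency condition $f(0)=-\mathcal G(0,\Phi)$ into $\Phi'(0)+A\Phi(0)+F(\Phi)+G(\Phi(0))=h(0)$. One remark: your third step, which you flag as the principal obstacle, is actually not part of this theorem's burden of proof — assumption \ref{ass:A} on $F$ and $G$ is a \emph{hypothesis} of the statement (the comparison with the weaker local-Lipschitz assumptions of the cited reference is deferred to \cref{rmk:Comparison1stOrderEvP}), and attempting to derive almost uniform Lipschitz continuity from boundedness of the not-yet-constructed local solution would be circular if it were load-bearing, since $V_{\alpha}$ bounds only derivatives, not function values.
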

\begin{proof}
  We note that the left-hand side of \cref{eq:ReactionDiffusion} fits the setting of evolutionary equations, i.e., satisfies the assumptions of \cref{th:Picard}. We appeal to \cref{th:localExistIns}, translating the consistency condition there:
  \begin{align*}
    -\Phi'(0) -A\Phi(0) + h(0)
    &= F\Bigl(0, \displaystyle\medint\int_{-h}^{\argdot}\Phi'(s)\dx[s]\Bigr) + G(\Phi(0))\\
    &= F(0,\Phi) + G(\Phi(0))\text{.}\qedhere
  \end{align*}
\end{proof}
\begin{remark}[Comparison of results]
  \label{rmk:Comparison1stOrderEvP}
  We point out that the assumptions in \cite{Chueshov2015p2} differ slightly from ours. Here, the Hilbert space $H$ does not need to be separable (although that point hardly ever comes up in examples). Similarly to here, some notion of Lipschitz-continuity of the prehistory and $\Phi(0)\in \operatorname{dom}(A^{\nicefrac{1}{2}})$ are assumed as well, cf.~\cite[thm.~3.2]{Chueshov2015p2}. As far as Lipschitz-continuity of the right-hand side is concerned, it is verified in \cite[thm.~3.2]{Waurick2023} that functions of the form $F_{0}\bigl(u(t-\tau (u(t)))\bigr)$ produce an almost uniformly Lipschitz-continuous map; in particular under the assumptions of \cite{Chueshov2015p2}. The same holds for $G$, which easily follows from the fact that the subspaces $V_{\alpha}$ contain functions with bounded derivative and that we obtain solutions in $H^{1}_{0,\rho}$. Hence, our Lipschitz assumptions are weaker than considered in \cite{Chueshov2015p2}. In \cite[thm.~3.3]{Chueshov2015p2}, the authors prove existence and uniqueness of a local solution $u$ satisfying
  \begin{equation*}
    u' \in \mathcal{C}(0,T;H_{\frac{1}{2}})\cap L_{2}(0,T;H)
  \end{equation*}
  and they include a result on continuous dependence on initial datum, cf.~\cite[prop.~3.4]{Chueshov2015p2}. For time continuity of the evolution of the prehistory, the authors consider initial values $\Phi \in \mathcal{C}^{1}(-h,0;H_{-\frac{1}{2}})$ satisfying a consistency condition
  \begin{equation*}
    \Phi'(0) + A\Phi(0) + F(\Phi) + G(\Phi(0)) = 0\text{,}
  \end{equation*}
  cf.~\cite[rem.~3.6]{Chueshov2015p2}. We point out that solutions in this formulation from \cite{Chueshov2015p2} can only be continuous across $0$ if $h=0$.
\end{remark}

\subsection{Second order evolution problems}
\label{subsec:SG2ndOrder}
Next, we consider a second-order problem from \cite{Chueshov2015p1}:
\begin{align*}
  u''(t)+ku'(t)+Au(t)+F(u(t))+M(u_{(t)})&=0\text{,}\\
  u_{(0)}&=\Phi\text{,}
\end{align*}
where $A$ is a positive operator with discrete spectrum on a separable Hilbert space $H$; $k$ is a constant; $F\colon \operatorname{dom}(A^{\nicefrac{1}{2}})\to H$ is locally Lipschitz-continuous and so is $M\colon W\to H$, where
\begin{equation*}
  W \coloneq \mathcal{C}\bigl(-h,0;\operatorname{dom}(A^{\nicefrac{1}{2}})\bigr)\cap \mathcal{C}^{1}(-h,0;H)\text{.}
\end{equation*}
The main example/application of the model in \cite{Chueshov2015p1} is a nonlinear plate equation of the form
\begin{equation*}
  \partial_{t}^{2}u(t,x) + k\partial_{t}u(t,x) + \Delta^{2} u(t,x) + F(u(t,x)) + au\bigl(t - \tau(u(t)),x\bigr) = 0
\end{equation*}
in a smooth bounded domain $\Omega \subseteq \mathbb{R}^{2}$ with suitable boundary conditions on $\partial\Omega$; $\tau$ is a mapping defined on solutions with values in some interval $[0,h]$, satisfying a Lipschitz-like condition (cf.~\cite[ass.~M4]{Chueshov2015p1}); $k$ and $a$ are constants. The plate is assumed to be placed on some foundation; the term $au\bigl(t - \tau(u(t)),x\bigr)$ models effects of the foundation with a delayed response; $F$ is a potentially nonlinear force. The model in particular covers the wave equation with state-dependent delay. For more details and applications we refer to \cite{Chueshov2015p1} and the references therein.\\
Since we want to solve a second order equation, we apply the usual ansatz to the derivative of $u$: $\partial_{\rho}u = I_{\rho}(v+\widehat{\Phi''})$, assuming that $\Phi \in \mathcal{C}^{2}(-h,0;H)$. This means $u = I_{\rho}I_{\rho}(v + \widehat{\Phi''})$ and consequently for $t>0$ hold:
\begin{align*}
  u''(t) &= v(t) + \Phi''(0)\text{,}\\
  u'(t) &= \medint\int_{0}^{t}v(s)\dx[s] + \medint\int_{-h}^{0}\Phi''(s)\dx[s] + \medint\int_{0}^{t}\Phi''(0)\dx[s] = \medint\int_{0}^{t}v(s)\dx[s] + \Phi'(0) + t\Phi''(0)\text{,}\\
  u(t) &= I_{\rho}(I_{\rho}v)(s) + \medint\int_{-h}^{t}\medint\int_{-h}^{s}\widehat{\Phi''}(r)\dx[r]\dx[s]\\
         &= I_{\rho}(I_{\rho}v)(s) + \tfrac{t^{2}}{2}\Phi''(0) + t\Phi'(0) + \Phi(0)\text{.}\\
\intertext{The initial condition is trivially satisfied and substituting $b \coloneq I_{\rho}v$ and $c \coloneq I_{\rho}A^{\nicefrac{1}{2}}b$, we can rewrite these equations:}
  u''(t) &= (\mathring{\partial}_{\rho}b)(t) + \Phi''(0)\text{,}\\
  u'(t) &= b(t) + \Phi'(0) + t\Phi''(0)\text{,}\\
  u(t) &= (I_{\rho}b) (s) + \tfrac{t^{2}}{2}\Phi''(0) + t\Phi'(0) + \Phi(0) = I_{\rho}(b + \Phi'(0)) + I_{\rho}I_{\rho}\Phi''(0) + \Phi(0)\text{.}
\end{align*}
This allows us to write the differential equation as
\begin{align*}
  0&=\mathring{\partial}_{\rho}b + \Phi''(0) + k\bigl(b + \Phi'(0) + I_{\rho}\Phi''(0)\bigr) + A^{\nicefrac{1}{2}} I_{\rho}A^{\nicefrac{1}{2}}b \\
  &\quad + A\bigl(I_{\rho}I_{\rho}\Phi''(0) + I_{\rho}\Phi'(0) + \Phi(0)\bigr)
  + F\bigl(I_{\rho}(b + I_{\rho}\widehat{\Phi''})\bigr) + M\bigl((I_{\rho}(b + I_{\rho}\widehat{\Phi''}))_{(\argdot)}\bigr)\text{,}
\end{align*}
which produces the system
\begin{align}
  \begin{aligned}
  \label{eq:2ndOrderEvEq}
  \biggl[\mathring{\partial}_{\rho}\begin{pmatrix}
    1 & 0 \\ 0 & 1
  \end{pmatrix}
  &+ \begin{pmatrix}
    k & 0 \\ 0 & 0
  \end{pmatrix}
  + \begin{pmatrix}
    0 & A^{\nicefrac{1}{2}}\\ -A^{\nicefrac{1}{2}} & 0
  \end{pmatrix}\biggr]
  \begin{pmatrix} b \\ c \end{pmatrix}\\
  &\quad=\begin{pmatrix}
    f - F\bigl(I_{\rho}(b + I_{\rho}\widehat{\Phi''})\bigr) - M\bigl((I_{\rho}(b + I_{\rho}\widehat{\Phi''}))_{(\argdot)}\bigr)\\
    0\end{pmatrix}\text{,}
  \end{aligned}
\end{align}
where
\begin{equation*}
  f \coloneq - \Phi''(0) - k\bigl(\Phi'(0) + I_{\rho}\Phi''(0)\bigr)
    - A\bigl(I_{\rho}I_{\rho}\Phi''(0) + I_{\rho}\Phi'(0) + \Phi(0)\bigr)\text{.}
\end{equation*}

We can prove well-posedness of the problem:
\begin{theorem}
  Under assumption \ref{ass:A} on $F,M$ and assumption \ref{ass:B} on $\Phi \in \mathcal{C}^{2}(-h,0;H)$, \cref{eq:2ndOrderEvEq} is locally uniquely solvable (in $H^{2}$), provided that the consistency condition $\Phi''(0)+k \Phi'(0)+A\Phi(0)+F(\Phi(0))+M(\Phi)=0$ holds.
\end{theorem}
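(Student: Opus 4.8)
The plan is to recognise \cref{eq:2ndOrderEvEq} as an instance of the abstract fixed point problem \cref{eq:FPPIntIns} and to invoke \cref{th:localExistIns}. First I would check that the left-hand side fits the framework of \cref{th:Picard}: the material law is the simple material law $z\mapsto\begin{psmallmatrix} 1 & 0 \\ 0 & 1\end{psmallmatrix}+z^{-1}\begin{psmallmatrix} k & 0 \\ 0 & 0\end{psmallmatrix}$, holomorphic and bounded on every $\C_{\Re>\rho}$ with $\rho>0$ (so $s_{\mathrm b}=0$), while the spatial part $\begin{psmallmatrix} 0 & A^{\nicefrac{1}{2}} \\ -A^{\nicefrac{1}{2}} & 0\end{psmallmatrix}$ is skew-selfadjoint (as $A$ is positive with discrete spectrum, $A^{\nicefrac{1}{2}}$ is a densely defined selfadjoint operator), hence $\mathrm{m}$-accretive. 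Since $zM(z)=\begin{psmallmatrix} z+k & 0 \\ 0 & z\end{psmallmatrix}$, one gets $\Re\dualprod{\varphi}{zM(z)\varphi}_{H}\geq(\Re z-\abs{k})\norm{\varphi}_{H}^{2}\geq c\norm{\varphi}_{H}^{2}$ on $\C_{\Re>\rho_{0}}$ as soon as $\rho_{0}>\abs{k}$, so the positivity hypothesis of \cref{th:Picard} holds for $\rho$ large.

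Next I would read off the abstract data. Setting $u\coloneq I_{\rho}\bigl(b+I_{\rho}\widehat{\Phi''}\bigr)$ --- the function that the substitutions $b=I_{\rho}v$, $c=I_{\rho}A^{\nicefrac{1}{2}}b$ turn back into the candidate solution of the original second order equation --- the right-hand side of \cref{eq:2ndOrderEvEq} becomes $\begin{psmallmatrix} f \\ 0\end{psmallmatrix}+G\bigl(\argdot,(I_{\rho}\begin{psmallmatrix} b \\ c\end{psmallmatrix}+Z)_{(\argdot)}\bigr)$ with $Z\coloneq\begin{psmallmatrix}I_{\rho}^{2}\widehat{\Phi''}\\ 0\end{psmallmatrix}$ and $G\bigl(t,\begin{psmallmatrix}\psi_{1}\\\psi_{2}\end{psmallmatrix}\bigr)\coloneq\begin{psmallmatrix}-F(\psi_{1}(0))-M(\psi_{1})\\ 0\end{psmallmatrix}$, i.e.\ exactly the shape \cref{eq:FPPIntIns}. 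That $G$ satisfies assumption \ref{ass:A} is inherited from the corresponding assumption on $F$ and $M$; that $\bigl((\Phi,0),Z\bigr)$ satisfies assumption \ref{ass:B} follows since $I_{\rho}^{2}\widehat{\Phi''}$ restricts to $\Phi$ on $(-h,0]$ --- here one uses $\Phi(-h)=\Phi'(-h)=0$, admissible by \cref{rmk:PrehistoryVanishes} --- and $(I_{\rho}^{2}\widehat{\Phi''})'(0)=\Phi'(0)$ with $\norm{\Phi'(0)}\leq\norm{\Phi'}_{\infty}$. Finally, $f=-\Phi''(0)-k\bigl(\Phi'(0)+I_{\rho}\Phi''(0)\bigr)-A\bigl(I_{\rho}^{2}\Phi''(0)+I_{\rho}\Phi'(0)+\Phi(0)\bigr)$ lies in $H^{1}_{\rho}(0,\infty;H)$: its constant, affine and quadratic-in-$t$ parts are visibly in $H^{1}_{\rho}$, and $A$ maps them back into $L_{2,\rho}$ precisely because $\Phi(0),\Phi'(0),\Phi''(0)\in\operatorname{dom}(A)$, a condition that is part of the hypotheses (and already tacit in the consistency condition through the term $A\Phi(0)$).

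It then remains to see that the consistency condition $f(0)=-G(0,\Phi)$ demanded by \cref{th:localExistIns} is the one stated: every $I_{\rho}$-term in $f$ vanishes at $t=0$, so $f(0)=-\Phi''(0)-k\Phi'(0)-A\Phi(0)$, while $\bigl(I_{\rho}(b+I_{\rho}\widehat{\Phi''})\bigr)_{(0)}=\Phi$ gives $-G(0,\Phi)=F(\Phi(0))+M(\Phi)$, and equating the two yields $\Phi''(0)+k\Phi'(0)+A\Phi(0)+F(\Phi(0))+M(\Phi)=0$. \Cref{th:localExistIns} then provides, up to some $T>0$, a unique local solution $\begin{psmallmatrix} b \\ c\end{psmallmatrix}$, which by the argument through \cref{th:Commutation} in that proof actually lies in $H^{1}_{0,\rho}(0,T;H)^{2}$. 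Hence $u=I_{\rho}b+I_{\rho}^{2}\widehat{\Phi''}$ has $I_{\rho}b\in H^{2}_{0,\rho}$ and $I_{\rho}^{2}\widehat{\Phi''}\in H^{2}_{\rho}$ (since $\widehat{\Phi''}\in L_{2,\rho}$), so $u\in H^{2}(0,T;H)$ is the asserted unique local solution.

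I expect the main difficulty to lie not in any single estimate --- these are routine or already absorbed into \cref{th:localExistIns} --- but in the careful bookkeeping of the two nested substitutions $b=I_{\rho}v$ and $c=I_{\rho}A^{\nicefrac{1}{2}}b$: one must make sure that the argument of the nonlinearity $G$ is genuinely $I_{\rho}$ applied to the full abstract unknown $\begin{psmallmatrix} b \\ c\end{psmallmatrix}$ plus a fixed $H^{1}_{\rho}$-extension $Z$ of a prehistory (with $G$ ignoring the second component), and that the resulting translation of both the consistency condition and the domain requirements on $\Phi(0),\Phi'(0),\Phi''(0)$ is faithful.
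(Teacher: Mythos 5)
Your proposal is correct and follows essentially the same route as the paper: recognise \cref{eq:2ndOrderEvEq} as an instance of \cref{eq:FPPIntIns}, check that the left-hand side satisfies the hypotheses of \cref{th:Picard}, and apply \cref{th:localExistIns} after translating its consistency condition $f(0)=-G(0,\Phi)$ into $\Phi''(0)+k\Phi'(0)+A\Phi(0)+F(\Phi(0))+M(\Phi)=0$. You are in fact somewhat more explicit than the paper on the accretivity estimate for $zM(z)$, on the tacit domain requirements $\Phi(0),\Phi'(0),\Phi''(0)\in\operatorname{dom}(A)$ needed for $f\in H^{1}_{\rho}$, and on why the recovered $u=I_{\rho}b+I_{\rho}^{2}\widehat{\Phi''}$ lands in $H^{2}$.
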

\begin{proof}
  We note that the left-hand side of \cref{eq:2ndOrderEvEq} satisfies the assumptions of \cref{th:Picard} and therefore fits the framework of evolutionary equations. We can therefore simply appeal to \cref{th:localExistIns}, translating the consistency condition there, which is:
  \begin{align*}
    0 &= f(0) - F\Bigl(\displaystyle\medint\int_{-h}^{0}\underbrace{b(s)}_{=0}+ \Phi'(s)\dx[s]\Bigr) - M\Bigl(\Bigl(\displaystyle\medint\int_{-h}^{\argdot}\underbrace{b(s)}_{=0}+\Phi'(s)\dx[s]\Bigr)_{(0)}\Bigr)\\
    &= -\Phi''(0) - k \Phi'(0) - A\Phi(0) - F(\Phi(0)) - M(\Phi)\text{.}\qedhere
  \end{align*}
\end{proof}
\begin{remark}[Comparison of results]
  In \cite{Chueshov2015p1}, the authors consider prehistories in
  \begin{equation*}
    W = \mathcal{C}\bigl(-h,0;\operatorname{dom}(A^{\nicefrac{1}{2}})\bigr)\cap \mathcal{C}^{1}(-h,0;H)\text{,}
  \end{equation*}
  which is a reasonably similar assumption compared to ours and prove existence of a local mild solution
  \begin{equation*}
    u\in \mathcal{C}\bigl(-h,T;\operatorname{dom}(A^{\nicefrac{1}{2}})\bigr)\cap \mathcal{C}^{1}(-h,T;H)\text{,}
  \end{equation*}
  cf.~\cite[prop.~2.3]{Chueshov2015p1}. Under additional regularity assumptions on $F$ and a growth bound on $M$, they can prove continuous dependence on initial datum and an energy equation, cf.~\cite[thm.~2.4]{Chueshov2015p1}. For strong solutions, that is, solutions $u$ satisfying
  \begin{equation*}
    u \in \mathcal{C}\bigl(0,\infty;\operatorname{dom}(A)\bigr)\text{,} \quad
    u' \in \mathcal{C}\bigl(0,\infty;\operatorname{dom}(A^{\nicefrac{1}{2}})\bigr)\text{,} \quad
    u''\in \mathcal{C}(0,\infty;H)\text{,}
  \end{equation*}
  the authors require $M$ to be {\em ``locally almost Lipschitz''}, i.e.,
  \begin{equation*}
    \forall \rho > 0 \exists C_{\rho}>0 \forall \varphi,\psi \in W \,\,\text{with}\,\,\norm{\varphi}_{W},\norm{\psi}_{W}\leq \rho\colon \,\norm{M(\varphi)\!-\!M(\psi)} \leq C_{\rho}\norm{\varphi \!- \!\psi}\text{,}
  \end{equation*}
  as well as additional regularity on $F$ and a consistency condition of the form
  \begin{equation*}
    \Phi''(0) + k\Phi'(0) + A \Phi(0) + F(\Phi(0)) + M(\Phi) = 0\text{,}
  \end{equation*}
  cf.~\cite[rem.~2.7]{Chueshov2015p1}. The difference in the notion of Lipschitz-continuity of $F$ and $M$ is commensurable to the one in the previous example, discussed in \cref{rmk:Comparison1stOrderEvP}.
\end{remark}
Evolutionary equations are particularly useful when dealing with nonlocal operators and can accomodate operators that are nonlocal in time. The next two examples highlight this aspect:

\subsection{Fractional time-derivatives}
\label{subsec:FracDer}
The Fourier--Laplace transform (cf.~definition \eqref{def:FourierLaplace}) allows for a convenient definition of $\partial_{t}^{-\alpha}$ for $0<\alpha<1$ (on the entire real line) via
\begin{equation*}
  \partial_{\rho}^{-\alpha} \coloneq \mathcal{L}_{\rho}^{\ast} \tfrac{1}{(\iu \mathrm{m} +  \rho)^{\alpha}}\mathcal{L}_{\rho}\text{.}
\end{equation*}
Subsequently, one can define for $\alpha \in \mathbb{R}$: $\partial_{\rho}^{\alpha} \coloneq \partial_{\rho}^{\lceil \alpha \rceil }\partial_{\rho}^{\alpha - \lceil \alpha \rceil}$. For details see \cite{Trostorff2015} and also \cite{Trostorff2022}, where conditions are discussed, under which the Caputo fractional derivative or the Riemann--Liouville fractional derivative can be realized via the Fourier--Laplace transform.\\
Fractional time-derivatives have a number of applications, in particular in models for viscosity. We only mention two examples:
\begin{itemize}[leftmargin=4ex]
  \item Fractional Fokker--Planck equation (cf.~\cite[sec.~4.1]{Trostorff2015}).
        \begin{equation}
          \label{eq:FokkerPlank}
          \;\;\biggl[\partial_{\rho}^{1-\alpha} \begin{pmatrix}
            \kappa_{\alpha} & 0 \\ 0 & 0
          \end{pmatrix}
          \!+ \!\begin{pmatrix}
            \mu_{00} & \mu_{01} \\ \mu_{10} & \mu_{11}
          \end{pmatrix}
          \!+ \!\begin{pmatrix}
            0 & \operatorname{div} \\ \operatorname{grad}_{0} & 0
          \end{pmatrix} \biggr]
          \!\begin{pmatrix} u \\ v \end{pmatrix}
          \!= \!\begin{pmatrix}
            f_{1} \!+ \!F_{1}\bigl(\argdot,u_{(\argdot)}\bigr)\\
            f_{2} \!+ \!F_{2}\bigl(\argdot,v_{(\argdot)}\bigr)
          \end{pmatrix}\text{,}
        \end{equation}
        where we immediately plugged in a right-hand side in the vein of formulation \eqref{eq:EvolProblem}. For a description of the parameters involved in this system we refer to \cite{Trostorff2015}, for background on the equation to        \cite{Metzler1999}.
  \item Kelvin--Voigt model for visco-elastic solids (cf.~\cite[sec.~4.2]{Trostorff2015}).
        \begin{equation}
          \label{eq:KelvinVoigt}
          \biggl[\partial_{\rho} \begin{pmatrix}
            \eta & 0 \\ 0 & (C + D\partial_{\rho}^{\alpha})^{-1}
          \end{pmatrix}
          - \begin{pmatrix}
            0 & \operatorname{Div} \\ \operatorname{Grad}_{0} & 0
          \end{pmatrix}\biggr]
          \begin{pmatrix} u \\ T \end{pmatrix}
          = \begin{pmatrix}
            f + F\bigl(\argdot,u_{(\argdot)}\bigr) \\ 0
          \end{pmatrix}\text{,}
        \end{equation}
        where $\operatorname{Div}$ and $\operatorname{Grad}$ denote the symmetrized divergence and gradient respectively, which are defined in \cref{subsec:SymDiffOp}. For more details we refer to \cite{Waurick2013}, for background on the equation to \cite{Bodonolte2011}. Once again, we included a right-hand side befitting formulation \eqref{eq:EvolProblem}.
\end{itemize}
In the aforementioned reference \cite{Trostorff2015}, it is also argued that the left-hand sides of both examples fit the framework of evolutionary equations and hence our well-posedness results from \cref{sec:SolThy} apply, provided that the right-hand sides are of suitable form. For instance, in the variant (\ref{eq:FPPIntIns}), we can state the following by applying \cref{th:localExistIns}:

\begin{theorem}
  For a right-hand side
  \begin{equation*}
    \begin{pmatrix}
      f_{1} + G_{1}\bigl(\argdot, (I_{\rho}(u+\widehat{\Phi'}))_{(\argdot)}\bigr)\\
      f_{2} + G_{2}\bigl(\argdot, (I_{\rho}(v+\widehat{\Psi'}))_{(\argdot)}\bigr)
    \end{pmatrix}\text{,}
  \end{equation*}
  with $f_{1},f_{2} \in H^{1}_{\rho}(0,\infty;H)$ and $G_{1},G_{2}$ satisfying assumption \ref{ass:A} and $\Phi,\Psi$, satisfying assumption \ref{ass:B}, \cref{eq:FokkerPlank} has a unique local solution $(u,v)\in H^{1}_{0}(0,T;H)^{2}$, provided that the following two consistency conditions hold:
  \begin{equation*}
    f_{1}(0) = - G_{1}(0, \Phi) \quad\text{and}\quad
    f_{2}(0) = - G_{2}(0,\Psi)\text{.}
  \end{equation*}
\end{theorem}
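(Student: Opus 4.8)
The plan is to reduce the fractional Fokker--Planck system \eqref{eq:FokkerPlank} to the abstract setting of \cref{th:localExistIns} and then invoke it directly. First I would verify that the operator on the left-hand side, namely
\[
  \partial_{\rho}^{1-\alpha}\begin{pmatrix} \kappa_{\alpha} & 0 \\ 0 & 0 \end{pmatrix}
  + \begin{pmatrix} \mu_{00} & \mu_{01} \\ \mu_{10} & \mu_{11} \end{pmatrix}
  + \begin{pmatrix} 0 & \operatorname{div} \\ \operatorname{grad}_{0} & 0 \end{pmatrix},
\]
fits the framework of evolutionary equations --- this is precisely what is argued in \cite{Trostorff2015}, so one rewrites $\partial_{\rho}^{1-\alpha} = \partial_{\rho}\partial_{\rho}^{-\alpha}$ to recognize a material law $M(z) = z^{-\alpha}\operatorname{diag}(\kappa_{\alpha},0) + z^{-1}\operatorname{diag}(\mu_{00},\dots) + \dots$ (bounded on a right half-space, holomorphic), and the skew-selfadjoint block $\begin{psmallmatrix} 0 & \operatorname{div} \\ \operatorname{grad}_{0} & 0 \end{psmallmatrix}$ is m-accretive; one then checks the positive-definiteness hypothesis $\Re\langle\varphi, zM(z)\varphi\rangle \geq c\norm{\varphi}^2$ on $\C_{\Re>\rho_0}$, which for appropriate $\kappa_\alpha,\mu_{ij}$ is exactly the condition ensuring the applicability of \cref{th:Picard}. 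Having established that, the system is of the form $(\overline{\partial_\rho M(\partial_\rho)+A})\binom{u}{v} = $ (right-hand side), and the Picard operator $S_\rho$ exists.

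**Reduction to the abstract theorem.** Next I would observe that the right-hand side has exactly the shape required by \cref{eq:FPPIntIns}: writing $w = \binom{u}{v}$ and bundling $f = \binom{f_1}{f_2}$, $G\bigl(t, w_{(t)}\bigr) = \binom{G_1(t, u_{(t)})}{G_2(t, v_{(t)})}$, the equation becomes $w = S_\rho[f + G(\cdot, (I_\rho w + Z)_{(\cdot)})]$ with $Z = \binom{I_\rho\widehat{\Phi'}}{I_\rho\widehat{\Psi'}}$ (here the diagonal decoupling of the delay is what lets the ansatz $u = I_\rho(\tilde u + \widehat{\Phi'})$, $v = I_\rho(\tilde v + \widehat{\Psi'})$ go through componentwise). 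Assumption \ref{ass:A} on each $G_i$ transfers to $G$: almost uniform Lipschitz-continuity, regularity preservation, and $G(\cdot,0)\in L_{2,\rho}$ all hold for the pair because they hold componentwise and the $H^1$- and $L_2$-norms on the product space are the $\ell^2$-combinations of the component norms. Likewise \ref{ass:B} for $(\Phi,\Psi)$ gives an $H^1_\rho$-extension $Z$ of the paired prehistory with the required $\mathcal{C}^1$-regularity and derivative bound at $0$. The hypothesis $f_1, f_2 \in H^1_\rho(0,\infty;H)$ gives $f\in H^1_{\tilde\rho}(0,\infty;H\oplus H)$ for suitable $\tilde\rho$, and the two scalar consistency conditions $f_i(0) = -G_i(0,\Phi)$ (resp. $\Psi$) combine into the single vector consistency condition $f(0) = -G(0,(\Phi,\Psi))$ demanded by \cref{th:localExistIns}. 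An application of \cref{th:localExistIns} then yields a unique local solution $w = \binom{u}{v}\in H^1(0,T;H\oplus H)$, i.e. $(u,v)\in H^1_0(0,T;H)^2$ after noting the solution vanishes on $(-\infty,0]$ by causality of $S_\rho$.

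**Main obstacle.** The genuinely non-routine step is the verification of the structural hypothesis of \cref{th:Picard} for the fractional material law --- i.e. confirming that $\Re\langle\varphi, zM(z)\varphi\rangle_H \geq c\norm{\varphi}_H^2$ holds uniformly on some right half-plane $\C_{\Re>\rho_0}$ for the specific $M(z) = z^{-\alpha}\operatorname{diag}(\kappa_\alpha, 0) + \operatorname{diag}(\mu_{00},\mu_{01};\mu_{10},\mu_{11}) + \dots$ appearing here; the factor $z^{1-\alpha} = z\cdot z^{-\alpha}$ rotates the spectrum and one must check that $\kappa_\alpha$ and the $\mu_{ij}$ are chosen so that positivity survives this rotation on the second (algebraic) block. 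This is handled in \cite[sec.~4.1]{Trostorff2015}, so in the write-up I would simply cite that the left-hand side fits the evolutionary framework and invoke \cref{th:Picard}; everything downstream --- the product-space bookkeeping for Assumptions \ref{ass:A}, \ref{ass:B}, the translation of the two consistency conditions, and the final appeal to \cref{th:localExistIns} --- is mechanical. A minor point worth a sentence is that $I_\rho$ and the delay commute componentwise by \cref{th:delayIntegralCommute}, which is what makes the ansatz $u = I_\rho(\tilde u + \widehat{\Phi'})$ land in the form \eqref{eq:FPPIntIns}.
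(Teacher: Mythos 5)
Your proposal is correct and follows essentially the same route as the paper: the paper simply cites \cite{Trostorff2015} for the fact that the left-hand side of \cref{eq:FokkerPlank} fits the evolutionary framework and then applies \cref{th:localExistIns} directly, with the two consistency conditions translating componentwise exactly as you describe. The extra detail you supply (the product-space bookkeeping for Assumptions \ref{ass:A} and \ref{ass:B} and the sketch of the positive-definiteness check) is consistent with, and slightly more explicit than, what the paper records.
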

\begin{theorem}
  For a right-hand side
  \begin{equation*}
    \begin{pmatrix}
      f + G\bigl(0, (I_{\rho}(u+\widehat{\Phi'}))_{(\argdot)}\bigr)\\
      0
    \end{pmatrix}\text{,}
  \end{equation*}
  with $f \in H^{1}_{\rho}(0,\infty;H)$ and $G$ satisfying assumption \ref{ass:A} and $\Phi$ satisfying assumption \ref{ass:B}, \cref{eq:KelvinVoigt} has a unique local solution $(u,T)\in H^{1}_{0}(0,T;H)^{2}$, provided that the consistency condition $f(0) = - G(0, \Phi)$ holds.
\end{theorem}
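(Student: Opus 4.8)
The plan is to follow the pattern of the preceding example theorems (cf.\ the Maxwell and reaction--diffusion cases): first check that the left-hand side of \cref{eq:KelvinVoigt} fits the framework of evolutionary equations, then recognise the right-hand side as being of the shape required in \cref{eq:FPPIntIns}, and finally invoke \cref{th:localExistIns}, translating its consistency condition into the stated one. For the left-hand side, the block operator
\begin{equation*}
  \partial_{\rho}\begin{psmallmatrix}\eta & 0\\ 0 & (C+D\partial_{\rho}^{\alpha})^{-1}\end{psmallmatrix} - \begin{psmallmatrix}0 & \operatorname{Div}\\ \operatorname{Grad}_{0} & 0\end{psmallmatrix}
\end{equation*}
satisfies the hypotheses of Picard's \cref{th:Picard}: the purely spatial part is (up to sign) skew-selfadjoint, hence $\mathrm{m}$-accretive, and $z\mapsto\begin{psmallmatrix}\eta & 0\\ 0 & (C+Dz^{\alpha})^{-1}\end{psmallmatrix}$ is a material law with $\Re\dualprod{\varphi}{zM(z)\varphi}_{H}\geq c\norm{\varphi}_{H}^{2}$ on a right half-plane. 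This is precisely what is established in \cite{Trostorff2015} (see also \cite{Waurick2013}), so I would simply cite it rather than redo the fractional functional calculus.

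Next I would fix the extension $Z\coloneq I_{\rho}\widehat{\Phi'}$ (with $I_{\rho}=\smallint_{-h}^{\argdot}$). Then the right-hand side of \cref{eq:KelvinVoigt}, namely $\begin{psmallmatrix}f+G(\argdot,(I_{\rho}(u+\widehat{\Phi'}))_{(\argdot)})\\ 0\end{psmallmatrix}$, is of the form $\begin{psmallmatrix}f+G(\argdot,(I_{\rho}u+Z)_{(\argdot)})\\ 0\end{psmallmatrix}$ prescribed in \cref{eq:FPPIntIns}, with the nonlinearity sitting only in the first component (the abstract $G$ being read as the map into the product space with vanishing second component). One then verifies that $\Phi$ and $Z$ satisfy assumption \ref{ass:B}: using $\Phi(-h)=0$ (admissible by \cref{rmk:PrehistoryVanishes}) one gets $(I_{\rho}\widehat{\Phi'})(s)=\Phi(s)$ for $s\in[-h,0]$, hence $Z\vert_{(-h,0]}=\Phi$; for $t>0$ one has $(I_{\rho}\widehat{\Phi'})(t)=\Phi(0)+t\,\Phi'(0)$, so $Z\vert_{[0,\infty)}\in\mathcal{C}^{1}$ with $Z'(0)=\Phi'(0)$, whence $\norm{Z'(0)}_{H}=\norm{\Phi'(0)}_{H}\leq\norm{\Phi'}_{\infty}$. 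Moreover $Z\in H^{1}_{\rho}(-h,\infty;H)$ since $\widehat{\Phi'}\in L_{2,\rho}$ and $I_{\rho}\colon L_{2,\rho}\to H^{1}_{\rho}$ is bounded.

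With $G$ satisfying assumption \ref{ass:A}, with $\Phi,Z$ satisfying assumption \ref{ass:B}, and with $f\in H^{1}_{\rho}(0,\infty;H)$, \cref{th:localExistIns} then provides a unique local solution $u\in H^{1}(0,T;H)$ of \cref{eq:FPPIntIns} under the condition $f(0)=-G(0,\Phi)$. That this is the stated consistency condition follows because the requirement of \cref{th:localExistIns}, evaluated at the fixed point, reads $f(0)=-G(0,(I_{\rho}u+Z)_{(0)})$, and since $I_{\rho}u$ vanishes on $(-\infty,0]$ by causality of $S_{\rho}$, one has $(I_{\rho}u+Z)_{(0)}=Z_{(0)}=(I_{\rho}\widehat{\Phi'})_{(0)}=\Phi$. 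Finally, to obtain $(u,T)\in H^{1}_{0}(0,T;H)^{2}$ as claimed, one notes that the full right-hand side $g=(f+G(\argdot,\dots),0)$ lies in $\operatorname{dom}(\mathring{\partial}_{\rho})$ — it is an $H^{1}_{\rho}$-function because $G$ is regularity preserving, and $g(0)=0$ by the consistency condition — so \cref{th:Commutation} gives $S_{\rho}g\in\operatorname{dom}(\mathring{\partial}_{\rho})$, i.e.\ both components lie in $H^{1}_{0,\rho}(0,\infty;H)$, and restricting to $(0,T)$ yields the assertion.

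I expect no serious obstacle here: the substantial work (the fractional functional calculus making the material law fit Picard's theorem, and the general contraction-mapping machinery of \cref{sec:SolThy}) has already been carried out. The only genuinely new steps are the elementary checks that $Z=I_{\rho}\widehat{\Phi'}$ meets assumption \ref{ass:B} and that the abstract consistency condition collapses to $f(0)=-G(0,\Phi)$, together with the short remark upgrading the second solution component from $L_{2,\rho}$ to $H^{1}_{0}$ via \cref{th:Commutation}. If anything, the one point worth stating carefully is why the product-space $G=(G,0)$ still inherits assumption \ref{ass:A} (it does, trivially, since adding a vanishing component changes neither the Lipschitz estimates nor the regularity-preservation property).
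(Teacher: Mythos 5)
Your proposal is correct and follows exactly the route the paper intends: the paper gives no separate proof for this theorem, merely noting that the left-hand side of \cref{eq:KelvinVoigt} fits Picard's \cref{th:Picard} (citing \cite{Trostorff2015}) and that the statement is then an application of \cref{th:localExistIns}. Your additional checks --- that $Z=I_{\rho}\widehat{\Phi'}$ satisfies assumption \ref{ass:B}, that the abstract consistency condition collapses to $f(0)=-G(0,\Phi)$, and that \cref{th:Commutation} upgrades both solution components to $H^{1}_{0,\rho}$ --- are exactly the routine verifications the paper leaves implicit (and carries out explicitly in the analogous examples of \cref{sec:Examples}).
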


\subsection{Convolutions with bounded operators}
\label{subsec:ViscoElasticity}
We briefly describe an example with a nonlocal material law: For some $\mu \in \mathbb{R}$ let $L_{1,\mu}\bigl(0,\infty;\mathcal{L}(H)\bigr)$ be the space of weakly measurable functions $B\colon \mathbb{R}_{\geq 0} \to \mathcal{L}(H)$, i.e.,
\begin{equation*}
  \forall x,y \in H\colon t\mapsto \dualprod{B(t)x}{y} \,\,\text{is measurable,}
\end{equation*}
such that the map $t\mapsto \norm{B(t)}$ is measurable and the norm
\begin{equation*}
  \norm{B}_{L_{1,\mu}(0,\infty;H)}\coloneq \medint\int_{0}^{\infty} \e^{-\mu t}\norm{B(t)}\dx[t]
\end{equation*}
is finite. The convolution $B\ast$ of an operator $B \in L_{1,\mu}\bigl(0,\infty;\mathcal{L}(H)\bigr)$ is defined as the extension of the convolution defined on the space of simple functions $S(\mathbb{R};H)$ with values in $H$
\begin{equation*}
  B\ast \colon L_{2,\rho}(\mathbb{R};H)\supseteq S(\mathbb{R};H) \to L_{2,\rho}(0,\infty;H)\text{,} \qquad u \mapsto \medint\int_{\mathbb{R}}B(\argdot - s)u(s)\dx[s]
\end{equation*}
to the space $L_{2,\rho}(\mathbb{R};H)$. The integral here is understood in the weak sense. That this definition is well-defined is verified in \cite[lem.~3.1]{Trostorff2015-2}.\\
With the help of this convolution operator, one can pose the equations of visco-elasticity: Let $\Omega \subseteq \mathbb{R}^{3}$ be open and let $L_{2,\mathrm{sym}}(\Omega)$ be the subspace of symmetric matrices of $L_{2}(\Omega)^{3\times 3}$ (equipped with the Frobenius norm). The equations of visco-elasticity read
\begin{equation}
  \label{eq:ViscoElasticity}
  \biggl[\partial_{t}\!\!\begin{pmatrix}
    \rho & 0 \\ 0 & C^{-\nicefrac{1}{2}}(1 \!- \!C^{-\nicefrac{1}{2}} (B\ast) C^{-\nicefrac{1}{2}})^{-1}C^{-\nicefrac{1}{2}}
  \end{pmatrix}
  \!- \!\begin{pmatrix}
    0 & \operatorname{Div} \\ \operatorname{Grad}_{0} & 0
  \end{pmatrix}\biggr]
  \!\!\begin{pmatrix} v \\ T \end{pmatrix}
  \!= \!\begin{pmatrix} f \\ 0 \end{pmatrix}
\end{equation}
and are posed on the state-space $H= L_{2}(\Omega)^{3}\oplus L_{2,\mathrm{sym}}(\Omega)$. Here $\operatorname{Grad}_{0}$ denotes the symmetric gradient with Dirichlet boundary condition; $\operatorname{Div}$ is the (row-wise taken) weak divergence, cf.~\cref{sec:DiffOp} for definitions. The unknown $u$ is the displacement field and $T$ is the stress tensor; $\rho \in L_{\infty}(\Omega)$ is a density that is strictly positive and bounded below (by a constant $>0$) and $C \in \mathcal{L}\bigl(L_{2,\mathrm{sym}}(\Omega)\bigr)$ is selfadjoint and strictly positive-definite and $B \in L_{1,\mu}\bigl(0,\infty;\mathcal{L}(L_{2,\mathrm{sym}}(\Omega))\bigr)$ as above. Well-posedness of this system is proven in \cite[thm.~4.2]{Trostorff2015-2} under the assumption that $C$ and $B(t)$ commute (for all $t$) and additional assumptions on $B$.\\
The proof hinges on a generalized version of Picard’s theorem, cf.~\cite[thm.~3.7]{Trostorff2013}. In this context, it suffices to show the condition $\Re z^{-1}M(z)\geq c$ for all $z\in \mathcal{B}_{\C}(r,r)$. This can be characterized via the conditions
\begin{equation*}
  \Re z^{-1}\bigl(1+\sqrt{2\pi}\hat{C} (\iu z^{-1})\bigr) \geq c \qquad \text{and}\qquad
  \Re z^{-1}\bigl(1-\sqrt{2\pi}\hat{B} (\iu z^{-1})\bigr) \geq c\text{,}
\end{equation*}
where $\hat{B}$ and $\hat{C}$ denote the Fourier-transforms of $B$ and $C$ respectively. Conditions for these two inequalities to be satisfied are given in \cite[lem.~3.9 \& lem.~3.10]{Trostorff2015-2}.\\
It is possible to study materials with memory effects, expressed via state-dependent delay on the right-hand side. This was already done in \cite{Trostorff2015-2}, although by extending Picard’s theorem to the $H^{-1}_{\rho}$-case and proving existence of solutions there. With our tools and under stricter assumptions, we can pose the problem in $H^{1}_{0,\rho}$ and prove existence and uniqueness in a stronger sense by appealing to \cref{th:localExistOut} or \cref{th:localExistIns}. For instance, in the framework of \cref{th:localExistIns}, we can formulate:
\begin{theorem}
  For a right-hand side
  \begin{equation*}
    \begin{pmatrix}
      f + G\bigl(\argdot, (I_{\rho}(v+\widehat{\Phi'}))_{(\argdot)}\bigr)\\
      0
    \end{pmatrix}\text{,}
  \end{equation*}
  with $f \in H^{1}_{\rho}(0,\infty;H)$,  $G$ satisfying assumption \ref{ass:A} and $\Phi$ satisfying assumption \ref{ass:B}, \cref{eq:ViscoElasticity} has a unique local solution $(v,T)\in H^{1}_{0}(0,T;H)^{2}$, provided that the consistency condition $f(0) = - G(0, \Phi)$ holds.
\end{theorem}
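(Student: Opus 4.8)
The plan is to read \cref{eq:ViscoElasticity} as a concrete instance of the abstract fixed point problem \cref{eq:FPPIntIns} on the product Hilbert space $H=L_{2}(\Omega)^{3}\oplus L_{2,\mathrm{sym}}(\Omega)$ and then to quote \cref{th:localExistIns}. First one has to check that the left-hand side falls under Picard's \cref{th:Picard}: the spatial part is $A=\begin{psmallmatrix}0 & -\operatorname{Div}\\ -\operatorname{Grad}_{0} & 0\end{psmallmatrix}$, which is skew-selfadjoint because $\operatorname{Div}$ and $\operatorname{Grad}_{0}$ are, up to sign, adjoint to one another (cf.~\cref{subsec:SymDiffOp}), hence $\mathrm{m}$-accretive; and for the material law $M$ in \cref{eq:ViscoElasticity} the analysis of \cite{Trostorff2015-2} shows that, under the standing commutation hypothesis $CB(t)=B(t)C$ for all $t$ and the conditions on $B$ recalled above, $M$ is holomorphic and uniformly bounded on a right half-plane and satisfies $\Re\dualprod{\varphi}{zM(z)\varphi}_{H}\geq c\norm{\varphi}_{H}^{2}$ for $\Re z$ large. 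Thus \cref{th:Picard} applies for every sufficiently large $\rho$, and the Picard operator $S_{\rho}$ and the regularity result \cref{th:Commutation} are available.

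Next I would fix the extension $Z\coloneq I_{\rho}\widehat{\Phi'}$ of the prehistory (with $\widehat{\Phi'}$ the constant continuation of $\Phi'$ from \cref{sec:SimMatLaws}) and, without loss of generality, $\Phi(-h)=0$ (\cref{rmk:PrehistoryVanishes}); then $Z$ is an $H^{1}_{\rho}(-h,\infty;H)$-extension of $\Phi$ with $Z(t)=\int_{-h}^{t}\Phi'(s)\dx[s]=\Phi(t)$ on $[-h,0]$ and $Z'(0)=\Phi'(0)$, so $\norm{Z'(0)}_{H}\leq\norm{\Phi'}_{\infty}$ and assumption \ref{ass:B} holds. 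To accommodate the $2\times 2$ block structure I would pad the data by zero in the stress component: $\mathcal{G}(t,\psi)\coloneq(G(t,\psi_{1}),0)$, $\mathcal{f}\coloneq(f,0)$, $\mathcal{Z}\coloneq(Z,0)$, with prehistory $(\Phi,0)$ on $H$. A componentwise computation shows that for $U=(v,T)$ the first component of $I_{\rho}U+\mathcal{Z}$ equals $I_{\rho}(v+\widehat{\Phi'})$, so the right-hand side of \cref{eq:ViscoElasticity} is exactly $\mathcal{f}+\mathcal{G}\bigl(\argdot,(I_{\rho}U+\mathcal{Z})_{(\argdot)}\bigr)$, the shape of \cref{eq:FPPIntIns}. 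Since $G$ satisfies assumption \ref{ass:A}, so does $\mathcal{G}$ --- almost uniform Lipschitz-continuity, regularity preservation, and $\mathcal{G}(\argdot,0)=(G(\argdot,0),0)\in L_{2,\rho}$ are inherited through the first component --- and $\mathcal{f}=(f,0)\in H^{1}_{\rho}(0,\infty;H)$; the consistency condition of \cref{th:localExistIns}, $\mathcal{f}(0)=-\mathcal{G}(0,(\Phi,0))$, is precisely $f(0)=-G(0,\Phi)$.

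With these identifications \cref{th:localExistIns} yields, for $\alpha>\norm{\Phi'}_{\infty}$ and $\rho$ large, a unique fixed point $U=(v,T)\in L_{2,\rho}(0,\infty;H)$ of the projected map $\Gamma_{\rho,\alpha}$; using \cref{th:Commutation} together with the regularity preservation of $\mathcal{G}$, the fact that $\mathcal{f}\in H^{1}_{\rho}$, and the consistency condition (which makes the bracketed right-hand side vanish at $0$), this fixed point already lies in $H^{1}_{0,\rho}(0,\infty;H)$. As in the proof of \cref{th:localExistIns}, continuity of this $H^{1}_{0,\rho}$-component at $0$, where it vanishes, together with $\norm{Z'(0)}_{H}<\alpha$, keeps $v+Z'$ below $\alpha$ in $\norm{\argdot}_{\infty}$-norm up to some $T>0$; hence $\pi_{\alpha}$ acts there as the identity and $U$ genuinely solves \cref{eq:ViscoElasticity} on $[0,T]$, giving $(v,T)\in H^{1}_{0}(0,T;H)^{2}$. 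Uniqueness follows from the contraction mapping principle and the eventual $\rho$-independence from \cref{th:Picard}.

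The step I expect to be the main obstacle is the very first one: establishing that the visco-elastic material law $M$ meets the hypotheses of \cref{th:Picard} --- holomorphy and uniform boundedness on a right half-plane of the Fourier--Laplace symbol of $C^{-\nicefrac{1}{2}}\bigl(1-C^{-\nicefrac{1}{2}}(B\ast)C^{-\nicefrac{1}{2}}\bigr)^{-1}C^{-\nicefrac{1}{2}}$, and the positivity estimate $\Re zM(z)\geq c$ --- which is genuinely non-trivial and rests on the commutation hypothesis $CB(t)=B(t)C$ and the decay conditions on $B$ from \cite{Trostorff2015-2}; the statement is to be read as carrying those standing assumptions. Everything after that is a routine translation into the framework of \cref{th:localExistIns}, the only bookkeeping being the zero-padding forced by $G$ depending on the displacement component alone.
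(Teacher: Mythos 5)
Your proposal is correct and follows exactly the route the paper intends: the paper states this theorem as a direct application of \cref{th:localExistIns}, with the verification that the visco-elastic material law fits Picard's \cref{th:Picard} delegated to \cite{Trostorff2015-2} (under the standing commutation and decay hypotheses on $B$ and $C$), just as you do. Your explicit zero-padding of $f$, $G$ and the prehistory in the stress component and the translation of the consistency condition are precisely the routine bookkeeping the paper leaves implicit.
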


\subsection{port-Hamiltonian systems}
Port-Hamiltonian systems arise in a wide variety of applications and are particularly useful in the context of control theory. For an introduction to port-Hamiltonian systems we refer to \cite{Schaft2014, Jacob2012}. For finite-dimenisonal port-Hamiltonian systems, the ODE theory for state-dependent delay systems developed in \cite{Aigner2024,Waurick2023} already applies. We will here take a glimpse at infinite-dimensional port-Hamiltonian systems. We use the formulation (7.8) from the standard reference \cite{Jacob2012}. Similar situations are also covered in \cite[sec.~5.1]{Waurick2012} and \cite{Waurick2023-2}.
\begin{definition}
  Let $P_{1}\in \mathbb{K}^{n\times n}$ be selfadjoint, let $P_{0}\in \mathbb{K}^{n\times n}$ be skew-selfadjoint and let $\mathcal{H} \in L_{\infty}(a,b;\mathbb{K}^{n\times n})$ satisfy $\mathcal{H}(\zeta)^{\ast} = \mathcal{H}(\zeta)$ and $mI \leq \mathcal{H}(\zeta) \leq MI$ for a.e. $\zeta \in [a,b]$ and $m,M > 0$. Let $X \coloneq L_{2}(a,b;\mathbb{K}^{n})$ be equipped with the inner product
  \begin{equation*}
    \dualprod{f}{g}_{X} = \tfrac{1}{2} \medint\int_{a}^{b}g(\zeta)^{\ast}\mathcal{H}(\zeta)f(\zeta) \dx[\zeta]\text{.}
  \end{equation*}
  Then the diﬀerential equation
  \begin{equation*}
    \tfrac{\partial x}{\partial t}(\zeta , t) = P_{1}\tfrac{\partial}{\partial \zeta}\bigl(\mathcal{H}(\zeta)x(\zeta , t)\bigr)
    + P_{0}\bigl(\mathcal{H}(\zeta)x(\zeta , t)\bigr)
  \end{equation*}
  is called a {\em linear, first order port-Hamiltonian system}.
\end{definition}
We want to examine the corresponding version of \cref{eq:EvolProblem}:
\begin{equation*}
  \bigl(\partial_{t}\mathcal{H}^{-1} - P_{0} - P_{1}\partial_{1}\bigr)v
  = F\bigl(\argdot, (v+I_{\rho}\widehat{\Phi'})_{(\argdot)}\bigr) + f\text{,}
\end{equation*}
gained via the usual ansatz $u=v+I_{\rho}\widehat{\Phi'}$ for a prescribed prehistory $\Phi$. Note that $\mathcal{H}$ is included into the inner product via $\dualprod{\mathcal{H}\argdot}{\argdot}_{X}$. We restrict ourselves
to the case where $P_{1}\partial_{1}$ is maximally dissipative. This is usually achieved by incorporating the boundary conditions into the operator and corresponds to the case where $P_{1}\partial_{1}$ is the infinitesimal generator of a $\mathcal{C}_{0}$-semigroup of contractions, appealing to the Lumer\textendash Philips theorem. In this case, we can provide a suitable well-posedness theory for our considered setting, e.g., for the equation
\begin{equation}
  \label{eq:pHS}
  \bigl(\partial_{t}\mathcal{H}^{-1} - P_{0} - P_{1}\partial_{1}\bigr)v
  = G\bigl(\argdot, (I_{\rho}(v+\widehat{\Phi'}))_{(\argdot)}\bigr) + f\text{,}
\end{equation}

\begin{theorem}
  \Cref{eq:pHS} is (locally) uniquely solvable under assumption \ref{ass:A} on $G$ and assumption \ref{ass:B} on $\Phi$, provided that $f \in H_{\rho}^{1}(0,\infty;H)$ for some $\rho>0$ and the consistency condition $f(0)=-G(0,\Phi)$ holds.
\end{theorem}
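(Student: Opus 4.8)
The plan is to recognise \cref{eq:pHS} as an instance of the abstract problem \cref{eq:FPPIntIns} and then to quote \cref{th:localExistIns}; the only real work is checking that the spatial part $-P_{0}-P_{1}\partial_{1}$ together with the material law $z\mapsto\mathcal{H}^{-1}$ meet the hypotheses of \cref{th:Picard}.

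First I would verify that the left-hand side of \cref{eq:pHS} fits the framework of evolutionary equations. The material law is the constant map $M(z)\equiv\mathcal{H}^{-1}$, which is holomorphic and bounded on all of $\C$, so $s_{\mathrm{b}}(M)=-\infty$. Since $\mathcal{H}=\mathcal{H}^{\ast}$ with $mI\leq\mathcal{H}\leq MI$, its inverse obeys $M^{-1}I\leq\mathcal{H}^{-1}\leq m^{-1}I$, whence for every $\rho_{0}>0$ and every $z\in\C_{\Re>\rho_{0}}$
\begin{equation*}
  \Re\dualprod{\varphi}{z\mathcal{H}^{-1}\varphi}_{H}=\Re(z)\dualprod{\varphi}{\mathcal{H}^{-1}\varphi}_{H}\geq\rho_{0}M^{-1}\norm{\varphi}_{H}^{2},
\end{equation*}
which is the positive-definiteness hypothesis of \cref{th:Picard} with $c=\rho_{0}M^{-1}$. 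Next, $A\coloneq-P_{0}-P_{1}\partial_{1}$ is $\mathrm{m}$-accretive: the standing assumption that $P_{1}\partial_{1}$ is maximally dissipative means that $P_{1}\partial_{1}$ generates a contraction $\mathcal{C}_{0}$-semigroup (Lumer--Phillips), i.e.\ that $-P_{1}\partial_{1}$ is $\mathrm{m}$-accretive, while $-P_{0}$ is a bounded skew-selfadjoint (hence accretive, with vanishing real part of its numerical range) perturbation, and such perturbations preserve $\mathrm{m}$-accretivity; cf.\ \cite{Jacob2012,Waurick2012,Waurick2023-2}. Hence \cref{th:Picard} applies for all $\rho>0$, and \cref{eq:pHS} has the form \cref{eq:EvolProblem} in the case $\widetilde{F}(v)=\widetilde{G}(I_{\rho}v)$.

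Then I would choose the extension $Z\coloneq I_{\rho}\widehat{\Phi'}$ and invoke \cref{th:localExistIns}. Assuming without loss, by \cref{rmk:PrehistoryVanishes}, that $\Phi(-h)=0$, one computes $Z(t)=\int_{-h}^{t}\widehat{\Phi'}(s)\dx[s]=\Phi(t)$ for $t\in[-h,0]$ and $Z(t)=\Phi(0)+t\,\Phi'(0)$ for $t\geq0$; in particular $Z\in H^{1}_{\rho}(-h,\infty;H)$, $Z|_{(-h,0]}=\Phi$, and $Z|_{[0,\infty)}\in\mathcal{C}^{1}(0,\infty;H)$ with $\norm{Z'(0)}_{H}=\norm{\Phi'(0)}_{H}\leq\norm{\Phi'}_{\infty}$, so $\Phi$ and $Z$ satisfy assumption \ref{ass:B}. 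Since $Z_{(0)}=\Phi$, the hypothesis $f(0)=-G(0,\Phi)$ is precisely the consistency condition $f(0)=-G(0,Z_{(0)})$ required in \cref{th:localExistIns}; together with assumption \ref{ass:A} on $G$ and $f\in H^{1}_{\rho}(0,\infty;H)$, that theorem delivers the unique local solution of \cref{eq:pHS}.

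The hard part will be the $\mathrm{m}$-accretivity claim in the first step: one must be careful that $-P_{0}-P_{1}\partial_{1}$ is $\mathrm{m}$-accretive \emph{with respect to the $\mathcal{H}$-weighted energy inner product on $X$}, since it is that inner product --- not the plain $L_{2}$ one --- which renders the (built-in) boundary conditions dissipative, and one must then read the material-law estimate above on the same space; the latter is harmless because passing from the plain to the energy inner product only rescales the constant $c$ by a factor between $m$ and $M$. Everything after this is a routine translation of \cref{th:localExistIns}.
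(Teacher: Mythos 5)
Your proposal is correct and follows the same overall strategy as the paper: verify the hypotheses of \cref{th:Picard} for the left-hand side of \cref{eq:pHS} and then quote \cref{th:localExistIns}, reading the consistency condition off from $Z_{(0)}=\Phi$. The one genuine difference is where you place $-P_{0}$: the paper absorbs it into the material law, taking $zM(z)=z\mathcal{H}^{-1}-P_{0}$ (a simple material law), so that the spatial operator is just $A=-P_{1}\partial_{1}$ and $\mathrm{m}$-accretivity is immediate from the standing maximal-dissipativity assumption; you instead keep $M(z)\equiv\mathcal{H}^{-1}$ constant and put $-P_{0}$ into $A$, which costs you the (standard, but extra) bounded-accretive-perturbation argument for $\mathrm{m}$-accretivity. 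The two routes are equivalent here because $P_{0}$ is skew-selfadjoint and hence contributes nothing to the real part in either the positivity estimate or the accretivity estimate; the paper's placement is slightly more economical, while yours makes the material law trivially constant. Your closing remark about checking everything in the $\mathcal{H}$-weighted inner product, with the constants merely rescaled by a factor between $m$ and $M$, is exactly the content of the paper's $\mathcal{H}^{\nicefrac{1}{2}}$ computation, and your explicit verification of assumption \ref{ass:B} for $Z=I_{\rho}\widehat{\Phi'}$ fills in a step the paper leaves implicit.
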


\begin{proof}
  To apply \cref{th:localExistIns}, we have to verify the assumptions of \cref{th:Picard}. Since $P_{1}\partial_{1}$ is maximally dissipative, this reduces to the condition
  \begin{equation*}
    \forall x\in X \forall z \in \C_{\Re \geq \nu}\colon \quad \Re \dualprod{x}{zM(z)x}\geq c\dualprod{x}{x}\text{,}
  \end{equation*}
  where $M(z)=\mathcal{H}^{-1}z + P_{0}$ (and $\C_{\Re \geq \nu}$ is some right half-plane). Since $\mathcal{H}^{-1}$ is pointwise selfadjoint, this is equivalent to
  \begin{equation*}
    \forall x \in X\colon\quad \rho_{0}\dualprod{\mathcal{H}^{-1}x}{x} = \rho_{0}\dualprod{\mathcal{H}^{-1}x}{x} + \Re \dualprod{P_{0}x}{x} \geq c \dualprod{x}{x}
  \end{equation*}
  for some $\rho_{0}\geq 0$ and $c>0$, because $P_{0}$ is skew-selfadjoint. From the assumptions on $\mathcal{H}$ we infer that for any $x\in \operatorname{dom}(\mathcal{H}^{\nicefrac{1}{2}})=X$:
  \begin{equation*}
    \dualprod{\mathcal{H}x}{\mathcal{H}x}
    = \dualprod{\mathcal{H}\mathcal{H}^{\nicefrac{1}{2}}x}{\mathcal{H}^{\nicefrac{1}{2}}x}
    \leq M \dualprod{\mathcal{H}^{\nicefrac{1}{2}}x}{\mathcal{H}^{\nicefrac{1}{2}}x}
    = M \dualprod{x}{\mathcal{H}x}\text{,}
  \end{equation*}
  where we used that $\mathcal{H}^{\nicefrac{1}{2}}$ is selfadjoint. The desired inequality follows.
\end{proof}

%%%%%%%%%%%%%%%%%%%%%%%%%%%%%%
%          SECTION
\section{Summary and Comparison}
In this article, we considered ``generalized'' initial value problems in the form of \cref{eq:EvolProblem}, that principally arise in a distributional formulation (motivated in \cref{subsec:DistribSpaces_IVPs}). Evolutionary equations in distributional spaces stemming from initial value problems have been studied before by S.~Trostorff in \cite{Trostorff2018} (cf. also the older publications \cite{Kalauch2014} and \cite{Waurick2012-2} for ODEs). As mentiond in \cref{subsec:DistribSpaces_IVPs}, formulating suitable initial conditions tailored to a problem is nontrivial in general. We bypassed this aspect by rather examining the general formulation (\ref{eq:DistribProblem}) that arises after transitioning from an initial value problem to a problem with non-homogeneity $f\in H^{-1}_{\rho}$ (cf., e.g., \cite{Trostorff2015-2,Trostorff2018}). Rather than exploring this general setup, we further focussed on the issues occuring due to the presence of state-dependent delay; specifically, we demanded $f \in L_{2,\rho}/H^{1}_{\rho}$ (cf.~formulation \eqref{eq:EvolProblem}).\\
In any case, the main idea for the well-posedness results \cref{th:localExistOut} and \cref{th:localExistIns} is to extend the approach from \cite{Waurick2023} developed for the ODE case (we also refer to \cite{Aigner2024} for a more comprehensive overview) to the PDE case. There, a contraction mapping argument paired with a projection argument was used to prove a generalized Picard--Lindel\"of theorem (cf.~\cite[thm.~4.1]{Waurick2023}) for ODEs with state-dependent delay. The idea for that result in turn stems from Morgenstern’s proof of the Picard--Lindel\"of theorem (cf.~\cite{Morgenstern1952}) utilizing exponentially weighted spaces in order to force a Lipschitz-continuous map to be a contraction. Observing that the delay operator $\Theta$ scales with the inverse of the weight parameter $\rho$ (cf.~\cref{th:ThetaIsLipschitz}) gave rise to $H^{1}$-solution theory for ODEs with state-dependent delay. Since exponentially weighted spaces naturally arise in the theory of evolutionary equations, it was an equally natural question to ask if the ODE approach could be generalized to PDEs. In the PDE case, there is the complication of the solution operator/Picard operator $S_{\rho}$ though, which is a bounded linear operator on $L_{2,\rho}(\mathbb{R};H)$ and certainly not as convenient for the solution theory as the antiderivative $I_{\rho}$ alone. In particular, there is no automatic increase in regularity due to the shape of the right-hand side of the fixed point problem any more. We had to remedy that fact by demanding that the right-hand side $f+F$ in \cref{eq:EvolProblem} is in some sense regularity-increasing. Naturally, an integrability condition also came into play, because the Picard operator is only defined on the entire real line. The increased demand on regularity introduces consistency conditions, more precisely the right-hand side $f +F$ needs to be an element of $H^{1}_{0,\rho}(0,\infty;H)$.\\
Consistency conditions are prevalent in solution theories for state-dependent differential equations: In the case of classical solution theory (i.e., in the sense of $\mathcal{C}^{1}$-solutions) for the ODE case, there is the concept of the {\em solution manifold}, cf.~\cite{Walther2003}, characterizing compatible initial prehistories via a consistency equation. In the case of evolutionary equations, the {\em space of admissible prehistories} $\mathrm{His}_{\rho}(M,A)$ (cf.~\cref{def:HistorySpace}) essentially takes the place of the solution manifold.\\
The increased regularity of the right-hand side $f+F$ is used in a projection argument. In order to remove the projection locally and turn a solution of the modified problem (\ref{eq:FPPproj}) into a solution of the original problem (\ref{eq:FPPinL2}), we made a continuity argument, cf.~the proofs of \cref{th:localExistOut} and \cref{th:localExistIns}. The complication lies in the definition of the spaces
\vspace{-1ex}
\begin{equation*}
  V_{\alpha} = \bigl\{u \in H^{1}(-h,0;H)\colon \norm{u'}_{\infty}\leq \alpha \bigr\}\text{.}
\end{equation*}
% \vskip-2ex
The $\norm{\argdot}_{\infty}$-norm in the definition of $V_{\alpha}$ destroys the Hilbert space structure that is otherwise present everywhere in the theory of evolutionary equations and to the best of our knowledge, there are no bounded-input/bounded-output results for general evolutionary equations. Why define $V_{\alpha}$ like that then? The simple answer is practical necessity: the ``standard'' example for state-dependent delay is of the form $-\tau(u(t))$ for a Lipschitz-continuous $\tau$. To force Lipschitz-continuity of $u\bigl(t - \tau(u(t))\bigr)$ one needs $u'$ to be bounded. Therefore, the $\norm{\argdot}_{\infty}$-norm in $V_{\alpha}$ is necessary and it is precisely by virtue of the projection argument that we can accomodate state-dependent delay via a weakened notion of Lipschitz-continuity. If one replaces the $\norm{\argdot}_{\infty}$-norm with the $\norm{\argdot}_{2}$-norm, the entire theory goes through much smoother, leads to well-posedness results in $L_{2,\rho}$ without the need of consistency conditions or regularity-increasing right-hand sides, but is simply not applicable in relevant cases. Precisely because state-dependent delay differential equations in general do not exhibit Lipschitz-continuous right-hand sides, the weakening almost uniform Lipschitz-continuity was introduced in \cite{Waurick2023} and reused here (cf.~\cref{def:AlmLC}).\\
To the best of our knowledge, there is little general solution theory for PDEs with state-dependent delay, general in the sense that the theory covers diverse physical phenomena modelled by parabolic or hyperbolic differential equations. Classical theory seems to have been done exclusively for the ODE case (cf.~the classical reference \cite{Hale1993}), the common approach for PDEs seems to be semigroup theory (cf.~the textbooks \cite{Batkai2005, Wu1996}, but there is a multitude of authors and articles in our references). Semigroup theory holds some advantages and disadvantages over evolutionary equations. The advantage is that existence results usually have a simple way of inferring strong solutions under slightly stronger assumptions, e.g., that the prehistory takes values in the domain of the semigroup generator or a suitable interpolation space thereof. The theory of evolutionary equations on the other hand is a theory purely designed around a notion of weak solutions. The advantage of evolutionary equations is that the concept is applicable to a larger set of equations, in particular time-nonlocal implicit partial differential algebraic equations. Here lies the biggest advantage of our approach over semigroup results. There are some notable technical differences as well. For instance, in evolutionary equations there is no need to consider extended state spaces or demand (possibly difficult to verify) Lipschitz-conditions of the nonlinearity in case the semigroup generator is not a differential operator in divergence form.\\
This article was a foray into developing $H^{1}$-solution theory for DDEs in the framework of evolutionary equations. Many questions require further investigation though. For instance, do global solutions actually require the stricter assumption of almost uniform Lipschitz-regularity in $L_{2}$, or can this assumption be lowered, at least in certain cases?

%%%%%%%%%%%%%%%%%%%%%%%%%%%%%%
%          APPENDIX
\appendix
%%%%%%%%%%%%%%%%%%%%%%%%%%%%%%
%          SECTION
\section{Differential operators}
\label{sec:DiffOp}
For the sake of completeness, we include the definitions of various differential operators used in \cref{sec:Examples}. In the following, let $\Omega \subseteq \mathbb{R}^{d}$ always be open.

\subsection{Weak gradient, divergence and rotation}
\label{subsec:WeakDiffOp}
We define
\begin{alignat*}{4}
  \operatorname{grad}_{\mathrm{s}}&\colon \mathcal{C}^{\infty}_{\mathrm{c}}(\Omega)^{\phantom{3}} &\to \mathcal{C}^{\infty}_{\mathrm{c}}(\Omega)^{d}\text{,} &&f &\mapsto \bigl(\partial_{i}f\bigr)_{1\leq i \leq d}\text{,}\\
  \operatorname{div}_{\mathrm{s}}&\colon \mathcal{C}^{\infty}_{\mathrm{c}}(\Omega)^{d} &\to \mathcal{C}^{\infty}_{\mathrm{c}}(\Omega)^{\phantom{d}}\text{,} &&\quad \bigl(f_{i}\bigr)_{1\leq i \leq d}&\mapsto \sum_{i=1}^{d}\partial_{i}f_{i}\text{,}\\
  \operatorname{curl}_{\mathrm{s}}&\colon \mathcal{C}^{\infty}_{\mathrm{c}}(\Omega)^{3}&\to \mathcal{C}^{\infty}_{\mathrm{c}}(\Omega)^{3}\text{,}&&\begin{psmallmatrix} f_{1} \\ f_{2} \\ f_{3} \end{psmallmatrix} &\mapsto \begin{psmallmatrix}\partial_{2}f_{3} - \partial_{3}f_{2} \\ \partial_{3}f_{1} - \partial_{1}f_{3} \\ \partial_{1}f_{2} - \partial_{2}f_{1} \end{psmallmatrix}\text{,}
\end{alignat*}
denoted as the {\em smooth} gradient, divergence and rotation respectively, where all partial derivatives are understood in the classical sense. Based on this definition, we can make use of the fact that $\mathcal{C}^{\infty}_{\mathrm{c}}(\Omega) \subseteq L_{2}(\Omega)$ is dense and define (as in \cite[sec.~6.1]{Waurick2022})
\begin{alignat*}{4}
  \operatorname{grad}&\colon L_{2}(\Omega)^{\phantom{3}} \supseteq \operatorname{dom}(\operatorname{grad}) &\to L_{2}(\Omega)^{d}\text{,} &&\quad \operatorname{grad} &\coloneq - \operatorname{div}^{\ast}_{\mathrm{s}}\text{,}\\
  \operatorname{div}&\colon L_{2}(\Omega)^{d} \supseteq \operatorname{dom}(\operatorname{div}) &\to L_{2}(\Omega)^{\phantom{d}}\text{,} &&\operatorname{div} &\coloneq - \operatorname{grad}^{\ast}_{\mathrm{s}}\text{,}\\
  \operatorname{curl}&\colon L_{2}(\Omega)^{3} \supseteq \operatorname{dom}(\operatorname{curl}) &\to L_{2}(\Omega)^{3}\text{,} &&\operatorname{curl} &\coloneq \operatorname{curl}^{\ast}_{\mathrm{s}}\text{,}
\end{alignat*}
where the adjoint is understood as the Hilbert space adjoint with the domains being the domains of the adjoint. We call these operators the {\em weak} gradient, divergence and rotation respectively. It is an easy task to verify that this definition coincides with the standard definition of weak (partial) derivatives (e.g., \cite[sec.~5.2.1]{Evans2010}), in fact the following holds:
\begin{proposition}[{\cite[thm.~6.1.2]{Waurick2022}}]\phantom{.}
  \begin{itemize}[leftmargin=4ex]
    \item $f \in \operatorname{dom}(\operatorname{grad}) \Longleftrightarrow \forall 1\leq i \leq d \exists g_{i}\in L_{2}(\Omega)\forall \varphi \in \mathcal{C}^{\infty}_{\mathrm{c}}(\Omega)\colon$
          \begin{equation*}
            \medint\int_{\Omega} f \partial_{i}\varphi = -\medint\int_{\Omega}g_{i}\varphi\text{.}
          \end{equation*}
    \item $(f_{i})_{1\leq i \leq d} \in \operatorname{dom}(\operatorname{div}) \Longleftrightarrow \exists g\in L_{2}(\Omega)\forall \varphi \in \mathcal{C}^{\infty}_{\mathrm{c}}(\Omega)\colon$
          \begin{equation*}
            \medint\int_{\Omega}g \varphi = - \medint\int_{\Omega}\textstyle\sum_{i=1}^{d}f_{i}\partial_{i}\varphi\text{.}
          \end{equation*}
    \item $\begin{psmallmatrix} f_{1} \\ f_{2} \\ f_{3} \end{psmallmatrix} \in \operatorname{dom}(\operatorname{curl}) \Longleftrightarrow \exists \begin{psmallmatrix} g_{1} \\ g_{2} \\ g_{3} \end{psmallmatrix} \in L_{2}(\Omega)^{3} \forall \begin{psmallmatrix} \varphi_{1} \\ \varphi_{2} \\ \varphi_{3} \end{psmallmatrix} \in \mathcal{C}^{\infty}_{\mathrm{c}}(\Omega)^{3}\colon$
          \begin{equation*}
            \medint\int_{\Omega} \begin{psmallmatrix} g_{1} \\ g_{2} \\ g_{3} \end{psmallmatrix} \cdot \begin{psmallmatrix} \varphi_{1} \\ \varphi_{2} \\ \varphi_{3} \end{psmallmatrix} = \medint\int_{\Omega} \begin{psmallmatrix} f_{1} \\ f_{2} \\ f_{3} \end{psmallmatrix} \cdot \operatorname{curl}_{\mathrm{s}} \begin{psmallmatrix} \varphi_{1} \\ \varphi_{2} \\ \varphi_{3} \end{psmallmatrix}\text{.}
          \end{equation*}
  \end{itemize}
\end{proposition}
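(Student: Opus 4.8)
The statement is a reformulation of the definition of the Hilbert space adjoint, so the plan is simply to unwind that definition, recalling that by construction $\operatorname{grad}=-\operatorname{div}_{\mathrm{s}}^{\ast}$, $\operatorname{div}=-\operatorname{grad}_{\mathrm{s}}^{\ast}$ and $\operatorname{curl}=\operatorname{curl}_{\mathrm{s}}^{\ast}$. First I would recall that for a densely defined operator $T\colon H_{0}\supseteq\operatorname{dom}(T)\to H_{1}$ between Hilbert spaces, an element $y\in H_{1}$ lies in $\operatorname{dom}(T^{\ast})$ precisely when there exists $z\in H_{0}$ with $\dualprod{Tx}{y}_{H_{1}}=\dualprod{x}{z}_{H_{0}}$ for all $x\in\operatorname{dom}(T)$, in which case $T^{\ast}y=z$; the adjoint is well-defined here because $\mathcal{C}^{\infty}_{\mathrm{c}}(\Omega)$ is dense in $L_{2}(\Omega)$, so that $\operatorname{grad}_{\mathrm{s}},\operatorname{div}_{\mathrm{s}},\operatorname{curl}_{\mathrm{s}}$ are densely defined.

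For the divergence I apply this with $T=\operatorname{grad}_{\mathrm{s}}$: $(f_{i})_{i}\in\operatorname{dom}(\operatorname{grad}_{\mathrm{s}}^{\ast})$ if and only if there is $g\in L_{2}(\Omega)$ with $\dualprod{\operatorname{grad}_{\mathrm{s}}\varphi}{(f_{i})_{i}}_{L_{2}(\Omega)^{d}}=\dualprod{\varphi}{-g}_{L_{2}(\Omega)}$ for all $\varphi\in\mathcal{C}^{\infty}_{\mathrm{c}}(\Omega)$, equivalently $\operatorname{grad}_{\mathrm{s}}^{\ast}(f_{i})_{i}=-g$, so that $g=\operatorname{div}(f_{i})_{i}$; writing the inner products as integrals gives exactly the stated identity, and the converse is immediate. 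For the gradient I apply the same principle to $T=\operatorname{div}_{\mathrm{s}}$: $f\in\operatorname{dom}(\operatorname{div}_{\mathrm{s}}^{\ast})$ if and only if there is $(g_{i})_{i}\in L_{2}(\Omega)^{d}$ with $\dualprod{\operatorname{div}_{\mathrm{s}}\psi}{f}_{L_{2}(\Omega)}=\dualprod{\psi}{-(g_{i})_{i}}_{L_{2}(\Omega)^{d}}$ for all $\psi\in\mathcal{C}^{\infty}_{\mathrm{c}}(\Omega)^{d}$, i.e.\ $\operatorname{div}_{\mathrm{s}}^{\ast}f=-(g_{i})_{i}=-\operatorname{grad}f$. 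Here one additionally passes from vector- to scalar-valued test functions: taking $\psi$ supported in a single component, $\psi_{j}=\delta_{ij}\varphi$, so that $\operatorname{div}_{\mathrm{s}}\psi=\partial_{i}\varphi$, yields the component-wise identity $\int_{\Omega}f\partial_{i}\varphi=-\int_{\Omega}g_{i}\varphi$; conversely, summing the $d$ component identities against $\psi=(\varphi_{1},\dots,\varphi_{d})$ recovers the vector identity by linearity. The rotation is handled exactly like the gradient but without any coordinate reduction, since $\operatorname{curl}_{\mathrm{s}}$ already acts on full vectors and carries no sign: $(f_{i})_{i}\in\operatorname{dom}(\operatorname{curl}_{\mathrm{s}}^{\ast})$ iff there is $(g_{i})_{i}\in L_{2}(\Omega)^{3}$ with $\dualprod{\operatorname{curl}_{\mathrm{s}}\varphi}{(f_{i})_{i}}=\dualprod{\varphi}{(g_{i})_{i}}$ for all $\varphi\in\mathcal{C}^{\infty}_{\mathrm{c}}(\Omega)^{3}$, with $(g_{i})_{i}=\operatorname{curl}_{\mathrm{s}}^{\ast}(f_{i})_{i}=\operatorname{curl}(f_{i})_{i}$, which read as integrals is the claimed identity.

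The only points requiring care are the bookkeeping of the sign conventions built into the definitions of $\operatorname{grad},\operatorname{div},\operatorname{curl}$, the passage between vector- and scalar-valued test functions (for $\operatorname{grad}$, and dually the observation that scalar test functions already suffice for $\operatorname{div}$), and — in the complex setting — the harmless remark that one may restrict to real-valued test functions or conjugate appropriately, since $\mathcal{C}^{\infty}_{\mathrm{c}}(\Omega)$ is dense in $L_{2}(\Omega)$ and the defining identities are $\C$-linear in the test function. None of these constitutes a genuine obstacle; the statement is essentially the definition of the adjoint specialised to these three operators, which is why it is merely quoted from \cite{Waurick2022}.
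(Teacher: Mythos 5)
Your proposal is correct and is exactly the standard argument: the paper itself gives no proof but merely cites \cite[thm.~6.1.2]{Waurick2022}, and the statement is indeed nothing more than the definition of the Hilbert space adjoint unwound for $\operatorname{grad}=-\operatorname{div}_{\mathrm{s}}^{\ast}$, $\operatorname{div}=-\operatorname{grad}_{\mathrm{s}}^{\ast}$ and $\operatorname{curl}=\operatorname{curl}_{\mathrm{s}}^{\ast}$. You correctly identify the only points needing care — the signs, the reduction to scalar test functions supported in a single component for the gradient case, and the conjugation conventions in the complex setting — so nothing is missing.
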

By adjoining again we obtain the weak gradient, divergence and rotation with {\em Dirichlet boundary condition}:
\begin{alignat*}{4}
  \operatorname{grad}_{0}&\colon L_{2}(\Omega)^{\phantom{3}} \supseteq \operatorname{dom}(\operatorname{grad}_{0}) &\to L_{2}(\Omega)^{d}\text{,} &&\quad \operatorname{grad}_{0} &\coloneq - \operatorname{div}^{\ast}\text{,}\\
  \operatorname{div}_{0}&\colon L_{2}(\Omega)^{d} \supseteq \operatorname{dom}(\operatorname{div}_{0}) &\to L_{2}(\Omega)^{\phantom{d}}\text{,} &&\operatorname{div}_{0} &\coloneq - \operatorname{grad}^{\ast}\text{,}\\
  \operatorname{curl}_{0}&\colon L_{2}(\Omega)^{3} \supseteq \operatorname{dom}(\operatorname{curl}_{0}) &\to L_{2}(\Omega)^{3}\text{,} &&\operatorname{curl}_{0} &\coloneq \operatorname{curl}^{\ast}\text{.}
\end{alignat*}

\begin{remark}
  One can prove that the above definition is equivalent to the statement that an element of the domain (of the respective weak differential operator with Dirichlet boundary condition) is approximable in graph-norm of the associated weak differential operator by elements of $\mathcal{C}^{\infty}_{\mathrm{c}}(\Omega)/ \mathcal{C}^{\infty}_{\mathrm{c}}(\Omega)^{d} / \mathcal{C}^{\infty}_{\mathrm{c}}(\Omega)^{3}$.
\end{remark}

\subsection{Symmetrized differential operators}
\label{subsec:SymDiffOp}
Almost completely analogous to the vector-valued case, one can define differential operators on matrix-valued functions. For that purpose let
\begin{equation*}
  \mathbb{R}^{d\times d}_{\mathrm{sym}} \coloneq \bigl\{M\in \mathbb{R}^{d\times d}\colon M=M^{\mathrm{T}}\bigr\}
\end{equation*}
be the subspace of symmetric matrices in the space of $d\times d$ matrices. Let further
\begin{alignat*}{4}
  \operatorname{Grad}_{\mathrm{s}}&\colon \mathcal{C}^{\infty}_{\mathrm{c}}(\Omega)^{d} &\to \mathcal{C}^{\infty}_{\mathrm{c}}\bigl(\Omega;\mathbb{R}^{d\times d}_{\mathrm{sym}}\bigr)\text{,} &&(f_{i})_{1\leq i \leq d} &\mapsto \tfrac{1}{2}(\partial_{i}f_{j} + \partial_{j}f_{i})_{1\leq i,j \leq d}\text{,}\\
  \operatorname{Div}_{\mathrm{s}}&\colon \mathcal{C}^{\infty}_{\mathrm{c}}\bigl(\Omega;\mathbb{R}^{d\times d}_{\mathrm{sym}}\bigr) &\to \mathcal{C}^{\infty}_{\mathrm{c}}(\Omega)^{d}\phantom{\mathbb{R}^{d\times d}}\text{,} &&\quad (f_{i,j})_{1\leq i,j \leq d}&\mapsto \Bigl( \textstyle\sum_{i=1}^{d}\partial_{i}f_{j,i}\Bigr)_{1\leq j\leq d}\text{,}
\end{alignat*}
denote the {\em symmetrized smooth} gradient and divergence respectively. Analogous to the vector-valued case, one defines the {\em weak symmetrized} gradient/divergence and weak symmetrized gradient/divergence with {\em Dirichlet boundary} by adjoining in $L_{2}$:
\begin{alignat*}{4}
  \operatorname{Grad}&\colon L_{2}(\Omega)^{d}\phantom{\mathbb{R}^{d\times .}} \supseteq \operatorname{dom}(\operatorname{Grad}) &\to L_{2}\bigr(\Omega;\mathbb{R}^{d\times d}_{\mathrm{sym}}\bigl)\text{,} &&\quad \operatorname{Grad} &\coloneq - \operatorname{Div}^{\ast}_{\mathrm{s}}\text{,}\\
  \operatorname{Div}&\colon L_{2}\bigr(\Omega;\mathbb{R}^{d\times d}_{\mathrm{sym}}\bigl) \supseteq \operatorname{dom}(\operatorname{Div}) &\to L_{2}(\Omega)^{d}\phantom{\mathbb{R}^{d\times ..}}\text{,} &&\operatorname{Div} &\coloneq - \operatorname{Grad}^{\ast}_{\mathrm{s}}\text{,}\\
  \operatorname{Grad}_{0}&\colon L_{2}(\Omega)^{d}\phantom{\mathbb{R}^{d\times .}} \supseteq \operatorname{dom}(\operatorname{Grad}_{0}) &\to L_{2}\bigr(\Omega;\mathbb{R}^{d\times d}_{\mathrm{sym}}\bigl)\text{,} &&\quad \operatorname{Grad}_{0} &\coloneq - \operatorname{Div}^{\ast}\text{,}\\
  \operatorname{Div}_{0} &\colon L_{2}\bigr(\Omega;\mathbb{R}^{d\times d}_{\mathrm{sym}}\bigl) \supseteq \operatorname{dom}(\operatorname{Div}_{0}) &\to L_{2}(\Omega)^{d}\phantom{\mathbb{R}^{d\times ..}}\text{,} &&\operatorname{Div}_{0} &\coloneq - \operatorname{Grad}^{\ast}\text{,}
\end{alignat*}

\subsection{Dirac distributions}
\label{subsec:Dirac}
Appealing to \cref{th:Sobolev}, on $H^{1}_{\rho}(\mathbb{R};H)$ we define the so-called {\em Dirac distribution} $\delta_{t}$ for any $t \in \mathbb{R}$ as
\begin{equation*}
  \delta_{t} \colon H^{1}_{\rho}(\mathbb{R};H) \to \mathbb{R}\text{,}\qquad \varphi \mapsto \varphi(t)\text{.}
\end{equation*}
It is easily verified that this operator is bounded, indeed
\begin{equation*}
  \abs{\delta_{t}(\varphi)} = \abs{\varphi(t)} = \e^{\rho t}\e^{-\rho t} \abs{\varphi(t)}
  \leq \e^{\rho t}\norm{\varphi}_{\mathcal{C}_{0,\rho}(\mathbb{R};H)} \leq C\e^{\rho t}\norm{\varphi}_{H^{1}_{\rho}(\mathbb{R};H)}\text{,}
\end{equation*}
where we appeal to the embedding $H^{1}_{\rho}\hookrightarrow \mathcal{C}_{0,\rho}(\mathbb{R};H)$ (i.e., \cref{th:Sobolev}) again. Thus, $\delta_{t}\in H^{-1}_{\rho}(\mathbb{R};H)$.

%%%%%%%%%%%%%%%%%%%%%%%%%%%%%%
%          SECTION
\section{Initial value problems}
\label{sec:IVPs}

In this appendix, we provide some additional details regarding \cref{subsec:DistribSpaces_IVPs}. For that purpose, throughout this section, let $M$ be a material law and $A$ an $\mathrm{m}$-accretive operator in a Hilbert space $H$. Additionally, we assume that they satisfy the assumptions of Picard’s \cref{th:Picard}.
\subsection{Distributional problems and support on $\mathbb{R}_{\geq 0}$}
The principal problem in stating evolutionary equations on the half-line in general is to give meaning to the informal expression
\begin{equation*}
  \bigl(\partial_{\rho}M(\partial_{\rho}) + A\bigr)u = F\bigl(\argdot,u_{(\argdot)}\bigr)
\end{equation*}
on $\mathbb{R}_{\geq 0}$. We elaborate how to get to the problem \eqref{eq:DistribProblem}, i.e.,
\begin{equation*}
  \bigl(\partial_{\rho}M(\partial_{\rho}) + A\bigr)v = f + F\bigl(\argdot,(v + Z)_{(\argdot)}\bigr)
\end{equation*}
in $H^{-1}_{\rho}(\mathbb{R};H)$ with $f$ and
\begin{equation*}
  t\mapsto F\bigl(t,(v+Z)_{(t)}\bigr)
\end{equation*}
having support on the right half-line $\mathbb{R}_{\geq 0}$ only (this is critically important for the well-posedness theory in \cref{sec:SolThy}). For the latter map, this is an assumption on $F$. We only need to go into detail in regard to $f$. The key reference for this deliberation is \cite[sec.~3]{Trostorff2018}. There, IVPs of the form
\begin{alignat}{3}
  \bigl(\partial_{\rho}M(\partial_{\rho}) + A\bigr) u &= 0 &\qquad \text{on}\ \mathbb{R}_{>0}\text{,}\label{eq:IntuitiveIVPDiffEq}\\
  u &= g & \text{on}\ \mathbb{R}_{\leq 0}\text{,} \label{eq:IntuitiveIVPInitialCondition}
\end{alignat}
are studied for $g \in H^{1}_{\rho}(-\infty,0;H)$. The equation is given meaning by applying the projection $P_{0}$ (cf.~\cref{def:cutoff}) to the problem for $u=v+g$ on the entire real line. To give the accurate result, we briefly recall the necessary definitions and preliminary results from \cite{Trostorff2018}.
\begin{definition}
  Let $M\colon \C \supseteq \operatorname{dom}(M) \to \mathcal{L}(H)$ be a linear material law. $M$ is called {\em regularizing} if there exists $\rho_{0}\in \mathbb{R}_{>0}$ such that $\C_{\Re \geq \rho_{0}}\subseteq \operatorname{dom}(M)$, $M\vert_{\C_{\Re \geq \rho_{0}}}$ is bounded and for all $\rho\geq \rho_{0}$ and $x\in H$,
  \begin{equation*}
    \bigl(M(\partial_{\rho})\chi_{\mathbb{R}_{\geq 0}}x\bigr)(0+)\quad\text{exists.}
  \end{equation*}
\end{definition}
This definition is required for the following useful fact:
\begin{lemma}[{\cite[lem.~3.2.4]{Trostorff2018}}]
  Let $M$ be a regularizing linear material law and $\rho > s_{\mathrm{b}}(M)$. Then for $g\in \chi_{\mathbb{R}_{\leq 0}}H^{1}_{\rho}(\mathbb{R};H)$ holds: $\partial_{\rho}M(\partial_{\rho})g \in \operatorname{dom}(P_{0})$.
\end{lemma}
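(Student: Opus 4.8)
The plan is to reduce the assertion to the existence of the one-sided limit $\bigl(M(\partial_{\rho})g\bigr)(0+)$. Since $g\in\chi_{\mathbb{R}_{\leq 0}}H^{1}_{\rho}(\mathbb{R};H)\subseteq L_{2,\rho}(\mathbb{R};H)$ and $M(\partial_{\rho})$ is a bounded operator on $L_{2,\rho}(\mathbb{R};H)$, we have $M(\partial_{\rho})g\in L_{2,\rho}(\mathbb{R};H)$, hence $\partial_{\rho}M(\partial_{\rho})g\in H^{-1}_{\rho}(\mathbb{R};H)$ and $\partial_{\rho}^{-1}\bigl(\partial_{\rho}M(\partial_{\rho})g\bigr)=M(\partial_{\rho})g$. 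By the definition of $\operatorname{dom}(P_{0})$ it therefore suffices to show that $M(\partial_{\rho})g$ admits a right limit at $0$.

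The second step is a decomposition that isolates the jump of $g$ at the origin. Choose $\tilde g\in H^{1}_{\rho}(\mathbb{R};H)$ with $g=\chi_{\mathbb{R}_{\leq 0}}\tilde g$; by the Sobolev embedding \cref{th:Sobolev}, $\tilde g$ has a continuous representative, so $x\coloneq\tilde g(0)\in H$ is well defined, and we write
\[
  g \;=\; \tilde g \;-\; \chi_{\mathbb{R}_{\geq 0}}x \;-\; \chi_{\mathbb{R}_{\geq 0}}\bigl(\tilde g-x\bigr).
\]
The first summand lies in $H^{1}_{\rho}(\mathbb{R};H)$ by construction. For the third, one checks that on $(0,\infty)$ the constant function $x$ belongs to $L_{2,\rho}$ (the weight $\e^{-2\rho\argdot}$ is integrable at $+\infty$), that $\tilde g-x$ is continuous and vanishes at $0$, and that its derivative on $(0,\infty)$ is $\tilde g'\in L_{2,\rho}$; hence $\chi_{\mathbb{R}_{\geq 0}}(\tilde g-x)\in H^{1}_{\rho}(\mathbb{R};H)$, with no Dirac contribution at $0$. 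The middle term $\chi_{\mathbb{R}_{\geq 0}}x$ is a Heaviside step times the vector $x$ and carries the genuine jump.

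Now I apply $M(\partial_{\rho})$ termwise. Since $M(\partial_{\rho})$ commutes with $\partial_{\rho}$, it maps $H^{1}_{\rho}(\mathbb{R};H)=\operatorname{dom}(\partial_{\rho})$ into itself, so $M(\partial_{\rho})\tilde g$ and $M(\partial_{\rho})\chi_{\mathbb{R}_{\geq 0}}(\tilde g-x)$ are again in $H^{1}_{\rho}(\mathbb{R};H)$, hence continuous by \cref{th:Sobolev}, and in particular possess right limits at $0$. For the Heaviside term, the hypothesis that $M$ is regularizing is exactly what guarantees that $\bigl(M(\partial_{\rho})\chi_{\mathbb{R}_{\geq 0}}x\bigr)(0+)$ exists. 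Summing, $\bigl(M(\partial_{\rho})g\bigr)(0+)$ exists, and the claim follows from the reduction in the first paragraph.

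The main obstacle — and the reason the regularizing assumption cannot be dispensed with — is precisely the Heaviside term: $M(\partial_{\rho})$ is causal but not support-preserving towards the past, so from $\operatorname{spt}g\subseteq\mathbb{R}_{\leq 0}$ one may not conclude $\operatorname{spt}M(\partial_{\rho})g\subseteq\mathbb{R}_{\leq 0}$, and a generic material law turns the jump of $g$ at $0$ into something lacking a one-sided limit there. The remaining work is the routine bookkeeping for the $H^{1}_{\rho}$-membership of the two continuous pieces, which rests only on integrability of constants against the exponential weight on $(0,\infty)$ and on the absence of a boundary term at $0$. If one wishes to match the admissible range of $\rho$, one first invokes the regularizing property for $\rho$ above the threshold appearing in its definition and then transfers it to all $\rho>s_{\mathrm b}(M)$ by the usual consistency of the $\mathcal{L}_{\rho}$-calculus.
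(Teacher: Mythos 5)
The paper does not prove this lemma itself (it only cites \cite[lem.~3.2.4]{Trostorff2018}), so there is no in-text proof to compare against; your argument is correct and is essentially the standard one behind the citation: reduce membership in $\operatorname{dom}(P_{0})$ to existence of $\bigl(M(\partial_{\rho})g\bigr)(0+)$ via $\partial_{\rho}^{-1}\partial_{\rho}M(\partial_{\rho})g=M(\partial_{\rho})g$, split off the Heaviside part $\chi_{\mathbb{R}_{\geq 0}}\tilde g(0)$ carrying the jump, note the two remaining summands lie in $H^{1}_{\rho}(\mathbb{R};H)$ (hence are continuous by \cref{th:Sobolev}, using that $M(\partial_{\rho})$ commutes with $\partial_{\rho}$), and apply the regularizing hypothesis to the jump term. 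Your closing remarks correctly identify the only two points requiring care, namely the absence of a Dirac contribution in the derivative of $\chi_{\mathbb{R}_{\geq 0}}(\tilde g-\tilde g(0))$ and the transfer of the regularizing property from $\rho\geq\rho_{0}$ to all $\rho>s_{\mathrm{b}}(M)$ via the $\rho$-independence of material law operators on intersections $L_{2,\rho}\cap L_{2,\rho'}$.
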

Appealing to this lemma, for regularizing material laws we can define
\begin{equation*}
  K_{\rho}\colon \chi_{\mathbb{R}_{\leq 0}}H^{1}_{\rho}(\mathbb{R},H) \to H^{-1}_{\rho}(\mathbb{R};H)\text{,}\quad
  g \mapsto P_{0}\partial_{\rho}M(\partial_{\rho})g
\end{equation*}
and subsequently:
\begin{definition}
  \label{def:HistorySpace}
  Let $M$ be a regularizing material law and $\rho > s_{\mathrm{b}}(M)$. Let
  \begin{align*}
    \mathrm{His}_{\rho}(M,A) &\!\coloneq \!\bigl\{g \in \chi_{\mathbb{R}_{\leq 0}}H^{1}_{\rho}(\mathbb{R};H) \colon \exists x\!\in \!H\!\colon \!
    S_{\rho}(\delta_{0}x \!- \!K_{\rho}g) \!- \!\chi_{\mathbb{R}_{\geq 0}}g(0-) \!\in \! H^{1}_{\rho}(\mathbb{R};H)\bigr\}\\
    \intertext{be the space of {\em admissible histories for $M$ and $A$}. Furthermore let}
    \mathrm{IV}_{\rho}(M,A) &\!\coloneq \!\bigl\{g(0-)\colon g \in \mathrm{His}_{\rho}(M,A)\bigr\}
  \end{align*}
  be the space of {\em admissible initial values for $M$ and $A$}.
\end{definition}
\begin{remark}
  It can be shown, that the element $x$ in the definition of the space of admissible histories is uniquely determined and is given by
  \begin{equation*}
    x = \bigl(M(\partial_{\rho})g\bigr)(0-) - \bigl(M(\partial_{\rho})g\bigr)(0+),
  \end{equation*}
  cf.~\cite[lem.~3.2.5]{Trostorff2018}. Hence, it makes sense to define
  \begin{equation*}
    \Gamma^{\rho}_{(M,A)} \colon \mathrm{His}_{\rho}(M,A) \to H\text{,} \quad
    g\mapsto \bigl(M(\partial_{\rho}g)\bigr)(0-) - \bigl(M(\partial_{\rho}g)\bigr)(0+)\text{.}
  \end{equation*}
\end{remark}
With these concepts, we can give the precise statement concerning the reformulation of the system (\ref{eq:IntuitiveIVPDiffEq}) + (\ref{eq:IntuitiveIVPInitialCondition}) as an evolutionary equation in $H^{-1}_{\rho}(\mathbb{R};H)$:
\begin{theorem}[{\cite[prop.~3.2.6]{Trostorff2018}}]
  \label{th:SaschasBigTheorem}
  For a regularizing material law $M$ and $\rho > s_{\mathrm{b}}(M)$ let
  \begin{equation*}
    v \coloneq S_{\rho}\bigl(\Gamma_{(M,A)}^{\rho} g - K_{\rho}g\bigr)\text{.}
  \end{equation*}
  Then the function $u \coloneq v + g$ defines an element of $H^{1}_{\rho}(\mathbb{R};H)$, satisfies the initial condition (\ref{eq:IntuitiveIVPInitialCondition}) and \cref{eq:IntuitiveIVPDiffEq} in the sense that
  \begin{equation*}
    \supp \bigl((\partial_{\rho}M(\partial_{\rho})+A)u\bigr)\subseteq \mathbb{R}_{\leq 0}\text{.}
  \end{equation*}
\end{theorem}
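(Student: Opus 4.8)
The plan is to verify the three assertions in turn, relying on \cref{def:HistorySpace} and the distributional calculus for $\partial_{\rho}M(\partial_{\rho})+A$ recalled above; note that the regularizing hypothesis on $M$ is precisely what renders $K_{\rho}g$ and $\Gamma^{\rho}_{(M,A)}g$ meaningful in the first place (cf.\ \cite[lem.~3.2.4]{Trostorff2018} and the remark following \cref{def:HistorySpace}). \textbf{Regularity.} Since $\Gamma^{\rho}_{(M,A)}$ is only defined on $\mathrm{His}_{\rho}(M,A)$, we may use $g\in\mathrm{His}_{\rho}(M,A)$ together with the uniqueness of the element $x\in H$ in \cref{def:HistorySpace}, which the remark identifies as $x=\Gamma^{\rho}_{(M,A)}g$. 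Hence $v-\chi_{\R_{\geq 0}}g(0-)\in H^{1}_{\rho}(\R;H)$ directly by definition of $\mathrm{His}_{\rho}(M,A)$. It then suffices to note that $\chi_{\R_{\geq 0}}g(0-)+g\in H^{1}_{\rho}(\R;H)$: this function coincides with $g$ on $\R_{<0}$ (an $H^{1}_{\rho}$-function there), with the constant $g(0-)$ on $\R_{\geq 0}$ (which lies in $L_{2,\rho}$ because $\rho>0$), and the two pieces share the trace $g(0-)$ at the origin, so the glued function has no singular part in its distributional derivative. Adding the two $H^{1}_{\rho}$-functions gives $u=v+g\in H^{1}_{\rho}(\R;H)$.

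\textbf{Initial condition.} I would obtain this from $\supp v\subseteq\R_{\geq 0}$. The Dirac term $\delta_{0}\Gamma^{\rho}_{(M,A)}g$ has support in $\{0\}\subseteq\R_{\geq 0}$, and $K_{\rho}g=P_{0}\partial_{\rho}M(\partial_{\rho})g$ has support in $\R_{\geq 0}$ since $P_{0}$ is a generalized cutoff onto $\R_{\geq 0}$ (\cref{th:P_t_is_cutoff} and \cite[prop.~3.1.16]{Trostorff2018}). Therefore $\delta_{0}\Gamma^{\rho}_{(M,A)}g-K_{\rho}g$ is supported in $\R_{\geq 0}$, and causality of the (extension to $H^{-1}_{\rho}$ of the) Picard operator from \cref{th:Picard} yields $\supp v\subseteq\R_{\geq 0}$. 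Consequently $u=v+g=g$ on $\R_{<0}$, which is \cref{eq:IntuitiveIVPInitialCondition}.

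\textbf{The equation.} Writing $v=S_{\rho}\bigl(\delta_{0}\Gamma^{\rho}_{(M,A)}g-K_{\rho}g\bigr)$ and applying the distributional reformulation \cref{th:EquivDistrProb} in its $H^{-1}_{\rho}$-version gives $\partial_{\rho}M(\partial_{\rho})v+A_{-1}v=\delta_{0}\Gamma^{\rho}_{(M,A)}g-K_{\rho}g$ in $H^{-1}_{\rho}(\R;H)$, whence
\begin{align*}
  (\partial_{\rho}M(\partial_{\rho})+A)u
  &= \delta_{0}\Gamma^{\rho}_{(M,A)}g - K_{\rho}g + \partial_{\rho}M(\partial_{\rho})g + A_{-1}g\\
  &= \delta_{0}\Gamma^{\rho}_{(M,A)}g + \bigl(\mathrm{id}-P_{0}\bigr)\partial_{\rho}M(\partial_{\rho})g + A_{-1}g\text{.}
\end{align*}
Now $\delta_{0}\Gamma^{\rho}_{(M,A)}g$ is supported in $\{0\}$; $A_{-1}g$ is supported in $\supp g\subseteq\R_{\leq 0}$ because $A_{-1}$ acts pointwise in the time variable; and $(\mathrm{id}-P_{0})\partial_{\rho}M(\partial_{\rho})g$ is supported in $\R_{\leq 0}$, being the complementary part of the cutoff $P_{0}$ applied to $\partial_{\rho}M(\partial_{\rho})g\in\operatorname{dom}(P_{0})$. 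Hence the sum is supported in $\R_{\leq 0}$, as claimed.

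\textbf{Main obstacle.} I expect the delicate point to be the bookkeeping at the level of $H^{-1}_{\rho}$ — the precise meaning of the $H^{-1}_{\rho}$-extension of $S_{\rho}$, the validity of \cref{th:EquivDistrProb} there, and above all the support statement for $(\mathrm{id}-P_{0})\partial_{\rho}M(\partial_{\rho})g$. For $L_{2,\rho}$-data $P_{0}$ is literally multiplication by $\chi_{\R_{\geq 0}}$ (\cref{th:P_t_is_cutoff}), but here $\partial_{\rho}M(\partial_{\rho})g$ is genuinely distributional, so one must first invoke the regularizing property of $M$ to guarantee $\partial_{\rho}M(\partial_{\rho})g\in\operatorname{dom}(P_{0})$ and then the structural description of $P_{0}$ (\cref{def:cutoff}) and its complement to conclude. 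The remaining steps are routine gluing of Sobolev functions together with causality.
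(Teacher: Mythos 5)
Your argument is correct. Note that the paper does not prove this statement itself --- it is imported verbatim from \cite[prop.~3.2.6]{Trostorff2018} --- so there is no in-paper proof to compare against; your reconstruction (reading the source term as $\delta_{0}\Gamma^{\rho}_{(M,A)}g - K_{\rho}g$ and implicitly assuming $g\in\mathrm{His}_{\rho}(M,A)$, both of which are needed for the statement to be meaningful) is the natural one and matches Trostorff's. The only caveats are the ones you already flag: the causal extension of $S_{\rho}$ to $H^{-1}_{\rho}(\mathbb{R};H)$ and the $H^{-1}_{\rho}$-version of \cref{th:EquivDistrProb} are not established in this paper and must be taken from \cite[sec.~3.1]{Trostorff2018}; granting those, each of your three steps --- the gluing argument for $H^{1}_{\rho}$-regularity, the support argument via causality, and the identification of the residual as $\delta_{0}\Gamma^{\rho}_{(M,A)}g + (\mathrm{id}-P_{0})\partial_{\rho}M(\partial_{\rho})g + A_{-1}g$ with support in $\mathbb{R}_{\leq 0}$ --- checks out.
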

In the context of this article, we apply the projection operator $P_{0}$ to the equation
\begin{equation*}
  \bigl(\overline{\partial_{\rho}M(\partial_{\rho}) + A}\bigr)(v + Z) = F\bigl(\argdot, (v+Z)_{(\argdot)}\bigr)\text{.}
\end{equation*}
Since $v+Z$ is in the domain of the closure of $\partial_{\rho}M(\partial_{\rho}) + A$, appealing to \cref{th:EquivDistrProb}, solving the equation above is equivalent to the inclusion
\begin{equation*}
  \bigl(\partial_{\rho}M(\partial_{\rho}) + A\bigr)(v + Z) \subseteq F\bigl(\argdot, (v+Z)_{(\argdot)}\bigr)\text{,}
\end{equation*}
where we interpret the left-hand side as an element of $H^{-1}_{\rho}(\mathbb{R};H)\cap L_{2,\rho}\bigl(\mathbb{R};H^{-1}(A)\bigr)$. Appealing to \cref{rmk:PrehistoryVanishes}, we assume in the following that the initial prehistory $\Phi$ satisfies $\Phi(-h)=0$. Now we can elaborate how $f$ is obtained:\\
$\bigl(\partial_{\rho}M(\partial_{\rho}) + A\bigr)Z$ is a well-defined $L_{2,\rho}$-function, provided that $Z$ takes values in $\operatorname{dom}(A)$; the projection $P_{0}$ applied to it produces a function on $\mathbb{R}_{\geq 0}$, cf.~\cref{th:P_t_is_cutoff}. We observe that under the conditions of Trostorff's \cref{th:SaschasBigTheorem}, $v$ is of the form
\begin{equation*}
  \bigl[\bigl(M(\partial_{\rho})\tilde{Z}\bigr)(0-) - \bigl(M(\partial_{\rho})\tilde{Z}\bigr)(0+)\bigr]\delta_{0}
  - P_{0}\partial_{\rho}M(\partial_{\rho})\tilde{Z}\text{,}
\end{equation*}
where $\tilde{Z}$ is the extension of $Z$ by $0$ on $(-\infty,-h)$. We observe that the first expression is purely distributional and has support only at $0$. The second term is
\begin{equation*}
  \partial_{\rho}\chi_{\mathbb{R}_{\geq 0}}\partial_{\rho}^{-1}\partial_{\rho}M(\partial_{\rho})\tilde{Z}
  - \bigl[\partial_{\rho}^{-1}\partial_{\rho}M(\partial_{\rho})\bigr](0+)\delta_{0}\text{,}
\end{equation*}
the second summand of which only has support at $0$ again, whereas the first part reduces to
\begin{equation*}
  \partial_{\rho}\chi_{\mathbb{R}_{\geq 0}} M(\partial_{\rho})\tilde{Z}\text{,}
\end{equation*}
which is clearly supported on $\mathbb{R}_{\geq 0}$ only, appealing to causality of material laws. Hence, $\operatorname{spt}(f)\subseteq \mathbb{R}_{\geq 0}$.

\subsection{Simple material laws}
We conclude this section by providing a perspective on initial value problems for {\em simple} material laws, i.e., material laws of the form $M(z)= M_{0}+M_{1}z^{-1}$. In this case, the history space only depends on the values $g(0-)$ of the prehistory $g$, and not the entire prehistory. This simpler dependency could already be observed in several examples we provided in \cref{sec:Examples}, but can be proven in Trostorff’s general framework:
\begin{proposition}[{\cite[prop.~3.2.10]{Trostorff2018}}]
  \label{th:IV_for_simple_MatLaws}
  Let $M_{0},M_{1}\in \mathcal{L}(H)$ with $M_{0}$ selfadjoint satisfying $M_{0}\vert_{\operatorname{ran}(M_{0})}\geq c_{0}>0$ and $M_{1}$ satisfying $\Re M_{1}\vert_{\operatorname{ker}(M_{0})}\geq c_{1}>0$. Then for $M(z)\coloneq M_{0} + z^{-1}M_{1}$ and $\rho> s_{\mathrm{b}}(M)$ holds
  \begin{equation*}
    \mathrm{His}_{\rho}(M,A) = \bigl\{g \in \chi_{\mathbb{R}\leq 0} H^{1}_{\rho}(\mathbb{R};H)\colon g(0-) \in \mathrm{IV}_{\rho}(M,A)\bigr\}\text{.}
  \end{equation*}
  Furthermore, for $g\in \mathrm{His}_{\rho}(M,A)$ with $g(0-)= 0$ holds
  \begin{equation*}
    S_{\rho}\Bigl(\delta_{0}\bigl[\bigl(M(\partial_{\rho})g\bigr)(0-) - \bigl(M(\partial_{\rho})g\bigr)(0+)\bigr] - K_{\rho}g\Bigr)=0\text{.}
  \end{equation*}
\end{proposition}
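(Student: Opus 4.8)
The plan is to deduce the first (history–space) identity from the second one together with linearity, and to prove the second identity by a direct computation. Throughout write $M(z)=M_{0}+z^{-1}M_{1}$, so that on $L_{2,\rho}(\mathbb{R};H)$ the material law operator is $M(\partial_{\rho})=M_{0}+M_{1}\partial_{\rho}^{-1}=M_{0}+M_{1}I_{\rho}$ with $I_{\rho}$ the antiderivative on the line. First I would record the facts that make the statement meaningful: such a simple $M$ is bounded on every right half-plane and satisfies $\bigl(M(\partial_{\rho})\chi_{\mathbb{R}_{\geq 0}}x\bigr)(0+)=M_{0}x$ for every $x\in H$, hence is regularizing as required for \cref{def:HistorySpace} (the coercivity assumptions on $M_{0},M_{1}$ are precisely tailored to yield the coercivity in Picard's \cref{th:Picard}), so that $K_{\rho}$, $\Gamma^{\rho}_{(M,A)}$, $\mathrm{His}_{\rho}(M,A)$ and $\mathrm{IV}_{\rho}(M,A)$ are all defined. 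I would then compute $\Gamma^{\rho}_{(M,A)}$ for an arbitrary $g=\chi_{\mathbb{R}_{\leq 0}}h$ with $h\in H^{1}_{\rho}(\mathbb{R};H)$: since $I_{\rho}$ maps $L_{2,\rho}$ into $H^{1}_{\rho}\hookrightarrow\mathcal{C}$ (\cref{th:Sobolev}), the summand $M_{1}I_{\rho}g$ is continuous at $0$, whereas $M_{0}g$ has left and right limits $M_{0}g(0-)$ and $0$; thus $\Gamma^{\rho}_{(M,A)}g=\bigl(M(\partial_{\rho})g\bigr)(0-)-\bigl(M(\partial_{\rho})g\bigr)(0+)=M_{0}\,g(0-)$, which in particular depends only on $g(0-)$.

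Next I would settle the second identity, i.e.\ the case $g(0-)=0$ — for any $g\in\chi_{\mathbb{R}_{\leq 0}}H^{1}_{\rho}(\mathbb{R};H)$ with $g(0-)=0$, a fortiori for $g\in\mathrm{His}_{\rho}(M,A)$. Writing $g=\chi_{\mathbb{R}_{\leq 0}}h$ with $h(0)=g(0-)=0$ and using $\chi_{\mathbb{R}_{\leq 0}}h=h-\chi_{\mathbb{R}_{\geq 0}}h$ together with the integration-by-parts computation in \cref{th:P_t_is_cutoff} (whose Dirac term at $0$ carries the factor $h(0)=0$), one obtains $g\in H^{1}_{\rho}(\mathbb{R};H)$ with $g(0)=0$ and $\partial_{\rho}g=\chi_{\mathbb{R}_{\leq 0}}h'$. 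Hence $\partial_{\rho}M(\partial_{\rho})g=M_{0}\partial_{\rho}g+M_{1}g$ is a genuine $L_{2,\rho}(\mathbb{R};H)$-function whose support lies in $\mathbb{R}_{\leq 0}$, so \cite[prop.~3.1.16]{Trostorff2018} (support in $\mathbb{R}_{\leq 0}$ $\Leftrightarrow$ membership in $\operatorname{dom}(P_{0})$ with vanishing image) gives $K_{\rho}g=P_{0}\partial_{\rho}M(\partial_{\rho})g=0$. Together with $\Gamma^{\rho}_{(M,A)}g=M_{0}\cdot 0=0$ from the previous step, the argument of $S_{\rho}$ is $0$, and $S_{\rho}0=0$ gives the asserted identity.

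For the history–space equality the inclusion ``$\subseteq$'' is immediate from the definition of $\mathrm{IV}_{\rho}(M,A)$. For ``$\supseteq$'', pick $g_{1}\in\chi_{\mathbb{R}_{\leq 0}}H^{1}_{\rho}(\mathbb{R};H)$ with $g_{1}(0-)\in\mathrm{IV}_{\rho}(M,A)$ and choose $g_{2}\in\mathrm{His}_{\rho}(M,A)$ with $g_{2}(0-)=g_{1}(0-)$; then $g_{0}:=g_{1}-g_{2}\in\chi_{\mathbb{R}_{\leq 0}}H^{1}_{\rho}(\mathbb{R};H)$ satisfies $g_{0}(0-)=0$. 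The map $g\mapsto S_{\rho}\bigl(\delta_{0}\,\Gamma^{\rho}_{(M,A)}g-K_{\rho}g\bigr)-\chi_{\mathbb{R}_{\geq 0}}g(0-)$ is linear on $\chi_{\mathbb{R}_{\leq 0}}H^{1}_{\rho}(\mathbb{R};H)$, since each ingredient — $g\mapsto g(0-)$, $K_{\rho}$, $\Gamma^{\rho}_{(M,A)}$ (which by the step above equals $g\mapsto M_{0}g(0-)$ on the whole space), $\delta_{0}$, $\chi_{\mathbb{R}_{\geq 0}}$ and the extrapolated Picard operator $S_{\rho}$ on $H^{-1}_{\rho}(\mathbb{R};H)$ — is linear. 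It sends $g_{2}$ into $H^{1}_{\rho}$ (this is the statement $g_{2}\in\mathrm{His}_{\rho}$) and $g_{0}$ to $0$ (previous paragraph), hence $g_{1}=g_{2}+g_{0}$ into $H^{1}_{\rho}$, i.e.\ $g_{1}\in\mathrm{His}_{\rho}(M,A)$; uniqueness of the element $x$ in the definition of $\mathrm{His}_{\rho}$, namely $x=\Gamma^{\rho}_{(M,A)}g$, ensures the correct $x$ has been used.

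The main obstacle I expect is bookkeeping on the extrapolation scale rather than genuine difficulty: one must be sure that for $g\in\chi_{\mathbb{R}_{\leq 0}}H^{1}_{\rho}(\mathbb{R};H)$ with $g(0-)=0$ the distribution $\partial_{\rho}M(\partial_{\rho})g$ really is an $L_{2,\rho}$-function with no residual Dirac mass at $0$ and with support contained in $\mathbb{R}_{\leq 0}$, so that \cite[prop.~3.1.16]{Trostorff2018} applies verbatim, and that $S_{\rho}$ is used here as the bounded linear extension of the Picard operator to $H^{-1}_{\rho}(\mathbb{R};H)$ with $S_{\rho}0=0$. Once these points are fixed, both assertions follow.
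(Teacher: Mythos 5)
The paper does not prove this proposition --- it is imported verbatim from \cite[prop.~3.2.10]{Trostorff2018} --- so there is no in-paper proof to compare against; I can only assess your argument on its own terms, and it is correct. Your route is the natural one and, as far as I can tell, reconstructs the intended proof: (i) for $M(z)=M_{0}+z^{-1}M_{1}$ the operator $M(\partial_{\rho})=M_{0}+M_{1}I_{\rho}$ has a continuous second summand, so the jump functional reduces to $\Gamma^{\rho}_{(M,A)}g=M_{0}g(0-)$ and in particular depends only on the initial value; (ii) if $g(0-)=0$ then $g\in H^{1}_{\rho}(\mathbb{R};H)$ with $\partial_{\rho}g=\chi_{\mathbb{R}_{\leq 0}}h'$ (no Dirac mass, since the boundary term carries the factor $h(0)=0$), hence $\partial_{\rho}M(\partial_{\rho})g=M_{0}\partial_{\rho}g+M_{1}g$ is an $L_{2,\rho}$-function supported in $\mathbb{R}_{\leq 0}$ and \cite[prop.~3.1.16]{Trostorff2018} gives $K_{\rho}g=0$, so the argument of $S_{\rho}$ vanishes; (iii) linearity of $g\mapsto S_{\rho}(\delta_{0}M_{0}g(0-)-K_{\rho}g)-\chi_{\mathbb{R}_{\geq 0}}g(0-)$ then transfers membership in $\mathrm{His}_{\rho}(M,A)$ from any $g_{2}$ with the same initial value to $g_{1}$. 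Two points you correctly flag but should keep explicit in a polished write-up: the definition of $\mathrm{His}_{\rho}(M,A)$ only asks for \emph{some} $x\in H$, so exhibiting $x=M_{0}g_{1}(0-)$ suffices and the uniqueness of $x$ is only needed to know that the witness for $g_{2}$ is $\Gamma^{\rho}_{(M,A)}g_{2}$; and the well-definedness of $K_{\rho}$ on all of $\chi_{\mathbb{R}_{\leq 0}}H^{1}_{\rho}(\mathbb{R};H)$ rests on the regularizing property, which you verify via $\bigl(M(\partial_{\rho})\chi_{\mathbb{R}_{\geq 0}}x\bigr)(0+)=M_{0}x$. The coercivity assumptions on $M_{0},M_{1}$ play no role in your argument beyond guaranteeing that $S_{\rho}$ exists, which is consistent with their role in the statement.
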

It is an interesting observation, that in the context of formulating initial value spaces for evolutionary equations, one seems to obtain consistency conditions ``from the left'' for the initial prehistory. The consistency conditions required for solution theory of DDEs are akin to conditions on the history segments, which enforce a consistency condition ``from the right''. This phenomenon might be subject to future investigation.

%%%%%%%%%%%%%%%%%%%%%%%%%%%%%%
% References
%%%%%%%%%%%%%%%%%%%%%%%%%%%%%%
\bibliographystyle{abbrvurl}
\bibliography{references}

\end{document}